\newcommand{\Z}{\mathbb{Z}}
\numberwithin{equation}{section}
\newtheorem{theorem}{Theorem}[section]
\newtheorem{definition}[theorem]{Definition}
\newtheorem{corollary}[theorem]{Corollary}
\newtheorem{lemma}[theorem]{Lemma}
\newtheorem{proposition}[theorem]{Proposition}
\newtheorem{remark}[theorem]{Remark}
\renewcommand{\H}{d\mathcal{H}^{n-1}(x)}
\newcommand{\C}{\mathbb{C}}
\newcommand{\R}{\mathbb{R}}
\newcommand{\F}{\mathcal{F}}
\newcommand{\beq}{\begin{equation}}
\newcommand{\eeq}{\end{equation}}
\newcommand{\bea}{\begin{eqnarray}}
\newcommand{\eea}{\end{eqnarray}}
\newcommand{\beaa}{\begin{eqnarray*}}
\newcommand{\eeaa}{\end{eqnarray*}}
\newcommand{\ben}{\begin{enumerate}}
\newcommand{\een}{\end{enumerate}}
\newcommand{\bi}{\begin{itemize}}
\newcommand{\ei}{\end{itemize}}
\def\bar{\overline}
\def\intav#1{\mathchoice
   {\mathop{\vrule width 6pt height 3 pt depth -2.5pt
   \kern -9pt \intop}\nolimits_{\kern -6pt#1}}%
   {\mathop{\vrule width 5pt height 3 pt depth -2.6pt
   \kern -6pt \intop}\nolimits_{#1}}%
   {\mathop{\vrule width 5pt height 3 pt depth -2.6pt
   \kern -6pt \intop}\nolimits_{#1}}%
   {\mathop{\vrule width 5pt height 3 pt depth -2.6pt
   \kern -6pt \intop}\nolimits_{#1}}}
\newcommand{\bpm}{\begin{pmatrix}}
\newcommand{\epm}{\end{pmatrix}}
\renewcommand\d{\partial}
\def\eps{\varepsilon }
\renewcommand\d{\partial}
\def\eps{\varepsilon}
\newcommand\br{\begin{remark}}
\newcommand\er{\end{remark}}
\newcommand\bp{\begin{pmatrix}}
\newcommand\ep{\end{pmatrix}}
\newcommand{\be}{\begin{equation}}
\newcommand{\ee}{\end{equation}}
\newcommand\ba{\begin{equation}\begin{aligned}}
\newcommand\ea{\end{aligned}\end{equation}}
\newcommand{\bap}{\begin{app}}
\newcommand{\eap}{\end{app}}
\newcommand{\begs}{\begin{exams}}
\newcommand{\eegs}{\end{exams}}
\newcommand{\beg}{\begin{example}}
\newcommand{\eeg}{\end{exaplem}}
\newcommand{\bpr}{\begin{proposition}}
\newcommand{\epr}{\end{proposition}}
\newcommand{\bt}{\begin{theorem}}
\newcommand{\et}{\end{theorem}}
\newcommand{\bc}{\begin{corollary}}
\newcommand{\ec}{\end{corollary}}
\newcommand{\bl}{\begin{lemma}}
\newcommand{\el}{\end{lemma}}
\newcommand{\bd}{\begin{definition}}
\newcommand{\ed}{\end{definition}}
\newcommand{\brs}{\begin{remarks}}
\newcommand{\ers}{\end{remarks}}
\newcommand{\const}{\text{\rm constant}}
\newcommand{\Id}{{\rm Id }}
\theoremstyle{definition}
\newtheorem{example}[theorem]{Example}
\title{Multidimensional stability and transverse bifurcation of 
hydraulic shocks and roll waves in open channel flow}
\author{Zhao Yang}
\address{Academy of Mathematics and Systems Science, Chinese Academy of Sciences, Beijing 100190 China}
\email{yangzhao@amss.ac.cn}
\author{Kevin Zumbrun}
\address{Indiana University, Bloomington, IN 47405}
\email{kzumbrun@iu.edu}
\thanks{Research of K.Z. was partially supported under NSF grants no. DMS-2154387 and DMS-2206105} 
\begin{document}

\begin{abstract}
We study by a combination of analytical and numerical methods
multidimensional stability and transverse bifurcation of planar hydraulic shock and roll wave solutions 
of the inviscid Saint Venant equations for inclined shallow-water flow, both in the whole space and in a channel 
of finite width, obtaining complete stability diagrams across the full parameter range of existence.
Technical advances include development of efficient multi-d Evans solvers, low- and high-frequency asymptotics,
explicit/semi-explicit
computation of stability boundaries, and rigorous treatment of channel flow with wall-type physical
boundary.  Notable behavioral phenomena are a novel essential transverse bifurcation of hydraulic shocks
to invading planar periodic roll-wave or doubly-transverse periodic herringbone patterns,
with associated metastable behavior driven by mixed roll- and herringbone-type waves
initiating from localized perturbation of an unstable constant state; and Floquet-type transverse
``flapping'' bifurcation of roll wave patterns.
\end{abstract}

\date{\today}
\maketitle
\tableofcontents

\section{Introduction}\label{s:intro}
In this paper, we initiate the systematic study of multidimensional stability and behavior for planar shock 
and roll wave solutions of the inviscid Saint Venant equations of inclined shallow water flow, in the whole space 
or a channel of finite width.
One-dimensional stability of these waves has been resolved analytically and numerically in \cite{JNRYZ,YZ,SYZ,FRYZ}.
Here, we achieve similarly definitive results on multidimensional stability by a combination of analytical
investigation and numerical Evans function computations. Moreover, we go beyond basic stability to investigate
1- and 2d bifurcation from instability, in particular a novel transverse essential bifurcation 
of hydraulic shocks to invading periodic roll wave or doubly periodic 
``herringbone'' patterns and a Floquet-type transverse ``flapping'' bifurcation of roll waves at instability.

Roll waves are of interest in hydraulic engineering as unwanted ``mini-rogue wave'' patterns forming 
in canals from unstable laminar flow. Of particular interest are the questions when such waves can occur- i.e.,
when they exist as stable solutions- and the maximum fluid height arising thereby.
We show that 2d stability is a much stronger constraint for open channel flow than 1d
stability, greatly refining the 1d stability diagram found in \cite{JNRYZ}; we compare our results, suitably
normalized, to experimental findings of Brock \cite{Brock,Br1,Br2}.

Hydraulic shocks are familiar as stable structures, used to dissipate energy in dam spillways, for example.
And, in the hydrodynamically stable case, i.e., the scenario of stable laminar flow, or Froude number
$F<2$, we find similarly as in the 1d analysis \cite{YZ} that they are universally stable. However, in the hydrodynamically unstable, or pattern formation regime $F>2$, we find that 1d stable hydraulic
shocks can in some circumstances exhibit transverse instabilities, leading to bifurcation. Remarkably, these bifurcation points may be explicitly computed; indeed, they correspond to novel
``essential'' bifurcations corresponding to failure of convective stability at the endstates of the shock.
These bifurcations lead to appearance of both roll waves and herringbone patterns on the upstream side of the shock.
Moreover, we find interesting ``metastable'' behavior with respect to highly localized (Gaussian) perturbations for which we find no counterpart in the literature, related to, but substantially extending/elaborating on recent observations of \cite{FHSS}.

Roll waves, as inherently more unstable structures, seem a more likely place to find transverse
instabilities than hydraulic shocks.
And, indeed, they do exhibit frequent transverse instabilities for 1d stable waves.  However, these
are of a standard type involving ``near-field'' rather than essential instability as in the shock case, that is, corresponding to solutions of a generalized Floquet eigenvalue system that may be studied via a periodic Evans-Lopatinsky determinant (described below).
Associated bifurcations are seen to be Floquet-type ``flapping'' bifurcations not changing the topological structure
of the waves.
Moreover, roll wave instabilities are seen to be of {\it low-frequency} type, determinable from the 
Taylor series expansion of the periodic Evans-Lopatinsky determinant about the origin.
This somewhat opposite (from the shock) scenario
allows us again to numerically compute the 2d stability diagram. 

These results involve a number of technical innovations of general interest, 
several of them importing techniques from the more-developed area of multi-d detonation stability
for the ZND equations, a system of balance laws with features similar to relaxation.
These include:
1. development of efficient numerical Evans solvers for multi-d relaxation systems,
including cases involving singular ODE or ``sonic'' points.
2. a low-frequency equivalence theorem for shocks similar to that noted for detonations in \cite{JLW},
3. a high-frequency turning-point analysis for shocks similar to that carried out for detonations in \cite{LWZ,Er4},
4. low-frequency stability analysis by a combination of contour integral-based Taylor series expansion as in \cite{JNRZ3} 
together with Weierstrass preparation manipulations as in \cite{HY}, and
5. systematic treatment of channel flow and relation to flow in the whole plane. 

The latter treatment is somewhat similar to analyses of detonation \cite{KS} and MHD \cite{BT,FT,BMZ},
reducing the study of eigenvalues under additional constraints to eigenvalues for the whole-plane problem without
constraints.
Namely, similarly as for, say, the scalar heat equation with Dirichlet boundary conditions, admitting half-sine
expansion in place of the standard Fourier series for periodic boundary conditions, we find that
the channel problem admits a complete basis of mixed half-sine/half-cosine functions corresponding to periodic
boundary conditions on a channel of double the width. This means that we may study the finite-width problem
by simple restriction of the full Fourier transform treatment of the whole-plane case to the lattice of frequencies
compatible with the doubled channel, allowing us to use the many analytical and Evans function
tools developed for study of the whole plane.
At the same time, we can compare to numerical time-evolution simulations (using CLAWPACK \cite{Cl})
carried out in the computationally more friendly domain of the channel with finite width.

A related side-note is that the cross-propagating herringbone patterns that we observe in 
bifurcating time-evolution studies in the channel with wall boundary conditions appear to
be restrictions to the channel of doubly-periodic patterns on the whole plane, with vertically symmetric
periods summing to a vertical period of double the width of the channel.
That is, just as in the shock and roll wave case, exact herringbone solution in the plane if known would yield 
exact solutions on the channel, with no modification due to boundary layers, a nice advantage of the {\it inviscid} 
formulation of shallow-water flow.
On the other hand, the existence of such doubly periodic discontinuous solutions does not seem to readily 
fit standard analytical frameworks.  By contrast, their (smooth) viscous cousins could presumably be investigated
on the whole plane by Turing-type bifurcation from the constant state; see \cite{RW} or \cite{WZ1,WZ2,WZ3}
in the one-dimensional case, and \cite{MR} for related analysis in the reaction diffusion case.
On the other hand, these do not furnish exact solutions on the channel with no-slip or Navier-slip boundary conditions.
\subsection{Equations}\label{s:eqs}
The (inviscid) Saint-Venant equations\footnote{The system \eqref{sv_intro} is dimensionless. See Appendix~\ref{s:brock} for equivalence between its 1d version \eqref{sv-1d} and the dimensionless shallow water model \eqref{brock_table} used by Brock \cite[eqs. (3.8) and (3.9)]{Brock} under the scale invariance \eqref{scaling}.} (SV) for inclined shallow-water flow are \cite{BaM,Ro}
\ba\label{sv_intro}
\d_th+\d_x (hu) +\d_y(hv) &=0, \\
\d_t (hu) +\d_{x}\left(hu^2 +h^2/2F^2 \right) + \d_y(huv) & =h - \sqrt{u^2+v^2} \,u , 
\\
\d_t (hv) + \d_x( huv) + \d_{y}\left(hv^2 +h^2/2F^2 \right) & = -\sqrt{u^2+v^2} \,v , 
\ea
where $h$ is fluid height, $u$ and $v$ are velocities in $x$ and $y$ directions, and
$F$ is a nondimensional {\it Froude number} depending on angle of incline, gravitational force, and
units for $(h,u,v)$.

We will consider these equations either on the whole plane $(x,y)\in \R^2$ or else in a channel,
$x\in \R$, $0\leq y\leq L$, with wall type boundary conditions 
\be\label{wallBC}
\hbox{\rm $v(y)=0$ at $y=0,L$.}
\ee
The latter is called ``open channel flow,'' referring to the fact that the
fluid surface forms a free boundary, and applies to a typical canal, stream, or spillway.
``Closed-channel flow'' refers, rather to the case of a channel with a top lid as well as sides,
such as flow through a rectangular pipe.

Derived from a free-boundary problem for the 3d {incompressible} irrotational Euler equations
in the limit as vertical scale goes to zero with respect to horizontal,
these are reminiscent of {compressible} 2d flow.
Indeed, the first-order, lefthand side, part of \eqref{sv_intro} corresponds to isentropic $\gamma$-law gas dynamics
with $h=$ density and $\gamma=2$ \cite{Ba,Sm},
with turbulent friction $-(\sqrt{u^2+v^2}u, \sqrt{u^2+v^2}v)$ opposing gravitational force $(h,0)$ 
in the zeroth-order, righthand side of \eqref{sv_intro}(ii)-(iii).
Balancing zeroth order terms and applying to \eqref{sv_intro}(i) gives the formal (scalar) equilibrium equation
\be\label{CE}
\d_t h+ \d_x h^{3/2}= 0, \quad (u,v)=(h^{1/2},0).
\ee

Equations \eqref{sv_intro} are of interest both mathematically, as a canonical example of 
relaxation/pattern formation \cite{Wh,Li,JK},
and physically, as the industry standard for hydraulic engineering and related applications \cite{BaM,Co,D,Brock}.
In the ``subcharacteristic'', or hydrodynamically stable case $F<2$ for which constant equilibrium
solutions are linearly stable, the system is expected to move toward equilibrium $r(w)=0$, being
governed time-asymptotically by formal equilibrium equation \eqref{CE} \cite{Je,Wh,Li}. 
In the ``supercharacteristic'' or hydrodynamically unstable case $F>2$, it is expected rather to display
complex behavior/pattern formation \cite{JK,BN}. 

\subsection{Hydraulic shocks and roll waves, and stability in 1d}\label{s:planar}
\begin{figure}[htbp]
\begin{center}
\includegraphics[scale=0.32]{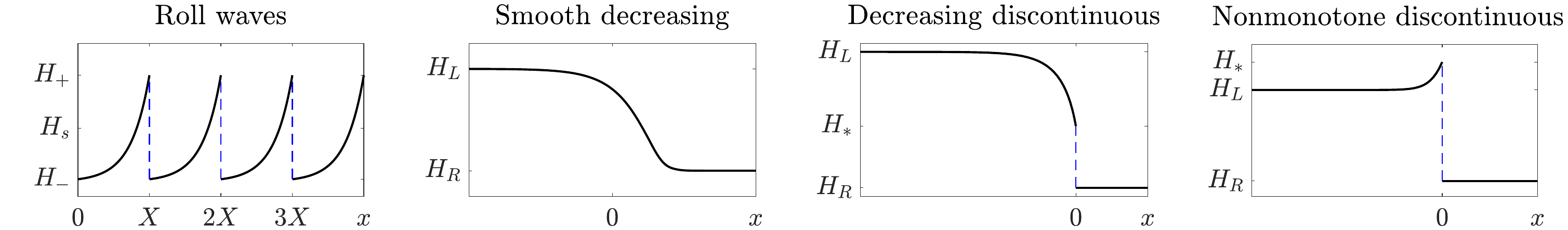}
\end{center}
\caption{First panel: profile of roll waves; Second, third, and fourth panel: profiles of hydraulic shocks of smooth decreasing, decreasing discontinuous, and nonmonotone discontinuous types. }
\label{fig_profiles}
\end{figure}
In the hydrodynamically stable case $F \leq 2$, there exist planar hydraulic shock fronts propagating in the $x$
direction connecting any two equilibria at $\pm\infty$ corresponding to entropy admissible shocks of \eqref{CE},
which are monotone decreasing in fluid height $h$. Denote by $H_L$ and $H_R$ the limits at $-$ and $+\infty$,
and 
\be \label{defnu}
\nu:=\sqrt{H_L/H_R}(>1),
\ee so that $\nu^2-1$ measures the strength of the equilibrium shock.
Then, for each fixed $\nu>1$ there is a characteristic Froude number \footnote{We refer to \eqref{characteristicF} as the characteristic Froude number since at $F=F_{\text{char}}(\nu)$ there are characteristic points on the profile. See \cite[Fig. 2. (b)]{YZ}}
\be \label{characteristicF}
F_{\text{char}}(\nu):=\frac{\nu+1}{\nu^2}(<2)
\ee 
such that profiles are smooth for $F<F_{\text{char}}(\nu)$
and contain an entropy admissible discontinuity, or ``subshock'' for $F$ above. See FIGURE~\ref{fig_profiles} second and third panels.
To put this another way, for each fixed $F<2$, profiles are smooth for shock strengths below a critical
value $\nu_{\text{char}}^2(F)-1$, and discontinuous for shock strengths above.
These waves have been shown analytically to be 1d stable \cite{YZ,SYZ}.

For $F>2$, there appear also periodic ``roll waves'' \cite{D,H03} consisting of
an array of shock discontinuities separated by smooth wave profiles. See FIGURE~\ref{fig_profiles} first panel. Stability of these waves selects period and amplitude, 
important in hydro-engineering applications such as canal and spillway design (see FIGURE~ \ref{fig_phy}),
for which typically $3.5\leq F\leq 10 $, and amplitudes can be sufficiently large to affect safety concerns \cite{Brock}.
A success in the 1d theory was the development \cite{JNRYZ} of a rigorous and practical spectral
stability framework for these waves, along with an efficient numerical hybrid method for periodic Evans-Lopatinsky determinant computation and a remarkable explicit analytic computation of the low-frequency (``sideband'') stability boundary I.
These yield the surprisingly simple {\it global} 1d stability diagram of FIGURE \ref{fig_roll_1d}, expressed in terms of the Froude number $F$ versus $H_-/H_s$ and $F$ versus $X/H_s$. Here, as depicted in FIGURE~\ref{fig_profiles} first panel, $H_-$ stands for minimum fluid height, $H_s$ stands for fluid height at the sonic point, and $X$ stands for the period of the roll waves. 
\begin{figure}[htbp]
\begin{center}
$
\begin{array}{lll}
\includegraphics[scale=0.408]{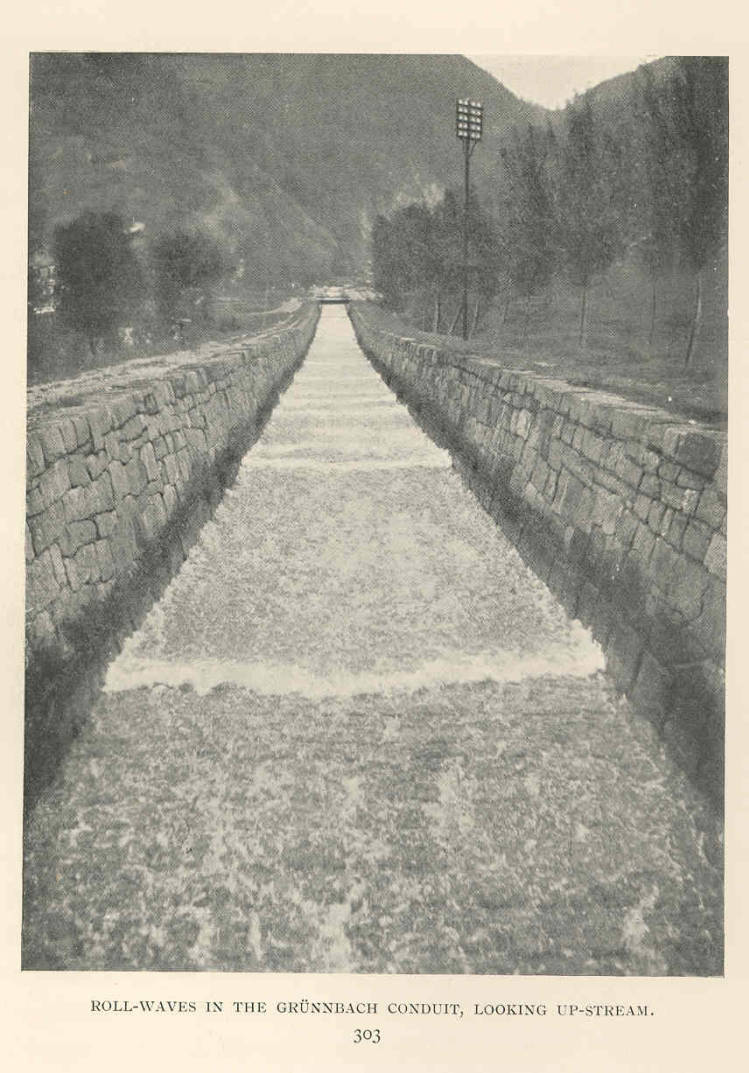}\;
\includegraphics[scale=0.335]{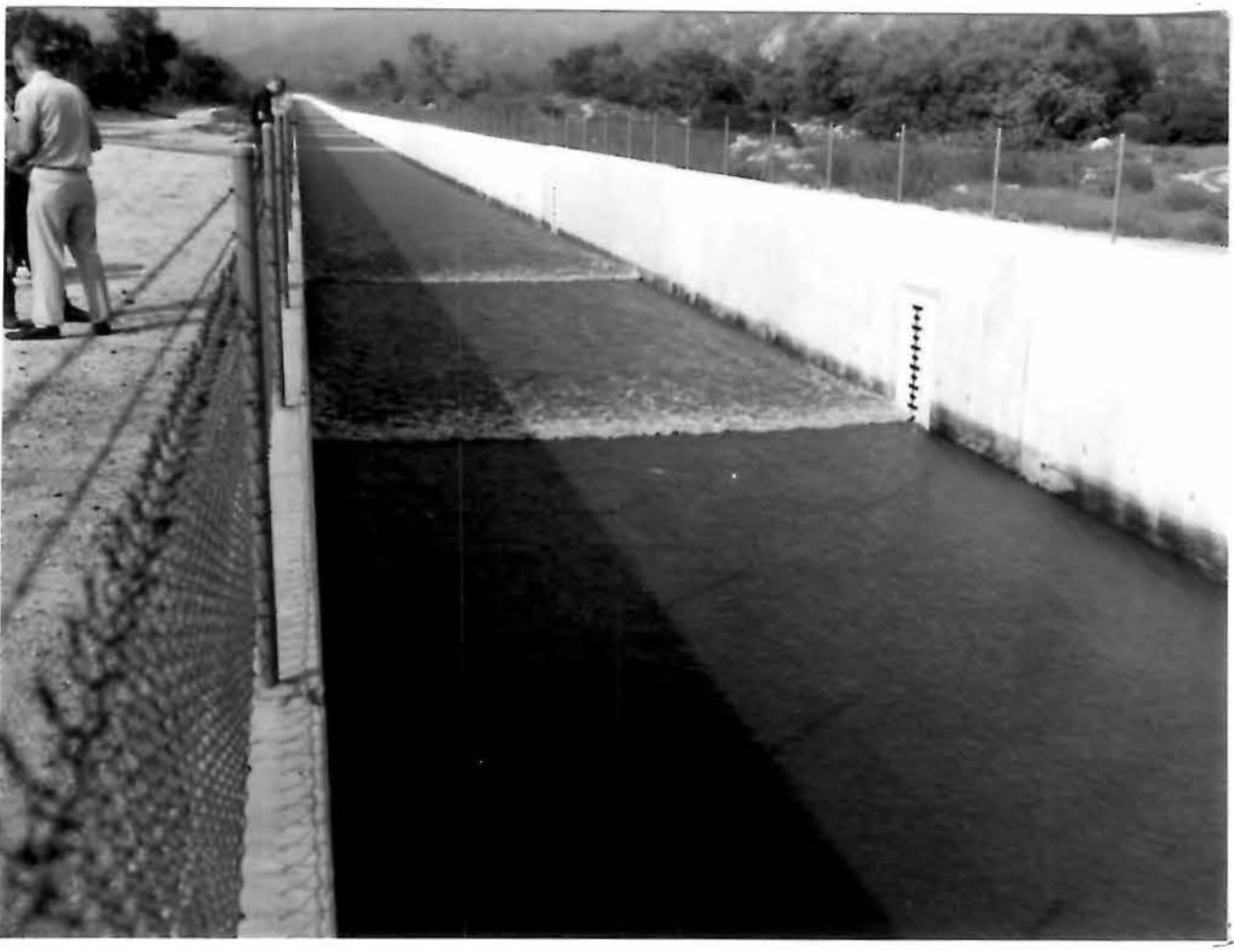}\;
\includegraphics[scale=0.261]{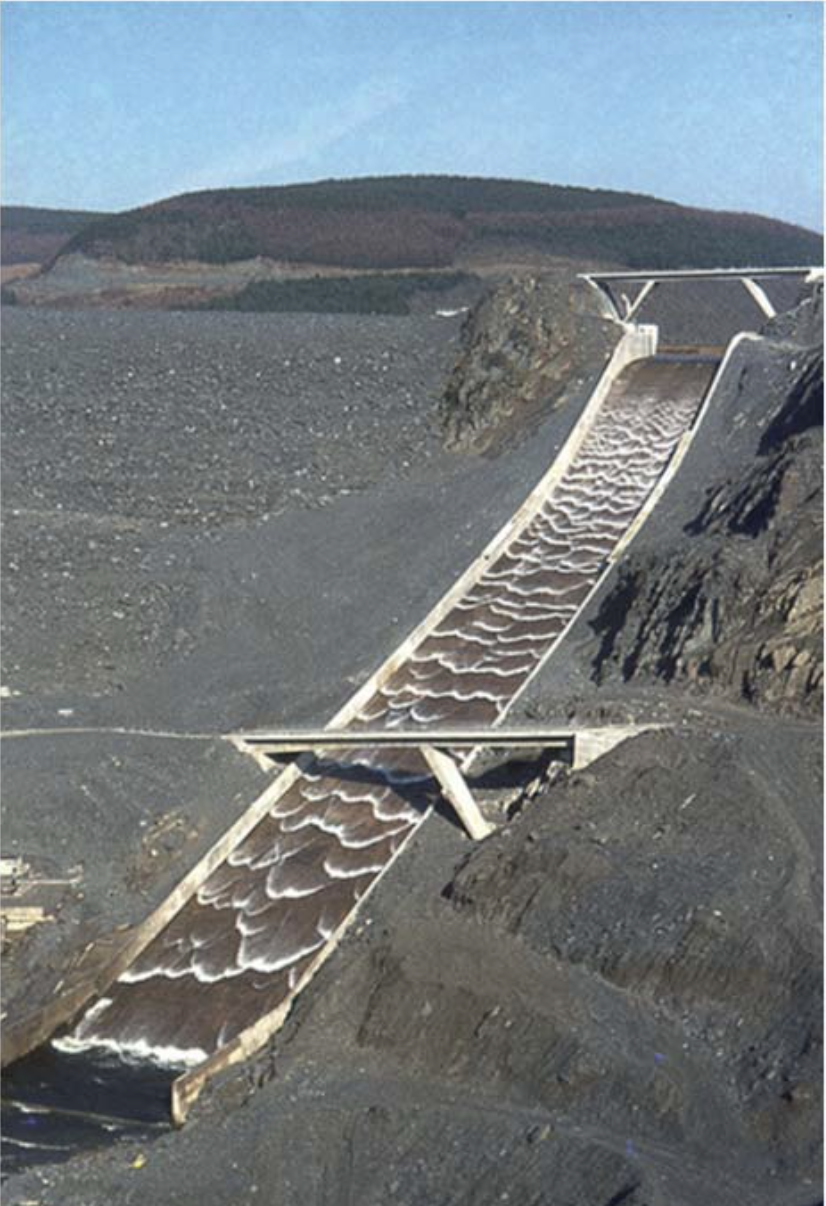}
\end{array}
$
\end{center}
\caption{Examples of roll waves. Left: Grunnbach conduit, Germany, V. Cornish (1910); 
Middle: Santa Anita Wash, California, R. Brock, (1969-1970);
Right: Llyn Brianne dam, Wales, G. Richard \& S. Gavrilyuk (2012, 2013).}
\label{fig_phy}\end{figure}

\begin{figure}[htbp]
\begin{center}
\includegraphics[scale=.42]{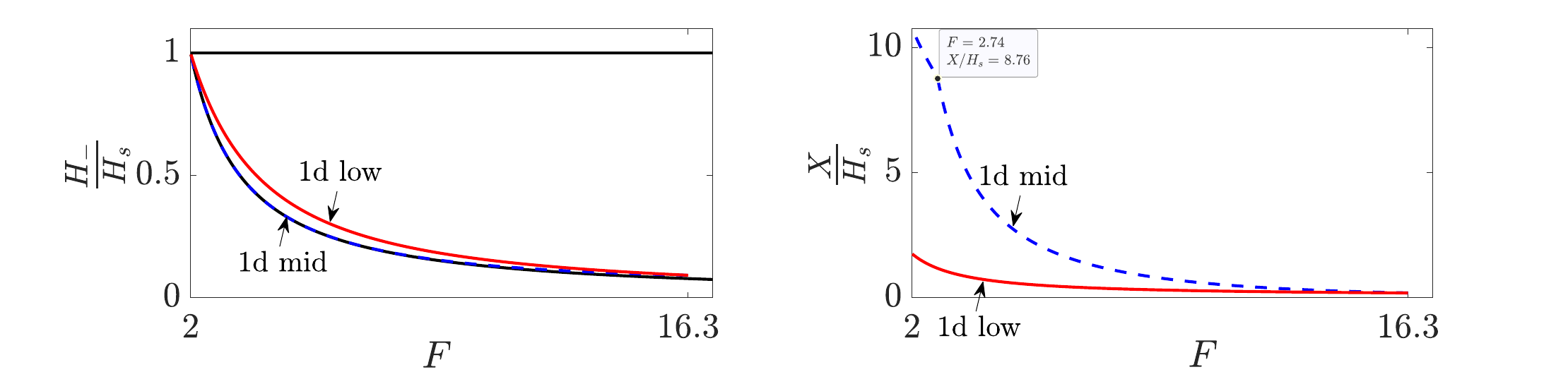}
\end{center}
	\caption{The 1d stability diagrams with respect to parameters $(F,H_-/H_s)$ in the left panel and $(F,X/H_s)$ in the right panel. In both panels, the 1d stable region is between the 1d low-frequency stability boundary I (red solid curve with label ``1d low'') and 1d medium-frequency stability boundary (blue dash curve with label ``1d mid'') which crosses the former curve at $F=16.3\ldots$, making the 1d stable region bounded. In the left panel, we find the ``1d mid'' boundary sits very close to the lower boundary of domain of existence corresponding to the homoclinic limit. In the right panel, we clearly see there is a sharp edge on the ``1d mid'' boundary at $F=2.73\ldots$. See \cite[Fig. 11.]{JNRYZ} for cause of the sharp edge. Also, readers may refer to \cite[Fig. 2., Fig. 3., and Fig. 7.]{JNRYZ} for a more complete understanding of 1d stability boundaries and regions.  }
\label{fig_roll_1d}\end{figure}


\smallskip

Along with periodic roll waves, there appear also  hydrodynamically unstable hydraulic shock profiles 
connecting entropy-admissible shock endstates of \eqref{CE}, which for fixed height ratio
$\nu^2$ exist for Froude numbers 
\be \label{defF_existence}
2<F<F_{\text{exist}}(\nu):=\nu(\nu+1).  
\ee 
See \cite[Proposition 2.1]{FRYZ} for further classification of the profiles into (ii) nonmonotone discontinuous, (iii) Riemann, and (iv) decreasing discontinuous types. See FIGURE~\ref{fig_profiles} fourth panel for a typical nonmonotone discontinuous hydraulic shock profile.
Put another way, for fixed $F>2$, hydrodynamically unstable profiles
exist for shock strength greater than a certain value $\nu_{\text{existence}}^2(F)-1$. All such shock profiles contain subshock
discontinuities.
These waves have been shown analytically to be 1d {\it convectively} stable \cite{FRYZ},
that is, stable with respect to sufficiently rapidly exponentially decaying perturbations, or, equivalently,
stable in an appropriate exponentially weighted norm, {so long there exists such a norm stabilizing essential
spectra of the limiting endstates of the shock.}
The latter condition translates to 
\be \label{F_1d}
F<F_{\text{1d}}(\nu):= \sqrt{2\nu(\nu+1)},
\ee
where it can be shown that $F_{\text{1d}}(\nu)<F_{\text{exist}}(\nu)$ for all $\nu>1$.
For $F>F_{\text{1d}}(\nu)$, numerical time evolution experiments \cite{FRYZ} indicate instability even with respect to
superexponentially localized (Gaussian, or smooth compactly-supported ``bump function'') perturbations, resulting in transition to a composite solution
consisting of an invading front of roll waves into a constant state on the right; see \cite[\S 1.1.4 and \S 6.1]{FRYZ}.
More generally, 1d time-asymptotic behavior for $F>2$ appears to consist of combinations of
roll waves and (stable) shock fronts \cite{FRYZ}.

We note \cite[Proposition 2.1]{FRYZ} that the nonmonotone hydraulic shocks appearing for $F>(\nu+1)\sqrt{2(\nu^2+1)}/(2\nu)$
are rather similar to single-cell profiles
for roll waves, having from the standpoint of hydraulic engineering the similar undesirable property that waves may reach a height far above the initiating upstream height of dam or spillway, leading to potential water overflow 
issues if this is not accounted for in design.

\subsection{Results in Multi-d}\label{s:new}
We now briefly describe our main results, to be presented in detail in the rest of the paper.
This work has two main motivations. The first is mathematical: namely, the 
systematic extension to multi-d of numerical and analytical tools for the study of 
1d stability of roll waves and hydraulic shocks developed recently in \cite{JNRYZ,YZ,SYZ,FRYZ} and elsewhere.

In this direction, our main results include: (i) a low-frequency equivalence theorem
similar to a result of \cite{JLW} for detonations, stating that the low-frequency expansion of neutral
spectra for hydraulic shock fronts agrees to lowest order with that for Lax shocks of the associated equlibrium
system \eqref{CE}, {\it even for large-amplitude shock profiles extending far from equilibrium}, possibly
including subshock discontinuities (Section~\ref{s:lf}),
(ii) low-frequency expansion of critical spectra for roll waves using
contour integral-based Taylor series expansion as in \cite{JNRZ3} together with Weierstrass preparation theorem,
yielding efficient numerical techniques for determining first- and second-order behavior 
(Sections ~\ref{s:rollLF} and~\ref{s:rollformal}),
(iii) high-frequency spectral asymptotics for hydraulic shocks based on Airy-type turning point analysis
like that done for detonations in \cite{Er4,LWZ} (Section \ref{s:hf}),
(iv) characterization of spectra on a channel as the restriction to an appropriate lattice of spectra on 
the whole plane (Section \ref{s:channel_flow_hydraulic}),
and
(v) efficient multi-d numerical Evans function solvers for hydraulic shocks and roll waves 
(Sections \ref{s:shockMF} and \ref{s:rollMF}, respectively).
Together, these give a complete ensemble of tools generalizing those of the 1d case, suitable for
the study of multi-d hydraulic shocks and roll waves.

The second, and to us more compelling, is pheomenological: to understand new 
waves and behavior not present in 1d.
In particular, 1d hydraulic shocks and roll waves
may be observed ``in the wild'' in many types of open channel flow, for example, in canals,
small streams, or rough-bottomed street gutters on a rainy day.
But, a more frequently-observed pattern is a doubly periodic ``crosshatch'' or ``herringbone'' flow, 
consisting roughly of superimposed planar periodic fronts moving obliquely 
to and symmetrically about the $x$ axis, or downhill grade.
One might guess- as indeed we initially did- that herringbone patterns would be associated with 
transverse instability of roll waves, in the same way (see \cite{FRYZ}) that roll waves are associated with 
longitudinal instability of hydraulic shocks.
Given the frequent analogies to detonation theory, one might guess, further, that this occurs
through evolution of the front, by a mechanism like that 
proposed by Majda-Rosales \cite{MR} for Mach stem formation in gas dynamical shock and detonation waves
based on weakly nonlinear geometric optics expansion, starting with smooth transverse bifurcation
and ending with a kink-type singularity.

That is, one might imagine a cascade of instabilities, from 1d hydraulic shock instability to roll waves,
and then by 2d roll wave instability to herringbone patterns.
{\bf But, this is not in fact what we see.}
Rather, thinking of shocks as parametrized by Froude number
$F$, herringbone patterns emerge {\it directly} through 2d shock instability under highly localized 
(Gaussian or compact support) perturbation as $F$ is increased to a point of instability, 
{\it before} the appearance of 1d shock instability and roll waves.
And, this phenomenon has nothing to do with front evolution, but corresponds to ``essential'' bifurcation/instability
of the (constant) far-field upstream limit.

\subsubsection{Hydraulic shocks, metastability, and bifurcation}\label{ss:shocks}
We discuss in more detail the above-described shock bifurcation and its relation to spectra and stability.
In simplest terms, one could view these candidate asymptotic states as arising through a (discontinuous) 
Turing-type bifurcation at $F=2$ of {\it either} doubly periodic herringbone waves or singly 
periodic roll waves emerging from the constant limiting state, 
connected to the constant right endstate by a modulating front.
But, this does not address the question of convective stability for $F>2$, or behavior with respect to localized
perturbations: in particular, when and how instability will result in convergence toward one or the other pattern
under localized perturbation.

The initial discussion above concerns observation of numerical time-evolution studies.
A fuller understanding is given by combining numerical Evans function studies with convective stability
conditions on the limiting end states of the shock. The Evans-Lopatinsky 
function detects ``near-field'' instabilities,
or eigenfunctions of the eigenvalue system Fourier transformed in the invariant $y$ direction.
An exhaustive numerical study yields that for multi-d stability, as seen for 1d in \cite{YZ,FRYZ},
there exist no unstable roots of the Evans-Lopatinsky function, hence no near-field instabilities.

Meanwhile, a detailed algebraic study of the Fourier symbol about the constant end-states yields
explicit boundaries for 1 and 2d convective instability of the end-states, 
thus completely deciding stability.
The result is that there exists between $F=2$ and the 1d convective stability boundary $F_{\text{1d}}(\nu)$ \eqref{F_1d}
a 2d convective stability boundary 
\be
\label{F_2d}
F_{\text{2d}}(\nu):=\frac{\nu(\nu+1)}{\sqrt{\nu^2+\nu-1}},
\ee 
explicitly computable.
Thus, for $F_{\text{2d}}(\nu)<F<F_{\text{1d}}(\nu)$ and $\nu$ held fixed, hydraulic shocks are 1d convectively 
stable but 2d convectively {\it unstable}, with a transverse essential bifurcation at $F=F_{\text{2d}}$.
In this range, perturbed shocks, being stable in 1d, cannot converge to roll waves, so must converge if
they do at all, to some genuinely multidimensional state, such as the herringbone patterns we see
in time-evolution simulations.

However, though satisfyingly explicit, this still does not completely describe what we see in
time-evolution experiments under highly localized perturbations. For, there, we find another value
$F_{\text{1d}}< \tilde F < F_{\text{2d}}$ such that for $F_{\text{1d}}<F<\tilde F$, a shock perturbed by a highly localized
(i.e., Gaussian decay or compact support) perturbation will still, after a brief $\mathcal{O}(1)$ excursion, converge
back to an undisturbed hydraulic shock, in original unweighted norm.
This interesting ``metastability'' phenomenon is an example of a more general phenomenon
studied in \cite{FHSS}, where perturbations in weighted norm give stability in a mixed weighted/$L^\infty$
norm applied to different pieces of the solution, and the latter property should be rigorously verifiable
by the techniques there introduced.

Moreover, it appears to be also an example of something more novel, but not yet verified.
Namely, if in a channel one perturbs {\it any} constant state with $F>F_{\text{2d}}$ by a Gaussian-decaying perturbation,
one sees emergence of a ubiquitous pattern consisting of herringbone to the left and roll waves to the right,
each moving at distinct speeds, separated by a modulating front with yet a third speed, and further modulating
fronts at left and right edges separating the entire pattern from the constant state.
This sort of pattern resembles a multi-d version of the ``unstable diffusion wave'' described for 1d in 
\cite[Appendix B]{OZ2}, \cite{AMPZ4}, namely, a nonlinear saturation/coarsening of growing oscillations 
created by linear instability of the constant state that after a sufficiently long time stabilizes at 
a fixed $\mathcal{O}(1)$ amplitude bounded above and below.

If the leftmost modulating front has speed greater than the hydraulic shock, then this pattern will eventually
be absorbed by the shock; however, it may take arbitrarily long, depending how far the left the initial
perturbation is located, yielding metastability rather than stability. (Imposing an exponentially weighted
norm of sufficiently rapid decay then yields stability in mixed norm as in \cite{FHSS}.)
If the middle modulating front but not the leftmost one 
has speed greater than the shock, then one sees convergence to a herringbone/shock pattern.
If neither of these has greater speed, then one sees convergence to a roll wave pattern on any finite domain, 
with a piece of herringbone pattern moving away to negative infinity.
And, indeed, we find that this heuristic effectively predicts the metastability boundary $\tilde F$ seen
in time-evolution simulations.  
This completes the discussion of shock stability and bifurcation, rigorous and nonrigorous.
See Section \ref{s:hydro_bif} for further discussion.

\subsubsection{Roll waves}\label{ss:rollwaves}
As regards roll waves, the main practical and mathematical issues are still best framed by Brock 
in the foundational work described in his Caltech doctoral thesis \cite{Brock}:

(\cite[p. 1 - p. 3]{Brock})
``{\it The maximum depth of flow in a roll wave train must necessarily
be greater than the normal or undisturbed depth. Thus a prismatic
channel designed to convey a discharge at normal depth, may not be
capable of conveying this same discharge with roll waves present
\cite{Holmes} in a channel
that had water overtopping the 8 ft side walls when the discharge was
estimated to be less than 25 per cent of the design discharge. Thus
the practical need for understanding the mechanics of roll waves is
clear.

In general one would like to be able to predict whether a channel
will exhibit roll waves for any particular discharge. Furthermore,
if roll waves are to be present, it is desirable to know where they will
start, and how high they will be at any section of the channel...}'' 
\begin{figure}[htbp]
\begin{center}
$
\begin{array}{lll}
\includegraphics[scale=0.13]{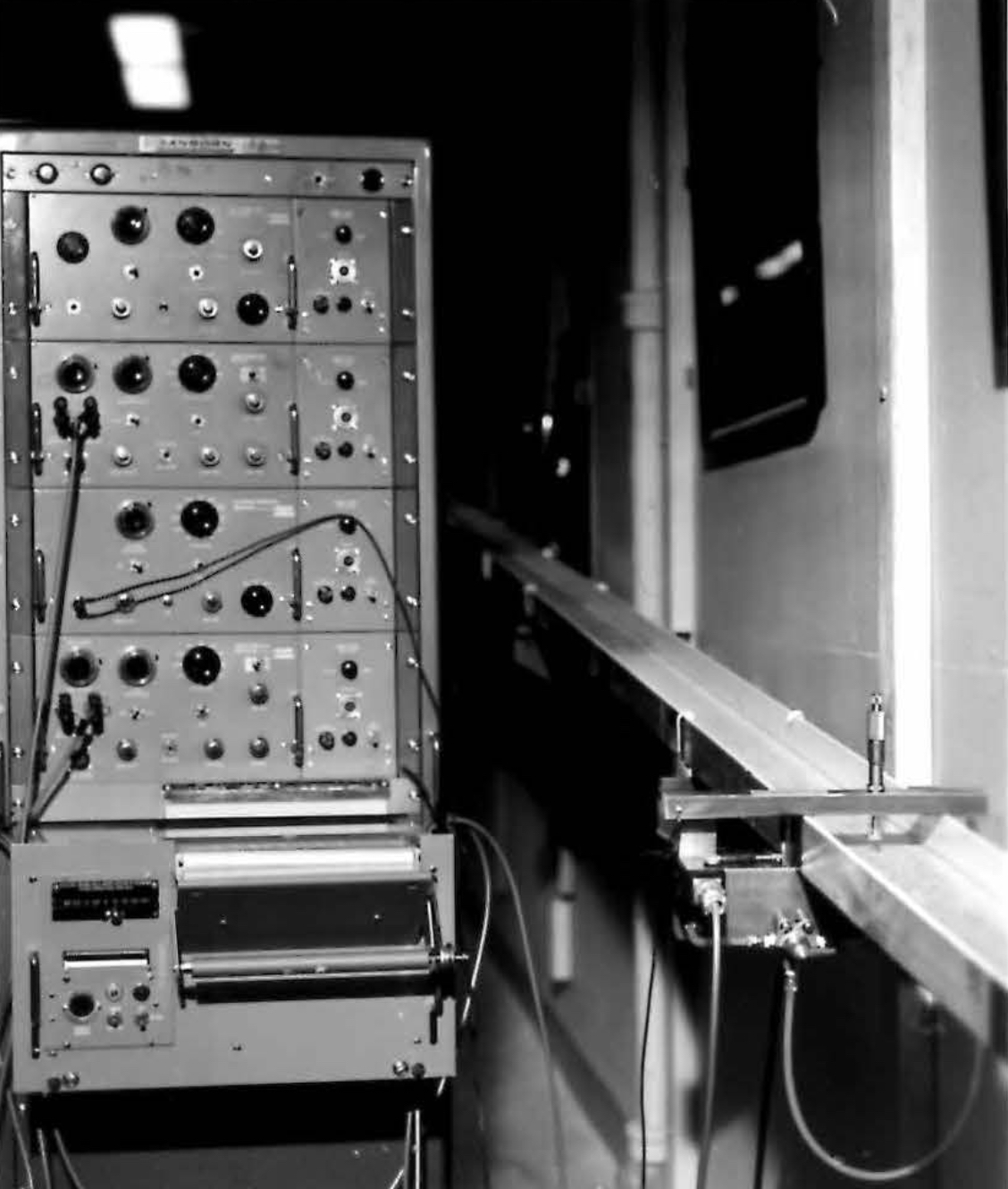}\;
\includegraphics[scale=0.13]{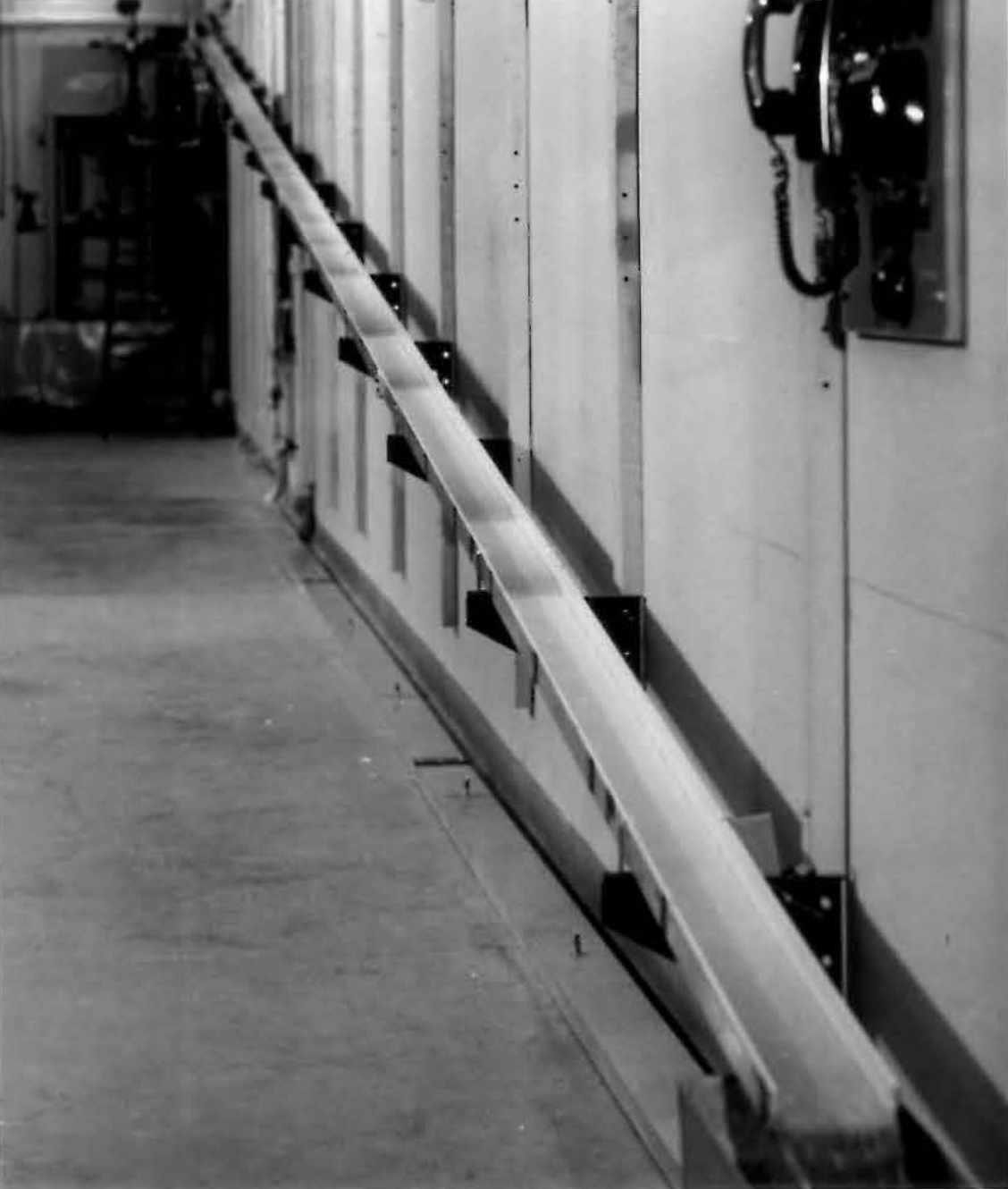}\;
\includegraphics[scale=0.13]{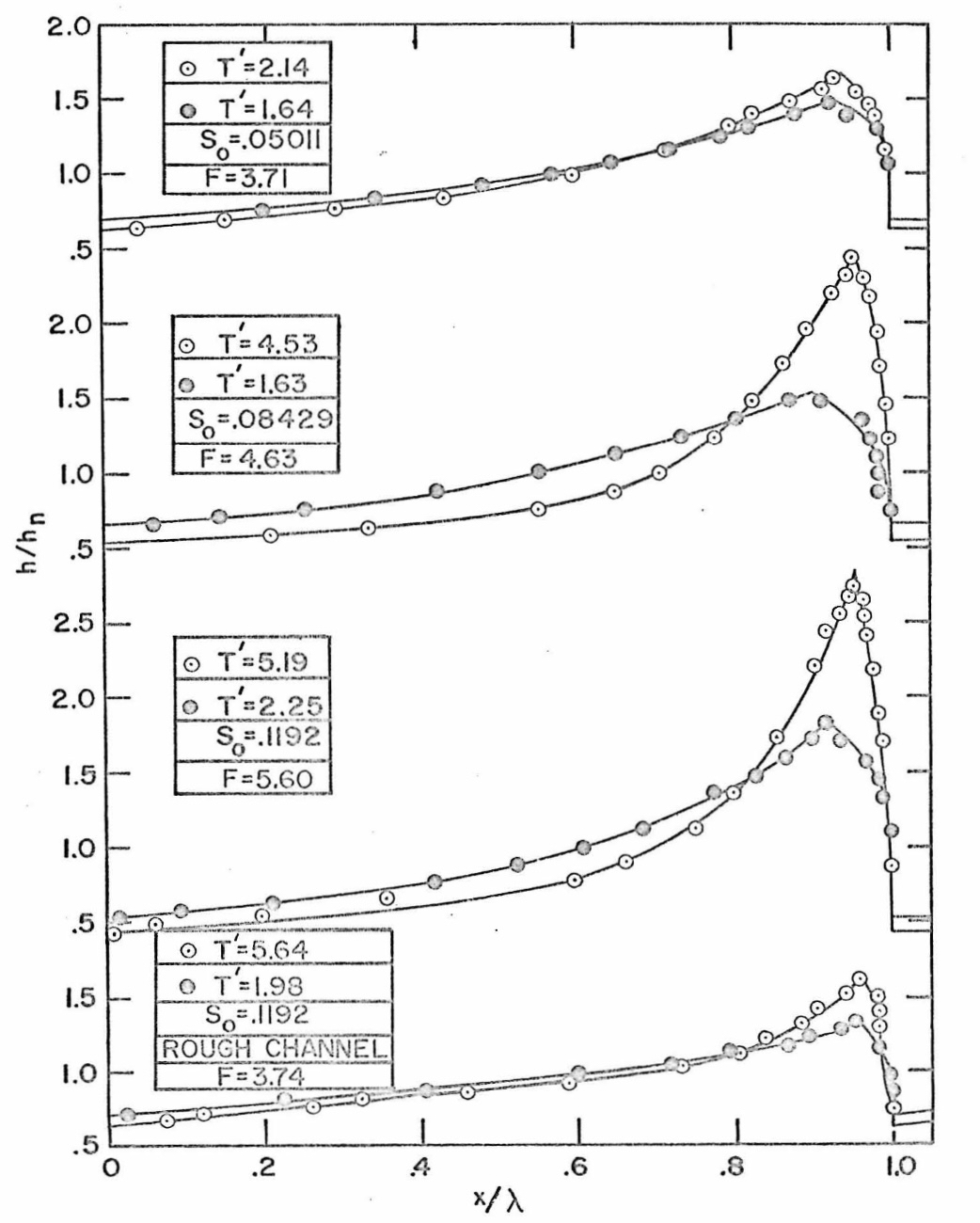}\;
\end{array}
$
\end{center}
\caption{Experimental apparati and results, reproduced from Brock's thesis. 
Left: \cite[Fig. 16.]{Brock}; 
Middle: \cite[Fig. 24.]{Brock};
Right: \cite[Fig. 45.]{Brock}}
\label{fig:brockpic_fig}
\end{figure}

\begin{figure}[htbp]
\vspace{-0.1cm}
\begin{center}
\includegraphics[scale=.42]{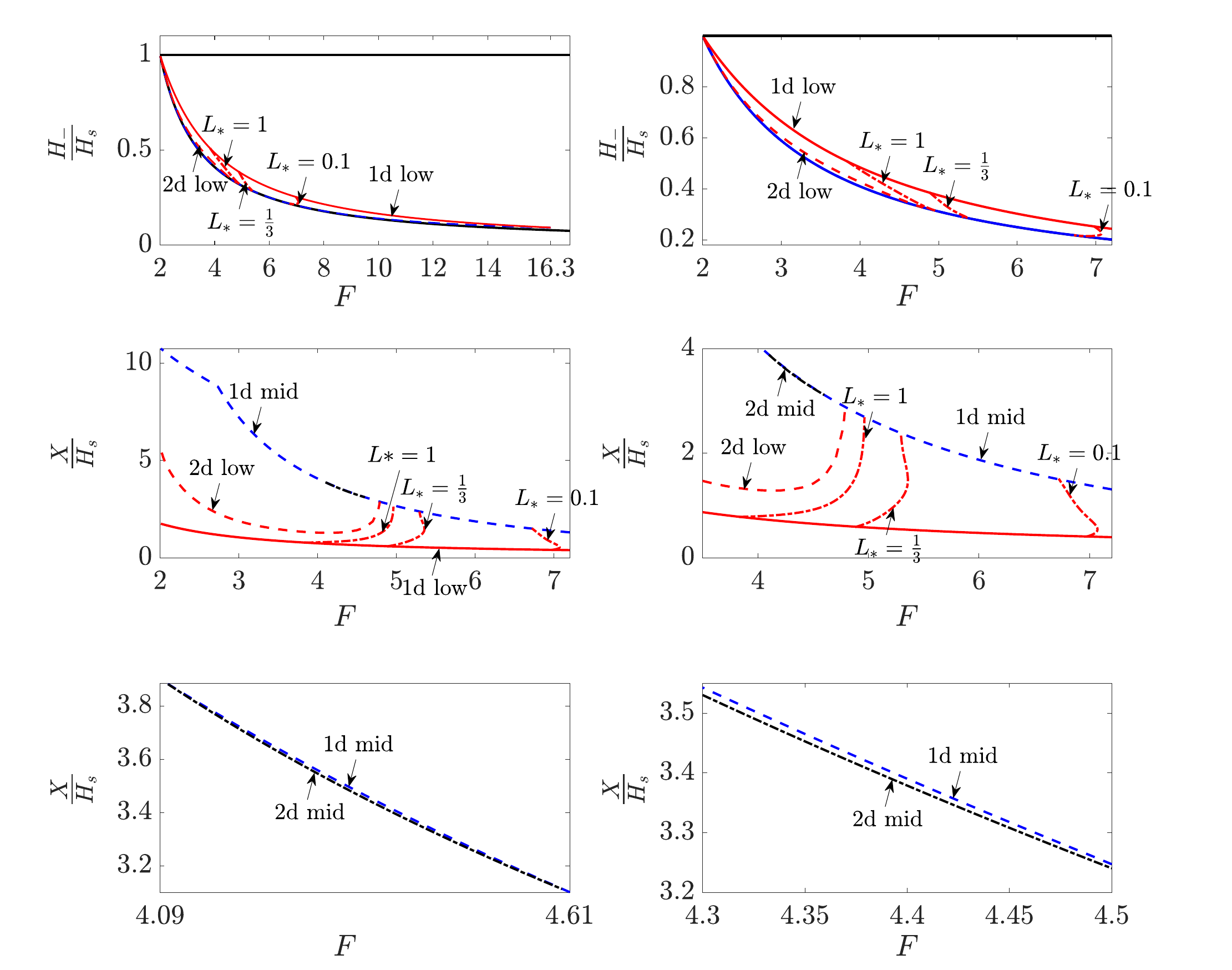}
\end{center}
\vspace{-0.4cm}
\caption{2d versus 1d stability diagrams with respect to parameters $(F,H_-/H_s)$ in the top two panels and $(F,X/H_s)$ in the middle and bottom four panels. The right panels are blowup of the left ones. The 2d stable regions, both in the whole space and a channel, are subsets of the 1d stable region (see FIGURE~\ref{fig_roll_1d}). Specifically, the stable region for roll waves in the whole space is between the 2d low-frequency stability boundary (red dash curve with label ``2d low'') and the 2d medium-frequency stability boundary (black dash-dotted curve with label ``2d mid''). The latter boundary coincides with the 1d medium-frequency stability boundary (blue dash curve with label ``1d mid'') for $F\in(2,4.09\ldots]\cup [4.61\ldots,16.3\ldots)$ and sits lower than the ``1d mid'' boundary for $F\in(4.09\ldots,4.61\ldots)$. See blowup in the bottom two panels. The ``2d low'' curve crosses the ``1d mid'' curve at $F=4.78\ldots$. The interior of the complement of the 2d stable region corresponds to the unstable region for roll waves in the whole space. As for roll waves in a channel, we compute, for $c=1$ or $\frac{1}{3}$ or $0.1$, a stability boundary for channel roll waves with critical channel width $L_*=c$ (red dash-dotted curve with label ``$L_*=c$''). The curves intersect the ``1d low'' boundary at $F=3.83\ldots$, $4.88\ldots$, and $6.97\ldots$ and the ``1d mid'' boundary at $F=4.96\ldots$, $F=5.29\ldots$, and $6.72\ldots$, for $c=0.1$, $\frac{1}{3}$, and $1$ respectively. The curve ``$L_*=c$'' sub-divides the 1d stable region into a left sub-region and a right sub-region.  Consider roll waves in a channel with width $L$. The condition $L<c$ implies stability in the left sub-region and the condition $L>c$ implies instability in the right sub-region.}
\label{fig:stabdiag_fig}
\end{figure}

\begin{figure}[htbp]
\begin{center}
\includegraphics[scale=.42]{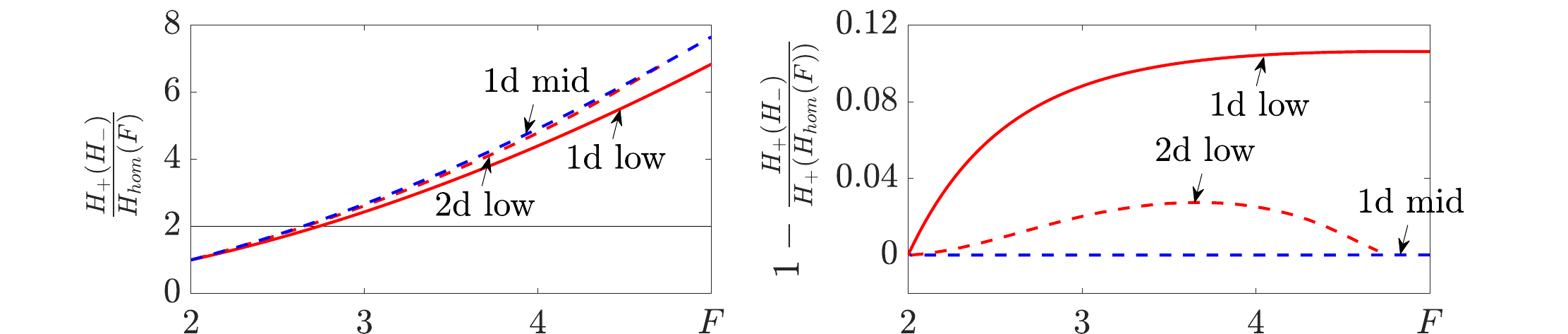}
\end{center}
\caption{Left: translations of stability boundaries depicted in FIGURE~\ref{fig:stabdiag_fig} to the parameter $H_+(H_-)/H_{hom}(F)$ versus $F$. Right: plot of the relative errors $\Big(\frac{H_+(H_{hom}(F))}{H_{hom}(F)}-\frac{H_+(H_-)}{H_{hom}}\Big)/\Big(\frac{H_+(H_{hom}(F))}{H_{hom}(F)}\Big)=1-\frac{H_+(H_-)}{H_+(H_{hom}(F))}$ versus $F$ of various stability boundaries.}
\label{fig_schon}\end{figure}

(\cite[p. 36]{Brock})
``{\it As the wave length approaches infinity the wave shape and
velocity approach a finite solution which gives rather substantial
values of $h_{max}/h_n$. This corresponds to one wave of infinite length
in a channel of infinite length.
For this limiting case $h_{min}$ approaches $h_n$.
Schonfeld \cite{Schonfeld} has claimed to have found that only the solution
with $h_{max}/h_n = 2.07$ is a stable one. However, in his work relations
were derived by assuming that there was a discontinuity in the water
surface at $h = h_n$ (in addition to the one at the shock). The above
solution has no discontinuities of this type, and therefore Schonfeld's result is doubtful. A stability analysis of the periodic permanent solution may lead to some interesting results. This remains to be done.}''

\medskip

In FIGURE~\ref{fig:brockpic_fig} left and middle panels, we reproduce from Brock's thesis pictures of his
experimental apparatus, apparently in a Caltech hallway, showing the formation of roll waves numerically recorded in the right panel.
We highly recommend the description of his innovative homemade fluids lab and instruments in \cite{Brock},
along with discussion of the issues and obstacles in obtaining such waves.
On this topic we repeat a possibly apocryphal anecdote shared by Jerry Bona when asked by the second author where to find such waves in nature, wherein a pair of British experimentalists, after an unsuccessful
day trying to produce roll waves on a nicely polished stainless steel water ramp, retreat through
the British rain to a pub to regroup, only to find beautifully formed roll waves marching down the cobblestone
street out the window.

Indeed, a rough bottom, produced by Brock by a mixture of sand and paint, appears to be critical for
the spontaneous formation of roll waves (the ``natural'' type of \cite{Brock}, as opposed to those
produced by a wave-generator in the form of oscillating upstream paddle).
And direct observation reveals that roll waves appear in force during a rain on any rough and 
gently inclined road. 

\medskip

Let us start then with the two key questions raised by Brock, above, and what our investigations 
add to the state of the art, starting at the end with Schonfeld's conjectured stability criterion.

In FIGURE~\ref{fig:stabdiag_fig}, we present graphically our main results on roll wave stability, superimposing
the 2d stability diagram obtained here upon the 1d stability diagram  (see FIGURE~\ref{fig_roll_1d}) obtained previously in \cite{JNRYZ}. In FIGURE~\ref{fig_schon}, we display the range of $h_{max}/h_{n}$, or $H_+/H_{hom}$, on various stability boundaries $(F,\frac{H_-}{H_s}(F))$ of 
the 1 and 2d stability regions, for various $F$, including the range $F\in [2.5,6]$ relevant to
hydrodynamic engineering and the experiments of Brock.
Contrary to the conjectured stability criterion $h_{max}/h_{n}\approx 2.07$ of Schonfeld \cite{Schonfeld}, 
the values of $h_{max}/h_{n}$ in the stable region are seen to depend substantially on $F$, and
are not close to $2.07$ except for $F$ in a small subinterval of $(2.7,2.8)$.
And, indeed, within the 1d stability region, they vary substantially even for fixed $F$.
On the other hand, within the smaller 2d stability region, the values for fixed $F$ appear 
to be nearly constant, recovering something like an $F$-dependent version of the Schonfeld criterion, this time based on rigorous analysis.

Moreover, from the right panel of FIGURE~\ref{fig_schon}, we see that the value of $h_{max}/h_{n}$ 
for stable (in the whole space) planar waves is well-approximated by its value in the homoclinic limit. 
Thus, for practical purposes it may be computed explicitly \cite{JNRYZ}.
Continuing this line of reasoning, one may conjecture that for a canal with fixed fluid injection rate at
one end, the height of any resulting stable roll waves can be essentially determined as that of the associated homoclinic wave of the same fluid flux; this is yet to be determined.
The period of these roll waves is a more delicate question; however,
it, too, may be read off from the stability diagram of FIGURE~\ref{fig:stabdiag_fig}.
The infinite-period homoclinic limit originally discussed by Shonfeld is always {\it unstable}, it should
be noted.\footnote{As mentioned in the quote from Brock above, the actual homoclinic limit
involves only a single shock, perhaps explaining this discrepancy with the heuristic investigation of Shonfeld based on a two-shock configuration.}

\medskip

A new aspect of our analysis, so far as we know not appearing in the previous literature, is the consideration of channel width and its effect on behavior. In standard references, e.g., \cite{D,Brock}, it is usually mentioned only that the channel should be ``wide enough'' for appearance of roll waves.
Our guess is that what is meant is wide enough that viscous boundary-layer effects are negligible on the side walls of the channel.

However, our more detailed treatment of channel boundary conditions and spectra show that the effect of 
channel width $L$ is to sample from continuous Fourier wave numbers $\eta\in \R$ the discrete
lattice $\pi n/L$, where $n$ runs through the integers, restricting Fourier-Floquet spectra 
$$
(\eta,\xi,\lambda) \in \R \times \R \times \C 
$$
for the whole-plane stability problem, where $\eta$ is Fourier frequency in the $y$ direction and
$\xi$ Floquet/Bloch number, to these values of $\eta$.
Thus, 2d stability in the channel is {\it less restrictive} than 2d stability in the whole space,
with the two conditions agreeing in the infinite-width limit $L\to \infty$.

In the opposite, zero-width limit $L\to 0$, ignoring non-modeled effects of viscosity, we find that
2d stability in the channel reduces to 1d stability.
In reality, the flow would likely be ``choked'' for $L$ small enough by viscous effects not modeled in (SV);
hence the physical rule of thumb that channel width should not be too small.
The argument just given explains why, nonetheless, we may expect essentially 1d behavior for channel
width that is ``not too small,'' but also ``not too big.''

An interesting consequence, as discussed in \cite{Z7} for the related issue of periodic channel flow,
is that waves that are 1d stable but 2d unstable in the whole plane may be expected to undergo
a series of transverse bifurcations as width $L$ increases from $0$, here considering $L$ as
a bifurcation parameter. 

In the traveling front case \cite{Z7}, sampling changes the spectrum from continuous to discrete,
making the bifurcation a classical finite-eigenvalue bifurcation.
Here, due to periodicity in $x$, one expects rather a Floquet-type bifurcation to quasiperiodic flow,
as we indeed observe numerically, in the form of an amusing ``flapping'' bifurcation, where roll wave
shock fronts change inclination time-periodically, with slightly shifted phase:
a roll wave ``doing the wave.''
As discussed in Section \ref{s:channel_flow_roll}, this may be deduced as the generic transition to
instability for roll waves in a channel, analogous to cellular or spinning instabilies for shock
and detonation waves in a rectangular or cylindrical duct \cite{TZ2,TZ4,Z5,KS}.

From these considerations, it is clear that channel width can be an important parameter in open channel flow.
In Appendix \ref{s:brock}, we compute 
dimensionless values of channel width for Brock's experimental data,
conveniently tabulating these useful data for the first time; see Tables \ref{tway1}-\ref{tway6}.
The majority of these rescaled widths are approximately $0.1$ or $1/3$.
Consulting the stability boundary for $L_*=1/3$ in the top right panel of FIGURE~\ref{fig:stabdiag_fig},
we see that within the range of Froude numbers $F\leq 5$ for which most of Brock's experiments take place,
transverse instabilities do not come into play; for $L_*=0.1$, they are irrelevant on the 
entire range $F\leq 6$ of the experiments.
This leaves just a 
single experiments in the range $L_*\approx 1/3$, $5\leq F\leq 6$ for which
transverse instabilities could in principle play a role, namely case 9 as displayed in TABLE~\ref{brock_table},
for which $F=5.60$ and the rescaled width is $\tilde L\approx 0.27$.
However, the wave in question, under the standard physical parametrization displayed in TABLE~\ref{tway2},
lies in the 1d unstable region, so that transverse instability is again irrelevant.
Thus, in all cases, {\it the experiments carried out by Brock lie in the essentially 1d regime where
1d and 2d stability coincide.}
Of course, these experiments were limited in width by the nature of Brock's experimental apparatus, consisting
of off-the-shelf lengths of aluminum rain gutter.
It would be very interesting to carry out experiments in a larger rescaled-width regime for which 2d
effects genuinely come into play, in particular to look for physical examples of theoretically
predicted flapping instabilities.

\subsubsection{Global asymptotics}\label{ss:global}
An interesting feature of the 1d analysis, as seen in numerical experiments of \cite[\S 6]{FRYZ},
is that 1d stability of hydraulic shocks, even though an inherently local condition, in fact
determined global asymptotics with respect to ``dam-break'', or Riemann-type initial data,
the latter consisting of a single hydraulic shock in the stable case, and in the unstable case of an 
``invading front'' consisting of a roll-wave pattern impinging from the left upon a constant righthand state.

In the present 2d setting, as described above in Section \ref{ss:shocks}, 
the situation is a bit more subtle, involving metastability considerations along with local shock stability.
However, aside from this subtlety, the situation is much as before, with the addition of the new herringbone
patterns arising when planar symmetry is relaxed: namely, when shocks are meta-stable, the asymptotic behavior
is a single hydraulic shock, while otherwise there results a pattern of either an invading roll-wave or invading
herringbone front.

Notably, the invading herringbone pattern arises first as Froude number $F$, considered as a bifurcation parameter,
is increased, with invading roll-waves appearing last.  This perhaps explains the more ubiquitous appearance
in nature of herringbone as compared to roll-wave flow, at least in the informal observations of the authors.

\subsection{Discussion and open problems}\label{s:disc}
In this paper, we have opened the study of multi-d hydraulic shock and roll wave stability, both
in the whole plane and in the physical setting of an open channel.
The results seem quite interesting, both in terms of the new tools required, and of the new phenomena observed.
Particularly striking we feel are: the introduction to shallow water flow of Airy-type WKB analysis
of high-frequency stability following the groundbreaking but not widely-known work of Erpenbeck in detonation;
the careful treatment of wall-type boundary conditions and associated bases in $L^2$;
and the first numerical multi-d Evans solver for a physical relaxation system. We note that the latter
is only one of two or three such analyses carried out for viscous or relaxation models in continuum mechanics,
where they have proved quite resistant to computation \cite{HLyZ,BMZ}.
Our observations on bifurcation, meta-stability, and transition to herringbone flow
yield both new physical insights and a set of new and interesting problems for future rigorous analysis.

Here, our investigations combined spectral stability and numerical evolution to yield insight into
nonlinear behavior.
A central open problem on the theoretical side is to establish also a Lyapunov-type theorem as done in 
the 1d case in \cite{YZ,FRYZ}.
We note that these, along with the very recent \cite{FR1,FR2,DR1,DR2} are the first time-asymptotic stability
results to our knowledge in the setting of hyperbolic balance laws concerning traveling waves containing
a shock-type discontinuity.
Recall that the fundamental results of Kreiss, Majda, M\'etivier, and others \cite{K,M1,M2,Met3}
on 1- and multi-d stability of shock and boundary problems concern, rather, {\it short-time} stability, 
or well-posedness.

Such a result would be highly desirable both in the whole space and, as emphasized in the present context,
in a channel with periodic or wall-type boundary conditions.
The latter should be much easier, since, as noted in the smooth setting in \cite{TZ4}, only the zero Fourier 
mode lacks a spectral gap, so that behavior should be exponentially convergent to that of the 1d case.
The main technical issue in either case seems to be generation of a ``nonlinear damping estimate'' controlling
regularity; see \cite{YZ,FRYZ,RZ2,WZ} for discussion of such estimates, and relevant recent results.

As noted earlier, another very interesting direction for further investigation is the rigorous treatment of the
various bifurcations observed numerically, by the development of new theory accomodating discontinuous 
background waves.
These include both essential bifurcations and the eigenvalue-type ``flapping'' bifurcations observed for roll waves.
At the same time it would be quite interesting to carry out physical roll wave experiments like those of 
\cite{Brock} for larger channel widths in which transverse instability/flapping bifurcation may be expected to arise.
Tabulation of rescaled channel widths for other existing hydrodynamic data would likewise be of interest.

An equally fundamental theoretical problem seems to be the pointwise description of the 
Green's function for the linearized
system about a constant equilibrium state, identifying distinct ``proto-herringbone'' and ``proto-roll wave''
regions of instability.
For, based on our experiments, this mechanism appears to be the true driver for the herringbone bifurcation
from hydraulic shocks, which in turn appeared to be the main mechanism for formation of herringbone flow in
our experiments.
This is not to say that roll waves could not eventually transform via Majda-Rosales or other type 
evolution to final herringbone pattern.
But, at least the initial (``flapping'') bifurcation is not of this type.
And, we did not see evidence of such singularity formation in our time evolution studies.
These were hardly exhaustive, though, and this question perhaps deserves further, more systematic investigation.

Note that the Green function description should be done on a channel with periodic or wall-type boundary conditions,
to match experiments.
It is an interesting question whether this changes behavior of the Green's function substantially as compared to
the corresponding computation on the whole plane.
Certainly one may expect that channel width plays a role in selecting the period of a herringbone pattern, but
is it also essential in the development of distinct regions/types of instability?
A related problem is to study behavior of unstable shocks in the whole plane, rather than in a channel,
and also to perform numerical time-evolution studies of such.
We note that numerical time evolution on the whole plane is substantially more computation-intensive,
including also new issues of boundary conditions/truncation.

Two interesting side topics are the appearance in the whole plane problem for \eqref{sv_intro}
of new ``oblique roll waves'' and their role in proposed formal Whitham modulation equations, discussed 
in Appendix \ref{s:oblique} and Section \ref{s:rollformal}.
These represent two more interesting directions for further investigation.

Finally, we return to the original problem posed by Brock, of determining the height of persistent (stable)
roll waves.
As noted by Brock, there is a substantial overshoot in peak height for the mathematical ``Dressler wave''
solutions (SV) as compared to experimentally observed profiles.
This was addressed in \cite[Ch. VII-C]{Brock} with excellent physical agreement,
by the introduction of a turbulent boundary layer at the wave crest in place of the sharp peak of the Dressler theory.
Quite recently, Richard and Gavrilyuk have introduced in \cite{R,RG1,RG2} an enlarged shallow water model
including effects of turbulent vorticity along the channel bottom and at the shock, recovering from 
first principle the same remarkable agreement obtained by the phenomenological adjustment of Brock.
We view this new model as one of the most exciting developments in the area for many years,
and the extension of our ideas/investigations to this larger context as a particularly
exciting direction for future study. See \cite{RYZ1,RYZ2,RYZ3} for investigations in the 1d case.

\smallskip

\noindent{\bf Acknowledgement:} 
Thanks to G. Faye and L.M. Rodrigues for useful conversations and insights in the course of our
concurrent investigations \cite{FRYZ} on nonmonotone hydraulic shocks for (SV) in the 1d case.
The numerical computations in this paper were carried out in the CLAWPACK and MATLAB environments; 
analytical calculations were checked with the aid of MATLAB Symbolic Math Toolbox. Thanks to Indiana Universities University Information Technology Services (UITS) division for providing the Quartz \& Big Red 200 supercomputer environment in which most of our computations were carried out. This research was supported in part by Lilly Endowment, Inc., through its support for the Indiana University Pervasive Technology Institute. 

\smallskip

\noindent{\bf Data availability:} Matlab codes for Evans solvers and Python codes for time-evolution solvers are available at \cite{YZcode}. Time-evolution movies corresponding to snapshots in presented figures are available at \cite{YZmovie}.

\smallskip

\noindent{\bf Glossary of terms:} By a planar wave, we mean a wave that depends only on a single spatial
variable, without loss of generality $x$. In this paper, we consider planar waves both in the whole
space and in a channel. We refer to stability of planar waves in the whole space simply as 2d
stability, following this convention also in labeling of stability boundaries, e.g., ``2d low-frequency boundary''. We indicate
stability in a channel by additional comment where it occurs.
Likewise, unless otherwise indicated, spectrum of the linearized operator about the wave refers to
spectrum in the whole space, with spectrum in the channel indicated by further descriptions.

\section{Multi-d spectral stability of hydraulic shocks}\label{s:mult_hydro} 
We begin with the most accessible case of spectral stability of a hydraulic shock: more generally, a solution 
\ba\label{rshock}
U(x,t)&=\bar U(x-st), \quad x\gtrless st,\quad [F_1(\bar U)-sF_0(\bar{U})]_{x=st}=0,
\ea
with limits $U_\pm$ as $x\to\pm \infty$ of a 2d general relaxation system
\be\label{req}
F_0(U)_t+F_1(U)_x+ F_2(U)_y=R(U)=(0,r(U)^T)^T, \quad U\in \R^N
\ee
augmented (see, e.g., \cite{Er1,Er2,M1,M2,ZS}) with Rankine-Hugoniot jump conditions 
\be\label{RH}
[\psi_t F_0(U) + \psi_y F_2(U)- F_1(U)]_{x=\psi(y,t)}=0,
\ee
at curves of discontinuity $x=\psi(y,t)$, where $[h]_{x=\psi(y,t)}:=h(\psi(y,t)^+,y,t)-h(\psi(y,t)^-,y,t)$.

Changing to comoving coordinates $x\to (x-st)$, $F_1\to (F_1-sF_0)$, we have the profile equations
\be\label{rels}
\bar \psi\equiv 0, \quad  (d/dx)F_1(\bar U(x))= R(\bar U).
\ee

For planar hydraulic shocks, either in the whole space or a channel, taking without of generality $v\equiv 0$, we quickly find that the profile equations \eqref{rels} 
reduce to the one-dimensional version of \cite{YZ}. Hence, profiles can be set either continuous or discontinuous in two dimensions as $(\bar{h}(x,y),\bar{u}(x,y),\bar{v}(x,y))_{\text{2d}}=(\bar{h}(x),\bar{u}(x),0))_{\text{1d}}$, where the right hand side are the 1d hydrodynamically stable hydraulic shock profiles constructed in \cite[\S 2]{YZ} for $F<2$ and the 1d hydrodynamically unstable hydraulic shock profiles (including non-monotone ones) constructed in \cite[Proposition 2.1]{FRYZ} for $F\geq 2$.

Considering a nearby solution $\tilde U=\bar U +V$, $\tilde{\psi}=\bar \psi+\psi$, and recalling $\bar \psi=0$,
we obtain by formal linearization of \eqref{req}-\eqref{RH} the interior and boundary equations
\be\label{lineqs_n}
A_0 V_t+\psi_tA_0(\bar{U}_x)+(A_1V)_x+ (A_2 V)_y=EV, \qquad
[\psi_t F_0(\bar U)+ A_1V   + \psi_y F_2(\bar U)]_{x=0}=0,
\ee
where $A_j=dF_j(\bar U)$, $E=dR(\bar U)$, or in ``good unknown" $V\to V+\psi\bar{U}_x$ \cite{JLW,Z1,Z2,JNRYZ,YZ},
\be\label{lineqs}
A_0 V_t+(A_1V)_x+ (A_2 V)_y=EV, \qquad
[\psi_t F_0(\bar U) + A_1V+ \psi_y F_2(\bar U)  -\psi A_1\bar{U}_x]_{x=0}=0.
\ee
Denoting by $\hat V$ and $\hat \psi$ the Laplace/Fourier transforms in $t$/$y$ of $V$ and $\psi$,
and using \eqref{rels}, we reduce to the generalized eigenvalue system
\be\label{geig}
\lambda A_0 \hat V +(A_1\hat V)_x+ i\eta A_2\hat V =E\hat V, \qquad
\hat \psi[\lambda F_0(\bar U) +i\eta  F_2(\bar U) - R(\bar U)]+[A_1 \hat V]=0.
\ee
Since profiles are non-characteristic, i.e., $\det(A_1)\neq 0$, \eqref{geig} becomes for $Z:=A_1\hat{V}$
\be 
\label{zgeig}
Z_x=(E-\lambda A_0-i\eta A_2)A_1^{-1}Z=:G(x;\lambda,\eta)Z, \qquad
\hat \psi[\lambda F_0(\bar U) +i\eta  F_2(\bar U) - R(\bar U)]+[Z]=0.
\ee 
In flux coordinates $U=(h,q=hu,p=hv)$,  $A_0=\Id_3$, $A_1$, $A_2$, and $E$ explicitly read
\ba \label{A1A2E}
A_1=&\left[\begin{array}{rrr}-s&1&0\\H/F^2-Q^2/H^2&2Q/H-s&0\\0&0& Q/H - s\end{array}\right],\\
A_2=&\left[\begin{array}{rrr}0&0&1\\0&0&Q/H\\H/F^2&0&0\end{array}\right],\quad\text{and}\quad E=\left[\begin{array}{rrr}0&0&0\\2Q^2/H^3 + 1& -2Q/H^2&0\\0&0&-Q/H^2\end{array}\right],
\ea 
where $(H,Q,0)=\bar{U}$ denotes a hydraulic shock profile.

\begin{lemma}[Transverse symmetry]\label{transverse_sym_hydraulic} A solution to the generalized eigenvalue system \eqref{geig} is invariant under the coordinate change
$\big(\lambda,\eta,\hat{\psi},[\hat{V}_1,\;\hat{V}_2,\;\hat{V}_3]\big)\rightarrow \big(\lambda,-\eta,\hat{\psi},[\hat{V}_1,\;\hat{V}_2,\;-\hat{V}_3]\big)$.
\end{lemma}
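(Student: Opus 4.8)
The plan is to realize the asserted map as conjugation by the fixed reflection $S:=\diag(1,1,-1)$ acting on the $(h,q,p)$-components, combined with the sign flip $\eta\mapsto-\eta$, and then to check directly that both the interior ODE and the Rankine--Hugoniot relation in \eqref{geig} (equivalently in the form \eqref{zgeig}) are left invariant by this operation.

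First I would record the parity properties that can be read off from \eqref{A1A2E}. The matrices $A_0=\Id_3$, $A_1$, and $E$ are block-diagonal for the splitting $(h,q)\oplus(p)$ and hence commute with $S$, i.e. $SA_jS=A_j$ for $j=0,1$ and $SES=E$; the remaining matrix $A_2$ is purely off-diagonal for this splitting, coupling the $p$-slot to the $(h,q)$-slot, so that $SA_2S=-A_2$. Similarly, because the profile satisfies $\bar v\equiv 0$, the fixed vectors in the jump relation have definite $S$-parity: $F_0(\bar U)=(H,Q,0)^T$ and $R(\bar U)=(0,r(\bar U)^T)^T$ are supported in the $(h,q)$-slot and thus $S$-even, while $F_2(\bar U)=(0,0,H^2/2F^2)^T$ is supported in the $p$-slot and thus $S$-odd.

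Given these, the verification is a short computation. For the interior equation $\lambda A_0\hat V+(A_1\hat V)_x+i\eta A_2\hat V=E\hat V$, substituting $\hat V\mapsto S\hat V$ and $\eta\mapsto-\eta$ and using $A_0S=SA_0$, $A_1S=SA_1$, $ES=SE$ together with $A_2S=-SA_2$ shows that both sides become $S$ applied to the original ones (the two sign changes in the term $i\eta A_2$ cancel); since $S$ is invertible, the transformed triple again solves the interior equation. In the $Z=A_1\hat V$ formulation this is the identity $SG(x;\lambda,-\eta)S^{-1}=G(x;\lambda,\eta)$ with $G$ as in \eqref{zgeig}. For the boundary relation $\hat\psi\big[\lambda F_0(\bar U)+i\eta F_2(\bar U)-R(\bar U)\big]+[A_1\hat V]=0$, the first two components are untouched, since every vector there is $S$-even and $\eta$ enters only through $F_2(\bar U)$, which vanishes in the $(h,q)$-slot; in the third component the term $\hat\psi\,i\eta F_2(\bar U)$ changes sign under $\eta\mapsto-\eta$, while $[A_1\hat V]_3$, which is linear in $\hat V_3$ with scalar coefficient $Q/H-s$ from \eqref{A1A2E}, changes sign under $\hat V_3\mapsto-\hat V_3$, and the remaining terms vanish there, so the whole third component is merely multiplied by $-1$. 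Hence the relation is preserved, and the lemma follows.

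There is no genuine obstacle here; the only point that needs care is bookkeeping the two independent sources of sign changes---the reflection $S$ and the flip $\eta\mapsto-\eta$---and confirming that in every location where $\eta$ or $i\eta$ occurs it multiplies an $S$-odd object ($A_2$ in the interior, $F_2(\bar U)$ on the boundary), so that the two effects either cancel or combine consistently with the $\hat V_3$-flip. Note also that the argument uses only the block structure forced by $\bar v\equiv 0$ together with the explicit form \eqref{A1A2E}, so it applies uniformly to all hydraulic-shock profiles under consideration, continuous or discontinuous.
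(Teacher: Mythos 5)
Your proof is correct and reaches the same conclusion by essentially the same route as the paper, namely a direct verification that \eqref{geig} is preserved. The paper writes out all six scalar equations in component form and asserts invariance by inspection, whereas you package the same check more structurally as conjugation by the reflection $S=\diag(1,1,-1)$ together with the parity facts $SA_jS=A_j$ ($j=0,1$), $SES=E$, $SA_2S=-A_2$ and the $S$-parities of $F_0(\bar U)$, $F_2(\bar U)$, $R(\bar U)$ forced by $\bar v\equiv 0$; this is a cleaner presentation of the same calculation, not a different argument.
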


\begin{proof}
By \eqref{A1A2E}, \eqref{geig}(i) explicitly reads
$$
\begin{aligned}
\lambda \hat{V}_1+(-s\hat{V}_1+\hat{V}_2)_x+i\eta\hat{V}_3&=0,\\
\hat{V}_2+((H/F^2-Q^2/H^2)\hat{V}_1+(2Q/H-s)\hat{V}_2)_x+i\eta(Q/H)\hat{V}_3&=(2Q^2/H^3+1)\hat{V}_1-2(Q/H^2)\hat{V}_2,\\
\lambda \hat{V}_3+((Q/H-s)\hat{V}_3)_x+i\eta(H/F^2)\hat{V}_1&=-(Q/H^2)\hat{V}_3,
\end{aligned}
$$
and \eqref{geig}(ii) explicitly reads
$$
\begin{aligned}
\hat{\psi}[\lambda H]+[-s\hat{V}_1+\hat{V}_2]&=0,\\
\hat{\psi}[\lambda Q-H+Q^2/H^2]+[(H/F^2-Q^2/H^2)\hat{V}_1+(2Q/H-s)\hat{V}_2]&=0,\\
\hat{\psi}[i\eta H^2/2F^2]+[(Q/H-s)\hat{V}_3]&=0.
\end{aligned}
$$
It is then easy to see that \eqref{geig} is invariant under the proposed symmetry. 
\end{proof}

\smallskip

\noindent\textbf{Scale-invariance, hydraulic 
shocks.} In the rest of Section~\ref{s:mult_hydro}, we adopt the useful scale-invariance \cite[Observation 2.4]{YZ}, which reduces all conclusions, up to scale-invariance, to the hydraulic shocks with $H_L=1$ and $H_R=1/\nu^2$. Here $H_L$ ($H_R$) denotes the limiting fluid height at $-\infty$ ($+\infty$).

\subsection{Essential Spectra.}\label{essential}
In the unweighted $L^2$ space, the essential spectra of a non-characteristic planar hydraulic shock profile sit to the left of the rightmost dispersion relations of the limiting matrices $G(\pm\infty;\lambda,\eta)$,\footnote{
	More precisely, the essential spectrum of the Fourier transformed operator with respect to $\eta$
	of the linearized operator about the wave, with point spectra
of the Fourier transformed operator corresponding to (a different type of) 
essential spectra of the linearized operator about the wave.}
the latter given by algebraic curves $\lambda_{j,\pm}(k;\eta)$, $j=1,2,3$ consisting of zeros of the cubic polynomial $\det(G(\pm\infty;\lambda,\eta)-ik\Id)=0$ for $k,\eta\in \mathbb{R}$. In the one-dimensional case \cite{YZ} and \cite{FRYZ}, it is proved that stability of the unweighted essential spectra of a non-characteristic hydraulic shock profile is equivalent to the hydrodynamic stability condition $F\leq 2$. In the current two-dimensional case, we will establish the same result in the lemma below.

\begin{lemma}
Consider a non-characteristic planar hydraulic shock profile in the whole space. If the hydrodynamic stability condition holds, i.e., $F\leq 2$, then the dispersion relations $\lambda_{j,\pm}(k;\eta)$ for $j=1,2,3$, 
defined as roots of  $\det(G(\pm\infty;\cdot,\eta)-ik)=0$, satisfy $\Re\lambda_{j,\pm}(k;\eta)<0$, for $j=2,3$ and all $k,\eta\in \mathbb{R}$ and $\Re \lambda_{1,\pm}(k;\eta)<0$ for all $k,\eta\in \mathbb{R}$ except at $k,\eta=0$ where $\lambda_{1,\pm}(0;0)=0$. 

If the hydrodynamic stability condition fails, i.e., $F>2$, then $\sup_{k\in \mathbb{R}}\Re\lambda_{1,\pm}(k;0)>0$, implying existence of unstable essential spectra.
\end{lemma}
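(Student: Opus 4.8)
Since (as recalled just above) the essential spectra lie to the left of the rightmost limiting dispersion curves, it suffices to locate the roots $\lambda$ of $\det\big(G(\pm\infty;\lambda,\eta)-ik\,\Id\big)=0$, equivalently — multiplying by $\det A_1\neq0$ — of the monic cubic
\[
P_\pm(\lambda;k,\eta):=\det\big(\lambda\,\Id_3-E+ikA_1+i\eta A_2\big)
\]
attached to the constant endstate $(H,Q,0)=U_\pm$, which satisfies the equilibrium relation $Q=H^{3/2}$. Writing $A_1=A_1^{0}-s\,\Id$ with $A_1^0$ the rest-frame flux Jacobian, a rest-frame root $\lambda'$ yields the comoving root $\lambda=\lambda'+iks$; since $iks$ is purely imaginary, the shock speed is irrelevant to $\Re\lambda$. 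By the scale invariance recalled above, stability of these curves depends only on $F$, so it suffices to treat $(H,Q,0)=(1,1,0)$. From \eqref{A1A2E}, the coefficients of $P$ are polynomial in $(k,\eta)$, and at $(k,\eta)=(0,0)$ the roots are the eigenvalues $0,\ -2Q/H^2,\ -Q/H^2$ of $E$; label $\lambda_1$ the branch with $\lambda_1(0;0)=0$ — the hydrodynamic mode carried by the conserved mass — and $\lambda_2,\lambda_3$ the relaxation branches.

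\emph{Instability, $F>2$.} At $\eta=0$ the matrices in \eqref{A1A2E} are block-diagonal ($2\oplus1$) and $A_2$ drops out: the scalar block contributes the explicit $\lambda_3(k;0)=-Q/H^2-ik(Q/H-s)$ with $\Re\lambda_3<0$, while the $2\times2$ block is verbatim the one-dimensional linearized dispersion relation of \cite{YZ,FRYZ}. As the unweighted 1D essential spectrum is proved unstable there for $F>2$, one of $\lambda_1(\cdot;0),\lambda_2(\cdot;0)$ has $\sup_k\Re>0$; to pin this on $\lambda_1$, I would compute the low-frequency expansion by regular perturbation about $(\lambda,k)=(0,0)$,
\[
\lambda_1(k;0)=-i\alpha k+D(F)\,k^2+O(k^3),
\]
with $\alpha=\tfrac32 Q/H>0$ the characteristic speed of \eqref{CE} and $D(F)$ the Chapman–Enskog effective diffusivity; a brief computation gives $\sgn D(F)=\sgn(F-2)$ (failure of the subcharacteristic condition turns diffusion into anti-diffusion). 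Then $\Re\lambda_1(k;0)=D(F)k^2+O(k^3)>0$ for all small $k\neq0$, hence $\sup_{k\in\R}\Re\lambda_{1,\pm}(k;0)>0$ — equivalently this is visible in the high-frequency tail $\Re\lambda_1(k;0)\to\tfrac F2-1>0$. A point of a limiting dispersion curve in the open right half plane places essential spectrum of the linearization about the shock there too, which is the asserted instability.

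\emph{Stability, $F\le2$.} At $\eta=0$ the same block splitting makes $\lambda_1,\lambda_2$ stable by the 1D result of \cite{YZ,FRYZ} and $\lambda_3$ the explicit stable branch above; (at $F=2$ exactly, $\lambda_1(k;0)=-i\alpha k$ lies on the imaginary axis, so the strict inequality is to be read for $F<2$, the knife-edge $F=2$ being genuinely neutral). For $\eta\neq0$ the cubic no longer factors, and I would run a continuity-in-$(k,\eta)$ argument: the roots vary continuously, at the origin lie in $\{\Re\lambda\le0\}$ with $\lambda_2,\lambda_3$ strictly inside, and can reach $\{\Re\lambda>0\}$ only by crossing the imaginary axis, so it suffices to exclude imaginary roots for $(k,\eta)\neq0$ and to check that $\lambda_1$ leaves the origin leftward. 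The latter comes from $\lambda_1(k,\eta)=-i\alpha k-\mathcal Q(k,\eta)+O(|(k,\eta)|^3)$ with $\mathcal Q\succeq0$ — positive definite for $F<2$, degenerate only in the longitudinal direction at $F=2$. For large $|(k,\eta)|$, high-frequency asymptotics — the three branches track the real characteristic speeds of the hyperbolic part of \eqref{sv_intro} up to an $O(1)$ correction from $E$ that is strictly dissipative when $F<2$ — excludes imaginary roots there. For bounded $(k,\eta)$, one excludes them either by a Routh–Hurwitz (Bilharz) analysis of the cubic, or conceptually via a symmetric positive-definite symmetrizer $S$ (Hessian of the mechanical energy of \eqref{sv_intro}) for which $SA_1^0,SA_2$ are symmetric: an imaginary root $\lambda=i\tau$ with eigenvector $v$ satisfies $Ev=i(\tau\,\Id+kA_1^0+\eta A_2)v$, and taking the $S$-inner product with $v$ forces $\Re\langle v,SEv\rangle=0$, contradicting the linearized entropy-dissipation inequality $\Re\langle v,SEv\rangle<0$ (valid off the equilibrium subspace precisely when $F<2$) unless $(k,\eta)=0$. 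Stitching these together gives $\Re\lambda_{2,3}<0$ throughout and $\Re\lambda_1<0$ off the origin.

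\emph{Main obstacle.} The substance is the sign bookkeeping: exhibiting the symmetrizer $S$ and showing that the induced dissipation $\Re\langle v,SEv\rangle$, the effective-diffusion form $\mathcal Q$, and the high-frequency corrections are all sign-definite exactly on $\{F<2\}$ — that is, re-expressing the subcharacteristic condition $F<2$ as these three algebraic facts. Everything else — the $\eta=0$ decoupling, the invocation of the 1D results of \cite{YZ,FRYZ}, the Taylor expansions, and the continuity/homotopy bookkeeping in $(k,\eta)$ — is routine once those sign statements are in hand; in particular the $F>2$ half requires only the $\eta=0$ decoupling together with the low- (or high-) frequency expansion of $\lambda_1$.
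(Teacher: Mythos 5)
Your plan for the $F>2$ half (block decoupling at $\eta=0$, then either the Chapman--Enskog second-order term or the high-frequency limit $\Re\lambda_1\to \tfrac F2-1$) is sound and matches the paper's explicit computation for $\lambda_{1,\pm}(k;0)$. Your parenthetical observation that at $F=2$ the branch $\lambda_1(k;0)$ is exactly purely imaginary for all $k$ (indeed $\lambda_1(k;0)=-\tfrac{3}{2}ik$ in the rest frame) is correct and shows the paper's strict inequality is slightly over-stated at the endpoint $F=2$.

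However, the part of your proposal carrying the real weight of the $F\le 2$ stability claim — the exclusion of purely imaginary roots for $\eta\neq 0$ — rests on a symmetrizer inequality that is \emph{false}. You assert that with $S=\nabla^2\eta_{\rm mech}$ one has $\Re\langle v,SEv\rangle<0$ off the equilibrium subspace precisely when $F<2$. Compute at the rescaled equilibrium $(H,Q,P)=(1,1,0)$:
\[
E=\begin{pmatrix}0&0&0\\3&-2&0\\0&0&-1\end{pmatrix},\qquad
S=\begin{pmatrix}1+\tfrac{1}{F^2}&-1&0\\-1&1&0\\0&0&1\end{pmatrix},
\]
so that $SE=\begin{pmatrix}-3&2&0\\3&-2&0\\0&0&-1\end{pmatrix}$. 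The first column of $E$ is zero, so the only $F$-dependent entry of $S$ (its $(1,1)$ entry) is annihilated: $SE$, and hence the symmetric part $SE+E^{T}S=\begin{pmatrix}-6&5&0\\5&-4&0\\0&0&-2\end{pmatrix}$, is $F$-\emph{independent}. It is also \emph{indefinite}: the upper-left $2\times2$ block has determinant $24-25=-1<0$. Thus $\Re\langle v,SEv\rangle=\tfrac12\langle v,(SE+E^TS)v\rangle$ takes both signs off $\ker E$ for every $F$, and your argument that an imaginary root would force $\Re\langle v,SEv\rangle=0$, ``contradicting'' dissipativity, collapses. The structural reason is that $R$ contains the gravitational forcing $h$ as well as friction; $\nabla\eta_{\rm mech}\cdot R$ is not sign-definite, and the subcharacteristic condition $F<2$ enters through the interlacing of equilibrium and frozen characteristics (a statement about $A_1^0$ and the projection onto $\ker E$, \`a la Kawashima genuine coupling), not through $SE+E^TS\le 0$.

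This is precisely the step the paper expends the most effort on: it writes $\Re$ and $\Im$ of $\det(G(\pm\infty;ib,\eta)-ik)$, eliminates $b$ by long division, and shows the resulting polynomial in $k$ (with $\eta\neq 0$) is sign-definite for $0<F\le 2$, followed by a reference-point Routh--Hurwitz check and continuity. Your alternative suggestion of a Routh--Hurwitz/Bilharz analysis of the cubic is, in spirit, what actually works; you would need to carry it out (it is genuinely computational, not a soft structural argument). Your low-frequency claim that the quadratic form $\mathcal Q$ is positive definite for $F<2$ is plausible and consistent with the paper's findings, but it too would need to be verified by computation, and in any case covers only a neighborhood of the origin rather than the whole medium-frequency range.
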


\begin{proof}
We make computations to find where and when dispersion relations intersect the imaginary axis, or equivalently, to investigate the solvablity of $\det(G(\pm\infty;ib,\eta)-ik)=0$ for $b,k,\eta\in\mathbb{R}$. 

At $x=+\infty$, $\det(G(+\infty;ib,\eta)-ik)=0$ is equivalent to
$$
\begin{aligned}
&\mathrm{Re}\det(G(+\infty;ib,\eta)-ik)=\frac{\nu + 1}{\nu^4(F_{\text{char}}^2-F^2)}\Big(3F^2\nu^2(\nu + 1)^2b^2\\&+F^2k\nu(- 6\nu^3 - 5\nu^2 + 2\nu + 1)b-F^2k^2\nu^2(- 3\nu^2 + \nu + 1) -(2\eta^2 + k^2)(\nu + 1)^2\Big)=0
\end{aligned}
$$
and
$$
\begin{aligned}
&\mathrm{Im}\det(G(+\infty;ib,\eta)-ik)=\frac{1}{\nu^5(F_{\text{char}}^2-F^2)}\Big(F^2\nu^2(\nu + 1)^3b^3\\
&-3F^2k\nu^3(\nu + 1)^2b^2-(\nu + 1)(\nu^4(- 3k^2 + 2\nu^2 + 4\nu + 2)F^2+(\nu + 1)^2(\eta^2 + k^2))b\\
&-k\nu^3(k^2\nu^2 - 2\nu^4 - 3\nu^3 + \nu^2 + 3\nu + 1)F^2 +k\nu(\nu + 1)^2(\eta^2 + k^2)\Big)=0,
\end{aligned}
$$
where $F_{\text{char}}$ is given by \eqref{characteristicF} and, for a non-characteristic hydraulic shock profile, there holds $F\neq F_{\text{char}}$. By long division, the two equations amount to
$$
\begin{aligned}
&b=\frac{-\nu^2(k^2 - 18\nu^4 + 9\nu^3 + 9\nu^2)F^2+(3\nu^2 - 2\nu - 2)\eta^2+(6\nu^2 - \nu - 1)k^2}{\nu(\nu + 1)((6 - F^2)k^2+18F^2\nu^4 + 3\eta^2)}k
\end{aligned}
$$
and
$$
\begin{aligned}
&(4 - F^2)k^6+(4-F^2)(F^4\nu^4 + 6F^2\nu^4 + 3\eta^2)k^4\\
&+\big(\eta^4(9-2F^2) + 9F^4\nu^8(4-F^2 ) + F^2\eta^2\nu^4(60-13F^2 )\big)k^2+2\eta^2(6F^2\nu^4 + \eta^2)^2=0.
\end{aligned}
$$
If $\eta\neq 0$, the left hand side of the latter equation above is great than $0$ for $0<F\leq 2$ and all $k\in\mathbb{R}$, whence the dispersion relation $\lambda_{j,+}(k;\eta\neq 0)$, for $j=1,2,3$, never intersects the imaginary axis. 

At $x=-\infty$, $\det(G(-\infty;ib,\eta)-ik)=0$ is equivalent to
$$
\begin{aligned}
&\mathrm{Re}\det(G(-\infty;ib,\eta)-ik)=\frac{\nu(\nu + 1)}{F_{\text{exist}}^2-F^2}\Big(3F^2\nu^2(\nu + 1)^2b^2-F^2k\nu(- \nu^3 - 2\nu^2 + 5\nu + 6)b\\&
-F^2k^2(\nu^2 + \nu - 3) -\nu^2(2\eta^2 + k^2)(\nu + 1)^2\Big)=0
\end{aligned}
$$
and
$$
\begin{aligned}
&\mathrm{Im}\det(G(-\infty;ib,\eta)-ik)=\frac{1}{F_{\text{exist}}^2-F^2}\Big(F^2\nu^3(\nu + 1)^3b^3-3F^2k\nu^2(\nu + 1)^2b^2\\&-\nu(\nu + 1)\big(F^2(- 3k^2 + 2\nu^4 + 4\nu^3 + 2\nu^2)+ \nu^2(\nu + 1)^2(\eta^2 + k^2)\big)b\\
&-F^2k(k^2 + \nu^6 + 3\nu^5 + \nu^4 - 3\nu^3 - 2\nu^2)+ k\nu^2(\nu + 1)^2(\eta^2 + k^2)\Big)=0,
\end{aligned}
$$
where, by \eqref{defF_existence}, $F<F_{\text{exist}}$.
The two equations amount to
$$
\begin{aligned}
&b=\frac{F^2(- k^2 - 9\nu^2 - 9\nu + 18) - \eta^2(2\nu^2 +2\nu - 3)-k^2(\nu^2 + \nu - 6)}{\nu(\nu + 1)\big((6- F^2)k^2 + 18F^2 + 3\eta^2\big)}k
\end{aligned}
$$
and
$$
\begin{aligned}
&(F^2 - 4)k^6+(F^2 - 4)(F^4 + 6F^2 + 3\eta^2)k^4\\&+\big(9F^4(F^2-4) + (13F^2-60)F^2\eta^2  + (2F^2-9)\eta^4\big)k^2-2\eta^2(6F^2 + \eta^2)^2=0.
\end{aligned}
$$
If $\eta\neq 0$, the left hand side of the latter equation above is less than $0$ for $0<F\leq 2$ and all $k\in\mathbb{R}$, whence the dispersion relation $\lambda_{j,-}(k;\eta\neq 0)$, for $j=1,2,3$, never intersects the imaginary axis. 

Therefore, if $\eta\neq 0$, the dispersion relation $\lambda_{j,\pm}(k;\eta)$, for $j=1,2,3$, stays either on the left or the right side of the imaginary axis. We make further computations and obtain that, for $F=1$, $\nu=2$, $k=0$, and $\eta=\pm 1$,
$
\det(G(+\infty;\lambda,\pm 1))=\frac{54}{7}\left(\lambda^3 + 6\lambda^2 + \frac{33}{4}\lambda + 1\right)$ and
$\det(G(-\infty;\lambda,\pm 1))=-\frac{216}{35}\left(\lambda^3 + 3\lambda^2 + 3\lambda + 2\right)$.
By the  Routh-Hurwitz criterion, a cubic polynomial $p(x)=x^3+a_2x^2+a_1x+a_0$ has all roots in the left half-plane if and only if $a_2$, $a_1$, and $a_0$ are positive and $a_2a_1>a_0$. We readily check $6\times \frac{33}{4}>1$ and $3\times 3>2$. Hence $\Re\lambda(0;\pm 1)<0$ when $F=1$ and $\nu=2$. As $(F,\nu,k,\eta)$ is continuously varied in $(0,2]\times(1,\infty)\times \mathbb{R}\times \mathbb{R}^-$ and in $(0,2]\times(1,\infty)\times \mathbb{R}\times \mathbb{R}^+$, the dispersion relation $\lambda_{j,\pm}(k;\eta)$, for $j=1,2,3$, stays in the left hand side of the imaginary axis. 

If $\eta=0$, explicit computations show that 
$\lambda_{3,+}(k;0)=-\nu+\frac{ik\nu}{\nu+1}$ and 
$$
\lambda_{j,+}(k;0)=-\nu+\frac{ik\nu}{\nu+1}-(-1)^j\frac{\sqrt{F^2\nu^4- F^2k\nu^2i - k^2}}{F\nu},\quad \text{for $j=1,2$.}
$$
If $0<F\leq 2$, we appeal to Lemma \ref{real_part:parabola} and compute
$$
\sup_{k\in\mathbb{R}}\Re\lambda_{1,+}(k;0)=-\nu+\sup_{k\in\mathbb{R}}\Re\frac{\sqrt{F^2\nu^4- F^2k\nu^2i - k^2}}{F\nu}=\lambda_{1,+}(0;0)=-\nu+\nu=0
$$
and 
$$\begin{aligned}
\sup_{k\in\mathbb{R}}\Re\lambda_{2,+}(k;0)=&-\nu-\inf_{k\in\mathbb{R}}\Re\frac{\sqrt{F^2\nu^4- F^2k\nu^2i - k^2}}{F\nu}\\
=&-\nu-\lim_{k\to \infty}\Re\frac{\sqrt{F^2\nu^4- F^2k\nu^2i - k^2}}{F\nu}=-\nu\left(1+\frac{F}{2}\right)<0.
\end{aligned}
$$
Similarly, we compute to obtain $\Re\lambda_{3,-}(k;0)\equiv-1$, $\sup_{k\in\mathbb{R}}\Re\lambda_{1,-}(k;0)=\lambda_{1,-}(0;0)=0$, and $\sup_{k\in\mathbb{R}}\Re\lambda_{2,-}(k;0)=-\left(1+\frac{F}{2}\right)<0$. In the course of the
proof of Lemma \ref{real_part:parabola}, we also find the supremum of $\Re\lambda_{1,\pm}(k;0)$ is achieved only at $k=0$. 

If $F> 2$, there hold
$$\begin{aligned}
\sup_{k\in\mathbb{R}}\Re\lambda_{1,+}(k;0)=&-\nu+\sup_{k\in\mathbb{R}}\Re\frac{\sqrt{F^2\nu^4- F^2k\nu^2i - k^2}}{F\nu}\\
=&-\nu+\lim_{k\to\infty}\Re\frac{\sqrt{F^2\nu^4- F^2k\nu^2i - k^2}}{F\nu}=\nu\left(\frac{F}{2}-1\right)>0
\end{aligned}
$$
and 
$\sup_{k\in\mathbb{R}}\Re\lambda_{1,-}(k;0)=\frac{F}{2}-1>0$.
\end{proof}
\begin{lemma}\label{real_part:parabola}
Consider the parabola $z(k)=-ak^2+ik+c$, $k\in\mathbb{R}$ with $a,c>0$. Then,
$$
\inf_{k\in\mathbb{R}}\Re\sqrt{z(k)}=\left\{\begin{aligned}&\sqrt{z(0)}&4ac\leq 1,\\
&\sqrt{z(\infty)}&4ac>1,\end{aligned}\right. \quad \text{and}\quad \sup_{k\in\mathbb{R}}\Re\sqrt{z(k)}=\left\{\begin{aligned}&\sqrt{z(\infty)}&4ac< 1,\\
&\sqrt{z(0)}&4ac\geq 1,\end{aligned}\right.
$$
where $\sqrt{z(0)}=\sqrt{c}$ and $\sqrt{z(\infty)}=\frac{1}{2\sqrt{a}}$.
\end{lemma}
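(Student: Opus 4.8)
The plan is to turn this into an elementary algebraic question about the range of the scalar function $k\mapsto\Re\sqrt{z(k)}$. Write $z(k)=(c-ak^2)+ik$, and recall that the principal branch of $\sqrt{\cdot}$ has non-negative real part. Put $\sqrt{z(k)}=w+iv$ with $w:=\Re\sqrt{z(k)}\ge0$. Since $c>0$, the point $z(k)$ never meets the branch cut $(-\infty,0]$ (for $k\ne0$ one has $\Im z(k)=k\ne0$, while $z(0)=c>0$), so in fact $w>0$ for all $k\in\R$; hence $w+iv$ is the root with strictly positive real part, which is the principal one. Squaring $\sqrt{z(k)}=w+iv$ gives $w^2-v^2=c-ak^2$ and $2wv=k$, and eliminating $v=k/(2w)$ and clearing denominators yields the single identity
\[
k^2\,(4aw^2-1)=4w^2\,(c-w^2).
\]
Thus a number $w>0$ is attained by $\Re\sqrt{z(\cdot)}$ at some real $k$ if and only if this equation has a real solution $k$, i.e.\ (when $4aw^2\ne1$) if and only if $(c-w^2)/(4aw^2-1)\ge0$, together with the degenerate possibility $w^2=c=1/(4a)$.

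From here I would read off the range of $w^2$ directly from the signs of the two factors $c-\tau$ and $4a\tau-1$ in the variable $\tau:=w^2\in(0,\infty)$, which switch at $\tau=c$ and $\tau=1/(4a)$, splitting into three cases according to the sign of $4ac-1$, i.e.\ the position of $c$ relative to $1/(4a)$. When $4ac>1$ the attained values of $w^2$ fill the interval $(1/(4a),\,c]$, with $\tau=c$ attained at $k=0$ and $\tau=1/(4a)$ only approached as $|k|\to\infty$; when $4ac<1$ they fill $[c,\,1/(4a))$, again with $\tau=c$ attained at $k=0$; and when $4ac=1$ the identity above reads $(w^2-1/(4a))(4ak^2+4w^2)=0$, forcing $w^2=c=1/(4a)$ for every $k$, so $\Re\sqrt{z(\cdot)}\equiv 1/(2\sqrt a)=\sqrt c$ outright. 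In each case $\inf_{k\in\R}\Re\sqrt{z(k)}$ and $\sup_{k\in\R}\Re\sqrt{z(k)}$ are the square roots of the two endpoints of this $\tau$-interval, namely $\sqrt c=\sqrt{z(0)}$ and $\tfrac{1}{2\sqrt a}=\sqrt{z(\infty)}$, arranged exactly as in the statement; continuity of $\Re\sqrt{z(\cdot)}$ and connectedness of $\R$ confirm the range is precisely this interval, and the ``attained versus approached'' bookkeeping additionally gives the fact, used in the proof of the preceding lemma, that the extremum equal to $\sqrt{z(0)}$ is achieved only at $k=0$.

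I do not expect a genuine obstacle here; the argument is elementary. The two points that need care are (i) the branch issue, namely verifying $w>0$ so that eliminating $v$ is legitimate and $w+iv$ really is the principal root, and (ii) the bookkeeping of which endpoint of the $\tau$-interval is attained and which is only approached as $|k|\to\infty$, since this is exactly what the $\inf/\sup$ assertions, and the ``$k=0$ only'' remark, rely on. An essentially equivalent and perhaps tidier write-up uses the substitution $s=ak^2\in[0,\infty)$: from the identity $2(\Re\sqrt z)^2=\Re z+|z|$, valid for the principal square root, one gets $2\big(\Re\sqrt{z(k)}\big)^2=h(s):=(c-s)+\sqrt{(c-s)^2+s/a}$, then checks $h(0)=2c$, $h(s)\to1/(2a)$ as $s\to\infty$, and that $h$ is monotone on $[0,\infty)$ with the direction of monotonicity governed by the sign of $4ac-1$; this renders the dichotomy transparent and yields the same four formulas.
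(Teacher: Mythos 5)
Your proof is correct and takes a genuinely different route from the paper's. Both arguments start the same way: observe that $\{z(k)\}$ avoids the branch cut $(-\infty,0]$ so the principal root has strictly positive real part, and eliminate $\Im\sqrt{z}$ to obtain a single algebraic relation between $w^2:=(\Re\sqrt{z})^2$ and $k^2$ (your $k^2(4aw^2-1)=4w^2(c-w^2)$ is a rearrangement of the paper's quartic $\alpha^4-(1-bk^2)\alpha^2-k^2/4=0$ after its normalization $b=ac$). The paper then differentiates this quartic implicitly in $k^2$, shows the derivative can vanish only when $b=1/4$, concludes strict monotonicity for $b\neq 1/4$, and finally determines the \emph{direction} of monotonicity by evaluating $\alpha^2$ and its derivative at two specific numerical values $(k^2,b)=(8,1/8)$ and $(2,1/2)$. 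You instead solve for $k^2$ as a rational function of $w^2$ and read off the attainable set of $w^2$ from the signs of the two factors $c-w^2$ and $4aw^2-1$: the endpoints $c$ (attained at $k=0$) and $1/(4a)$ (attained only in the limit $|k|\to\infty$) emerge transparently, with no monotonicity argument and no spot-checks. This is arguably cleaner and also supplies, as a byproduct, the ``attained only at $k=0$'' fact that the surrounding essential-spectrum lemma uses, and which the paper obtains separately. Your alternative write-up via $2(\Re\sqrt z)^2=\Re z+|z|$ with the scalar function $h(s)=(c-s)+\sqrt{(c-s)^2+s/a}$, $s=ak^2\ge 0$, is likewise valid: one checks $h(0)=2c$, $h(\infty)=1/(2a)$, and that $h'$ can vanish only when $4ac=1$ (in which case $h$ is constant), so $h$ is monotone in the appropriate direction otherwise; this avoids even the branched sign discussion. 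Two small points worth keeping in any polished version: the explicit verification that for each $k$ the biquadratic has a \emph{unique} positive root (product of roots is $-k^2/4\le 0$), so that the correspondence $k\mapsto w^2$ is a genuine function and your ``attained iff'' characterization is legitimate; and the explicit handling of $4aw^2=1$, where the equation forces $w^2=c$, confirming that $1/(4a)$ is a strict limit, not an attained value, when $4ac\ne1$.
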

\begin{proof}
For $c>0$, we readily find the parabola $\{z(k):k\in \mathbb{R}\}$ lies in the analytic branch of the square root function $\mathbb{C}\setminus(\infty,0]$. By setting $\tilde{k}:=k/c$, we obtain $z(k)=c(-ac\tilde{k}^2+i\tilde{k}+1)$ and hence $\Re\sqrt{z(k)}=\sqrt{c}\Re\sqrt{-ac\tilde{k}^2+\tilde{k}i+1}$. It then suffices to consider $\Re\sqrt{-b k^2+ik+1}$ where $b:=ac>0$. Now, denoting $\alpha=\Re\sqrt{-b k^2+ik+1}$ and $\beta=\Im\sqrt{-b k^2+ik+1}$ gives the equation $\alpha+\beta i=\sqrt{-b k^2+ik+1}$, or
$$
\alpha^2-\beta^2=-bk^2+1\quad 2\alpha\beta=k.
$$
Eliminating $\beta$ from the system above yields a quadratic equation of $\alpha^2$ given by 
$$
\alpha^4-(-bk^2+1)\alpha^2-\frac{k^2}{4}=0,
$$
which always has a positive zero and a non-positive zero, both of which depend smoothly on $(k^2,b)\in [0,\infty)\times(0,\infty)$. To find the infimum/supremum of $\alpha$, or equivalently the infimum/supremum of the positive zero, taking partial derivative of the quadratic equation against $k^2$ yields
$$
2\alpha^2\frac{\partial\alpha^2}{\partial k^2}+b\alpha^2-(-bk^2+1)\frac{\partial\alpha^2}{\partial k^2}-\frac{1}{4}=0, \quad \text{and hence} \quad \frac{\partial\alpha^2}{\partial k^2}=\frac{\frac{1}{4}-b\alpha^2}{2\alpha^2+bk^2-1}.
$$
Setting $\frac{\partial\alpha^2}{\partial k^2}=0$, we find $b\alpha^2=\frac{1}{4}$. Plugging it back into the quadratic equation gives $\alpha=1$ and $b=\frac{1}{4}$. Indeed, for $b=\frac{1}{4}$, $\alpha\equiv 1$ for all $k\in \mathbb{R}$. For $b\neq \frac{1}{4}$, our analysis above shows that there is no critical point of the function $\alpha^2(k^2;b)$. Therefore, $\alpha^2(k^2;b\neq \frac{1}{4})$ must be either strictly increasing or strictly decreasing on $k^2\in[0,\infty)$. For $k^2=8$ and $b=\frac{1}{8}$, we directly compute $\alpha^2(8;\frac{1}{8})=\sqrt{2}$ and $\frac{\partial \alpha^2}{\partial k^2}(8;\frac{1}{8})=\frac{\sqrt{2}-1}{16}>0$. For $k^2=2$ and $b=\frac{1}{2}$, we directly compute $\alpha^2(2;\frac{1}{2})=\frac{1}{\sqrt{2}}$ and $\frac{\partial \alpha^2}{\partial k^2}(2;\frac{1}{2})=\frac{\sqrt{2}-2}{8}<0$. Therefore $\alpha^2(k^2;b)$ must be strictly increasing on $k^2\in[0,\infty)$ for all $0<b<\frac{1}{4}$ and be strictly decreasing on $k^2\in[0,\infty)$ for all $b>\frac{1}{4}$, giving the claimed result.
\end{proof}

To recover stability of a non-characteristic planar hydraulic shock profile in the whole space with $F>2$, one can use exponential weights that stabilize the dispersion relations. As in \cite{FRYZ}, we introduce, for $(\mu_L,\mu_R)\in [0,\infty)\times (-\infty,0]$, the weighted spaces
\be \label{weighted_space}
L^2_{\mu_L,\mu_R}(\mathbb{R}^*\times\mathbb{R},\mathbb{C}^3):=\left\{V(x,y):\left.\begin{aligned}&e^{-\mu_L x}V(x,y)|_{\mathbb{R}^-\times \mathbb{R}}\in L^2(\mathbb{R}^-\times\mathbb{R},\mathbb{C}^3)\quad \text{and}\\
&e^{-\mu_R x}V(x,y)|_{\mathbb{R}^+\times \mathbb{R}}\in L^2(\mathbb{R}^+\times\mathbb{R},\mathbb{C}^3) \end{aligned}\right.\right\},
\ee 
which in short will be referred to as $(\mu_L,\mu_R)$--weighted space. In a $(\mu_L,\mu_R)$--weighted space, the weighted essential spectra of a non-characteristic planar hydraulic shock profile in the whole space sit to the left of the rightmost weighted dispersion relations of the matrix $G(\pm\infty;\lambda,\eta)$ that are given by algebraic curves $\lambda_{j,\pm}(k;\eta)$, $j=1,2,3$ consisting of zeros of the cubic polynomial $\det(G(\pm\infty;\cdot,\eta)-(ik+\mu_{L,R})\Id)=0$ for $k,\eta\in \mathbb{R}$. Our setting motivates the following definition of stabilization of weighted dispersion relations (essential spectra) in a $(\mu_L,\mu_R)$--weighted space.

\begin{definition}\label{stable_ess}
Consider a discontinuous planar hydraulic shock  $(F,H_L,H_R)$ in the whole space with $2<F<{\normalfont F_{\text{exist}}}$ that admits unstable essential spectra in the unweighted $L^2$ space. For $(\mu_L,\mu_R)\in[0,\infty)\times (-\infty,0]$, it is said to have $(\mu_L,\mu_R)$--weightedly stable dispersion relations {\normalfont (}essential spectra{\normalfont )} if $$\sup_{k,\eta\in \mathbb{R},j=1,2,3}\Re\lambda_{j,\pm}(k;\eta)\leq 0,$$
where $\lambda_{j,\pm}(k;\eta)$, $j=1,2,3$ are zeros of the characteristic polynomial $\det(G(\pm;\cdot,\eta)-ik-\mu_{L,R})=0$ for $k,\eta\in\mathbb{R}$. It is said to have $(\mu_L,\mu_R)$--weightedly exponentially stable dispersion relations {\normalfont (}essential spectra{\normalfont )} if $$\sup_{k,\eta\in \mathbb{R},j=1,2,3}\Re\lambda_{j,\pm}(k;\eta)< 0.$$
\end{definition}
The stabilization of weighted dispersion relations (essential spectra) of a discontinuous planar hydraulic shock in the whole space can be characterized by $\gamma_{j,\pm}(\lambda,\eta)$, the eigenvalues of $G(\pm\infty,\lambda,\eta)$ \eqref{zgeig}.
\begin{lemma}
Let $\gamma_{i,\pm}(\lambda,\eta)$, $i=1,2,3$ be eigenvalues of $G(\pm\infty,\lambda,\eta)$ \eqref{zgeig} ordered by $$\Re\gamma_{1,\pm}(\lambda,\eta)\leq \Re\gamma_{2,\pm}(\lambda,\eta)\leq \Re\gamma_{3,\pm}(\lambda,\eta).$$
Let $(\mu_L,\mu_R)\in[0,\infty)\times (-\infty,0]$. A discontinuous planar hydraulic shock $(F,H_R)$ in the whole space
with $2<F< {\normalfont F_{\text{exist}}}$ has $(\mu_L,\mu_R)$--weightedly stable dispersion relations {\normalfont (}essential spectra{\normalfont )} if and only if 
$$
\sup_{\Re\lambda\geq 0,\eta\in\mathbb{R}}\Re\gamma_{1,-}(\lambda,\eta)\leq \mu_L\leq \inf_{\Re\lambda\geq 0,\eta\in\mathbb{R}}\Re\gamma_{2,-}(\lambda,\eta)\quad\text{and} \quad\mu_R\leq \inf_{\Re\lambda\geq 0,\eta\in\mathbb{R}}\Re\gamma_{1,+}(\lambda,\eta).
$$
It has $(\mu_L,\mu_R)$--weightedly exponentially stable dispersion relations {\normalfont (}essential spectra{\normalfont )} if and only if 
$$
\sup_{\Re\lambda\geq 0,\eta\in\mathbb{R}}\Re\gamma_{1,-}(\lambda,\eta)< \mu_L< \inf_{\Re\lambda\geq 0,\eta\in\mathbb{R}}\Re\gamma_{2,-}(\lambda,\eta)\quad\text{and} \quad\mu_R< \inf_{\Re\lambda\geq 0,\eta\in\mathbb{R}}\Re\gamma_{1,+}(\lambda,\eta).
$$
\end{lemma}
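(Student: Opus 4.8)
The plan is to translate the condition on the weighted dispersion relations into an eigenvalue‑counting statement for the limiting matrices $G(\pm\infty;\lambda,\eta)$ on the closed right half‑plane. By the very definition of $\lambda_{j,\pm}(k;\eta)$ one has $\lambda=\lambda_{j,\pm}(k;\eta)$ for some $j\in\{1,2,3\}$ precisely when $ik+\mu_{L,R}\in\spec G(\pm\infty;\lambda,\eta)$; letting $k$ and $\eta$ range over $\mathbb{R}$, the dispersion relations are $(\mu_L,\mu_R)$--weightedly stable, i.e.\ $\sup_{k,\eta,j}\Re\lambda_{j,\pm}(k;\eta)\le0$, if and only if for every $\lambda$ with $\Re\lambda>0$ and every $\eta\in\mathbb{R}$ we have $\mu_L\notin\{\Re\gamma_{i,-}(\lambda,\eta)\}_{i=1,2,3}$ and $\mu_R\notin\{\Re\gamma_{i,+}(\lambda,\eta)\}_{i=1,2,3}$. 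The exponentially stable version ($\sup<0$) is the same statement with $\Re\lambda>0$ replaced by $\Re\lambda\ge0$.

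Next I would pin down the relevant dimension counts. Since $G(\pm\infty;\cdot,\cdot)$ is affine in $(\lambda,\eta)$ by \eqref{A1A2E}, the $\Re\gamma_{i,\pm}$ are continuous, so on the connected set $\{\Re\lambda>0\}\times\mathbb{R}$ the integers $N_\pm:=\#\{i:\Re\gamma_{i,\pm}(\lambda,\eta)>\mu_{L,R}\}$ are constant whenever $\mu_{L,R}$ is avoided there, and their common value equals the value at a single convenient point, say $(\lambda,\eta)=(R,0)$ with $R>0$ large, where $G(\pm\infty;R,0)=-R\,A_1(U_\pm)^{-1}+O(1)$ makes the count readable off from the signs of the eigenvalues of $A_1(U_\pm)$. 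Comparing, at the equilibrium endstates, the characteristic speeds $\sqrt{H}$ and $\sqrt{H}(1\pm F^{-1})$ of \eqref{sv_intro} with the shock speed $s=(H_L^{3/2}-H_R^{3/2})/(H_L-H_R)$, in the normalization $H_L=1$, $H_R=1/\nu^2$ and regime $2<F<F_{\text{exist}}=\nu(\nu+1)$ (Lax ``$2$-shock'' structure), shows $A_1(U_-)$ has exactly one positive and two negative eigenvalues and $A_1(U_+)$ has three negative eigenvalues; hence $N_-=2$ and $N_+=3$, since $\mu_L\ge0$ and $\mu_R\le0$. Because the $\gamma_{i,\pm}$ are real‑part‑ordered, $N_-=2$ and $N_+=3$ are equivalent to
\[
\Re\gamma_{1,-}(\lambda,\eta)<\mu_L<\Re\gamma_{2,-}(\lambda,\eta)\qquad\text{and}\qquad\mu_R<\Re\gamma_{1,+}(\lambda,\eta)
\]
for all $\Re\lambda>0$, $\eta\in\mathbb{R}$; here $\mu_L<\Re\gamma_{3,-}$ is automatic from $\Re\gamma_{3,-}\ge\Re\gamma_{2,-}$ and $\mu_R<\Re\gamma_{2,+},\Re\gamma_{3,+}$ from $\mu_R<\Re\gamma_{1,+}\le\Re\gamma_{2,+}\le\Re\gamma_{3,+}$, which explains why only $\gamma_{1,\pm}$ and $\gamma_{2,-}$ appear in the statement.

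It remains to pass from these inequalities on the open half‑plane to the suprema and infima over $\overline{\mathbb{C}_+}\times\mathbb{R}$. Monotonicity of the infimum under the pointwise bound $\Re\gamma_{3,-}\ge\Re\gamma_{2,-}$ (and its analogue at $+\infty$) makes the constraints coming from $\gamma_{3,-}$, $\gamma_{2,+}$ and $\gamma_{3,+}$ redundant, so the displayed inequalities hold on $\{\Re\lambda>0\}$ if and only if $\sup_{\Re\lambda\ge0,\eta}\Re\gamma_{1,-}\le\mu_L\le\inf_{\Re\lambda\ge0,\eta}\Re\gamma_{2,-}$ and $\mu_R\le\inf_{\Re\lambda\ge0,\eta}\Re\gamma_{1,+}$, and likewise with strict inequalities in the exponentially stable case. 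This last equivalence is where the work sits: one needs the extremal values $\sup_{\Re\lambda\ge0,\eta}\Re\gamma_{1,-}$, $\inf_{\Re\lambda\ge0,\eta}\Re\gamma_{2,-}$ and $\inf_{\Re\lambda\ge0,\eta}\Re\gamma_{1,+}$ to be attained only on the boundary line $\{\Re\lambda=0\}$, never in the open half‑plane — equivalently, that on $\{\Re\lambda>0\}$ the three eigenvalue‑real‑parts stay strictly on one side of those extremal values.

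I expect this boundary‑attainment property to be the main obstacle. Away from branch points of $\det\big(G(\pm\infty;\lambda,\eta)-\gamma\,\mathrm{Id}\big)=0$ the three roots are locally holomorphic in $\lambda$, so their real parts are harmonic and the open mapping theorem forbids interior extrema; together with the large‑$|\lambda|$ asymptotics above (which send the relevant branch's real part to $\mp\infty$ as $\Re\lambda\to+\infty$), this confines any interior extremum to a branch point where the two smallest, resp.\ two largest, real parts collide, with collision value equal to the extremum. Such a collision at the extremal value is incompatible with a genuine spectral gap $\sup_{\Re\lambda\ge0,\eta}\Re\gamma_{1,-}<\inf_{\Re\lambda\ge0,\eta}\Re\gamma_{2,-}$ (and its one‑sided analogue at $+\infty$), so the statement reduces to establishing that gap, part of the consistent‑splitting structure of these profiles; for $\eta=0$ it can be extracted by inverting the explicit formulas for $\lambda_{j,\pm}(k;0)$ of the preceding lemma (cf.\ also \cite{YZ,FRYZ} in one dimension), and for $\eta\neq0$ either by the same algebra applied to the cubic or by a continuation argument in $\eta$. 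Everything else is routine bookkeeping.
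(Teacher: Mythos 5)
Your proof is correct and takes essentially the same route as the paper: translate weighted‑essential‑spectrum stability into constancy of the eigenvalue count $N_\pm=\#\{i:\Re\gamma_{i,\pm}(\lambda,\eta)>\mu_{L,R}\}$ on the connected set $\{\Re\lambda>0\}\times\mathbb{R}$, determine $N_-=2$, $N_+=3$ from the large‑$|\lambda|$ behavior $G(\pm\infty;\lambda,\eta)\sim-\lambda A_1(U_\pm)^{-1}$ (your Lax‑shock characteristic‑speed count encodes exactly the asymptotic eigenvalue formulas the paper writes down), and convert to the ordered‑real‑part inequalities. Your harmonicity and branch‑point handling of the passage from $\{\Re\lambda>0\}$ to the suprema and infima over the closed half‑plane is a useful sharpening of a step the paper dispatches in one sentence, but it is a refinement, not a different strategy.
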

\begin{proof}
On the right side of the rightmost $(\mu_L,\mu_R)$--weighted dispersion relations, referred to as domain of extended consistent splitting in the literature, the number of eigenvalues of $G(-\infty;\lambda,\eta)$ ($G(+\infty;\lambda,\eta)$) with real part greater than $\mu_L$  ($\mu_R$) must be constant, which we now determine. When $|\lambda|\rightarrow \infty$ and $\eta$ stays bounded, the leading terms of eigenvalues of $G(\pm\infty;\lambda,\eta)$ are connected to characteristic speeds of \eqref{lineqs} as $x\to \pm \infty$. Specifically, eigenvalues of $G(-\infty;\lambda,\eta)$ ($G(+\infty;\lambda,\eta)$) expand as
$$
\begin{aligned}
&\lambda F_{\text{exist}}+\mathcal{O}(1),\quad \lambda\frac{F_{\text{exist}}F}{F\pm F_{\text{exist}}}+\mathcal{O}(1)\\
\Big(&\lambda F_{\text{char}}\nu+\mathcal{O}(1),\quad \lambda\frac{F_{\text{char}}F}{F\pm F_{\text{char}}}+\mathcal{O}(1)\Big).
\end{aligned}
$$
For $2<F< F_{\text{exist}}$ and bounded $\eta$, as $\Re\lambda \to \infty$, the constant is $2$ at $-\infty$ ($3$ at $+\infty$). Nonpresence of $\mu_L$($\mu_R$)--weighted dispersion relations $\lambda_{j,-}(k;\eta)$ ($\lambda_{j,+}(k;\eta)$), $j=1,2,3$, on the right half plane is equivalent to the foregoing (latter) condition on $\mu_L$ ($\mu_R$).
\end{proof}
In the one-dimensional case, \cite[\S 3.4]{FRYZ} shows that one can always choose a $\mu_R<0$ so that the $\mu_R$--weighted dispersion relations $\lambda_{j,+}(k)$, $j=1,2$, are stable. We assert the same result in the two-dimensional case by claiming that $\Re \gamma_{1,+}(\lambda,\eta)$ has a lower bound.
\begin{lemma}\label{existencemuR} For $2<F$, $\Re\gamma_{1,+}(\lambda,\eta)$ is bounded from below for $\Re\lambda\geq 0$ and $\eta\in \mathbb{R}$.
\end{lemma}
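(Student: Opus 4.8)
\emph{Plan.} The plan is to prove the stronger statement that \emph{every} eigenvalue of $G(+\infty;\lambda,\eta)$ has real part bounded below, uniformly for $\Re\lambda\ge 0$ and $\eta\in\mathbb R$, by a one‑line energy estimate built on the entropy symmetrizer of (SV) together with the fact that $F>2$ forces the right endstate to be supersonic relative to the shock. First I would note that if $G(+\infty;\lambda,\eta)\zeta=\gamma\zeta$ with $\zeta\ne 0$, then $w:=A_1^{-1}\zeta\ne 0$ (here $A_1=A_1(+\infty)$, non‑characteristic, so invertible) solves the symbol equation
\[
(\lambda\,\mathrm{Id}_3+\gamma A_1+i\eta A_2-E)\,w=0 ,
\]
with $A_1,A_2,E$ the constant matrices \eqref{A1A2E} evaluated at the right endstate $U_+=\lim_{x\to+\infty}\bar U$ and $A_0=\mathrm{Id}_3$. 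It therefore suffices to bound $\Re\gamma$ from below over all such $(\lambda,\gamma,\eta,w)$.

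Next I would invoke symmetrizability. Since (SV) is a system of conservation laws admitting the strictly convex mechanical energy $\mathcal E(h,q,p)=\tfrac{q^2+p^2}{2h}+\tfrac{h^2}{2F^2}$ as an entropy, $S:=D^2\mathcal E(U_+)$ is real symmetric positive definite and both $SA_1$ and $SA_2$ are (real) symmetric. The eigenvalues of $A_1=dF_1(U_+)-s\,\mathrm{Id}_3$ are the characteristic speeds $u_+,\ u_+\pm c_+$ of (SV) at $U_+$ shifted by $-s$, with $u_+=\sqrt{H_R}$, $c_+=\sqrt{H_R}/F$; the Lax condition together with the explicit shock speed gives $s>u_++c_+$ precisely when $F>F_{\text{char}}(\nu)$, hence for all $F>2$ since $F_{\text{char}}(\nu)<2$. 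Thus all three eigenvalues of $A_1$ are strictly negative, so by Sylvester's law of inertia the symmetric matrix $SA_1$ is negative definite, and $w^*SA_1w\le -\delta\,|w|^2$ where $\delta$, the least eigenvalue of $-SA_1$, is a positive constant depending only on $(F,\nu)$.

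I would then left‑multiply the symbol equation by $w^*S$ and take real parts. Because $SA_2$ is real symmetric and $\eta\in\mathbb R$, the term $i\eta\,w^*SA_2w$ is purely imaginary, and $w^*Sw$, $w^*SA_1w$ are real, so what survives is
\[
\Re\lambda\,(w^*Sw)+\Re\gamma\,(w^*SA_1w)-\Re(w^*SEw)=0 .
\]
Solving for $\Re\gamma$ and using $w^*SA_1w\le-\delta|w|^2<0$, $\Re\lambda\,(w^*Sw)\ge 0$, and $|w^*SEw|\le\|SE\|\,|w|^2$ gives $\Re\gamma\ge -\|SE\|/\delta=:-C$ with $C=C(F,\nu)$ independent of $(\lambda,\eta)$; in particular $\Re\gamma_{1,+}(\lambda,\eta)\ge -C$, which is the claim.

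\emph{Main obstacle.} The estimate itself is trivial; the content is in its two inputs. The point requiring the most care is that the entropy symmetrizer of the \emph{two‑dimensional} system symmetrizes $A_2$ as well as $A_1$ — this is exactly what makes the $i\eta A_2$ contribution drop out of the real part, so the bound is uniform in $\eta$ rather than degrading as $|\eta|\to\infty$. The second is the bookkeeping that ``all characteristic speeds at $U_+$ lie strictly below $s$'' is equivalent to $F>F_{\text{char}}(\nu)$, hence automatic for $F>2$. Both can be made fully explicit, if preferred, by writing down a positive definite $3\times 3$ matrix $S$ with $SA_1(+\infty)$, $SA_2(+\infty)$ symmetric directly from \eqref{A1A2E} and computing the eigenvalues of $SA_1(+\infty)$ there.
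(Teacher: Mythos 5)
Your proof is correct, and it takes a genuinely different route from the paper. The paper rescales $(\lambda,\eta)=(\tilde\lambda\, r\sin\theta,\,r\cos\theta)$, passes to $r\to\infty$, and computes the three eigenvalues of the leading-order rescaled matrix $M(0)$ explicitly, checking by hand that their real parts are nonnegative; boundedness on compacta is handled separately by continuity. You instead exploit structure: the entropy symmetrizer $S=D^2\mathcal E(U_+)$ simultaneously symmetrizes $dF_1$ and $dF_2$, so the transverse term $i\eta\,w^*SA_2w$ drops out of the real part of the Rayleigh-quotient identity regardless of $\eta$, and the downstream-supersonic condition $s>u_++c_+$ (equivalent to $F>F_{\text{char}}(\nu)$, hence automatic for $F>2$) makes $SA_1$ negative definite, which is the sign you need to solve for $\Re\gamma$. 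The payoffs differ: the paper's computation gives precise leading asymptotics of all three $\gamma_{j,+}$ as $|(\lambda,\eta)|\to\infty$, which the authors also use elsewhere; your argument gives a single uniform lower bound $\Re\gamma\geq-\|SE\|/\delta$ valid for all $(\lambda,\eta)$ with $\Re\lambda\geq0$ at once (no compactness/continuity step), with constants computable from data at $U_+$, and it visibly generalizes to any symmetrizable relaxation system whose downstream state lies entirely on one side of the shock characteristic cone. One point worth stating explicitly if you flesh this out: the inertia step uses that $SA_1$ is congruent to $S^{1/2}A_1S^{-1/2}$ (hence has the same signature), and the latter is similar to $A_1$, whose eigenvalues $u_+-s$, $u_+\pm c_+-s$ are real and, for $F>F_{\text{char}}(\nu)$, all strictly negative.
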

\begin{proof}
On any compact subset of $\{\lambda:\Re\lambda\geq 0\}\times \mathbb{R}$, $\Re\gamma_{1,+}(\lambda,\eta)$ is surely bounded from below by continuity. It suffices to show that $\Re\gamma_{1,+}(\lambda,\eta)$ is also bounded from below as $|\lambda|^2+|\eta|^2\rightarrow \infty$. For $\theta\in[0,\pi]$, $r\gg 1$, and $\tilde{\lambda}=e^{i\tilde{\theta}}$ where $\tilde{\theta}\in[-\frac{\pi}{2},\frac{\pi}{2}]$, let $\lambda=\tilde{\lambda}r\sin(\theta)$ and $\eta=r\cos(\theta)$ so that
$$
\sigma(G(+\infty;\lambda,\eta))=r\sigma\big((-\tilde{\lambda}\sin(\theta)A_0-i\cos(\theta)A_2+\eps E)A_1^{-1}(+\infty)\big)=:r\sigma(M(\eps)),
$$
where $\eps:=1/r\ll 1$. The eigenvalues of the leading matrix $M(0)$ are $\tilde{\lambda}\sin(\theta)(\nu^{-1}+ 1)$ and
$$
\frac{(\nu + 1)\left(F^2\tilde{\lambda}\sin(\theta)\nu\pm\sqrt{\cos(\theta)^2(F_{\text{char}}^2-F^2) + F^2\tilde{\lambda}^2\sin(\theta)^2F_{\text{char}}^2\nu^2}\right)}{\nu^2(F^2 -F_{\text{char}}^2)}.
$$
Clearly, $\Re(\tilde{\lambda}\sin(\theta)(\sqrt{H_R}+1))=\sin(\theta)(\sqrt{H_R}+1)\Re(\tilde{\lambda})\geq 0$ and 
$$
\Re\left(F^2\tilde{\lambda}\sin(\theta)\nu+\sqrt{\cos(\theta)^2(F_{\text{char}}^2-F^2) + F^2\tilde{\lambda}^2\sin(\theta)^2F_{\text{char}}^2\nu^2}\right)\geq 0.
$$
To verify 
$$\Re\left(F^2\tilde{\lambda}\sin(\theta)\nu-\sqrt{\cos(\theta)^2(F_{\text{char}}^2-F^2) + F^2\tilde{\lambda}^2\sin(\theta)^2F_{\text{char}}^2\nu^2}\right)\geq 0,
$$
when $\sin(\theta)=0$, it follows from $F_{\text{char}}<2<F$, and when $\sin(\theta)>0$, factoring out $\sin(\theta)$ yields
$$\sin(\theta)\Re\left(F^2\tilde{\lambda}\nu-\sqrt{\cot(\theta)^2(F_{\text{char}}^2-F^2) + F^2\tilde{\lambda}^2F_{\text{char}}^2\nu^2}\right).
$$
Since $\cot(\theta)^2$ varies in $[0,\infty)$ and $F_{\text{char}}^2-F^2<0$, we make computations by the formula $\Re\sqrt{z}=\sqrt{\frac{\Re z+|z|}{2}}$ to obtain
$$
\begin{aligned}
&\sup_{M\leq 0}\Re\sqrt{M+F^2\tilde{\lambda}^2F_{\text{char}}^2\nu^2}\\
=&\sqrt{\sup_{M\leq 0}\frac{\Re(M+F^2\tilde{\lambda}^2F_{\text{char}}^2\nu^2)+|M+F^2\tilde{\lambda}^2F_{\text{char}}^2\nu^2| }{2}}\\
=&\Re\sqrt{M+F^2\tilde{\lambda}^2F_{\text{char}}^2\nu^2}\Bigg|_{M=0}=FF_{\text{char}}\nu\Re{\tilde\lambda}
\end{aligned}
$$
and hence
$$
\begin{aligned}
&\Re\left(F^2\tilde{\lambda}\nu-\sqrt{\cot(\theta)^2(F_{\text{char}}^2-F^2) + F^2\tilde{\lambda}^2F_{\text{char}}^2\nu^2}\right)\geq &  F(F-F_{\text{char}})\nu\Re\tilde{\lambda}\geq 0.
\end{aligned}
$$
\end{proof}

As for the stabilization of $\mu_L$--weighted dispersion relations $\lambda_{j,-}(k;\eta)$, $j=1,2,3$, in the one-dimensional case, \cite{FRYZ} shows that it suffices to stabilize them as $k\to \infty$ for the most unstable essential spectra are located in the $|\Im\lambda|\to \infty$ limit. This motivates in the 2d case to search a $\mu_L$ weight that stabilizes the weighted dispersion relations in the $k^2+\eta^2\to \infty$ limit.

\begin{lemma}\label{stability_1d2d}

If ${\normalfont F_{\text{2d}}}< F<{\normalfont F_{\text{exist}}}$, there exists no $\mu_L$--weight that can stabilize the weighted dispersion relations $\lambda_{j,-}(k;\eta)$. If $2<F< {\normalfont F_{\text{2d}}} $, for
\be \label{w_range}
\mu_L\in\left(\frac{F^2}{2}-\frac{F^2}{\normalfont{F_{\text{exist}}}}-\frac{F}{{\normalfont F_{\text{2d}}}}\sqrt{{\normalfont F_{\text{2d}}}^2-F^2},\frac{F^2}{2}-\frac{F^2}{\normalfont{F_{\text{exist}}}}+\frac{F}{{\normalfont F_{\text{2d}}}}\sqrt{{\normalfont F_{\text{2d}}}^2-F^2}\right)\subset(0,\infty),
\ee 
the $\mu_L$--weighted dispersion relations $\lambda_{j,-}(k;\eta)$, $j=1,2,3$, are stable in the $k^2+\eta^2\to \infty$ limit.
\end{lemma}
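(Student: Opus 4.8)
The plan is to imitate the high-frequency reduction used in the proof of Lemma~\ref{existencemuR}, now at the $x=-\infty$ endstate, and feed the outcome into the $\gamma_{i,-}$--characterization of weighted stability established just above. First I would record that, since admissible weights $\mu_L$ are $\mathcal O(1)$ while the $\gamma_{j,-}$ grow linearly, in the regime $k^2+\eta^2\to\infty$ the weighted essential spectrum can approach the line $\{\Re\lambda=\mu_L\}$ only along directions in which the principal symbol of $G(-\infty;\lambda,\eta)$ from \eqref{zgeig} has an eigenvalue with vanishing real part. Writing, exactly as in Lemma~\ref{existencemuR}, $\lambda=\tilde\lambda\,r\sin\theta$, $\eta=r\cos\theta$ with $\tilde\lambda=e^{i\tilde\theta}$, $\tilde\theta\in[-\tfrac\pi2,\tfrac\pi2]$, $\theta\in[0,\pi]$ and $r=1/\eps\to\infty$, one gets $\sigma\big(G(-\infty;\lambda,\eta)\big)=r\,\sigma\big(M_-(\eps)\big)$ with
\[
M_-(\eps)=\big(-\tilde\lambda\sin\theta\,\Id-i\cos\theta\,A_2(-\infty)+\eps E\big)A_1^{-1}(-\infty),\qquad M_-'(0)=E\,A_1^{-1}(-\infty).
\]

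Step two is to diagonalize the leading matrix $M_-(0)$, using the block structure of \eqref{A1A2E} at $x=-\infty$: one eigenvalue is the decoupled ``shear'' mode $\tilde\lambda\sin\theta\,F_{\text{exist}}$ (reflecting $1/|\bar u-s|=F_{\text{exist}}=\nu(\nu+1)$ there), and the other two are ``acoustic'', with the same square-root structure as the acoustic eigenvalues in Lemma~\ref{existencemuR} but built from $F_{\text{exist}}$ in place of the quantity $F_{\text{char}}\nu=1/|\bar u-s|$ at $+\infty$. The decisive structural facts are: on the imaginary axis $\tilde\theta=\pm\tfrac\pi2$ the matrix $(\pm\sin\theta\,\Id+\cos\theta\,A_2)A_1^{-1}$ is real, so $M_-(0)$ has purely imaginary eigenvalues exactly when that real matrix is diagonalizable over $\R$; by hyperbolicity of the first-order Saint--Venant symbol in the corresponding ``tilted'' direction this holds for $\theta\in(\theta_\ast,\tfrac\pi2]$, where $\tan^2\theta_\ast=1/F^2-1/F_{\text{exist}}^2>0$ (recall $F<F_{\text{exist}}$, \eqref{defF_existence}), while at $\theta=\theta_\ast$ the two acoustic modes coalesce into a Jordan block — an Airy-type turning point; and for $\theta\in[0,\theta_\ast)$ an acoustic eigenvalue of $M_-(0)$ has strictly negative real part of size $\sim r$, so no weighted essential spectrum sits near $\{\Re\lambda=\mu_L\}$ in those directions.

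Step three carries out first-order eigenvalue perturbation on the two acoustic branches over $\theta\in(\theta_\ast,\tfrac\pi2]$: with explicit left/right null vectors $\ell_j,r_j$ of $M_-(0)-\mu_j(0)\Id$ one has $\mu_j(\eps)=\mu_j(0)+\eps\,\Phi_j+\mathcal O(\eps^2)$, $\Phi_j:=\ell_j E\,A_1^{-1}(-\infty) r_j/(\ell_j r_j)$. Since the weighted dispersion curve is the locus where $r\mu_j(\eps)=ik+\mu_L$ for some $k\in\R$, one is driven onto $\tilde\theta\to\pm\tfrac\pi2$, and a short computation (reducing at $\theta=\tfrac\pi2$, $\eta=0$ to the 1d analysis of \cite[\S3.4]{FRYZ}) shows that in the limit $\Re\lambda$ on this curve has the sign of $\mu_L-\Re\Phi_j$ times that of the real $\eta=0$ slope of the branch — positive for the branch of slope $\tfrac{F F_{\text{exist}}}{F+F_{\text{exist}}}$ (this is $\gamma_{2,-}$), negative for slope $\tfrac{F F_{\text{exist}}}{F-F_{\text{exist}}}$ ($\gamma_{1,-}$). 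Hence weighted stability in the $k^2+\eta^2\to\infty$ limit holds precisely when
\[
\sup_{\theta\in(\theta_\ast,\pi/2]}\Re\Phi_1(\theta)\ \le\ \mu_L\ \le\ \inf_{\theta\in(\theta_\ast,\pi/2]}\Re\Phi_2(\theta),
\]
the high-frequency shadow of the $\gamma_{i,-}$ condition. Evaluating these two one-parameter optimizations — the extrema being attained at a common interior critical direction — yields $\sup\Re\Phi_1=\tfrac{F^2}{2}-\tfrac{F^2}{F_{\text{exist}}}-\tfrac{F}{F_{\text{2d}}}\sqrt{F_{\text{2d}}^2-F^2}$ and $\inf\Re\Phi_2=\tfrac{F^2}{2}-\tfrac{F^2}{F_{\text{exist}}}+\tfrac{F}{F_{\text{2d}}}\sqrt{F_{\text{2d}}^2-F^2}$ provided the discriminant $F_{\text{2d}}^2-F^2=F_{\text{exist}}^2/(F_{\text{exist}}-1)-F^2$ (using $\nu^2+\nu-1=F_{\text{exist}}-1$ in \eqref{F_2d}) is nonnegative, i.e.\ $2<F\le F_{\text{2d}}$; this is exactly the window \eqref{w_range}. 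For $F_{\text{2d}}<F<F_{\text{exist}}$ the discriminant is negative, the interior critical configuration disappears, and the two ranges overlap — $\inf_\theta\Re\Phi_2<\sup_\theta\Re\Phi_1$, witnessed by an explicit intermediate $\theta$ — so no $\mu_L$ can stabilize, even though, consistently, the endpoint $\theta=\tfrac\pi2$ still leaves a nonempty one-dimensional window.

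The main obstacle I expect is this last step. The explicit diagonalization of $M_-(0)$ and the null-vector/perturbation computation producing $\Phi_1,\Phi_2$ are routine but heavy $3\times3$ algebra; the delicate content is the optimization of $\Re\Phi_j(\theta)$ over $(\theta_\ast,\tfrac\pi2]$, together with: the bookkeeping of which acoustic branch plays the role of $\gamma_{1,-}$ versus $\gamma_{2,-}$ as $\theta$ and $F$ vary, since their ordering by real part can switch; the check that neither the endpoint $\theta\to\theta_\ast^+$ (where the turning point forces a separate $\sqrt\eps$-type local expansion) nor $\theta=\tfrac\pi2$ beats the interior critical value; and the verification that this interior optimum packages into precisely the closed form built from $F_{\text{2d}}$ in \eqref{F_2d} — the step where the threshold $F_{\text{2d}}$ actually originates.
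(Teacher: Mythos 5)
Your structural reading is correct — the high-frequency behavior is governed by a first-order eigenvalue perturbation whose $\mathcal O(1)$ real parts are rational in a polar angle, the admissible $\mu_L$'s are sandwiched between extrema of those real parts over the angle, and the discriminant controlling whether the window is nonempty is proportional to $F_{\text{2d}}^2-F^2$; the endpoint formulas you quote are indeed what comes out. But the route you take to get there is needlessly hard, and the difficulties you flag at the end are genuine under your parametrization while they simply do not exist under the paper's. You polarize $(\lambda,\eta)$ as in Lemma~\ref{existencemuR} and study the spatial eigenvalues $\gamma_{j,-}(\lambda,\eta)$ of $G(-\infty;\lambda,\eta)$, which forces you through a glancing angle $\theta_\ast$ with $\tan^2\theta_\ast=1/F^2-1/F_{\text{exist}}^2$ (this formula is correct), a coalescence of acoustic roots there with an attendant non-uniform ($\sqrt\eps$-type) local expansion, and nontrivial bookkeeping of which root is $\gamma_{1,-}$ versus $\gamma_{2,-}$ as $\theta$ and $F$ vary. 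The paper instead polarizes $(k,\eta)$: since $A_0=\Id$, the $\mu_L$-weighted dispersion relations $\lambda_{j,-}(k;\eta)$ are exactly the eigenvalues of $-ikA_1-i\eta A_2+E-\mu_LA_1$ at $x=-\infty$, so with $(k,\eta)=(r\cos\theta,r\sin\theta)$ and $\eps=1/r$ one gets $r\,\sigma(M(\eps))$ where the leading matrix $-i(\cos\theta A_1+\sin\theta A_2)$ has spectrum $\{\pm i/F+i\cos\theta/F_{\text{exist}},\ i\cos\theta/F_{\text{exist}}\}$ — three eigenvalues that are purely imaginary \emph{and pairwise distinct for every} $\theta$. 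No turning points, no Airy analysis, no branch ambiguity. Standard first-order perturbation then gives the $\mathcal O(1)$ real parts as the explicit rational expressions $L_j(0)(E-\mu_LA_1)R_j(0)$ of \eqref{dispersion_real}; imposing negativity of all three for all $\theta$ (using the symmetry swapping $j=1,2$ under $\theta\mapsto\theta+\pi$) pins $\mu_L$ between the local max and min of $\frac{FF_{\text{exist}}(\cos^2\theta+F\cos\theta+1)}{2(\cos\theta F_{\text{exist}}+F)}$ on the two sides of the pole $\cos\theta=-F/F_{\text{exist}}$, the derivative of which has a quadratic numerator whose discriminant carries the sign of $F_{\text{2d}}^2-F^2$. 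That single elementary optimization over $\cos\theta$ does all the work; the whole $\gamma_{j,-}$/glancing apparatus you build is an artifact of choosing to parametrize the wrong pair of variables, and since you yourself identify that apparatus as the undone and delicate part of the argument, the gap in your proposal is precisely the part that disappears once you make the paper's change of parametrization.
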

\begin{proof}
For $\eta=\sin(\theta)r$ and $k=\cos(\theta)r$ with $r\gg 1$ and $\theta\in [0,2\pi]$, the $\mu_L$--weighted dispersion relations are given by
$$\sigma(-ikA_1-i\eta A_2+E-\mu_LA_1)=r\sigma\left(-i\cos(\theta)A_1-i\sin(\theta)A_2+\eps(E-\mu_LA_1)\right)=:r\sigma(M(\eps))$$
where $\eps:=1/r\ll 1$. The eigenvalues of the leading matrix $-i\cos(\theta)A_1-i\sin(\theta)A_2$ are 
$$
\tilde{\lambda}_{1,2}(0)=\pm\frac{i}{F}+\frac{i\cos(\theta)}{F_{\text{exist}}}\quad \text{and} \quad \tilde{\lambda}_3(0)=\frac{i\cos(\theta)}{F_{\text{exist}}},
$$
hence, distinct and purely imaginary. For sufficiently small $\eps$, let $P_j(\eps)$ be the projection onto the $\tilde{\lambda}_j(\eps)$-eigenspace of $M(\eps)$. Let $U_j(\eps)$ be transformation function for $P_j(\eps)$ and set
$$
L_j(\eps)=e_j^TU_j^{-1}(\eps),\quad R_j(\eps)=U_j(\eps)e_j.
$$
For small $\eps$, there holds, for $j=1,2,3$,
$$
\sigma_{\Big|\text{near $\tilde{\lambda}_j(0)$}}(M(\eps))=\sigma\left(L_j(\eps)M(\eps)R_j(\eps)\right)=L_j(\eps)M(\eps)R_j(\eps).
$$
By rescaling, we may further assume $L_j(0)R_j'(0)=0$ and $L_j'(0)R_j(0)=0$, with which the leading $\mathcal{O}(\eps)$ real part of $L_j(\eps)M(\eps)R_j(\eps)$ is given by $L_j(0)(E-\mu_LA_1)R_j(0)\eps$. Therefore, for $|k|+|\eta|\gg 1$,
$$
\Re\sigma(-ikA_1-i\eta A_2+E-\mu_LA_1)=\Re\; r\sigma(M(\eps))=\big\{L_j(0)(E-\mu_LA_1)R_j(0)+\mathcal{O}(\eps):j=1,2,3\big\},
$$
where 
\ba 
\label{dispersion_real}
&L_1(0)(E-\mu_LA_1)R_1(0)=\frac{\cos(\theta)F_{\text{exist}} + F}{FF_{\text{exist}}}\mu_L- \frac{1}{2}\cos(\theta)^2 - \frac{F}{2}\cos(\theta) - \frac{1}{2},\\
&L_2(0)(E-\mu_LA_1)R_2(0)=-\frac{\cos(\theta)F_{\text{exist}} + F}{FF_{\text{exist}}}\mu_L- \frac{1}{2}\cos(\theta)^2 + \frac{F}{2}\cos(\theta) - \frac{1}{2},\\
&L_3(0)(E-\mu_LA_1)R_3(0)=\frac{\mu_L}{F_{\text{exist}}} - \sin(\theta)^2 - 1.
\ea 
At leading order, a sufficient condition for stabilizability is $L_j(0)(E-\mu_LA_1)R_j(0)< 0$, for all $\theta\in[0,2\pi]$ and $j=1,2,3$ while a sufficient condition for non-stabilizability is $L_j(0)(E-\mu_LA_1)R_j(0)> 0$ for some $\theta$ and $j$ and any $\mu_L\geq 0$. For $j=3$, stabilizability requires $\mu_L<F_{\text{exist}}$. For $j=1,2$, noting that $(L_1(E-\mu_LA_1)R_1)(\theta,\mu_L)=(L_2(E-\mu_LA_1)R_2)(\theta+\pi,\mu_L)$, stabilizability reduces to requiring $(L_1(E-\mu_LA_1)R_1)(\theta,\mu_L)< 0$ for all $\theta\in[0,2\pi]$, yielding: 

(i) when $\cos(\theta)F_{\text{exist}} + F=0$, $F<F_{\text{2d}}$,

(ii) when $\cos(\theta)F_{\text{exist}} + F<0$, 
$$
\frac{FF_{\text{exist}}\left(\cos(\theta)^2+F\cos(\theta)+1\right)}{2\left(\cos(\theta)F_{\text{exist}}+F\right)}< \mu_L, \quad \forall \cos(\theta)\in[-1,-\frac{F}{F_{\text{exist}}})
$$

and, (iii) when $\cos(\theta)F_{\text{exist}} + F>0$,
$$
\mu_L< \frac{FF_{\text{exist}}\left(\cos(\theta)^2+F\cos(\theta)+1\right)}{2\left(\cos(\theta)F_{\text{exist}}+F\right)},\quad \forall \cos(\theta)\in(-\frac{F}{F_{\text{exist}}}, 1 ].
$$
Computation reveals that the derivative
$$
\frac{d}{d\cos(\theta)}\frac{\cos(\theta)^2+F\cos(\theta)+1}{\cos(\theta)F_{\text{exist}}+F}=\frac{\cos(\theta)^2F_{\text{exist}}+2F\cos(\theta)+F^2 -F_{\text{exist}}}{(\cos(\theta)F_{\text{exist}} + F)^2}
$$
is positive at $\cos(\theta)=\pm 1$. In (ii) and (iii), $\cos(\theta)\neq -F/F_{\text{exist}}$, the denominator of the derivative is always positive. By (i), given $F<F_{\text{2d}}$, the numerator of the derivative has two zeros, one in $(-1,-F/F_{\text{exist}})$, where the function achieves a maximum, and the other one in $(-F/F_{\text{exist}},1)$, where the function achieves a minimum, giving \eqref{w_range}. We also verify $$\frac{F^2}{2}-\frac{F^2}{F_{\text{exist}}}-\frac{F}{ F_{\text{2d}}}\sqrt{ F_{\text{2d}}^2-F^2}>0$$  and 
$$
\frac{F^2}{2}-\frac{F^2}{F_{\text{exist}}}+\frac{F}{ F_{\text{2d}}}\sqrt{ F_{\text{2d}}^2-F^2}<F_{\text{exist}}.
$$
As for non-stabilizability, from (i), we see when $F>F_{\text{2d}}$ and $\cos(\theta)=-F/F_{\text{exist}}$, $L_1(0)(E-\mu_LA_1)R_1(0)>0$ for any $\mu_L\in[0,\infty)$.

For the critical case $F=F_{\text{2d}}$, stabilizability is determined upon investigating the next $\mathcal{O}(\eps)$-order. 
\end{proof}

It turns out that the mid-point of the interval \eqref{w_range} ``maximally stabilizes'' the dispersion relations at $-\infty$ in the $k^2+\eta^2\to \infty$ limit.

\begin{lemma}\label{optimal_weight}
For $2<F<{\normalfont F_{\text{2d}}}$, the optimal weight $\mu_L$ such that the dispersion relations $\lambda_{j,-}(k;\eta)$, $j=1,2$, are maximally stabilized in the $k^2+\eta^2\to\infty$ limit is the mid-point of \eqref{w_range} that is 
\be 
\label{muopt}
\mu_{L,opt}=\frac{F^2}{2}-\frac{F^2}{F_{\text{exist}}}.
\ee
For $\mu_L=\mu_{L,opt}$, the leading $\mathcal{O}(1)$ real parts of the dispersion relations \eqref{dispersion_real} in the $k^2+\eta^2\to\infty$ limit satisfy
\ba \label{optimal_weight_inequalities}
&L_1(0)(E-\mu_{L,opt}A_1)R_1(0),L_2(0)(E-\mu_{L,opt}A_1)R_2(0)\leq \frac{F^2-F_{\text{2d}}^2}{2F_{\text{2d}^2}}<0,\\
&L_3(0)(E-\mu_{L,opt}A_1)R_3(0)\le\frac{F_{\text{exist}} - 2}{2F_{\text{exist}}^2}F^2-1<0.
\ea 
\end{lemma}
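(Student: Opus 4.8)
The plan is to reduce the statement to the explicit one–variable optimization already visible in the formulas \eqref{dispersion_real} from the proof of Lemma~\ref{stability_1d2d}. Writing $c=\cos\theta\in[-1,1]$ and setting
\[
g(c,\mu_L):=L_1(0)(E-\mu_L A_1)R_1(0)=\frac{cF_{\text{exist}}+F}{F\,F_{\text{exist}}}\,\mu_L-\tfrac12 c^2-\tfrac F2\,c-\tfrac12,
\]
the parity relation $(L_1(E-\mu_L A_1)R_1)(\theta,\mu_L)=(L_2(E-\mu_L A_1)R_2)(\theta+\pi,\mu_L)$ noted there shows that the quantity to be minimized over admissible weights, $\max_{j=1,2}\sup_{\theta}L_j(0)(E-\mu_L A_1)R_j(0)$, equals $h(\mu_L):=\sup_{c\in[-1,1]}g(c,\mu_L)$. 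So the core claim to establish is that $h$ is minimized at $\mu_L=\mu_{L,opt}$ with $\min_{\mu_L}h(\mu_L)=\frac{F^2-F_{\text{2d}}^2}{2F_{\text{2d}}^2}$; the $j=3$ relation will be handled separately and directly.

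The argument for $h$ rests on two elementary observations about $g$. First, for fixed $\mu_L$ the map $c\mapsto g(c,\mu_L)$ is strictly concave (the coefficient of $c^2$ is $-\tfrac12$), with unconstrained maximizer $c^*(\mu_L)=\mu_L/F-F/2$; substituting $\mu_L=\mu_{L,opt}=\tfrac{F^2}{2}-\tfrac{F^2}{F_{\text{exist}}}$ gives $c^*(\mu_{L,opt})=c_0$, where $c_0:=-F/F_{\text{exist}}$, and since $2<F<F_{\text{2d}}<F_{\text{exist}}$ we have $c_0\in(-1,0)\subset[-1,1]$. Second, $g$ is affine in $\mu_L$ with slope $(cF_{\text{exist}}+F)/(F F_{\text{exist}})$, which vanishes precisely at $c=c_0$; hence $g(c_0,\mu_L)$ does not depend on $\mu_L$, and a short computation using $F_{\text{2d}}^2=F_{\text{exist}}^2/(F_{\text{exist}}-1)$ (equivalently $\nu^2+\nu-1=\nu(\nu+1)-1=F_{\text{exist}}-1$) identifies this constant as $g(c_0,\mu_L)=\frac{F^2-F_{\text{2d}}^2}{2F_{\text{2d}}^2}$.

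The minimax step is then immediate. For every $\mu_L$, evaluating at the admissible point $c=c_0$ gives $h(\mu_L)\ge g(c_0,\mu_L)=\frac{F^2-F_{\text{2d}}^2}{2F_{\text{2d}}^2}$, a bound independent of $\mu_L$; and at $\mu_L=\mu_{L,opt}$ the concave function $g(\cdot,\mu_{L,opt})$ attains its maximum over $[-1,1]$ at the interior critical point $c^*(\mu_{L,opt})=c_0$, so $h(\mu_{L,opt})=g(c_0,\mu_{L,opt})$ meets the bound. Thus $\mu_{L,opt}$ is a minimizer and the optimal value is $\frac{F^2-F_{\text{2d}}^2}{2F_{\text{2d}}^2}$, which is $<0$ because $F<F_{\text{2d}}$; this is the first line of \eqref{optimal_weight_inequalities}. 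I would also record that $\mu_{L,opt}=F^2(F_{\text{exist}}-2)/(2F_{\text{exist}})>0$ since $F_{\text{exist}}=\nu(\nu+1)>2$, so it is an admissible weight and is the midpoint of \eqref{w_range}, matching \eqref{muopt}; and that uniqueness of the minimizer, if wanted, follows by writing $h(\mu_L)=\max\big(\sup_{c\le c_0}g,\ \sup_{c\ge c_0}g\big)$ as the max of a nonincreasing and a nondecreasing function of $\mu_L$ that coincide only at $\mu_{L,opt}$.

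For the $j=3$ relation, the third line of \eqref{dispersion_real} reads $\mu_L/F_{\text{exist}}-\sin^2\theta-1$, whose supremum over $\theta$ is $\mu_L/F_{\text{exist}}-1$ (attained at $\sin\theta=0$); at $\mu_L=\mu_{L,opt}$ this equals $\tfrac{F_{\text{exist}}-2}{2F_{\text{exist}}^2}F^2-1$, the claimed expression, and negativity follows from $F^2<F_{\text{2d}}^2=F_{\text{exist}}^2/(F_{\text{exist}}-1)$, which yields $\tfrac{F_{\text{exist}}-2}{2F_{\text{exist}}^2}F^2<\tfrac{F_{\text{exist}}-2}{2(F_{\text{exist}}-1)}<\tfrac12<1$. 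I expect no serious obstacle; the only point requiring care is pinning down the precise reading of ``maximally stabilized'' — that it means minimizing $\sup_\theta$ of the leading $\mathcal{O}(1)$ real parts of the $j=1,2$ relations over admissible $\mu_L$, and that by the $\theta\mapsto\theta+\pi$ symmetry the $j=1$ and $j=2$ suprema agree so only $g$ need be analyzed — after which everything is the two-line concavity/affineness computation above plus routine algebra matching the closed forms, relying throughout on $2<F<F_{\text{2d}}<F_{\text{exist}}$ and $F_{\text{exist}}=\nu(\nu+1)>2$.
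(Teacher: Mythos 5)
Your argument is correct and reaches the same conclusion as the paper's proof, but via a genuinely different and somewhat slicker route. The paper's proof substitutes the interior maximizer $\cos\theta = \mu_L/F - F/2$ directly into \eqref{dispersion_real}[i], obtaining the explicit quadratic $\frac{1}{2F^2}\mu_L^2 + (\frac{1}{F_{\text{exist}}} - \frac12)\mu_L + \frac{F^2}{8} - \frac12$, and then minimizes this quadratic in $\mu_L$; the midpoint identity drops out because the quadratic's vertex happens to equal the midpoint of \eqref{w_range}. You instead exploit the saddle structure directly: $g(\cdot,\mu_L)$ is concave in $c=\cos\theta$ while the $\mu_L$-slope $\frac{cF_{\text{exist}}+F}{FF_{\text{exist}}}$ vanishes exactly at $c_0=-F/F_{\text{exist}}$, so $g(c_0,\cdot)$ is a $\mu_L$-independent lower bound for $\sup_c g$, and $\mu_{L,opt}$ is precisely the weight at which the concave maximizer coincides with $c_0$, making the bound tight. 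This buys a few things over the paper's route: you avoid having to check that the unconstrained maximizer $c^*(\mu_L)$ stays inside $[-1,1]$ across the whole interval \eqref{w_range} (the paper asserts this without verification), you only need $c_0\in(-1,0)$ which is immediate from $F<F_{\text{exist}}$, your argument shows optimality of $\mu_{L,opt}$ over all real $\mu_L$ rather than just over \eqref{w_range}, and you spell out the explicit identification of the optimal value with $\frac{F^2-F_{\text{2d}}^2}{2F_{\text{2d}}^2}$ via $F_{\text{2d}}^2 = F_{\text{exist}}^2/(F_{\text{exist}}-1)$, which the paper leaves implicit. You also handle the $j=3$ bound in \eqref{optimal_weight_inequalities} explicitly, whereas the paper's proof only says ``similar analysis follows for \eqref{dispersion_real}[ii]'' and is silent about [iii]. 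Your reading of ``maximally stabilized'' as minimizing $\sup_\theta$ of the $\mathcal O(1)$ real part, with the $\theta\mapsto\theta+\pi$ symmetry collapsing the $j=1,2$ suprema to a single function, matches the paper's intent.
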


\begin{proof}
As a quadratic function of $\cos(\theta)$, \eqref{dispersion_real}[i] attains its maximum 
at $\cos(\theta)=\frac{\mu_L}{F} -\frac{F}{2}$ provided that $-1\le\frac{\mu_L}{F} -\frac{F}{2}\leq 1$, which holds for any $\mu$ in \eqref{w_range}. Hence, the maximum is 
$$
\frac{1}{2F^2}\mu^2+\left(\frac{1}{F_{\text{exist}}} - \frac{1}{2}\right)\mu+\frac{F^2}{8} - \frac{1}{2},
$$
which is minimized at the mid point of \eqref{w_range}. Similar analysis follows for \eqref{dispersion_real}[ii] and the optimal $\mu_L$ is the same as before.
\end{proof}

\begin{remark}\label{numericalchecked}
Lemma~\ref{stability_1d2d} and Lemma~\ref{optimal_weight} together prove the stability of $\mu_{L,opt}$--weighted dispersion relations $\lambda_{j,-}(k;\eta)$, $j=1,2,3$, for all sufficiently large $(k,\eta)$. The stability of the optimally weighted dispersion relations for bounded $(k,\eta)$ is checked by (nonrigorous) numerics, which would be very desirable to prove analytically or verify by validated numerics. 
\end{remark}
Based on Remark~\ref{numericalchecked}, we conjecture that for a $(F,H_L,H_R)$ discontinuous planar hydraulic shock in the whole space with $2<F<F_{\text{2d}}$ there exists $(\mu_L,\mu_R)\in[0,\infty)\times(-\infty,0]$ such that the wave has $(\mu_L,\mu_R)$--weightedly stable dispersion relations (essential spectra).

\smallskip

\textbf{General weights and absolute instability.} In the one-dimensional case \cite{FRYZ}, it is proven that for $F_{\text{1d}}<F<F_{\text{exist}}$  the limiting matrix $G_{H_L}(\lambda)$ has identical eigenvalues for a $\lambda$ with positive real part where the eigenvalues lose analyticity and from which ``{\it the resolvent operator cannot be continuously
extended even as an operator from the space of test functions to distributions}". In such a case, ``{\it absolute instabilities cannot be cured in any sensible sense, in particular not
by replacing exponential weights by a more general class of reasonable weights}". 

In the present two dimensional case, it is again of our curiosity to investigate whether the essential spectrum, for $F_{\text{2d}}< F<F_{\text{1d}}$, can be stabilized by a more general class of reasonable weights. Unfortunately, for $0<F-F_{\text{2d}}\ll 1$, as we will prove, the two eigenvalues $\gamma_{1,-}(\lambda,\eta)$ and $\gamma_{2,-}(\lambda,\eta)$ with smaller real parts than that of $\gamma_{3,-}(\lambda,\eta)$ can collide for $\lambda$ with positive real part, resulting in absolute instability when $F$ passes the critical value $F_{\text{2d}}$ from below to above.

At $x=-\infty$, eigenvalues $\gamma_{j,-}$, $j=1,2,3$ of $G(-\infty,\lambda,\eta)$ \eqref{zgeig} are zeros of the characteristic polynomial $\det(E-\lambda \Id -i\eta A_2-\gamma A_1)=:p(\gamma;\lambda,\eta)$ which reads
$$\begin{aligned}
p(\gamma;\lambda,\eta)=&-\frac{F_{\text{exist}}^2-F^2}{F^2F_{\text{exist}}^3}\gamma^3+\frac{(\lambda + 1)(- 3F^2 + F_{\text{exist}}^2)+F^2F_{\text{exist}}}{F^2F_{\text{exist}}^2}\gamma^2\\
&+\frac{-F^2(-3\lambda^2+(F_{\text{exist}} - 6)\lambda+ F_{\text{exist}} - 2)+\eta^2}{F^2F_{\text{exist}}}
\gamma-\frac{(\lambda + 2)(F^2\lambda^2 + F^2\lambda + \eta^2)}{F^2}.
\end{aligned}$$
To locate double zeros of $p(\;\cdot\;;\lambda,\eta)$, taking the resultant of $p(\gamma;\lambda,\eta)=0$ and $p_\gamma(\gamma;\lambda,\eta)=0$ gives a remainder linear in $\gamma$, yielding $\gamma_*(\lambda;\eta)=\frac{\gamma_{top}}{\gamma_{bot}}(\lambda;\eta)$ where
$$
\begin{aligned}
\gamma_{top}&(\lambda;\eta)=F_{\text{exist}}\Big(6F^2F_{\text{exist}}^2\lambda^3+F^2F_{\text{exist}}(- 6F^2 + F_{\text{exist}}^2 + 18F_{\text{exist}})\lambda^2\\
&+\big((-6F^2+8F_{\text{exist}}^2)\eta^2+F^4(F_{\text{exist}}^2 - 12F_{\text{exist}} + 6)+2F^2F_{\text{exist}}^2(F_{\text{exist}} + 5) \big)\lambda\\
&+\eta^2(- F^2(F_{\text{exist}}+15)+17F_{\text{exist}}^2)+F^2(F_{\text{exist}} - 2)(F^2(F_{\text{exist}} - 3)+F_{\text{exist}}^2)\Big)
\end{aligned}
$$
and
$$
\begin{aligned}
\gamma_{bot}&(\lambda;\eta)=2F_{\text{exist}}^2(3F^2+F_{\text{exist}}^2)\lambda ^2+6F_{\text{exist}}(F_{\text{1d}}^2-F^2)(F^2+\frac{F_{\text{exist}}^2}{3})\lambda\\
&+6\eta^2(-F^2+F_{\text{exist}}^2)+F^4(2F_{\text{exist}}^2 - 6F_{\text{exist}} + 6)-2F^2F_{\text{exist}}^3 +2F_{\text{exist}}^4.
\end{aligned}
$$

By the Routh-Hurwitz stability criterion, $\gamma_{bot}(\lambda;\eta)$ does not vanish for $\lambda$ with positive real part, provided that the coefficients of the quadratic function in $\lambda$ are all positive/negative. The coefficient of $\lambda^2$ is surely positive. The coefficient of the linear term is also positive since $F<F_{\text{1d}}$. As for the constant term, the coefficient of $\eta^2$ is positive since $F<F_{\text{1d}}<F_{\text{exist}}$ and the remaining part is also positive, since, as a quadratic function of $F^2$, it opens upwards ($2F_{\text{exist}}^2 - 6F_{\text{exist}} + 6>2\times2^2 - 6\times2 + 6>0$) and has negative discriminant ($\Delta=-12F_{\text{exist}}^4(F_{\text{exist}} - 2)^2<0$). 

The root
$\gamma_*(\lambda;\eta)$ will be a double eigenvalue provided that $p_\gamma(\gamma_*;\lambda,\eta)=0$, which amounts to solving $f(\lambda;\eta)=0$ where $f(\lambda;\eta)=0$ is a sextic polynomial in $\lambda$ with coefficients being polynomials of $\eta$. If $F<F_{\text{2d}}$, we expect that
$\gamma_{1,-}(\lambda,\eta)$ does not collide with $\gamma_{2,-}(\lambda,\eta)$ for $\lambda$ with positive real part. On the other hand, if the collision does occur for $F>F_{\text{2d}}$ and $\lambda$ with positive real part, the collision must also occur in the mid way for some purely imaginary $\lambda$. We then investigate  $f(i\tau;\eta)=0$ for $\tau \in \mathbb{R}$. Separating the real and imaginary part of $f(i\tau;\eta)$, we obtain formulae for $\Re f(i\tau;\eta)$ and $\Im f(i\tau;\eta)=0$ of
$$
\begin{aligned}
\Re f&(i\tau;\eta)=-36F^2F_{\text{exist}}^6\tau ^6+9F_{\text{exist}}^4\big(\eta ^2(4F_{\text{exist}}^2-12F^2)+12F^6+F^4(F_{\text{exist}}^2 -60F_{\text{exist}} + 12)\\&+ 56F^2F_{\text{exist}}^2\big)\tau ^4-9F_{\text{exist}}^2\big(\eta^4(12F^2-8F_{\text{exist}}^2)+\eta^2(-12F^6+F^4(2F_{\text{exist}}^2+126F_{\text{exist}}-24)\\
&-2F^2F_{\text{exist}}^2(15F_{\text{exist}}+82)+36F_{\text{exist}}^4)+F^2(F^6(F_{\text{exist}}^2-12F_{\text{exist}}+12)+F^4(- 6F_{\text{exist}}^3 \\&
+ 70F_{\text{exist}}^2 - 72F_{\text{exist}} + 12)+2F^2F_{\text{exist}}^2(3F_{\text{exist}}^2-48F_{\text{exist}}+32)+36F_{\text{exist}}^4)\big)\tau ^2\\&+36\eta ^6(F_{\text{exist}}-F^2)-9\eta ^4(F^4(F_{\text{exist}}^2-30F_{\text{exist}}-15)+2F^2F_{\text{exist}}^2(13F_{\text{exist}}+45)-71F_{\text{exist}}^4)\\
&+9\eta ^2\big(F^6(6F_{\text{exist}}^3 -32F_{\text{exist}}^2 + 42F_{\text{exist}} - 12)-4F^4F_{\text{exist}}^2(F_{\text{exist}}^2-11F_{\text{exist}}+12)\\
&-2F^2F_{\text{exist}}^4(7F_{\text{exist}}-2)+8F_{\text{exist}}^6\big)+9F^4(F_{\text{exist}}-2)^2(F^2(1-F_{\text{exist}})+F_{\text{exist}}^2)^2
\end{aligned}
$$

$$
\begin{aligned}
\Im f&(i\tau;\eta)=2F_{\text{exist}}\tau \Big(54F^2F_{\text{exist}}^4(2F_{\text{exist}}^2- F^2)\tau ^4-9F_{\text{exist}}^2\big(\eta^2(12F^4-2F^2(2F_{\text{exist}}^2+21F_{\text{exist}}^2)\\&+10F_{\text{exist}}^3)-2F^8+F^6(-F_{\text{exist}}^2+24F_{\text{exist}}-12)-2F^4\nu(- \nu^5 - 3\nu^4 + 25\nu^3 + 55\nu^2 + 16\nu\\& - 12)+32F^2F_{\text{exist}}^3\big)\tau ^2+9\eta^4\big(-6F^4+F^2(4F_{\text{exist}}^2+30F_{\text{exist}})-26F_{\text{exist}}^3\big)+9\eta^2\big(F^6(F_{\text{exist}}^2\\&-21F_{\text{exist}}+12)-F^4F_{\text{exist}}(F_{\text{exist}}^2 - 82F_{\text{exist}} + 42)-2F^2F_{\text{exist}}^3(9F_{\text{exist}}+22)+14F_{\text{exist}}^5\big)\\
&+9F^8(F_{\text{exist}}^3 - 7F_{\text{exist}}^2+ 12F_{\text{exist}} - 6)-9F^6F_{\text{exist}}(3F_{\text{exist}}^3 - 22F_{\text{exist}}^2 + 32F_{\text{exist}} - 12)\\&+18F^4F_{\text{exist}}^3(F_{\text{exist}}^2-9F_{\text{exist}}+8)+36F^2F_{\text{exist}}^5\Big).
\end{aligned}
$$ 
Solving $\Re f(i\tau;\eta),\Im f(i\tau;\eta)=0$ yields (i) when $\tau=0$, the constant term in $\Re f(i\tau;\eta)$, as a cubic polynomial in $\eta^2$, has to vanish, and (ii) when $\tau \neq 0$, taking the resultant of $\Re f(i\tau;\eta)=0$ and $\Im f(i\tau;\eta)/\tau=0$ gives a remainder linear in $\tau^2$, solving which yields
\be \label{tausolution}
\tau_*^2=g(\eta).
\ee 
Evaluating $\Im f(ig(\eta);\eta)/g(\eta)$, we find that the numerator is a sextic polynomial in $\eta^2$. 

To proceed with case (i), the discriminant of the cubic polynomial in $\eta^2$ reads $209952F_{\text{exist}}(F_{\text{1d}}^2-F^2)\#(F,\nu)^3$, where
$$\begin{aligned}
\#&(F,\nu):=(- F_{\text{exist}}^3 - 9F_{\text{exist}}^2 + 27F_{\text{exist}} - 27)(F^2)^3\\
&+3F_{\text{exist}}^2(5F_{\text{exist}}^2-6F_{\text{exist}}+9)(F^2)^2-3F_{\text{exist}}^4(7F_{\text{exist}}+3)(F^2)+17F_{\text{exist}}^6
\end{aligned}
$$
is a cubic polynomial of $F^2$ whose discriminant $-78732F_{\text{exist}}^{12}(F_{\text{exist}} - 2)^6$ is negative. The cubic polynomial $\#(F,\nu)$ in $F^2$ then has a real zero and a pair of complex conjugate zeros. Because $\#(0,\nu)>0$ and $\#(F_{\text{1d}},\nu)=27\nu^3\big((\nu-1)(\nu+2)(\nu+1)\big)^3>0$, the cubic must be positive for  $0<F\leq F_{\text{1d}}$, otherwise the cubic has at least two real zeros. A contradiction. That is we have shown that $\#(F,\nu)>0$ for $\nu>1$, $0<F\leq F_{\text{1d}}$. The discriminant of the cubic polynomial in $\eta^2$ is then positive, yielding that the cubic polynomial has three real roots. To decide the number of positive roots in the region $\nu>1$, $F_{\text{2d}}<F< F_{\text{1d}}$, we find that $\Re f(0,0)$ is positive and 
$$
\Re f(0,\eta)\big|_{F=3,\nu=2}=243\big(4(\eta^2)^3-933(\eta^2)^2-2592\eta^2+3888\big)
$$
has one negative zero and two positive zeros
\ba \label{positiveta2}
&\frac{{\left(31661351-51696\sqrt{1077}i\right)}^{1/3}}{4}+\frac{{\left(31661351+51696\sqrt{1077}i\right)}^{1/3}}{4}+\frac{311}{4}\quad \text{and}\\
&\frac{311}{4}+\frac{\left(-1+\sqrt{3}i\right)\left(31661351-51696\sqrt{1077}i\right)^{1/3}}{8}-\frac{\left(31661351+51696\sqrt{1077}i\right)^{1/3}}{8}\\
&-\frac{\sqrt{3}\left(31661351+51696\sqrt{1077}i\right)^{1/3}i}{8}.
\ea
Therefore, the cubic polynomial in $\eta^2$ always has two positive roots and a negative root for $\nu>1$, $F_{\text{2d}}<F< F_{\text{1d}}$. Correspondingly, there are four real zeros $\eta_j$, $j=1,2,3,4$ to the sextic polynomial in $\eta$. Recall that $p(\gamma;0,\eta_j)$, as a cubic polynomial in $\gamma$ with real coefficients, has a double zero $\gamma_*(0;\eta_j)$, whence it must have three real zeros. Because $p(0;0,\eta_j)=-2\eta_j^2/F^2<0$, the number of positive zeros of $p(\cdot;0,\eta_j)$ must be constant. For $F=3$, $\nu=2$, 
$$
p(\gamma;0,\eta_j)=-\frac{1}{72}\gamma^3+\frac{7}{36}\gamma^2+(\frac{1}{54}\eta_j^2 - \frac{2}{3})\gamma-\frac{2\eta_j^2}{9}.
$$
Substituting $\eta_j^2$ by the two positive numbers in \eqref{positiveta2}, we find that the equation has a double positive zero and a negative zero. The collision of eigenvalues occurs for the two positive ones. Hence, in case (i), $\gamma_{1,-}(\lambda,\eta)$ cannot collide with $\gamma_{2,-}(\lambda,\eta)$ for $\lambda$ with positive real part.

As we shall see in the following theorem, the collision between $\gamma_{1,-}(\lambda,\eta)$ and $\gamma_{2,-}(\lambda,\eta)$ happens in case (ii).

\begin{theorem}
Given a planar hydraulic shock profile $(F,H_L,H_R)$ in the whole space with $0<F-{\normalfont F_{\text{2d}}}\ll 1$, there 
exist $\eta(F;H_L,H_R)\in\mathbb{R}^+$ and $\tau_*(\eta(F;H_L,H_R))\in\mathbb{R}^+$, satisfying $$\eta(F;H_L,H_R),\tau_*(\eta(F;H_L,H_R))\to \infty\quad \text{ as $F\to {\normalfont F_{\text{2d}}}^+$,}$$ such that 
$$\gamma_{1,-}(\pm i\tau_*(\eta(F;H_L,H_R)),\pm\eta(F;H_L,H_R))=\gamma_{2,-}(\pm i\tau_*(\eta(F;H_L,H_R)),\pm\eta(F;H_L,H_R))$$
where $\gamma_{1,2,-}$ is a double eigenvalue of $G(-\infty;\pm i\tau_*(\eta(F;H_L,H_R)),\pm\eta(F;H_L,H_R))$ with smaller real part than that of the 
remaining, third eigenvalue.
\end{theorem}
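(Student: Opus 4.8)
The plan is to carry out the ``$\tau\neq 0$'' branch (case (ii)) of the reduction to a scalar equation already set up above, and to show that the relevant root of that equation escapes to $+\infty$ exactly as $F\downarrow F_{\text{2d}}$. \emph{Step 1 (reduction).} Recall that the $\gamma_{j,-}$ are the roots of the cubic $p(\gamma;\lambda,\eta)=\det(E-\lambda\Id-i\eta A_2-\gamma A_1)$, that a double root occurs precisely when $\gamma=\gamma_*(\lambda;\eta)=\gamma_{top}/\gamma_{bot}(\lambda;\eta)$ (the linear--in--$\gamma$ remainder obtained in reducing $p$ against $p_\gamma$) and simultaneously $p_\gamma(\gamma_*;\lambda,\eta)=0$, the latter being the sextic $f(\lambda;\eta)=0$, and that $\gamma_{bot}$ does not vanish for $\Re\lambda\ge 0$ when $F<F_{\text{1d}}$, so $\gamma_*$ is well defined and analytic there. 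Putting $\lambda=i\tau$ with $\tau\in\mathbb{R}\setminus\{0\}$ and splitting into real and imaginary parts, the imaginary--axis collision locus is the common zero set of $\Re f(i\tau;\eta)=0$ and $\Im f(i\tau;\eta)/\tau=0$; their Euclidean reduction in the single unknown $\tau^2$ leaves a remainder linear in $\tau^2$, giving $\tau_*^2=g(\eta)$ as in \eqref{tausolution}, together with a final scalar equation $\Phi(\zeta;F,\nu)=0$, a sextic in $\zeta:=\eta^2$ with coefficients polynomial in $F^2,\nu$. It thus suffices to exhibit, for $0<F-F_{\text{2d}}\ll 1$, a root $\zeta_*(F)>0$ of $\Phi(\,\cdot\,;F,\nu)$ with $\zeta_*(F)\to+\infty$ and $g(\zeta_*(F))\to+\infty$ (in particular $g(\zeta_*(F))>0$), and then to check that the ensuing double root is the pair of smaller real part. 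The $\pm$'s in the statement are then automatic, since $p(\gamma;i\tau,\eta)$ is even in $\eta$ (conjugation by $\mathrm{diag}(1,1,-1)$ sends $A_2\mapsto-A_2$, $A_1\mapsto A_1$, $E\mapsto E$, cf.\ Lemma~\ref{transverse_sym_hydraulic}) and satisfies $\overline{p(\bar\gamma;i\tau,\eta)}=p(\gamma;-i\tau,\eta)$ with $A_1,A_2,E$ real.

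\emph{Step 2 (the escaping root --- the crux).} The mechanism is that the leading coefficient of $\Phi$ in $\zeta$ degenerates exactly at $F=F_{\text{2d}}$. I expect to verify that the degree--six coefficient $c_6(F,\nu)$ of $\Phi$ carries the factor $F^2-F_{\text{2d}}^2$, say $c_6=(F^2-F_{\text{2d}}^2)\hat c_6$ with $\hat c_6(F_{\text{2d}},\nu)\neq 0$, while the degree--five coefficient $c_5(F_{\text{2d}},\nu)\neq 0$. Conceptually this vanishing is the algebraic shadow of Lemmas~\ref{stability_1d2d}--\ref{optimal_weight}: $c_6$ governs the high--frequency ($\zeta\to\infty$, i.e.\ $k^2+\eta^2\to\infty$) tail of the collision condition, and $F=F_{\text{2d}}$ is precisely where the leading--order real part \eqref{dispersion_real} of the $\gamma_{1,-}$--branch along the critical ray $\cos\theta=-F/F_{\text{exist}}$, equal to $(F^2-F_{\text{2d}}^2)/(2F_{\text{2d}}^2)$ by \eqref{optimal_weight_inequalities}, changes sign; at $F=F_{\text{2d}}$ the only marginal branch point sits ``at infinity'' along that ray, and a genuine finite one is born from $\zeta=+\infty$ as $F$ crosses $F_{\text{2d}}$. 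Quantitatively, the substitution $\zeta=s/(F^2-F_{\text{2d}}^2)$ followed by clearing $(F^2-F_{\text{2d}}^2)^5$ turns $\Phi$ into a polynomial $\widetilde\Phi(s;F,\nu)$ that extends continuously to $F=F_{\text{2d}}$ with $\widetilde\Phi(s;F_{\text{2d}},\nu)=s^5\big(\hat c_6(F_{\text{2d}},\nu)\,s+c_5(F_{\text{2d}},\nu)\big)$, so one root $s_*(F)$ stays bounded away from $0$ and $\infty$, with limit $s_*=-c_5(F_{\text{2d}},\nu)/\hat c_6(F_{\text{2d}},\nu)$. Its sign --- which must be positive for $\zeta_*(F)=s_*(F)/(F^2-F_{\text{2d}}^2)$ to be admissible (and, consistently, rules out such a root for $F<F_{\text{2d}}$) --- I would fix by a single explicit evaluation at, say, $\nu=2$, $F=F_{\text{2d}}(2)=6/\sqrt{5}$, in the spirit of the case--(i) computations, using the positivity already established ($\#(F,\nu)>0$ for $\nu>1$, $0<F\le F_{\text{1d}}$, together with $F<F_{\text{1d}}<F_{\text{exist}}$) to exclude sign changes of $\hat c_6$ or $c_5$ on $(F_{\text{2d}},F_{\text{1d}})$. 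This step --- confirming symbolically that $c_6$ factors through $F^2-F_{\text{2d}}^2$ while $c_5$ does not, and pinning the sign of $s_*$ --- is the \textbf{main obstacle}: the coefficients of $\Phi$ are very bulky (their size is visible already in $\Re f(i\tau;\eta)$ and $\Im f(i\tau;\eta)$), so the argument leans on the structural input of Lemmas~\ref{stability_1d2d}--\ref{optimal_weight} to know where to look.

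\emph{Step 3 (conclusion).} On the escaping branch the leading large--$\zeta$ behavior of the rational function $g$ gives $g(\zeta_*(F))>0$ and $g(\zeta_*(F))\to+\infty$, so $\tau_*:=\sqrt{g(\zeta_*(F))}\in\mathbb{R}^+$ with $\tau_*\to\infty$, and $\eta:=\sqrt{\zeta_*(F)}\to\infty$; these are the announced data. It remains to see that the colliding pair is $\gamma_{1,-}=\gamma_{2,-}$ rather than $\gamma_{2,-}=\gamma_{3,-}$. At $(i\tau_*,\eta)$ the double root is $\gamma_*(i\tau_*;\eta)$ and the third eigenvalue is $\mathrm{tr}\,G(-\infty;i\tau_*,\eta)-2\gamma_*(i\tau_*;\eta)$, so this amounts to the inequality $\Re\gamma_*(i\tau_*;\eta)<\Re\big(\mathrm{tr}\,G(-\infty;i\tau_*,\eta)-2\gamma_*(i\tau_*;\eta)\big)$; since case (i) ($\tau=0$) was shown above to produce only collisions of the two \emph{positive}--real--part roots, the present family is distinct, and in the regime $\tau_*,\eta\to\infty$ the inequality reduces to a manifestly--signed comparison of the $\mathcal{O}(1)$ real--part corrections of the eigenvalues of the leading matrix $(-i\hat\tau\,\Id-i\hat\eta A_2)A_1^{-1}(-\infty)$ along the critical ray --- whose double eigenvalue is $-i\hat\tau F^2F_{\text{exist}}/(F_{\text{exist}}^2-F^2)$ and whose simple eigenvalue is $i\hat\tau F_{\text{exist}}$ --- computed by the same Lidskii/two--timing scheme as in \eqref{dispersion_real}. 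This establishes $\Re\gamma_{1,-}=\Re\gamma_{2,-}<\Re\gamma_{3,-}$ at the collision, completing the proof.
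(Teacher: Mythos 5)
Your Steps 1 and 2 track the paper's approach closely and in fact Step 2 is cleanly phrased: the paper verifies by direct computation that the degree--six coefficient of the sextic in $\eta^2$ is exactly $(F_{\text{2d}}^2-F^2)$ (equation \eqref{sixthorder}) and that the degree--five coefficient at $F=F_{\text{2d}}$ is $\frac{F_{\text{exist}}^7(F_{\text{exist}}-2)^2}{4(F_{\text{exist}}-1)^6}>0$ (equation \eqref{fifthorder}), which is precisely your $\hat c_6\neq 0$, $c_5(F_{\text{2d}},\nu)>0$ input. (The paper argues for the positive escaping root via a linear--factor argument rather than your $\zeta = s/(F^2-F_{\text{2d}}^2)$ rescaling; both are standard root--asymptotics arguments and amount to the same thing, and your version is arguably the more robust phrasing since it avoids having to posit a factorization over $\mathbb{R}[\eta^2]$.)

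The genuine gap is in Step 3, and it is in two places. First, the logic ``case (i) produced only collisions of the two positive--real--part roots, so the present family is distinct'' establishes that the $\tau\neq 0$ collision is not the $\tau=0$ one, but says nothing about \emph{which} pair collides in the $\tau\neq 0$ family; distinctness is not a dichotomy between ``same pair'' and ``other pair.'' Second, and more seriously, the leading matrix $(-ib_{-1}M_{20}-ia_{-1}M_{30})$ along the escaping branch has a genuine $2\times 2$ Jordan block (geometric multiplicity $1$), as the paper's $J_{-1}$ exhibits. The Lidskii formula $L_j(0)(E-\mu_L A_1)R_j(0)$ invoked in \eqref{dispersion_real} is valid only for a simple (semisimple) leading eigenvalue; for a Jordan block, generic perturbation splits the eigenvalues at rate $\eps^{1/2}$, and the fact that the persistent--collision constraint forces an integer--power expansion with equal first--order real parts ($\Lambda_{11}=\Lambda_{22}$) is exactly what has to be extracted. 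The paper does this by block--diagonalizing $G(\eps)=J_{-1}\eps^{-1}+M_{10}+\mathcal{O}(\eps)$ with $R(\eps)=R_0+R_1\eps+\cdots$, imposing $L_0R_1+L_1R_0=0$, solving the order--$\eps^0$ Sylvester--type system $L_0M_{10}R_0+[J_{-1},L_0R_1]=\Lambda$ for $R_1$ modulo its $3$--parameter kernel, and then reading off $\Lambda=\mathrm{diag}\big(\tfrac{F_{\text{exist}}^2-F_{\text{1d}}^2}{2(F_{\text{exist}}-1)},\tfrac{F_{\text{exist}}^2-F_{\text{1d}}^2}{2(F_{\text{exist}}-1)},\tfrac{F_{\text{exist}}^2}{F_{\text{exist}}-1}\big)$; the commutator term $[J_{-1},L_0R_1]$ is not present in the nondegenerate Lidskii computation and cannot be discarded. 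So ``computed by the same scheme as in \eqref{dispersion_real}'' would produce wrong, or at least uninterpretable, numbers. To fill the gap you need to carry out the Jordan--block perturbation explicitly and show $\Lambda_{11}-\Lambda_{33}=-\tfrac{F_{\text{exist}}^2+F_{\text{1d}}^2}{2(F_{\text{exist}}-1)}<0$.
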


\begin{proof}
In case (ii), the highest degree term of the sextic polynomial in $\eta^2$ reads
\be
\label{sixthorder}
(F_{\text{2d}}^2-F^2)(\eta^2)^6,
\ee 
whence for $F=F_{\text{2d}}$ the polynomial degenerates to a quintic polynomial in $\eta^2$ whose highest degree term reads
\be\label{fifthorder}
\frac{F_{\text{exist}}^7(F_{\text{exist}} - 2)^2}{4(F_{\text{exist}} - 1)^6}(\eta^2)^5.
\ee
	A polynomial with real coefficients is a product of irreducible polynomials (with real coefficients) of first and second degrees. We claim that, for $F$ near $F_{\text{2d}}$, the factor corresponding to the degeneracy must be of degree one. If not, we can assume the factor takes the form
\be \label{notquadratic}
(F_{\text{2d}}^2-F^2)(\eta^2)^2+b(F;\nu)\eta^2+c(F;\nu),
\ee
where, by \eqref{fifthorder}
\be
\label{b_coeff}
b(F_{\text{2d}};\nu)=\frac{F_{\text{exist}}^7(F_{\text{exist}} - 2)^2}{4(F_{\text{exist}} - 1)^6}>0.
\ee The discriminant of \eqref{notquadratic} is then positive for $F$ near $F_{\text{2d}}$, making the quadratic reducible in $\mathbb{R}$. A contradiction. Therefore, for $F$ near $F_{\text{2d}}$, the degenerate factor must take the form
\be \label{eta2linear}
(F_{\text{2d}}^2-F^2)\eta^2+b(F;\nu),
\ee
where $b(F;\nu)$ again satisfies \eqref{b_coeff} and, whence \eqref{eta2linear} admits a positive $\eta^2$--zero for $F>F_{\text{2d}}$ which blows up as $F\to\left(F_{\text{2d}}\right)^+$. 

Our analysis suggests that, for $F>F_{\text{2d}}$, there is a double eigenvalue of $G(-,i\tau_*(\eta(F;\nu)),\eta(F;\nu))$ where $\eta(F;\nu)=\sqrt{\frac{b(F;\nu)}{F^2-F_{\text{2d}}^2}}\gg 1$ and $\tau_*(\eta)$ is given by \eqref{tausolution}. To compute for $\eta(F;\nu)$ as zeros of a singular perturbation problem, for fixed $\nu$, denote $\eps=\sqrt{F-F_{\text{2d}}}$. In $\eps$ variable, the singular zero $\eta(\eps;\nu)$ expands as
$\eta(\eps;\nu)=a_{-1}\eps^{-1}+a_0+\mathcal{O}(\eps)$ where, by plugging in the sextic polynomial, we find
	that $a_{-1}=\sqrt{b(F_{\text{2d}};\nu)/(2F_{\text{2d}})}$ and $a_0=0$. Substituting $\eta$ and $F$ in \eqref{tausolution} by $\eta(\eps;\nu)$ and $F_{\text{2d}}+\eps^2$ yields $\tau_*(\eta(\eps;\nu))=b_{-1}\eps^{-1}+\mathcal{O}(\eps)$
where $b_{-1}=\frac{\sqrt{F_{\text{exist}} - 2}}{F_{\text{exist}}}a_{-1}$. Substituting $F$ in $EA_1^{-1}$, $A_1^{-1}$, and $A_2A_1^{-1}$ by $F_{\text{2d}}+\eps^2$ yields
$$
EA_1^{-1}=M_{10}+M_{12}\eps^2+\cdots,\quad A_1^{-1}=M_{20}+M_{22}\eps^2+\cdots,\quad\text{and}\quad A_2A_1^{-1}=M_{30}+M_{32}\eps^2+\cdots,
$$
We now turn to compute distinct eigenvalues of the permanently degenerate matrix $$G(\eps):=(EA_1^{-1}-i\tau_*(\eta(\eps;H_R))A_1^{-1}-i\eta(\eps;H_R) A_2A_1^{-1})$$ and consequently compare their real parts. Substituting the asymptotic expansions in $G(\eps)$ yields 
$$
G(\eps)=\left(-ib_{-1}M_{20}-ia_{-1}M_{30}\right)\eps^{-1}+M_{10}+\mathcal{O}(\eps)
$$
Assume now $R(\eps)$ and $L(\eps):=R(\eps)^{-1}$ block-diagonalizes $G(\eps)$ to its Jordan canonical form $J(\eps)$ and they expand as
$$
L(\eps)=L_0+L_1\eps+\mathcal{O}(\eps^2),\quad R(\eps)=R_0+R_1\eps+\mathcal{O}(\eps^2),\quad J(\eps)=J_{-1}\eps^{-1}+\Lambda+\mathcal{O}(\eps)
$$
where $J_{-1}$ is Jordan block-diagonal and $\Lambda$ is diagonal. The leading matrix $\left(-ib_{-1}M_{20}-ia_{-1}M_{30}\right)$ by, for instance,
$$
R_0:=\left[\begin{array}{ccc} 0 & -\frac{\nu(- \nu^3 - 2\nu^2 + \nu + 2)}{\nu^2 + \nu - 1} & \frac{1}{\nu}+1\\\sqrt{\nu^2 + \nu - 2} & -\frac{\nu(- \nu^3 - 2\nu^2 + \nu + 2)}{\nu^2 + \nu - 1} & \frac{\nu^2 + \nu - 1}{\nu^2}\\ 1 & 0 & \frac{\sqrt{\nu^2 + \nu - 2}}{\nu^2} \end{array}\right],\quad L_0:=R_0^{-1},
$$
transforms to its Jordan canonical form
$$
J_{-1}=-ia_{-1}\left[\begin{array}{rrr} \frac{1}{\sqrt{\nu^2 + \nu - 2}}&1&0\\0&\frac{1}{\sqrt{\nu^2 + \nu - 2}}&0\\0&0&-\sqrt{\nu^2 + \nu - 2}\end{array}\right],
$$
which has purely imaginary eigenvalues. The next order terms are subject to linear systems
$$
\begin{aligned}
&L_0R_1+L_1R_0=\mathbf{0}\\ 
&L_0M_{10}R_0+L_1\left(-ib_{-1}M_{20}-ia_{-1}M_{30}\right)R_0+L_0\left(-ib_{-1}M_{20}-ia_{-1}M_{30}\right)R_1=\Lambda,
\end{aligned}
$$
where $\Lambda={\rm diag}(\Lambda_{11},\Lambda_{11},\Lambda_{33})$, i.e., the $(1,1)$ entry is equal to the $(2,2)$ entry and we are interested in comparing $\Re \Lambda_{11}$ with $\Re \Lambda_{33}$. The foregoing system forces $L_1=-L_0R_1L_0$, which makes the latter system amounts to 
$$
L_0M_{10}R_0+[J_{-1},L_0R_1]=\Lambda,
$$
giving $7$ linear equations for entries of $R_1$. Solving them yields
$R_1\in R_{1,*}+{\rm span}\{R_{1,1},R_{1,2},R_{1,3}\}$ where
$$
\begin{aligned}
R_{1,*}&:=\frac{1}{ia_{-1}}\left[\begin{array}{cc} \frac{F_{\text{exist}}^3(F_{\text{exist}} - 2)}{2(F_{\text{exist}} - 1)^2} & -\frac{F_{\text{exist}}^2\sqrt{F_{\text{exist}} - 2}(F_{\text{exist}}^2 -3F_{\text{exist}} + 2)}{(F_{\text{exist}}- 1)^3}\\
\frac{F_{\text{exist}}^3(F_{\text{exist}} - 2)}{2(F_{\text{exist}} - 1)^2} & -\frac{F_{\text{exist}}(F_{\text{exist}} - 2)^{(3/2)}}{F_{\text{exist}} - 1}\\
0 & 0\end{array}\right.\\
&\left.\begin{array}{c}
-\frac{(\nu + 1)^2(F_{\text{exist}}^2 - 2F_{\text{exist}} + 2)}{(F_{\text{exist}} - 1)^2\sqrt{F_{\text{exist}} - 2}}\\   -\frac{(\nu + 1)(\nu^8 + 4\nu^7 + 3\nu^6 - 5\nu^5 - 6\nu^4 + \nu^3 + 4\nu^2 + 2\nu - 2)}{\nu(F_{\text{exist}} - 1)^3\sqrt{F_{\text{exist}} - 2}}\\ 0  \end{array}\right],\quad R_{1,1}:=\left[\begin{array}{ccc} 0 & 0 & 0\\ 0 & \sqrt{F_{\text{exist}} - 2} & 0\\ 0 & 1 & 0 \end{array}\right]\\
R_{1,2}&:=\left[\begin{array}{ccc} 0 & \frac{F_{\text{exist}}(F_{\text{exist}} - 2)}{F_{\text{exist}} - 1} & 0\\ \sqrt{\nu^2 + \nu - 2} & \frac{F_{\text{exist}}(F_{\text{exist}} - 2)}{F_{\text{exist}} - 1} & 0\\ 1 & 0 & 0 \end{array}\right],\quad R_{1,3}:=\left[\begin{array}{ccc} 0 & 0 & \frac{F_{\text{exist}}}{\sqrt{F_{\text{exist}}- 2}}\\ 0 & 0 & \frac{F_{\text{exist}}-1}{\sqrt{F_{\text{exist}}- 2}}\\ 0 & 0 & 1 \end{array}\right].
\end{aligned}
$$
This allows us to compute 
\be\label{Lambda}
\Lambda={\rm diag}\left\{\frac{F_{\text{exist}}^2-F_{\text{1d}}^2}{2(F_{\text{exist}} - 1)},\frac{F_{\text{exist}}^2-F_{\text{1d}}^2}{2(F_{\text{exist}} - 1)},\frac{F_{\text{exist}}^2}{F_{\text{exist}} - 1}\right\},
\ee
from which we compute
$$
\frac{F_{\text{exist}}^2-F_{\text{1d}}^2}{2(F_{\text{exist}} - 1)}-\frac{F_{\text{exist}}^2}{F_{\text{exist}} - 1}=-\frac{F_{\text{exist}}^2+F_{\text{1d}}^2}{2(F_{\text{exist}} - 1)}<0.
$$
Thus the collision happens for $\gamma_{1,-}(i\tau_*(\eta(F;\nu)),\eta(F;\nu))$ and $\gamma_{2,-}(i\tau_*(\eta(F;\nu)),\eta(F;\nu))$ with smaller real parts than that of $\gamma_{3,-}(i\tau_*(\eta(F;\nu)),\eta(F;\nu))$. The proof is completed.
\end{proof}

In figure \ref{analyticity2d}, we set $H_R=0.7$ ($F_{\text{2d}}(\sqrt{\frac{10}{7}})=2.0590\cdots$) and follow a curve $\lambda=\lambda(\eta)$ in the $\lambda$--plane on which $\gamma_{1,-}(\lambda(\eta),\eta)$ and $\gamma_{2,-}(\lambda(\eta),\eta)$ are identical for $F=2.06$, $2.18$, and $2.25$, respectively, showing that 
\begin{itemize}
\item[(a)] the collision of eigenvalues happens at a point on the imaginary axis and continue into the left and right half planes. Also, the collision continues for $F\gg F_{\text{2d}}$.

\item[(b)] for $F=2.06$ close to $F_{\text{2d}}$, the intercept of the curve with imaginary axis is large,

\item[(c)] our numerics also agree with the analytically derived $\Lambda$; we omit the details of this comparison.
\end{itemize}
\begin{figure}[htbp]
\begin{center}
\includegraphics[scale=0.35]{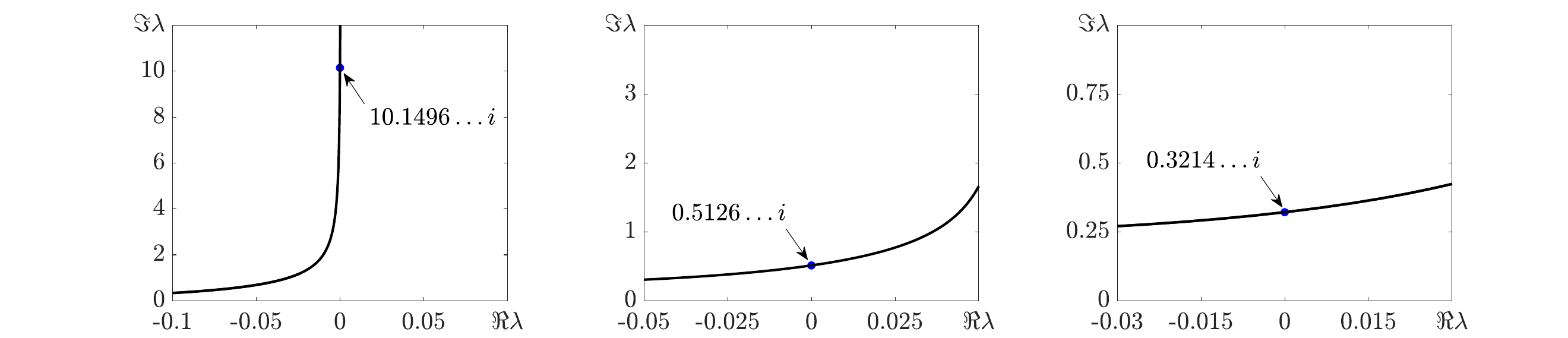}
\end{center}
\caption{The curve $\lambda(\eta)$ on which the matrix $(E-\lambda(\eta)\Id-i\eta A_2)A_1^{-1}$ has a double eigenvalue with smaller real part than that of the other eigenvalue. In these figures, we set $H_R=0.7$ and $F=2.06$, $2.18$, and $2.25$ in the left, middle, and right panel.}
\label{analyticity2d}
\end{figure}

\subsection{Evans functions}\label{s:evans_hydro}
For $2<F<F_{\text{2d}}$, adopting the weighted space \eqref{weighted_space}, the eigenvalue system \eqref{zgeig} becomes
\be 
\label{zgeig_weight}
\left\{\begin{aligned}& W_x=(G(x;\lambda,\eta)-\mu_{L,R}\Id)W=:G_{\mu_{L,R}}W,\quad \text{for  $x\lessgtr 0$}\\
&\hat \psi[\lambda F_0(\bar U) +i\eta  F_2(\bar U) - R(\bar U)]+[W]=0.\end{aligned}\right.
\ee 
Our analysis in the foregoing section shows that there exist weights $\mu_{L,R}$ such that the weighted interior equation \eqref{zgeig_weight}(i) restores consistent splitting at $x=\pm \infty$ for $\Re\lambda>0$. For $F_{\text{char}}<F\leq 2$, consistent splitting holds for the unweighted interior equation \eqref{zgeig}(i) which amounts to setting $\mu_{L,R}=0$ in \eqref{zgeig_weight}(i). By setting $\mu_{L,R}=0$, we identify the eigenvalue system \eqref{zgeig} in the latter case with \eqref{zgeig_weight}.  In either case, there is a basis of $2$ ($0$) solutions $\mathcal{W}_j^-$ ($\mathcal{W}_l^+$) of interior equations 
\eqref{zgeig_weight}(i) on $x<0$ ($x>0$) that decay exponentially as $x\to- \infty$ ($x\to+\infty$), 
whence, evidently, \eqref{zgeig_weight}(i) has an $L^2$
solution  $W=\sum_{j=1}^2 \alpha_j^- \mathcal{W}_j^-$ for $x<0$ ($W\equiv 0$ for $x>0$) satisfying \eqref{zgeig_weight}(ii),
that is, an unstable eigenmode growing as $e^{\Re \lambda t}$,
if and only if the {\it Evans-Lopatinsky determinant}
\be\label{EL}
D(\lambda, \eta):= \det ( [\lambda F_0(\bar U)+ i\eta F_2(\bar U) - R(\bar U)], 
\mathcal{W}_1^-(0^-;\lambda,\eta), \mathcal{W}_2^-(0^-;\lambda,\eta))
\ee
vanishes at frequencies $(\lambda, \eta)$ with $\Re \lambda >0$.
Thus, {\bf spectral stability} amounts to {\it nonvanishing of $D$ for $\Re \lambda>0$.}

Note, in the conservative case $R\equiv 0$, that \eqref{geig}(i) is constant-coefficient,
and \eqref{EL} reduces to the Lopatinsky determinant of Majda \cite{M1,M2}.
For smooth hydraulic shocks $F<F_{\text{char}}$, \eqref{geig}(ii) reduces to continuity across $x=0$,
the sum of dimensions of decaying solutions as $x\to \pm \infty$ becomes $3$, and \eqref{EL} reduces to the Evans function 
\be  \label{Evans}
\Delta(\lambda, \eta):= \det ( \mathcal{W}^+_1(0^+;\lambda,\eta),
\mathcal{W}_{1}^-(0^-;\lambda,\eta),\mathcal{W}_{2}^-(0^-;\lambda,\eta)) 
\ee 
of \cite{AGJ,PW,ZS}.

For both cases, discontinuous or smooth, the Evans-Lopatinsky (Evans) function can be chosen analytically 
in $\eta$ and $\lambda$ on the entire domain $\eta\in \R$ and $\Re \lambda \geq 0$ for any $F< F_{\text{2d}}$.
For, in this case, there is a spectral gap in suitably weighted norm between decaying and growing eigenspaces
of the limiting constant-coefficient systems at $x_1\to \pm \infty$, whence the (unique) 
associated spectral projectors extend analytically. By Kato's Lemma together with the Conjugation Lemma of \cite{MZ3}
one may then define in standard fashion analytic choices of the functions $\mathcal{W}_j^\pm$; see, e.g., \cite{Z0}.

In the 1d case $\eta=0$, the third equation of \eqref{geig} decouples, reducing
to a 2-variable first-order ODE.  In \cite{SYZ} \cite{FRYZ}, this was reduced to a nonlinear (in $\lambda$) eigenvalue problem
for a second-order scalar operator, and stability shown by a generalized Sturm--Liouville argument.
In the present, multi-d case, no such simplification presents itself, and so our strategy as for other complicated
physical systems is to carry out an exhaustive
numerical Evans (Evans-Lopatinsky) function study (See Appendix~\ref{Evans_numeric}), over parameters $F<F_{\text{char}}$ ($F_{\text{char}}<F<F_{\text{2d}}$), $0<H_R/H_L<  1$ characterizing (up to scale-invariance)
the family of hydraulic shock waves of \eqref{sv_intro}, via $\lambda$-winding number computations
indexed by $(F, H_R/H_L, \eta)$.
See \cite{HuZ1,Z4,HLyZ1,HLyZ,BHZ} for the general approach, and \cite{YZ} 
for the specialization to \eqref{sv_intro} in 1d. In the course of our numerical Evans-Lopatinsky study, we formulate a rescaled Evans-Lopatinsky \eqref{rescaledduallopatinsky} whose computation is independent of the choices of weights $\mu_{L,R}$ and necessarily has the same zeros as \eqref{EL}. 


As with any such all-parameters study, a crucial aspect is to truncate or in some cases extend
the parameter/frequency regime to a compact set, allowing uniform control of errors and efficient computation
across the full parameter range.
A first, important step as in the similar 1d study carried out in \cite{YZ}
is to notice that we can adjoin the value $H_R/H_L=0$, closing the parameter set, while
maintaining the basic integration scheme used in \cite{YZ} to evolve $\mathcal{V}_j^\pm$ from $x=\pm \infty$
to $x=0$.

Similarly, away from the origin $(\lambda, \eta)=(0,0)$, values $\Re \lambda=0$ may be adjoined to the frequency set 
without affecting uniformity of estimates.
This leaves the problem of truncating {\it high frequencies} $|(\lambda, \eta)|\geq  R$ and
{\it low frequencies} $0<|(\lambda, \eta)|\leq  1/R$. Both issues turn out to be quite interesting.

\subsection{Low-frequency stability}\label{s:lf}
Because limits $U_\pm$ of a relaxation profile $\bar U$
must be rest points of profile ODE $(F_1(\bar U))_x=R(\bar U)$, we have
$R(U_\pm)=0$, hence $((U_1)_-,(U_{2})_+,s)$ is a shock triple for the associated equilibrium model
(generalizing \eqref{CE})
\be\label{gCE}
(U_1)_t +f_{1}^*(U_1)_x + f_{2}^*(U_1)_y=0,
\ee
where $U=(U_1,U_2)$, $F_j=(F_{j1},F_{j2})$, $f_{j}^*(U_1)=F_{j1}(U_1,U_2^*(U_1))$, and $R(U_1, U_2^*(U_1))\equiv 0$.
Then, we have the following fundamental relation generalizing observations of \cite{GZ,ZS,Go,JLW}, reflecting the
idea that internal structure is not ``seen'' on low-frequency/large space-time scales.

\bl[Discontinuous Zumbrun--Serre Lemma]\label{ZSlem}
Defining $\Delta(\lambda, \eta)$ to be the Lopatinsky determinant for the equilibrium equation \eqref{gCE} 
associated with \eqref{req}, there holds near $(\lambda, \eta)=(0,0)$
\be\label{ZSeq}
D(\lambda, \eta)= \gamma \Delta(\lambda, \eta)+ \mathcal{O}(|(\lambda, \eta|^2),
\ee
where $\gamma$ is a fixed Melnikov-type determinant that is nonvanishing for transversal connections $\bar U$.
\el
\begin{proof}
For $(\lambda, \eta)=(0,0)$, the $U_1$ equation of \eqref{geig}
reduces to $(d/dx)(A_1 V)_1=0$, so that $(A_1\mathcal{V}_j)_1 \equiv 0$ for ``fast,'' or exponentially 
decaying modes, and $(A_1\mathcal{V}_j)_1 \equiv \const$ for ``slow,'' or neutrally decaying modes.
By inspection of the linearized existence problem, which agrees with the eigenvalue equation when
$(\lambda, \eta)=(0,0)$, we see that there are $2$ fast decaying modes, both at $x=-\infty$, and no
slow decaying modes.
The invertible change of coordinates $V\to W:= A_1 V$ thus imposes to lowest order in $\lambda,\eta$ 
a block-determinant reduction on the matrix in \eqref{EL},
for which the $1-1$ block $[\lambda F_0(\bar U)+ i\eta F_2(\bar U) - R(\bar U)]$ 
is independent of $x$ and the $2-2$ block $\gamma$ independent of $\lambda, \eta$.

By translation-invariance, the $x$-derivative of the traveling wave satisfies the interior eigenvalue
equation at the origin, so that we may take without loss of generality
	$\mathcal{V}_1^-(0^-;\lambda,\eta)=\bar U_x$, and, setting
	$(\lambda, \eta)=\rho (\hat \lambda, \hat \eta)$ and evaluatating \eqref{geig} at $\rho=0$, obtain
	$(A_1 \partial_x \bar U)_x = E\partial_x \bar U= \partial_x R(\bar U)$, and 
	and thus, integrating from $-\infty$ to $x$,
	\be\label{unvareq}
	A_1 V_1= A_1 \partial_x \bar U = R(\bar U).
	\ee
	Meanwhile, differentiating \eqref{geig} with respect to $\rho$, we obtain
	$$i
	(A_1 \partial_\rho V_1)_x - E\partial_\rho V_1=
	\hat \lambda A_0 \bar U_x + \hat \eta A_2 \bar U_x=\partial_x(F_0(\bar U)+ F_2(\bar U)),
	$$
	and thus, integrating the first coordinate of this equation (for which $E$ vanishes)
from $-\infty $ to $x$,
	\be\label{vareq}
	(1,0,0)^T A_1 \partial_\rho V_1 =
	(1,0,0^T (F_0(\bar U)+ F_2(\bar U))|_{-\infty}^x
	\ee
	at $\rho=0$,
	Combining \eqref{unvareq} and \eqref{vareq}, we thus obtain
	$$
	W_1^-(0^-)= R(\bar U(0^-)) + \rho \bp (1,0,0^T (F_0(\bar U)+ F_2(\bar U))|_{-\infty}^x
	\\ *\ep + O(\rho^2).
	$$

	Performinging a column operation subtracting the second column $W_1^-$ of \eqref{EL} from 
	the first, we thus have
	$$
	D(\lambda, \eta)=\det \bp \Delta(\lambda, \eta) & 0 & 0\\ * & e_2 W_2^-(0,0) & e_2 W_3^-(0,0) \ep
	=\Delta \gamma,
	$$
	where 
	$$
	\Delta(\lambda,\eta)=(1,0,0)^T ( \lambda F_0\bar U +R(\bar U) + i\eta F_2(\bar U)|_{-\infty}^{+\infty}
	$$
	is the Lopatinsky determinant for the associated equilibrium system and
	$$
	\gamma:= \det \bp e_2 W_2^-(0,0) & e_2 W_3^-(0,0) \ep
	$$
	with $e_2:=\bp 0& 0\\0 & \Id_2\ep$, may be recognized as a Melnikov type determinant whose nonvanishing
	corresponds to transversality of the traveling wave connection in the existence problem.
For similar arguments, see \cite{ZS,BSZ,Go,JLW}.
\end{proof}

The surprising consequence is that equilibrium dynamics are relevant for large-amplitude relaxation shocks 
whose profiles pass far from equilibrium, and may possess subshock discontinuities.
In the present case that \eqref{gCE} is scalar,
there is near the origin an isolated, hence analytically dependent, curve of spectra 
\be\label{lambda_exp}
\lambda(\eta)=i \alpha \eta + \beta \eta^2,
\quad
\alpha= -[f_2^*(U_1)]/[U_1]\in \R
\ee
bifurcating from the ``translational'' eigenvalue $\lambda(0)=0$: $\alpha=0$ in the case \eqref{CE}.

Thus, treatment of low-frequency stability reduces as in the scalar viscous case \cite{HoZ1} to
determination of the sign of $\beta$, which should be do-able either analytically by spectral expansion/Fredholm
solvability conditions as in \cite[\S 5]{HoZ2} or by continuation/numerical root following as in, e.g., \cite{HL},
with $\beta< 0$ corresponding to ``diffusive stability'' and $\beta >0$ presumably indicating
``fingering instabilities'' of the front. 

We do not carry out such an analysis in this work, instead lumping the question of low-frequency diffusive
stability together with determination of intermediate-frequency stability in our numerical experiments described below.  Our results indicate stability, $\Re \lambda \leq 0$ for all spectra $(\lambda, \eta)$
with $0\leq |\eta|\leq C$, implicitly indicating at least neutral diffusive stability $\beta \leq 0$. 

\subsection{High-frequency stability}\label{s:hf}
In 1d, truncation of high frequencies is readily accomplished.
For, a fairly standard WKB type approximation shows that in the
high-frequency limit $\mathcal{V}^\pm_j$ approximately track along stable/unstable subspaces of the
frozen-coefficient version of \eqref{geig}(i), while, in \eqref{geig}(ii), the $\mathcal{O}(1)$ term $[R(\bar U)]$
becomes negligible, yielding convergence to the Majda Lopatinsky determinant for the associated conservative
system with $R=0$, in this case 1d isentropic $\gamma$-law gas dynamics. 
This yields high-frequency stability by the 1d version of {\it Majda's Theorem} \cite{M2} 
on stability of isentropic $\gamma$-law shocks.
In 2d, likewise, this follows in the high-frequency limit
for $\Re \lambda/|\eta|$ bounded uniformly below, from the multi-d
version of Majda's Theorem, as there remains a uniform spectral gap between stable/unstable eigenvalues of
the frozen-coefficient system.

However, for $\Re \lambda\ll |(\Im \lambda, \eta)|$, the situation becomes quite subtle, as noted by Erpenbeck
in a far ahead of its time paper \cite{Er4} 
on detonation stability for the ZND equations, a model of form \eqref{req} with first-order part
consisting in its relevant elements also of compressible gas dynamics.
Viewed as a perturbation of the case $\lambda=i\tau$, the issue, in modern hyperbolic terminology
\cite{K,M1,M2,Met3}, is that there may occur for certain values of $x$, $\tau$, $\eta$ ``glancing points''
where growing and decaying modes coalesce at a Jordan block, and this corresponds generically to an Airy-type
{\it turning point} in the eigenvalue ODE.
The end result
is that if 
a certain quantity \footnote{The quantity $c^2- (u-s)^2$ in the notation of \cite{LWZ}.} decreases along the profile as $x$ decreases toward the endstate $x=-\infty$ 
at which it is initiated, then the limiting decaying mode is oscillating there and to the left of any turning point, and growing/decaying to the right, which, with connection properties of the Airy function, yields again convergence
to Majda's Lopatinsky determinant for the nonreactive (Neumann) shock at the end of the reaction zone, hence
stability. 
In the reverse case, there may appear a lattice of unstable transverse modes, depending on 
a certain ``ratio condition'' that must be calculated. For a detailed discussion, see \cite{LWZ};
for an informal exposition, see \cite[\S 2.2.2-2.2.3]{Z9}.

The structure of (SV) is similar to but somewhat simpler than that of the detonation equations, being a $3\times 3$
relaxation of isentropic gas dynamics, whereas detonation is a $5\times 5$ relaxation of full nonisentropic gas dynamics augmented with an additional reaction variable.
We are thus able to carry out a high-frequency analysis quite parallel to that of \cite{Er4,LWZ}, as we now do.

\smallskip

\noindent\textbf{High-frequency analysis.} 
For $\lambda=\lambda_r+i\lambda_i$, $\lambda_i$, $\eta\in \mathbb{R}$, and $\lambda_r\geq 0$, we analyze solutions of \eqref{zgeig} in the high-frequency region \be\label{domainhf}\sqrt{|\lambda|^2+\eta^2}\sim \mathcal{O}(r),\quad \text{with $r\gg 1$.}\ee 
Considering the order of $\lambda$, $\eta$ with respect to $r$, we may further split the high-frequency region \eqref{domainhf} into case (a) $|\lambda|\sim \mathcal{O}(r)$, $|\eta|\sim \mathcal{O}(1)$; case (b) $|\lambda|\sim \mathcal{O}(1)$, $|\eta|\sim \mathcal{O}(r)$; case (c) $|\lambda|\sim \mathcal{O}(r)$, $|\eta|\sim \mathcal{O}(r)$. As noticed in \cite{Er4,LWZ,Z9}, possible presence of turning points where the $\mathcal{O}(r)$ term of the matrix $G(x;\lambda,\eta)$ \eqref{zgeig} processes generalized eigenvectors can complicate the high-frequency analysis. In search of turning points, the eigenvalues of $-(\lambda A_0+i\eta A_2)A_1^{-1}(x)$ reads
\ba\label{Lead_eigenvalues_allcases}
&\gamma_1(x;\lambda)=\lambda F_{\text{exist}}H(x),\\
&\gamma_2,\gamma_3(x;\lambda,\eta)=-\left(\lambda\frac{F^2H}{F_{\text{exist}}(H^3-H_s^3)}\pm\frac{H\sqrt{H^3F^2\lambda^2 + (H^3-H_s^3)H\eta^2}}{H^3-H_s^3}\right)(x).
\ea
where $H_s=(F/F_{\text{exist}})^{2/3}$ is the singular point of the profile ode \cite[eq. (2.14)]{YZ} and there hold, for discontinuous hydraulic shocks, $H(x)>H_s$, for $x<0$ and $H(x)\equiv H_R=1/\nu^2<H_s$ for $x>0$, and for smooth hydraulic shocks $H(x)>H_s$, for $-\infty<x<+\infty$.

In case (a) and case (b), easy computations show that the leading $\mathcal{O}(r)$ terms of \eqref{Lead_eigenvalues_allcases} are distinct, whence $(\lambda,\eta)$ cannot be turning point frequency. This leaves the search to case (c).

If the leading $\mathcal{O}(r)$ term of $\gamma_2$ is equal to that of $\gamma_3$, there must hold 
\be
\label{turningpoint1}
\lambda=\pm i\tau_1(H)\eta+\mathcal{O}(1)\quad\text{and}\quad \eta=\mathcal{O}(r),\quad \text{where}\quad \tau_1(H):=\frac{\sqrt{H^3-H_s^3}}{HF}.
\ee 
Clearly, $\tau_1(H)$ is monotone increasing.

On the other hand, if the leading $\mathcal{O}(r)$ term of $\gamma_1$ is equal to either that of $\gamma_2$ or that of $\gamma_3$, there must hold 
\be
\label{turningpoint2}
\lambda=\tau_2(H)\eta+\mathcal{O}(1)\quad\text{and}\quad \eta=\mathcal{O}(r),\quad \text{where}\quad \tau_2(H):=\frac{1}{F_{\text{exist}}H}.
\ee 
Clearly, $\tau_2(H)$ is monotone decreasing. 

Without loss of generality, in the lemma below, we take $\eta=r$ and positive sign in \eqref{turningpoint1}[i].
\begin{lemma}[turning points and turning point frequencies; hydraulic shocks]\label{turningpoint:lemma}
${}$

\noindent Consider a non-characteristic discontinuous $($smooth$)$ hydraulic shock profile. 

At the frequencies $(\lambda,\eta)=(i\tau_1 r+\mathcal{O}(1),r)$, where \be\label{tau1range}
\tau_1\in\big(\tau_1(\min(\{1,H_*\})),\tau_1(\max(\{1,H_*\}))\big) \quad \quad \big(\tau_1\in(\tau_1(H_R),\tau_1(1))\big),
\ee
there exists a unique turning point $x_1(\tau_1)\in (-\infty,0)$  $\big(x_1(\tau_1)\in(-\infty,\infty)\big)$, where $\gamma_2(x_1(\tau_1);i\tau_1r,r)=\gamma_3(x_1(\tau_1);i\tau_1r,r)\neq \gamma_1(x_1(\tau_1);i\tau_1r)$ and the geometric multiplicity of the double eigenvalue is $1$.

At the frequencies $(\lambda,\eta)=(\tau_2r+\mathcal{O}(1),r)$, where \be\label{tau2range}
\tau_2\in\big(\tau_2(\max(\{1,H_*\})),\tau_2(\min(\{1,H_*\}))\big) \quad \quad \big(\tau_2\in(\tau_2(1),\tau_2(H_R))\big),
\ee
there exists a unique turning point $x_2(\tau_2)\in (-\infty,0)$  $\big(x_2(\tau_2)\in(-\infty,\infty)\big)$, where $\gamma_1(x_2(\tau_2);\tau_2r)=\gamma_3(x_2(\tau_2);\tau_2r,r)=\eta\neq \gamma_2(x_2(\tau_2);\tau_2r,r)$ and the geometric multiplicity of the double eigenvalue is $1$.
\end{lemma}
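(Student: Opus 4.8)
The plan is to read off the turning-point frequencies in case (c) as those $(\lambda,\eta)=\mathcal{O}(r)$ at which, for some $x$ on the profile, two of the eigenvalues $\gamma_1,\gamma_2,\gamma_3$ of the leading $\mathcal{O}(r)$ symbol $\mathcal{G}_0(x;\lambda,\eta):=-(\lambda A_0+i\eta A_2)A_1^{-1}(x)$ of $G(x;\lambda,\eta)$ from \eqref{zgeig} coalesce and do so at a genuine $2\times2$ Jordan block; by the preceding discussion the only two ways two of the $\gamma_j$ can agree at leading order are $\gamma_2=\gamma_3$ (type 1) and $\gamma_1\in\{\gamma_2,\gamma_3\}$ (type 2), and since $\mathcal{G}_0$ depends on $(\lambda,\eta)$ only through the ratio recorded by $\tau_1$ (resp.\ $\tau_2$), the $\mathcal{O}(1)$ correction to $\lambda$ plays no role in locating the turning point or in fixing its Jordan type. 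For each type I would carry out three steps: (i) solve the coalescence equation for the location $x=x_i(\tau_i)$; (ii) check that the third eigenvalue remains separated, so that $\gamma_*:=\gamma_2=\gamma_3$ (resp.\ $\gamma_*:=\gamma_1=\gamma_3$) has algebraic multiplicity exactly $2$; and (iii) verify that $\gamma_*$ has geometric multiplicity $1$. Step (iii) reduces to a finite linear-algebra check: since $A_1$ is invertible, $\mathcal{G}_0-\gamma_*\Id=-\bigl[(\lambda A_0+i\eta A_2)+\gamma_*A_1\bigr]A_1^{-1}$, so geometric multiplicity $1$ is equivalent to $\rank\bigl(\lambda A_0+i\eta A_2+\gamma_*A_1\bigr)=2$, i.e.\ to exhibiting one nonvanishing $2\times2$ minor of that matrix at the turning-point data.

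For type 1 I would set $\eta=r$, $\lambda=i\tau_1r+\mathcal{O}(1)$, and use that $\gamma_2-\gamma_3$ is, from \eqref{Lead_eigenvalues_allcases}, a nonzero multiple of $\sqrt{H^3F^2\lambda^2+(H^3-H_s^3)H\eta^2}$, whose leading term vanishes precisely when $-H^3F^2\tau_1^2+(H^3-H_s^3)H=0$, that is, when $\tau_1=\tau_1(H(x))$ with $\tau_1(H)=\sqrt{H^3-H_s^3}/(HF)$ as in \eqref{turningpoint1}. Since $\tau_1(H)^2=(H^3-H_s^3)/(H^2F^2)$ has positive $H$-derivative $(1+2H_s^3/H^3)/F^2$ on $\{H>H_s\}$, the map $\tau_1(\cdot)$ is a strictly increasing bijection from the range of $H$ along the profile for $x<0$ --- an open interval with endpoints $H(-\infty)=1$ and $H(0^-)=H_*$, since $H$ is monotone there by the profile classification of \cite{YZ,FRYZ} (for a smooth profile, replace $x<0$ by $\mathbb R$ and $H_*$ by $H(+\infty)=H_R$) --- onto the interval \eqref{tau1range}; composing with the monotone $H(\cdot)$ gives the unique root $x_1(\tau_1)$. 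There the radicand is $\mathcal{O}(r)$, so $\gamma_2/r=\gamma_3/r=-i\tau_1F^2H/\bigl(F_{\text{exist}}(H^3-H_s^3)\bigr)$ while $\gamma_1/r=i\tau_1F_{\text{exist}}H$; these differ because, with $H>H_s$, one is a positive and the other a negative multiple of $i\tau_1$, giving (ii). For (iii) I would substitute the explicit $A_0,A_1,A_2$ from \eqref{A1A2E}, together with the profile identity $Q-sH\equiv\text{const}$ (so $Q/H-s$ is a known function of $H$), into $\lambda A_0+i\eta A_2+\gamma_*A_1$ at these values and display a nonvanishing $2\times2$ minor; non-characteristicity $\det A_1\neq0$ is what licenses the reduction above.

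Type 2 runs in parallel with $\eta=r$, $\lambda=\tau_2r+\mathcal{O}(1)$: using $H_s^3=(F/F_{\text{exist}})^2$ one computes that at $\tau_2=\tau_2(H):=1/(F_{\text{exist}}H)$ the radicand equals $r^2H^4+\mathcal{O}(r)$, whence $\gamma_1/r=\tau_2F_{\text{exist}}H=1=\eta/r$ and $\gamma_{2,3}/r=(-H_s^3\pm H^3)/(H^3-H_s^3)$, i.e.\ the values $1$ and $-(H^3+H_s^3)/(H^3-H_s^3)$. The first reproduces the asserted triple equality $\gamma_1=\gamma_3=\eta$ to leading order, and the second is distinct from it since $-(H^3+H_s^3)\ne H^3-H_s^3$ (as $H\ne0$), giving (ii). Step (i) is identical, now with $\tau_2(\cdot)$ strictly decreasing, which is precisely why the endpoints in \eqref{tau2range} appear in the opposite order to those in \eqref{tau1range}; and step (iii) is again the finite rank-$2$ check on $\lambda A_0+i\eta A_2+\gamma_*A_1$ at the type-2 data.

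I expect step (iii) to be the real obstacle in both cases. A double eigenvalue of the leading symbol need not be a turning point at all --- if it were semisimple, no Airy-type behavior would occur and the downstream high-frequency analysis would be entirely different --- so the geometric-multiplicity-one (equivalently rank-two) statement has to be verified honestly at the exact coalescence data, using the on-profile relations among $H$, $Q$, and $s$; this is where non-characteristicity is used and where essentially all of the computation lives, even though it is finite and mechanical. By comparison, the monotonicity facts in (i) and the strict inequalities in (ii) are routine, and the uniqueness of $x_i(\tau_i)$ is clean because $H$ is monotone on $(-\infty,0)$ (and on $\mathbb R$ for smooth profiles), with the ranges \eqref{tau1range}--\eqref{tau2range} being exactly the images of those monotone arcs under $\tau_1(\cdot)$ and $\tau_2(\cdot)$.
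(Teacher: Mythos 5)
Your plan is consistent with the paper's own (extremely terse) proof, which is simply ``The lemma follows by direct computations''; you have spelled out what those computations are. The reduction to (i) solving the coalescence condition, (ii) checking the third eigenvalue stays separated, and (iii) checking rank two of $\lambda A_0+i\eta A_2+\gamma_*A_1$ is the right structure, and your explicit evaluations are correct: for type~1 the radicand $H^3F^2\lambda^2+(H^3-H_s^3)H\eta^2$ degenerates to $\mathcal{O}(r)$ exactly at $\tau_1=\tau_1(H(x))$, and since $\tau_1(\cdot)$ is strictly increasing on $\{H>H_s\}$ and $H$ is monotone on $(-\infty,0)$ (increasing for nonmonotone profiles, decreasing otherwise, per the classification cited in Lemma~\ref{lemma_high2}), the turning point is unique with range exactly \eqref{tau1range}; for type~2 the identity $H_s^3=(F/F_{\text{exist}})^2$ gives the radicand $r^2H^4+\mathcal{O}(r)$ and hence $\gamma_3/r=1=\gamma_1/r=\eta/r$ while $\gamma_2/r=-(H^3+H_s^3)/(H^3-H_s^3)\ne 1$. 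You correctly flag step~(iii) as the part that must actually be verified and reduce it to a finite minor check using $\det A_1\ne 0$; you do not carry out that check, but neither does the paper, which takes it as read. One could note in passing that the later analysis (Lemma~\ref{decayingmode_decreasing} and the Airy conjugation it invokes, and the explicit similarity matrix $T(\cdot;\tau_1)$ in \eqref{matT}, which degenerates precisely at the turning point) implicitly relies on and is consistent with the rank-one Jordan structure, which corroborates your step~(iii); stating this corroboration would strengthen the writeup without replacing the direct minor check you propose.
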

\begin{proof}
The lemma follows by direct computations.
\end{proof}
\begin{remark}
Within {\normalfont case (c)}, our foregoing analyses confirm $(\lambda,\eta)$ can be turning point frequencies if either $|\lambda_i|\sim\mathcal{O}(1)$ or $|\lambda_r|\sim\mathcal{O}(1)$ and, in particular, they cannot be turning point frequencies if $|\lambda_i|,|\lambda_r|\sim\mathcal{O}(r)$.
\end{remark}
We now treat the former turning point frequencies discussed in Lemma~\ref{turningpoint:lemma}. Let $\lambda=i\tau_1r+\lambda_r$ and $\eta=r$, where $|\lambda_r|\sim \mathcal{O}(1)$. Away from the turning point, $x\neq x_1(\tau_1)$, the leading matrix 
$-ir(\tau_1 A_1^{-1}+A_2A_1^{-1})(x)$ can be diagonalized as $${\rm diag}\big(\gamma_1(x;ir\tau_1),\gamma_2(x;ir\tau_1,r),\gamma_3(x;ir\tau_1,r)\big),$$
where
\ba \label{gammaturning}
&\gamma_1(x;ir\tau_1)=ir\tau_1 F_{\text{exist}}H(x),\\
&\gamma_2,\gamma_3(x;ir\tau_1,r)=-ir\left(\frac{F^2H\tau_1}{F_{\text{exist}}(H^3-H_s^3)}\pm\frac{FH^{5/2}\sqrt{\tau_1^2-\tau_1(H)^2}}{H^3-H_s^3}\right)(x),
\ea
by similar transform $T^{-1}(\cdot;\tau_1)\left(-ir(\tau_1 A_1^{-1}+A_2A_1^{-1})\right)T(\cdot;\tau_1)$ where, for example
\ba 
\label{matT}
T(\cdot;\tau_1)&=\left[\begin{array}{cc}F_{\text{exist}} H&F F_{\text{exist}} H \sqrt{\tau_1^2-\tau_1(H)^2}\\H + F_{\text{exist}} H - 2&F(H + F_{\text{exist}} H - 1)\sqrt{\tau_1^2-\tau_1(H)^2}+ F_{\text{exist}} H^{3/2}\tau_1 \\F_{\text{exist}} H \tau_1&\sqrt{H}
\end{array}\right.\\
&\quad\quad\left.\begin{array}{c}-F F_{\text{exist}} H \sqrt{\tau_1^2-\tau_1(H)^2}\\F(H + F_{\text{exist}} H - 1)\sqrt{\tau_1^2-\tau_1(H)^2}+ F_{\text{exist}} H^{3/2}\tau_1\\\sqrt{H}
\end{array}\right].
\ea
In $Z_1:=T^{-1}Z$, \eqref{zgeig} becomes
\be\label{eigenZ1}
{Z_1}_x={\rm diag}\big(\gamma_1(\cdot;ir\tau_1),\gamma_2(\cdot;ir\tau_1,r),\gamma_3(\cdot;ir\tau_1,r)\big) Z_1+M(\cdot;\tau_1)Z_1,
\ee
where $M(\cdot;\tau_1,\lambda_r):=\left(T^{-1}(E\,A_1^{-1}-\lambda_r A_1^{-1})T-H_xT^{-1}d_HT\right)(\cdot;\tau_1)$ has diagonal entries
\ba \label{diagonalM}
M_{11}&(\cdot;\tau_1,\lambda_r)=HF_{\text{exist}}\lambda_r+\Big(\tau_1^2HF_{\text{exist}}^2(H^3-H_s^3)(H + F_{\text{exist}} H - 1)\\
&+2(F_{\text{exist}} + 1)H^3+2(F^2 - 1)H^2-2H_s^3(F_{\text{exist}} + 1)^2H\\
&+2H_s^3(F_{\text{exist}} + 1)\Big)\Big((1 + H^2\tau_1^2F_{\text{exist}}^2)(H^3  - H_s^3)\Big)^{-1},\\
M_{22}&(\cdot;\tau_1,\lambda_r)=\left(-\frac{FH_s^{3/2}H}{H^3-H_s^3}-\frac{FH^{5/2}\tau_1}{(H^3 -H_s^3)\sqrt{\tau_1^2-\tau_1(H)^2}}\right)\lambda_r+\#_2(\cdot;\tau_1)+\#_3(\cdot;\tau_1),\\
M_{33}&(\cdot;\tau_1,\lambda_r)=\left(-\frac{FH_s^{3/2}H}{H^3-H_s^3}+\frac{FH^{5/2}\tau_1}{(H^3 -H_s^3)\sqrt{\tau_1^2-\tau_1(H)^2}}\right)\lambda_r-\#_2(\cdot;\tau_1)+\#_3(\cdot;\tau_1),
\ea 
where
\ba \label{pond2}
\#_2(\cdot;\tau_1):=&-\frac{\tau_1(H + F_{\text{exist}} H - 1)}{2FF_{\text{exist}}^3 H^{5/2} (H^3-H_s^3)(F_{\text{exist}}^2 H^2\tau_1^2 + 1)}\\&
\times\Big(\sqrt{\tau_1^2-\tau_1(H)^2}F^2F_{\text{exist}}^2H^2(2F_{\text{exist}}^2H^3-F^2(F_{\text{exist}} + 1)H+F^2)\\
&+\frac{F_{\text{exist}}^2H^3(F_{\text{exist}}^2H^3-F^2(F_{\text{exist}} + 1)H+ 2F^2)}{\sqrt{\tau_1^2-\tau_1(H)^2}}\Big),
\ea 

\ba 
\label{pond3}
\#_3(\cdot;\tau_1):=&\Big(-\tau_1^2F^2 H^2(-3F_{\text{exist}}^2 H^3 +5(F_{\text{exist}} + 1)^2H^2 -14(F_{\text{exist}} +1)H+9)\\&-F_{\text{exist}}^4 H^6+ F_{\text{exist}}^2(F_{\text{exist}} + 1)^2H^5+F_{\text{exist}}^2(F^2 - 1)H^3 -3F^2(F_{\text{exist}} + 1)^2H^2\\
&+8F^2(F_{\text{exist}} + 1)H-5F^2\Big)\Big(4H(H_s^3 - H^3)(F_{\text{exist}}^2H^2\tau_1^2 + 1)\Big)^{-1}\\&+\Big((2H_s^3 +  H^3)(H - 1)(- F_{\text{exist}}^2H^2 + 2F_{\text{exist}}H + H - 1)\Big)\\
&\times\Big(4F^2(H_s^3-H^3)(F_{\text{exist}}^2H^2\tau_1^2 + 1)(\tau_1^2-\tau_1(H)^2)\Big)^{-1}.
\ea
For later usage, we compute by \eqref{diagonalM}--\eqref{pond3} 
\ba \label{Minfty}
M_{11}&(-\infty;\tau_1,\lambda_r)=F_{\text{exist}}(\lambda_r+1)+\frac{F_{\text{exist}}}{F_{\text{exist}}^2\tau_1^2+1}>0,\\
M_{11}&(+\infty;\tau_1,\lambda_r)=(\sqrt{H_R}+1)\lambda_r+\frac{1}{\sqrt{H_R}} + 1+\frac{\sqrt{H_R} +1}{\sqrt{H_R}(\sqrt{H_R} + 1)^2\tau_1^2+ \sqrt{H_R}}>0,\\
M_{22}&(-\infty;\tau_1,\lambda_r)=\left(-\frac{FH_s^{3/2}}{1-H_s^3}-\frac{F\tau_1}{(1-H_s^3)\sqrt{\tau_1^2-\tau_1(1)^2}}\right)\lambda_r+\#_2(-\infty;\tau_1)+\#_3(-\infty;\tau_1),\\
M_{22}&(+\infty;\tau_1,\lambda_r)=\left(-\frac{FH_s^{3/2}H_R}{H_R^3-H_s^3}-\frac{FH_R^{5/2}\tau_1}{(H_R^3-H_s^3)\sqrt{\tau_1^2-\tau_1(H_R)^2}}\right)\lambda_r+\#_2(+\infty;\tau_1)+\#_3(+\infty;\tau_1),\\
M_{33}&(-\infty;\tau_1,\lambda_r)=\left(-\frac{FH_s^{3/2}}{1-H_s^3}+\frac{F\tau_1}{(1-H_s^3)\sqrt{\tau_1^2-\tau_1(1)^2}}\right)\lambda_r-\#_2(-\infty;\tau_1)+\#_3(-\infty;\tau_1),\\
M_{33}&(+\infty;\tau_1,\lambda_r)=\left(-\frac{FH_s^{3/2}H_R}{H_R^3-H_s^3}+\frac{FH_R^{5/2}\tau_1}{(H_R^3-H_s^3)\sqrt{\tau_1^2-\tau_1(H_R)^2}}\right)\lambda_r-\#_2(+\infty;\tau_1)+\#_3(+\infty;\tau_1).
\ea

\begin{lemma}\label{lemma_high2}
Consider turning point frequencies $(\lambda,\eta)=(i\tau_1r+\lambda_r,r)$, where $\tau_1$ satisfies \eqref{tau1range} and $|\lambda_r|\sim\mathcal{O}(1)$, with associated turning point $x_1(\tau_1)\in(-\infty,0)$ $\big(x_1(\tau_1)\in(-\infty,\infty)\big)$ of a non-characteristic discontinuous $($smooth$)$ hydraulic shock profile. If the profile is non-monotone, then for $x\in(-\infty,x_1(\tau_1))$, $\gamma_2(x;ir\tau_1,r)$ and $\gamma_3(x;ir\tau_1,r)$ are purely imaginary, and, for $x\in(x_1(\tau_1),0)$, $\gamma_2(x;ir\tau_1,r)$ and $\gamma_3(x;ir\tau_1,r)$ are complex with non-vanishing opposite real parts of order $\mathcal{O}(r)$ and $\Re M_{22}(x;\tau_1,\lambda_r)=\Re M_{33}(x;\tau_1,\lambda_r)$. If the profile is monotone decreasing, then, for $x\in(-\infty,x_1(\tau_1))$, $\gamma_2(x;ir\tau_1,r)$ and $\gamma_3(x;ir\tau_1,r)$ are complex with non-vanishing opposite real parts of order $\mathcal{O}(r)$ and $\Re M_{22}(x;\tau_1,\lambda_r)=\Re M_{33}(x;\tau_1,\lambda_r)$, and, for $x\in(x_1(\tau_1),0)$ $\big(x\in(x_1(\tau_1),\infty)\big)$, $\gamma_2(x;ir\tau_1,r)$ and $\gamma_3(x;ir\tau_1,r)$ are purely imaginary.
\end{lemma}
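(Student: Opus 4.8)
The plan is to reduce the whole statement to tracking the sign of the single scalar $\tau_1^2-\tau_1(H(x))^2$ that sits under the radical in \eqref{gammaturning}. Throughout the relevant range of $x$ one has $H(x)>H_s$ (as stated just below \eqref{Lead_eigenvalues_allcases}: on $x<0$ for a discontinuous profile, on all of $\mathbb{R}$ for a smooth one), so $H^3-H_s^3>0$ and the term $\frac{F^2H\tau_1}{F_{\text{exist}}(H^3-H_s^3)}$ in \eqref{gammaturning} is real. Multiplying the bracket by $-ir$: when $\tau_1^2-\tau_1(H(x))^2>0$ the radical $\sqrt{\tau_1^2-\tau_1(H)^2}$ is real, so $\gamma_2(x;ir\tau_1,r)$ and $\gamma_3(x;ir\tau_1,r)$ are purely imaginary and distinct; when $\tau_1^2-\tau_1(H(x))^2<0$ the radical is purely imaginary, so $\gamma_2$ and $\gamma_3$ acquire the opposite real parts $\pm\, r F H^{5/2}\sqrt{\tau_1(H)^2-\tau_1^2}/(H^3-H_s^3)$, which are nonzero and of size $\mathcal{O}(r)$ as long as $x$ stays away from the turning point; and at $x=x_1(\tau_1)$ the radical vanishes, which is the defining relation $\gamma_2=\gamma_3$ of the turning point recorded in Lemma~\ref{turningpoint:lemma}. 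Thus the only remaining issue is on which side of $x_1(\tau_1)$ the scalar is positive.

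I would settle this using monotonicity twice. First, $\tau_1(H)=\sqrt{H^3-H_s^3}/(HF)$ is strictly increasing on $\{H>H_s\}$ (noted after \eqref{turningpoint1}) and $\tau_1=\tau_1(H(x_1(\tau_1)))$, so the sign of $\tau_1^2-\tau_1(H(x))^2$ equals the sign of $H(x_1(\tau_1))-H(x)$. Second, $H$ is monotone on $(-\infty,0)$ (on all of $\mathbb{R}$ for a smooth profile): the profile ODE \cite[eq.~(2.14)]{YZ} has $H_x\neq 0$ except at the rest points $x=\pm\infty$, so $H$ is increasing along a non-monotone discontinuous profile (which therefore overshoots) and decreasing along every smooth profile and every monotone decreasing discontinuous profile. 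Hence $x\mapsto\tau_1^2-\tau_1(H(x))^2$ undergoes a single, simple sign change, at $x_1(\tau_1)$ (its derivative $-2\tau_1(H)\tau_1'(H)H_x$ being nonzero there), and its sign on $(-\infty,x_1(\tau_1))$ equals that of $\tau_1^2-\tau_1(H_L)^2=\tau_1^2-\tau_1(1)^2$. By the admissible range \eqref{tau1range} — equivalently, by the profile classification of \cite[Proposition 2.1]{FRYZ} — one has $\tau_1<\tau_1(1)$ in the decreasing case and $\tau_1>\tau_1(1)$ in the non-monotone case. Combining with the previous paragraph: in the decreasing case $\gamma_2,\gamma_3$ are complex with opposite $\mathcal{O}(r)$ real parts on $(-\infty,x_1(\tau_1))$ and purely imaginary on $(x_1(\tau_1),0)$ (resp.\ $(x_1(\tau_1),\infty)$); in the non-monotone case the two regimes are interchanged, exactly the lemma's claim.

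It then remains to check $\Re M_{22}=\Re M_{33}$ on the subinterval where $\gamma_2,\gamma_3$ carry real parts, i.e.\ where $\tau_1^2-\tau_1(H)^2<0$, so that $\sqrt{\tau_1^2-\tau_1(H)^2}$ is purely imaginary. This I would read off directly from \eqref{diagonalM}--\eqref{pond3}: the coefficient $-FH^{5/2}\tau_1/\big((H^3-H_s^3)\sqrt{\tau_1^2-\tau_1(H)^2}\big)$ of $\lambda_r$ is purely imaginary, $\#_2$ is purely imaginary (each of its two summands carries the radical $\sqrt{\tau_1^2-\tau_1(H)^2}$ or its reciprocal as a factor), $\#_3$ is real (it involves that radical only through its square $\tau_1^2-\tau_1(H)^2$, which is real), and the other coefficient $-FH_s^{3/2}H/(H^3-H_s^3)$ of $\lambda_r$ is real; since $\lambda_r$ is the real $\mathcal{O}(1)$ correction to the purely imaginary leading frequency $i\tau_1 r$, the expressions $M_{22}$ and $M_{33}$ differ only by the sign of a purely imaginary quantity and hence have equal real parts. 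I expect the only genuine point of substance to be the monotonicity/overshoot input for the profiles (hence the references to \cite{YZ} and \cite{FRYZ}) together with the simplicity of the turning point from Lemma~\ref{turningpoint:lemma}; the real/imaginary bookkeeping in \eqref{diagonalM}--\eqref{pond3} is routine but must be carried out carefully since those formulas are long, and everything else follows by direct computation, as does Lemma~\ref{turningpoint:lemma} itself.
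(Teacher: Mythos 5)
Your proof is correct and follows the same route as the paper's: reduce everything to the sign of $\tau_1^2-\tau_1(H(x))^2$, use strict monotonicity of $\tau_1(\cdot)$ on $\{H>H_s\}$ together with monotonicity of the profile $H(x)$ on the relevant interval to locate the single sign change at $x_1(\tau_1)$, and then read off the purely-imaginary versus opposite-real-part dichotomy for $\gamma_{2,3}$ from \eqref{gammaturning} and the equality $\Re M_{22}=\Re M_{33}$ from the real/imaginary bookkeeping in \eqref{diagonalM}--\eqref{pond3}. The paper's proof is simply a terser version of the same computation.
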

\begin{proof}
Clearly $\tau_1(H)$ \eqref{turningpoint1} is strictly increasing on $[H_s,\infty)$. If the profile is non-monotone, it is monotone increasing on $(-\infty,0)$. Then, for $x\in(-\infty,x_1(\tau_1))$, we have $H_s<1<H(x)<H(x_1(\tau_1))$ and $\tau_1^2-\tau_1(H(x))^2>0$, from which the assertions for $\gamma_{2,3}(x)$ and $\Re M_{22}(x)$, $\Re M_{33}(x)$ follow by referring to \eqref{gammaturning}, and \eqref{diagonalM}-\eqref{pond3}. Similarly, the remaining claims also hold.
\end{proof}

\begin{lemma}\label{decayingmode_decreasing}
Under the consideration made in Lemma~\ref{lemma_high2}, if the profile is monotone decreasing, for $x\in(x_1(\tau_1),0)$ the solutions of the system \eqref{eigenZ1} that connect to the unstable manifold at $-\infty$ are linear combinations of
\ba \label{decayingmodesminus}
\mathcal{W}_1^-(x;\tau_1,\lambda_r)=&e^{\int_0^x\gamma_1(y;ir\tau_1)+M_{11}(y;\tau_1,\lambda_r)dy}\left(T_1(x;\tau_1)+\mathcal{O}(1/r)\right)\quad \text{and}\\
\mathcal{W}_2^-(x;\tau_1,\lambda_r)=&e^{\int_{x_1(\tau_1)}^x\gamma_2(y;ir\tau_1,r)+M_{22}(y;\tau_1,\lambda_r)dy}\left(T_2(x;\tau_1)+\mathcal{O}(1/r)\right)\\
&+e^{\int_{x_1(\tau_1)}^x\gamma_3(y;ir\tau_1,r)+M_{33}(y;\tau_1,\lambda_r)dy}\left(T_3(x;\tau_1)+\mathcal{O}(1/r)\right),
\ea 
where $T_j$ denotes the $j^{th}$ column of the matrix $T$ \eqref{matT}. Furthermore, if the profile is smooth, 
	the solution that connects to the stable manifold at $+\infty$ is parallel to 
\ba \label{decayingmodesplus}
&\mathcal{W}_3^-(x;\tau_1,\lambda_r)=e^{\int_0^x\gamma_2(y;ir\tau_1,r)+M_{22}(y;\tau_1,\lambda_r)dy}\left(T_2(x;\tau_1)+\mathcal{O}(1/r)\right).\\
\ea 
\end{lemma}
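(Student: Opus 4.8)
The plan is to handle the ``regular'' mode attached to $\gamma_1$ separately from the turning‑point pair $\gamma_2,\gamma_3$, invoking Airy asymptotics only for the latter. For frequencies $(\lambda,\eta)=(i\tau_1r+\lambda_r,r)$ with $\tau_1$ as in \eqref{tau1range}, Lemma~\ref{turningpoint:lemma} shows $\gamma_1(\cdot\,;ir\tau_1)$ is separated from $\{\gamma_2,\gamma_3\}(\cdot\,;ir\tau_1,r)$ by a gap of order $r$ along the whole profile ($\gamma_1$ is never part of a turning point for these frequencies). Hence one may block‑diagonalize \eqref{eigenZ1} into a scalar block $w_x=(\gamma_1+M_{11}+\mathcal{O}(1/r))w$ and a $2\times2$ block, with exponentially small coupling; since the profile converges exponentially at $\pm\infty$, the $\mathcal{O}(1/r)$ remainder of the scalar block is integrable, giving (up to scalar multiple) the solution $e^{\int_0^x\gamma_1(y;ir\tau_1)+M_{11}(y;\tau_1,\lambda_r)dy}(T_1(x;\tau_1)+\mathcal{O}(1/r))$. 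Since $\gamma_1$ is purely imaginary and, by \eqref{Minfty}, $\Re M_{11}(-\infty;\tau_1,\lambda_r)>0$ for $\Re\lambda_r\ge0$, this solution decays as $x\to-\infty$ and so lies in the unstable manifold at $-\infty$: this is $\mathcal{W}_1^-$.

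The substance of the proof is the $2\times2$ $(\gamma_2,\gamma_3)$‑block near $x_1(\tau_1)$. For a monotone decreasing profile $H$ is smooth and strictly decreasing on $(-\infty,0)$ (the subshock sits at $x=0$), so $x_1(\tau_1)$ lies in the smooth part; since $\tau_1(\cdot)$ is strictly increasing, $\tau_1^2-\tau_1(H(x))^2$ vanishes simply at $x_1(\tau_1)$, and Lemma~\ref{turningpoint:lemma} tells us the resulting double eigenvalue of the leading $\mathcal{O}(r)$ matrix carries a single Jordan block. Thus $x_1(\tau_1)$ is a simple, Airy‑type turning point, and the reduction of \cite{Er4,LWZ} applies: after a bounded normalizing transformation the block becomes, in a stretched variable $\sim r^{2/3}(x-x_1(\tau_1))$, a perturbed Airy equation with uniformly controlled $\mathcal{O}(1/r)$ remainder, whose WKB modes away from $x_1(\tau_1)$ are $e^{\int\gamma_j+M_{jj}}(T_j+\mathcal{O}(1/r))$, $j=2,3$. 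By Lemma~\ref{lemma_high2}, on $(-\infty,x_1(\tau_1))$ the eigenvalues $\gamma_{2,3}$ have non‑vanishing opposite $\mathcal{O}(r)$ real parts, so there the recessive WKB mode (decaying as $x\to-\infty$) spans the $\gamma_2,\gamma_3$‑part of the unstable subspace at $-\infty$, which together with $\mathcal{W}_1^-$ gives a basis of that $2$‑dimensional subspace. The classical Airy connection formula then identifies this recessive mode, continued across $x_1(\tau_1)$ into the oscillatory region $(x_1(\tau_1),0)$ — where, by \eqref{tau1range}, the two purely‑imaginary eigenvalues $\gamma_{2,3}$ stay distinct up to $x=0^-$ — with the balanced combination of the two oscillatory WKB modes based at $x_1(\tau_1)$, i.e., with the formula for $\mathcal{W}_2^-$ in \eqref{decayingmodesminus} (the Airy phase factors absorbed into the normalization of $T_2,T_3$). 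This is the main obstacle: carrying out the Airy reduction with uniform error control and pinning down exactly which combination of oscillatory WKB modes the recessive mode matches to — precisely the technical core imported from the detonation analyses \cite{Er4,LWZ}.

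For the smooth case, $H>H_s$ on all of $\R$, so by \eqref{tau1range} $x_1(\tau_1)\in(-\infty,\infty)$ and for $x>x_1(\tau_1)$ the eigenvalues $\gamma_{2,3}$ are purely imaginary and distinct while $\gamma_1$ stays separated from them: no turning point is crossed and only one‑sided WKB tracking is needed. By \eqref{Minfty}, $\Re M_{11}(+\infty;\tau_1,\lambda_r)>0$, so the $1$‑dimensional stable manifold at $+\infty$ lies in the $\gamma_2,\gamma_3$‑block; reading off \eqref{Minfty}, and consistently with that dimension count, it is the $\gamma_2$‑component that decays there, $\Re M_{22}(+\infty;\tau_1,\lambda_r)<0<\Re M_{33}(+\infty;\tau_1,\lambda_r)$. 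Tracking this decaying mode from $+\infty$ down to $x_1(\tau_1)^+$ by the single‑mode gap lemma shows it is parallel to $e^{\int_0^x\gamma_2(y;ir\tau_1,r)+M_{22}(y;\tau_1,\lambda_r)dy}(T_2(x;\tau_1)+\mathcal{O}(1/r))$, the formula for $\mathcal{W}_3^-$ in \eqref{decayingmodesplus}. Once the turning‑point reduction is in place, the remaining bookkeeping — base points of the phase integrals, signs read from \eqref{Minfty} — is routine.
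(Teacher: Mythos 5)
Your proof is correct and follows essentially the same route as the paper's sketch: block-diagonalize to decouple the spectrally isolated $\gamma_1$ mode from the $(\gamma_2,\gamma_3)$ acoustic block, track by WKB where the eigenvalues stay separated, and conjugate the acoustic block to an Airy equation near $x_1(\tau_1)$ so that the recessive mode at $-\infty$ connects to the equal-weight combination of the two oscillatory WKB modes on $(x_1(\tau_1),0)$. Your explicit handling of the smooth-profile mode $\mathcal{W}_3^-$ --- one-sided tracking from $+\infty$ across a turning-point-free region, with the $\gamma_2$ mode identified as the decaying one via $\Re M_{22}(+\infty)<\Re M_{33}(+\infty)$ and the one-dimensional stable-subspace count --- spells out a step the paper's proof leaves implicit.
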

\begin{proof}
	We sketch the argument given in \cite{LWZ}, directing the reader to the reference for full details.
	For $x\in (-\infty, x_1(\tau_1)-\eps)$, $\eps>0$ fixed but sufficiently small, one finds by standard
	WKB approximation that the solution ``tracks'' the positive real part eigenspace of the 
	frozen-coefficient symbol.
	Indeed, using a block-diagonalization, we may reduce on all of $(-\infty,0)$ to the $2\times 2$
	subspace consisting of the two acoustic modes that collide at $x_1(\tau_1)$.
	On $ (x_1(\tau_1)-\eps, x_1(\tau_1)+\eps)$, this $2\times 2$ block may be conjugated to an Airy
	equation, thus giving a link between the value at $x_1(\tau_1)-\eps$, determined by tracking,
	and the value at $x_1(\tau_1+\eps)$.
	For the Airy equation, it is a standard fact that the decaying solution on the growth/decay side
	connects to an equal sum of both sinusoidal solutions on the oscillatory side.
	Applying a tracking argument now to the interval $(x_1(\tau_1)+\eps,0)$, on which these two modes
	are spectrally separated, we obtain the result.
\end{proof}

\begin{proposition}\label{high_stability_turning;decreasing;discontinuous} 
Under the consideration made in Lemma~\ref{lemma_high2}, for decreasing discontinuous profiles, set
\be  \label{indexhydraulic} 
{\rm ind}(\tau_1):=e^{\int_{x_1(\tau_1)}^02\#_2(y;\tau_1)dy}\left|\frac{\det\big([\tau_1 F_0(\bar{U})+ F_2(\bar{U})],T_1(0^-;\tau_1), T_2(0^-;\tau_1)\big)}{\det\big([\tau F_0(\bar{U})+ F_2(\bar{U})],T_1(0^-;\tau_1), T_3(0^-;\tau_1)\big)}\right|.
\ee  
if ${\rm ind}(\tau_1)>1$, there exists a sequence $\{(\lambda_n,r_n)(\tau_1):\lambda_n(\tau_1)=i\tau_1 r_n(\tau_1)+\lambda_{n,r}(\tau_1),\eta_n(\tau_1)=r_n(\tau_1)\}_{n=1}^\infty$ of zeros of the Evans-Lopatinsky $D(\lambda,\eta)$ \eqref{EL} satisfying
$$
 r_{n+1}(\tau_1)-r_n(\tau_1)\rightarrow \frac{2\pi }{\int_{x_1(\tau_1)}^0\frac{2FH^{5/2}\sqrt{\tau_1^2-\tau_1(H)^2}}{(H^3-H_s^3)}(y)dy},\quad \text{as $n\rightarrow \infty$,}
$$
and
$$
\lambda_{n,r}(\tau_1)\rightarrow \lambda_{r,\infty}:= \frac{{\rm log}\big({\rm ind}(\tau_1)\big)}{e^{\int_{x_1(\tau_1)}^0\frac{2FH^{5/2}\tau_1}{(H^3-H_s^3)\sqrt{\tau_1^2-\tau_1(H)^2}}(y)dy}}>0,\quad \text{as $n\rightarrow +\infty$,}
$$
giving a sequence of unstable point spectra depended continuously on $\tau_1\in(\tau_1(H_*),\tau_1(1))$.

If ${\rm ind}(\tau_1)<1$, $D(\lambda,\eta)$ \eqref{EL} does not vanish at turning point frequencies $(\lambda,\eta)=(i\tau_1r+\lambda_r,r)$, where $\tau_1\in(\tau_1(H_*),\tau_1(1))$ and $|\lambda_r|\sim\mathcal{O}(1)$, $\lambda_r \geq 0$.

If ${\rm ind}(\tau_1)=0$, stability at the turning point frequencies depends on higher order terms in \eqref{decayingmodesminus}, which we do not treat.
\end{proposition}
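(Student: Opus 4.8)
The plan is to build on Lemma~\ref{decayingmode_decreasing}, which supplies explicit WKB-plus-Airy asymptotics for the two decaying modes $\mathcal{W}_1^-$ and $\mathcal{W}_2^-$ on the interval $(x_1(\tau_1),0)$, and then insert these into the Evans-Lopatinsky determinant \eqref{EL}, tracking the dominant exponential balance between the two oscillating branches $\gamma_2$ and $\gamma_3$. First I would substitute the expansions \eqref{decayingmodesminus} into $D(\lambda,\eta)=\det([\tau_1 F_0(\bar U)+F_2(\bar U)],\mathcal{W}_1^-(0^-),\mathcal{W}_2^-(0^-))$, using multilinearity to split $\mathcal{W}_2^-(0^-)$ into its $T_2$-part and $T_3$-part. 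Since $\mathcal{W}_1^-(0^-)$ carries the overall factor $e^{\int_0^0\gamma_1+M_{11}\,dy}=1$ after appropriate normalization, while the two pieces of $\mathcal{W}_2^-(0^-)$ carry $e^{\int_{x_1}^0(\gamma_j+M_{jj})\,dy}$ for $j=2,3$, the determinant becomes, to leading order in $1/r$, a sum of two exponentials:
$$
D \sim e^{\int_{x_1}^0(\gamma_2+M_{22})} D_2 + e^{\int_{x_1}^0(\gamma_3+M_{33})} D_3,
$$
where $D_j:=\det([\tau_1 F_0(\bar U)+F_2(\bar U)],T_1(0^-;\tau_1),T_j(0^-;\tau_1))$ are the $\mathcal{O}(1)$ Lopatinsky-type minors appearing in \eqref{indexhydraulic}.

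Next I would extract the real and imaginary parts of the exponents. By Lemma~\ref{lemma_high2}, on $(x_1(\tau_1),0)$ the eigenvalues $\gamma_{2,3}$ have opposite real parts of order $\mathcal{O}(r)$ given by $\pm FH^{5/2}\sqrt{\tau_1^2-\tau_1(H)^2}/(H^3-H_s^3)$ from \eqref{gammaturning}, and $\Re M_{22}=\Re M_{33}$. Writing $\lambda_r$ as the free $\mathcal{O}(1)$ parameter, the $\lambda_r$-dependence of $\Re M_{22}-\Re M_{33}$ is, from \eqref{diagonalM}, the term $\mp 2FH^{5/2}\tau_1/((H^3-H_s^3)\sqrt{\tau_1^2-\tau_1(H)^2})\,\lambda_r$, while the $\lambda_r$-independent imaginary parts of $M_{22},M_{33}$ differ by $2\#_2(y;\tau_1)$ (the $\#_3$ terms cancel). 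The vanishing condition $D=0$ then forces the moduli of the two exponential terms to match and their arguments to differ by $\pi\pmod{2\pi}$. Equating moduli gives
$$
e^{2\int_{x_1}^0\frac{FH^{5/2}\sqrt{\tau_1^2-\tau_1(H)^2}}{H^3-H_s^3}\,dy}\;e^{\lambda_r\int_{x_1}^0\frac{2FH^{5/2}\tau_1}{(H^3-H_s^3)\sqrt{\tau_1^2-\tau_1(H)^2}}\,dy}\;=\;{\rm ind}(\tau_1)^{-1}\cdot(\text{the }\#_2\text{ contribution factor}),
$$
which upon solving for $\lambda_r$ yields exactly $\lambda_{r,\infty}=\log({\rm ind}(\tau_1))/\exp(\int_{x_1}^0 2FH^{5/2}\tau_1/((H^3-H_s^3)\sqrt{\tau_1^2-\tau_1(H)^2})\,dy)$, positive precisely when ${\rm ind}(\tau_1)>1$; and equating arguments (which is the $\mathcal{O}(r)$ oscillatory balance $r\int_{x_1}^0 2FH^{5/2}\sqrt{\tau_1^2-\tau_1(H)^2}/(H^3-H_s^3)\,dy \equiv \pi \pmod{2\pi}$ up to the $\mathcal{O}(1)$ $\#_2$ phase shift) yields the quantization in $r_n$ with the stated spacing $2\pi/\int_{x_1}^0 2FH^{5/2}\sqrt{\tau_1^2-\tau_1(H)^2}/(H^3-H_s^3)\,dy$. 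When ${\rm ind}(\tau_1)<1$ the required $\lambda_r<0$ is forbidden (we work in $\lambda_r\ge 0$), so no zeros occur; this handles the second clause. The continuity in $\tau_1$ follows since all the integrands depend smoothly on $\tau_1$ on the open interval \eqref{tau1range}.

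The main obstacle, as in \cite{LWZ,Er4}, is making the connection-formula step rigorous: passing from the formal leading-order determinant to an honest implicit-function-theorem argument producing genuine zeros $(\lambda_n,r_n)$ of $D$, with controlled error terms. This requires (i) uniform $\mathcal{O}(1/r)$ control of the WKB error on $(x_1+\varepsilon,0)$ and of the Airy-matching error across $(x_1-\varepsilon,x_1+\varepsilon)$, uniformly in $\tau_1$ on compact subintervals of \eqref{tau1range}, and (ii) verifying that the leading determinant, as a function of $(\lambda_r, r)$, has nondegenerate derivative at the predicted roots so that Rouch\'e or the implicit function theorem applies to the perturbed function. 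I would invoke the corresponding estimates of \cite{LWZ} essentially verbatim, noting that the present $3\times3$ relaxation system reduces, after the block-diagonalization already used in Lemma~\ref{decayingmode_decreasing}, to the same $2\times2$ Airy normal form treated there; the only genuinely new bookkeeping is computing $D_2, D_3$ and the $\#_2$ phase explicitly from \eqref{A1A2E}, \eqref{matT}, \eqref{pond2}, which is routine. The degenerate case ${\rm ind}(\tau_1)=0$ (equivalently $D_2=0$ or $D_3=0$, i.e.\ one branch absent at leading order) genuinely depends on the $\mathcal{O}(1/r)$ corrections in \eqref{decayingmodesminus} and is explicitly excluded from the statement.
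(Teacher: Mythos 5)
Your overall strategy---insert the WKB/Airy asymptotics \eqref{decayingmodesminus} into \eqref{EL}, reduce by multilinearity to a sum of two exponentials times $\mathcal{O}(1)$ minors $D_2,D_3$, then equate modulus and phase---is the same as the paper's, which instead factors out the $D_3$ branch and studies $\widetilde{\rm ind}(\tau_1,\lambda_r)+1=0$. However, there is a genuine bookkeeping error in how you read Lemma~\ref{lemma_high2}, and it propagates through your modulus/phase split.

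For \emph{decreasing} profiles, Lemma~\ref{lemma_high2} places the ``opposite real parts of order $\mathcal{O}(r)$'' and ``$\Re M_{22}=\Re M_{33}$'' behavior on $(-\infty,x_1(\tau_1))$, while on $(x_1(\tau_1),0)$ the eigenvalues $\gamma_2,\gamma_3$ are \emph{purely imaginary}: from \eqref{gammaturning} they are $-ir$ times a real quantity since $\tau_1^2-\tau_1(H)^2>0$ there. You have attributed the $(-\infty,x_1)$ behavior to $(x_1,0)$. The consequences are: (i) your ``equating moduli'' equation carries a spurious $\mathcal{O}(r)$ factor $e^{2\int_{x_1}^0 FH^{5/2}\sqrt{\tau_1^2-\tau_1(H)^2}/(H^3-H_s^3)\,dy}$ that is not present---$e^{\int_{x_1}^0(\gamma_2-\gamma_3)\,dy}$ has modulus one and contributes only to the phase; (ii) you describe $\#_2$ as a ``phase shift'' in $\Im(M_{22}-M_{33})$, whereas from \eqref{pond2} $\#_2$ is \emph{real} on $(x_1,0)$ (the square root is real there), so it contributes to the modulus and is already packaged inside ${\rm ind}(\tau_1)$ via \eqref{indexhydraulic}---listing it separately alongside ${\rm ind}(\tau_1)^{-1}$ double-counts it. With these corrected, the modulus balance is just $e^{-\lambda_r\int_{x_1}^0 \frac{2FH^{5/2}\tau_1}{(H^3-H_s^3)\sqrt{\tau_1^2-\tau_1(H)^2}}\,dy}\,{\rm ind}(\tau_1)=1$, and the phase balance is governed by the $\mathcal{O}(r)$ imaginary part $r\int_{x_1}^0 \frac{2FH^{5/2}\sqrt{\tau_1^2-\tau_1(H)^2}}{H^3-H_s^3}\,dy$, which gives the stated spacing. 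You actually use the latter correctly in your ``equating arguments'' paragraph, which is inconsistent with your earlier claim that the same quantity is a real part. You also omit the step that $D_3\neq 0$ (the paper establishes this via the explicit computation \eqref{ELT1T3} and the positivity in \eqref{tau1inquality}); this nonvanishing is what justifies the factorization and, together with $D_2\neq 0$ generically, rules out ${\rm ind}(\tau_1)=0$ except in the excluded degenerate case (which is $D_2=0$ only---$D_3=0$ would make ${\rm ind}$ infinite, not zero). Your appeal to the Airy connection estimates of \cite{LWZ} and to an implicit-function-theorem/Rouch\'e step is appropriate and matches the level of rigor of the paper's sketch.
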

\begin{proof}
Inserting \eqref{decayingmodesminus} in \eqref{EL}, we obtain
$$ \begin{aligned}
D(i\tau_1 r+\lambda_r,r)=&\det\Big([(i\tau_1 r+\lambda_r)F_0(\bar{U})+ir F_2(\bar{U})-R(\bar{U})],T_1(0^-)+\mathcal{O}(\frac{1}{r}), \\&\quad\quad\;\;\; e^{\int_{x_1}^0\gamma_2(y)+M_{22}(y)dy}T_2(0^-)+e^{\int_{x_1}^0\gamma_3(y)+M_{33}(y)dy}T_3(0^-)+\mathcal{O}(\frac{1}{r})\Big)\\
=&ire^{\int_{x_1}^0\gamma_2(y)+M_{22}(y)dy}\det\big([\tau_1 F_0(\bar{U})+ F_2(\bar{U})],T_1(0^-), T_2(0^-)\big)\\
&+ire^{\int_{x_1}^0\gamma_3(y)+M_{33}(y)dy}\det\big([\tau_1 F_0(\bar{U})+ F_2(\bar{U})],T_1(0^-), T_3(0^-)\big)+\mathcal{O}(1)\\
=&ire^{\int_{x_1}^0\gamma_3(y)+M_{33}(y)dy}\det\big([\tau_1 F_0(\bar{U})+ F_2(\bar{U})],T_1(0^-), T_3(0^-)\big)\\
&\times\big(\widetilde{{\rm ind}}(\tau_1,\lambda_r)+1\big)+\mathcal{O}(1),
\end{aligned}
$$ 
where
$$
\begin{aligned}
\widetilde{{\rm ind}}(\tau_1,\lambda_r):=&e^{\int_{x_1}^0(\gamma_2-\gamma_3)(y)dy}e^{\int_{x_1}^0(M_{22}-M_{33})(y)dy}\frac{\det\big([\tau_1 F_0(\bar{U})+ F_2(\bar{U})],T_1(0^-), T_2(0^-)\big)}{\det\big([\tau_1 F_0(\bar{U})+ F_2(\bar{U})],T_1(0^-), T_3(0^-)\big)}\\
=&e^{\int_{x_1}^0(\gamma_2-\gamma_3)(y)dy}e^{-\lambda_r\int_{x_1}^0\frac{2FH^{5/2}\tau_1}{(H^3-H_s^3)\sqrt{\tau_1^2-\tau_1(H)^2}}(y)dy}e^{\int_{x_1}^02\#_2(y)dy}\\
&\times\frac{\det\big([\tau_1 F_0(\bar{U})+ F_2(\bar{U})],T_1(0^-), T_2(0^-)\big)}{\det\big([\tau_1 F_0(\bar{U})+ F_2(\bar{U})],T_1(0^-), T_3(0^-)\big)}.
\end{aligned}
$$
The factorization above is valid provided that $\det\big([\tau_1 F_0(\bar{U})+ F_2(\bar{U})],T_1(0^-), T_3(0^-)\big)\neq 0$, which we prove now. Direct computation shows
\ba 
\label{ELT1T3}
&\det\big([\tau_1 F_0(\bar{U})+ F_2(\bar{U})],T_1(0^-), T_3(0^-)\big)\\
=&\tfrac{1}{2}F^{-2}\sqrt{H_*}(H_* - H_R)\Big(FF_{\text{exist}}\sqrt{H_*}(2F^2\tau_1^2 + H_R + H_*)\sqrt{\tau_1^2-\tau_1(H)^2}\\
&+2F^2F_{\text{exist}}^2 H_*^2\tau_1\left(\tau_1^2+\frac{H_R(2H_R - H_*)}{H_*^2(\sqrt{H_R}+ 1)^2}\right)\Big)>0,
\ea 
since for $\tau_1\in (\tau_1(H_*),\tau_1(1))$
\ba
\label{tau1inquality}
&\tau_1^2+\frac{H_R(2H_R - H_*)}{H_*^2(\sqrt{H_R}+ 1)^2}> \tau_1(H_*)^2+\frac{H_R(2H_R - H_*)}{H_*^2(\sqrt{H_R}+ 1)^2}\\
=&\frac{H_R(F^2(H_*-H_R) + H_R^2H_* + 2H_R^{3/2}H_* + H_RH_*)}{F^2H_*^2(\sqrt{H_R} + 1)^2}>0.
\ea 

Recall Lemma~\ref{lemma_high2}, $\gamma_2(y)$ and $\gamma_3(y)$ are purely imaginary for $x\in(x_1,0)$. By adjusting $r$, one can make $e^{\int_{x_1}^0(\gamma_2(y)-\gamma_3(y))dy}$ real so that the equation $\widetilde{{\rm ind}}(\tau_1,\lambda_r)+1=0$ amounts to
$$
e^{\lambda_r\int_{x_1}^0-\frac{2FH^{5/2}\tau_1}{(H^3-H_s^3)\sqrt{\tau_1^2-\tau_1(H)^2}}(y)dy}{\rm ind}(\tau_1)=1,\;\;\text{
where}\;\; -\infty<\int_{x_1}^0-\frac{2FH^{5/2}\tau_1}{(H^3-H_s^3)\sqrt{\tau_1^2-\tau_1(H)^2}}(y)dy<0.$$ We pause to note the integrand above is integrable for it $\sim\mathcal{O}(H(x_1)-H(y))^{-1/2}$ as $y\rightarrow x_1^+$. If ${\rm ind}(\tau_1)>1$, then there exists $\lambda_{r,\infty}>0$ such that the equation above holds, from where we conclude the former part of the proposition. If ${\rm ind}(\tau_1)<1$, the equation above holds for no $\lambda_r\geq 0$, justifying the latter part of the proposition.
\end{proof}

\begin{corollary}\label{cor:high_stability_turning;decreasing;discontinuous}
It can be proven that ${\rm ind}(\tau_1)<1$ for all $\tau_1\in(\tau_1(H_*),\tau_1(1))$. Therefore, $D(\lambda,\eta)$ \eqref{EL} does not vanish at any turning point frequencies $(\lambda,\eta)=(i\tau_1r+\lambda_r,r)$.
\end{corollary}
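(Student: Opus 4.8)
The plan is to reduce the inequality ${\rm ind}(\tau_1)<1$ of \eqref{indexhydraulic} to a single explicit inequality in the admissible parameters $(\nu,F,\tau_1)$, following the detonation computation in \cite{Er4,LWZ}. Three ingredients enter: the numerator determinant, the denominator determinant \eqref{ELT1T3}, and the amplitude factor $e^{\int_{x_1(\tau_1)}^0 2\#_2(y;\tau_1)\,dy}$.

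\smallskip

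\noindent\emph{Step 1 (the determinant ratio).} I would compute $\det\big([\tau_1 F_0(\bar U)+F_2(\bar U)],\,T_1(0^-;\tau_1),\,T_2(0^-;\tau_1)\big)$ in closed form, exactly as \eqref{ELT1T3} was obtained, the only change being that the column $T_2$ carries the opposite sign of $\sqrt{\tau_1^2-\tau_1(H)^2}$ from $T_3$ in \eqref{matT}. Evaluated at $x=0^-$, where $H=H_*$ and, since $\tau_1>\tau_1(H_*)$ by \eqref{tau1range}, the radical $\sqrt{\tau_1^2-\tau_1(H_*)^2}$ is real, positive and nonzero, the quotient $\det(\cdot,T_2)/\det(\cdot,T_3)$ is an explicit rational expression of the shape $\big(Q-P\sqrt{\tau_1^2-\tau_1(H_*)^2}\,\big)\big/\big(Q+P\sqrt{\tau_1^2-\tau_1(H_*)^2}\,\big)$ up to a harmless sign, with $P,Q$ built from $\tau_1,H_*,H_R$ and the positivity of the relevant factors again controlled by \eqref{tau1inquality}.

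\smallskip

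\noindent\emph{Step 2 (the amplitude factor).} Since a decreasing discontinuous profile has $H$ strictly monotone on $(-\infty,0)$, I would change the integration variable from $y$ to $H$ by the profile ODE of \cite[eq. (2.14)]{YZ} (whose denominator is precisely the $H^3-H_s^3$ pervading \eqref{pond2}), and then try to recognize $2\#_2(H;\tau_1)/H_y$ as a total $H$-derivative of a logarithm, as the WKB transport structure behind Lemma~\ref{lemma_high2} suggests: $\#_2$ is exactly the part of the diagonal correction $M_{22}-M_{33}$ that transports the acoustic amplitude, so one expects $\int_{x_1}^0 2\#_2$ to depend only on the endpoints $H_*$ and $H(x_1)$, the latter characterized by $\tau_1(H(x_1))=\tau_1$. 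The integrable $(\tau_1^2-\tau_1(H)^2)^{-1/2}$ singularity of $\#_2$ at the turning point must be tracked so the resulting endpoint value at $H(x_1)$ is finite and nonzero — this simultaneously re-establishes ${\rm ind}(\tau_1)>0$, so the degenerate alternative ${\rm ind}(\tau_1)=0$ of Proposition~\ref{high_stability_turning;decreasing;discontinuous} does not arise. Should the integrand fail to be an exact $\log$-derivative, the singular part near $H(x_1)$ still integrates in closed form and the remaining smooth part admits an explicit bound from the profile ODE.

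\smallskip

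\noindent\emph{Step 3 (assembly).} I would then combine the two closed forms into the single inequality ${\rm ind}(\tau_1)<1$, subject to the admissibility constraints — the Rankine--Hugoniot relation between $H_*$ and $H_R=1/\nu^2$, the ordering $H_s<H_*<H_L=1$, the Froude range $F_{\text{char}}<F<F_{\text{2d}}$, and $\tau_1\in(\tau_1(H_*),\tau_1(1))$ — and discharge it by a Routh--Hurwitz / sum-of-squares manipulation in the style of the earlier lemmas, most conveniently after reparametrizing $\tau_1$ by the turning-point location $H_1:=H(x_1)\in(H_*,1)$ via $\tau_1=\tau_1(H_1)$, so that ${\rm ind}$ becomes a function of $(\nu,F,H_1,H_*)$ with both $H_1,H_*\in(H_s,1)$. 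The main obstacle is Step 2 together with this final verification: unlike the essential-spectrum computations earlier in the section, the estimate genuinely couples a profile-dependent integral to endpoint data, so the telescoping identity for $\int 2\#_2$ (or an adequate substitute bound) is the real crux — without it one is left with a non-explicit integral over an unknown profile and no route to a bound uniform over all admissible shocks and all $\tau_1$.
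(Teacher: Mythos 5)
The proposal's Step 1 (showing the determinant ratio has absolute value strictly less than one by a difference-of-squares manipulation at $H=H_*$) matches the paper's argument. The gap is in Steps 2--3: you aim to evaluate $\int_{x_1}^0 2\#_2\,dy$ in closed form (or at least bound its value) and then couple it to the determinant ratio in a single explicit inequality, and you yourself flag this as the unresolved crux, worrying about a ``non-explicit integral over an unknown profile.'' The paper sidesteps this entirely by decoupling the two factors: ${\rm ind}(\tau_1)$ is the product of an exponential $e^{\int_{x_1}^0 2\#_2\,dy}$ and the absolute determinant ratio, and the paper shows each is independently less than one. For the exponential this requires only the \emph{sign} of $\#_2$, not the value of the integral: since the decreasing profile has $H(y)>H_*>H_s$ for $y\in(x_1,0)$ and $\sqrt{\tau_1^2-\tau_1(H)^2}$ is real and positive there by Lemma~\ref{lemma_high2}, every factor in \eqref{pond2} except the leading $-$ sign is positive once one checks the three polynomial positivity conditions \eqref{pond2sign} (which the paper handles by inconsistency arguments against the Rankine--Hugoniot relation and the existence domain of \cite[Proposition 2.1]{FRYZ}). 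Hence $\#_2<0$ pointwise, so the integral is negative and the exponential factor is $<1$, with no telescoping identity or closed-form evaluation required and no profile dependence surviving. Your hope that $2\#_2/H_y$ is an exact $H$-logarithm is not needed and not attempted in the paper; the simpler pointwise-sign observation is what makes the corollary tractable. As written, your Step 2 has no completed argument, so the proposal does not yet constitute a proof.
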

\begin{proof}
Recalling the formula of $\#_2$ \eqref{pond2}, we will show the integrand $\#_2(y;\tau_1)<0$ for $x_1<y<0$, by noting that for $H>H_*>H_R$ there hold 
\ba \label{pond2sign}
&H+F_{\text{exist}}H-1=\frac{H - H_R + H\sqrt{H_R} + HH_R}{H_R}>0, \\
&2F_{\text{exist}}^2H^3-F^2(F_{\text{exist}} + 1)H+F^2>0,\quad \text{and}\\
&F_{\text{exist}}^2H^3-F^2(F_{\text{exist}} + 1)H+ 2F^2>0.
\ea 
To prove \eqref{pond2sign}[iii], note that
$$
\begin{aligned}
&d_H(F_{\text{exist}}^2H^3-F^2(F_{\text{exist}} + 1)H +2F^2)(y) \\
>& d_H(F_{\text{exist}}^2H^3-F^2(F_{\text{exist}} + 1)H +2F^2)(0^-)\\
=&3F_{\text{exist}}^2H_*^2-F^2(F_{\text{exist}} + 1)=\left(3(\sqrt{H_R} + 1)^2H_*^2 -F^2H_R(H_R + \sqrt{H_R} + 1)\right)/H_R^2>0,
\end{aligned}
$$
where the last inequality can be justified by (i) for $H_R=0.25$ and $F=1$, 
$$
3(\sqrt{H_R} + 1)^2H_*^2 -F^2H_R(H_R + \sqrt{H_R} + 1)=\frac{163}{128} - \frac{9\sqrt{137}}{128}>0,$$
and (ii) the inconsistency of the system\footnote{The second equation of the system is given by the Rankine-Hugoniot conditions for the subshock connecting states $H_*$ and $H_R$.}
$$
\left\{\begin{aligned}&3(\sqrt{H_R} + 1)^2H_*^2 -F^2H_R(H_R + \sqrt{H_R} + 1)=0,\\
&(\sqrt{H_R} + 1)^2H_*^2+ H_R(\sqrt{H_R}+ 1)^2H_* -2F^2H_R=0.\end{aligned}\right.
$$
Here, solving the system yields
$$
\begin{aligned}
0=&-(H_R + \sqrt{H_R} - 5)^2F^2+ 3H_R(\sqrt{H_R} + 1)^2(H_R + \sqrt{H_R} + 1)\\
<&-(H_R + \sqrt{H_R} - 5)^2(H_R+\sqrt{H_R})^2+ 3H_R(\sqrt{H_R} + 1)^2(H_R + \sqrt{H_R} + 1)\\
=&-H_R(\sqrt{H_R}+ 1)^2(\sqrt{H_R} + 2) (1-\sqrt{H_R})(11-H_R - \sqrt{H_R})<0, \quad \text{for $0<H_R<1$.}
\end{aligned}
$$
Thus, \eqref{pond2sign}[iii] follows from $$\begin{aligned}&F_{\text{exist}}^2H^3-F^2(F_{\text{exist}} + 1)H +2F^2>F_{\text{exist}}^2H_*^3-F^2(F_{\text{exist}} + 1)H_* +2F^2\\
=&\left((\sqrt{H_R} + 1)^2H_*^3-F^2H_R(H_R + \sqrt{H_R} + 1)H_* +2F^2H_R^2\right)/H_R^2>0,
\end{aligned}$$ where the last inequality can be justified by (i) for $H_R=0.25$ and $F=1$, $$(\sqrt{H_R} + 1)^2H_*^3-F^2H_R(H_R + \sqrt{H_R} + 1)H_* +2F^2H_R^2=\frac{13\sqrt{137}}{1536} -\frac{13}{512}>0,$$
and (ii) the inconsistency of the system
$$
\left\{\begin{aligned}&(\sqrt{H_R} + 1)^2H_*^3-F^2H_R(H_R + \sqrt{H_R} + 1)H_* +2F^2H_R^2=0,\\
&(\sqrt{H_R} + 1)^2H_*^2+ H_R(\sqrt{H_R}+ 1)^2H_* -2F^2H_R=0.\end{aligned}\right.
$$
Here, solving the system yields $0=(1 - \sqrt{H_R} - H_R)F^2+H_R(\sqrt{H_R} + 1)^2$, which, if $(1 - \sqrt{H_R} - H_R)\geq 0$, gives a contradiction, otherwise, a contradiction can be concluded from 
$$
\begin{aligned}
0=&(1 - \sqrt{H_R} - H_R)F^2+H_R(\sqrt{H_R} + 1)^2\\
>&(1 - \sqrt{H_R} - H_R)\frac{(\sqrt{H_R} + 1)^2(H_R + 1)}{2H_R}+H_R(\sqrt{H_R} + 1)^2\\
=&\frac{(H_R - 1)^2(H_R + \sqrt{H_R} + 1)}{2H_R}>0,
\end{aligned}
$$
where, recalling the domain of existence of decreasing discontinuous hydraulic shocks (See \cite[Proposition 2.1]{FRYZ}), the first inequality follows.

Note that
$$
d_H(2F_{\text{exist}}^2H^3-F^2(F_{\text{exist}} + 1)H +F^2)(y)> d_H(F_{\text{exist}}^2H^3-F^2(F_{\text{exist}} + 1)H +2F^2)(y)>0.
$$
Thus, \eqref{pond2sign}[ii] follows from $$\begin{aligned}&2F_{\text{exist}}^2H^3-F^2(F_{\text{exist}} + 1)H +F^2>2F_{\text{exist}}^2H_*^3-F^2(F_{\text{exist}} + 1)H_* +F^2\\
=&\left(2(\sqrt{H_R} + 1)^2H_*^3-F^2H_R(H_R + \sqrt{H_R} + 1)H_* +F^2H_R^2\right)/H_R^2>0,
\end{aligned}$$ where the last inequality can be justified by (i) for $H_R=0.25$ and $F=1$, $$2(\sqrt{H_R} + 1)^2H_*^3-F^2H_R(H_R + \sqrt{H_R} + 1)H_* +F^2H_R^2=\frac{9\sqrt{137}}{256} -\frac{75}{256}>0,$$
and (ii) the inconsistency of the system
$$
\left\{\begin{aligned}&2(\sqrt{H_R} + 1)^2H_*^3-F^2H_R(H_R + \sqrt{H_R} + 1)H_* +F^2H_R^2=0,\\
&(\sqrt{H_R} + 1)^2H_*^2+ H_R(\sqrt{H_R}+ 1)^2H_* -2F^2H_R=0.\end{aligned}\right.
$$
Here, solving the system yields $$2(H_R + \sqrt{H_R} - 3)^2F^4 -H_R(\sqrt{H_R} + 1)^2(5H_R + 5\sqrt{H_R} - 6)F^2+2H_R^2(\sqrt{H_R} + 1)^4=0.$$
Treating the lefthand side of the equation above as a quadratic function of $F^2$, we find that the discriminant 
$9H_R^2(\sqrt{H_R} + 1)^4(\sqrt{H_R} - 1)(\sqrt{H_R} + 2)(H_R + \sqrt{H_R} + 6)$ is negative for $0<H_R<1$, yielding the inconsistency.

Then, we prove that the absolute value factor in ${\rm ind}(\tau_1)$ is less than $1$. Direct computation shows 
$$\begin{aligned}
&\Big(\det\big([\tau_1 F_0(\bar{U})+ F_2(\bar{U})],T_1(0^-), T_2(0^-)\big)\Big)^2-\Big(\det\big([\tau_1 F_0(\bar{U})+ F_2(\bar{U})],T_1(0^-), T_3(0^-)\big)\Big)^2\\
=&-4FF_{\text{exist}}^3H_*^{7/2}\tau_1(H_* - H_R)^2\sqrt{\tau_1^2-\tau_1(H_*)^2}\left(\tau_1^2+\frac{H_R + H_*}{2F^2}\right)\left(\tau_1^2+\frac{H_R(2H_R - H_*)}{H_*^2(\sqrt{H_R}+ 1)^2}\right)<0,
\end{aligned}
$$
where we have used \eqref{tau1inquality}. Combining with \eqref{ELT1T3}, we obtain
$$
\left|\frac{\det\big([\tau_1 F_0(\bar{U})+ F_2(\bar{U})],T_1(0^-), T_2(0^-)\big)}{\det\big([\tau_1 F_0(\bar{U})+ F_2(\bar{U})],T_1(0^-), T_3(0^-)\big)}\right|<1.
$$
\end{proof}

\begin{lemma}\label{high_stability_turning;increasing;discontinuous} The Evans-Lopatinsky determinant $D(\lambda,\eta)$ \eqref{EL} associated to a non-monotone discontinuous hydraulic shock profile does not vanish at turning point frequencies $(\lambda,\eta)=(i\tau_1r+\lambda_r,r)$, where $\tau_1$ satisfies \eqref{tau1range} and $|\lambda_r|\sim\mathcal{O}(1)$, $\lambda_r\geq 0$.
\end{lemma}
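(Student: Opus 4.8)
The plan is to run the high-frequency turning-point argument of Proposition~\ref{high_stability_turning;decreasing;discontinuous} in the non-monotone setting, exploiting that by Lemma~\ref{lemma_high2} the roles of the two sub-intervals cut out by the turning point are exchanged relative to the decreasing case: for a non-monotone discontinuous profile $H$ is strictly increasing from $H_L=1$ at $-\infty$ to $H_*>1>H_R$ at $0^-$, so the turning point $x_1(\tau_1)$ lies on the $-\infty$ side of the oscillatory region, the acoustic pair $\gamma_2,\gamma_3$ is purely imaginary on $(-\infty,x_1(\tau_1))$, and on the segment $(x_1(\tau_1),0)$ abutting the subshock it is complex with opposite real parts of size $\mathcal{O}(r)$ and $\Re M_{22}=\Re M_{33}$. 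The consequence is that the solution connecting to the unstable manifold at $-\infty$ is, at $0^-$, dominated by a single exponentially large mode; no oscillatory cancellation is available, and this is exactly why the result is a Lemma rather than a Proposition requiring the index ${\rm ind}(\tau_1)$ of the decreasing case.

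Concretely, set $\lambda=i\tau_1 r+\lambda_r$, $\eta=r$ with $\tau_1$ in the range \eqref{tau1range}, which here equals $(\tau_1(1),\tau_1(H_*))$, and $|\lambda_r|\sim\mathcal{O}(1)$, $\lambda_r\ge 0$ (the relevant $F$ lie in $2<F<F_{\text{2d}}$, where the weighted Evans-Lopatinsky framework of \S\ref{s:evans_hydro} applies). As in Lemma~\ref{decayingmode_decreasing}, the solutions of \eqref{eigenZ1} connecting to the unstable manifold at $-\infty$ form a two-dimensional space spanned by the $\gamma_1$-mode $\mathcal{W}_1^-=e^{\int_0^x(\gamma_1+M_{11})(y)\,dy}(T_1+\mathcal{O}(1/r))$, which is recessive at $-\infty$ since $\gamma_1(-\infty)$ is purely imaginary and $\Re M_{11}(-\infty;\tau_1,\lambda_r)>0$ by \eqref{Minfty}, together with a mode $\mathcal{W}_2^-$ which on $(-\infty,x_1(\tau_1))$ is a single pure oscillatory acoustic mode — the one of $\gamma_2,\gamma_3$ whose $\mathcal{O}(1)$ correction $\Re M_{jj}(-\infty)$ carries the recessive sign (well defined, as the mode count forces $\Re M_{22}(-\infty)\neq\Re M_{33}(-\infty)$; in fact either acoustic mode leads to the same conclusion below). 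Applying the Airy connection across the simple turning point $x_1(\tau_1)$ (Lemma~\ref{turningpoint:lemma}) exactly as in the proof of Lemma~\ref{decayingmode_decreasing} but run in reverse — oscillatory behavior to the left, growth/decay to the right — and using the standard fact that the decaying solution on the growth/decay side corresponds to the equal sum of the two oscillatory modes, one sees that a single pure oscillatory mode necessarily has a nonzero growing component on $(x_1(\tau_1),0)$; by \eqref{gammaturning}, with $\tau_1^2-\tau_1(H)^2<0$ and $H^3-H_s^3>0$ there, the growing branch is $\gamma_2$, of eigenvector the column $T_2$ of \eqref{matT}. Hence, with $\mathcal{O}(1)$ Airy coefficients and $c_2\neq 0$,
\[
\mathcal{W}_2^-(0^-;\tau_1,\lambda_r)=c_2\,e^{\int_{x_1(\tau_1)}^0(\gamma_2+M_{22})(y)\,dy}\bigl(T_2(0^-;\tau_1)+\mathcal{O}(1/r)\bigr)+c_3\,e^{\int_{x_1(\tau_1)}^0(\gamma_3+M_{33})(y)\,dy}\bigl(T_3(0^-;\tau_1)+\mathcal{O}(1/r)\bigr),
\]
where $\Re\int_{x_1(\tau_1)}^0\gamma_2\,dy\sim\mathcal{O}(r)>0$ is finite (the integrand is bounded and vanishes like $(y-x_1)^{1/2}$ at $x_1$) while $\Re\int_{x_1(\tau_1)}^0\gamma_3\,dy\sim-\mathcal{O}(r)$.

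Substituting this and $\mathcal{W}_1^-(0^-)=T_1(0^-;\tau_1)+\mathcal{O}(1/r)$ into \eqref{EL}, using $[(i\tau_1 r+\lambda_r)F_0(\bar U)+ir F_2(\bar U)-R(\bar U)]=ir[\tau_1 F_0(\bar U)+F_2(\bar U)]+\mathcal{O}(1)$, and factoring out the dominant exponential, one obtains
\[
D(i\tau_1 r+\lambda_r,r)=ir\,c_2\,e^{\int_{x_1(\tau_1)}^0(\gamma_2+M_{22})(y)\,dy}\,\det\bigl([\tau_1 F_0(\bar U)+F_2(\bar U)],\,T_1(0^-;\tau_1),\,T_2(0^-;\tau_1)\bigr)\bigl(1+o(1)\bigr),
\]
the subdominant acoustic term carrying a factor $e^{\int_{x_1(\tau_1)}^0(\gamma_3-\gamma_2)\,dy}$ with $\Re(\gamma_3-\gamma_2)<0$, and all other corrections being $\mathcal{O}(1)$ against a leading term of size $r\,e^{\mathcal{O}(r)}$. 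Hence $D$ does not vanish at these frequencies once $\det([\tau_1 F_0(\bar U)+F_2(\bar U)],T_1(0^-;\tau_1),T_2(0^-;\tau_1))\neq 0$; and, to avoid pinning down the Airy coefficients and the choice of recessive mode at $-\infty$, it suffices to verify this determinant for both $T_2$ and $T_3$, since $\mathcal{W}_2^-(0^-)$ is in any case a nonzero combination of $T_2(0^-),T_3(0^-)$ up to $\mathcal{O}(1/r)$ whose leading term is a nonzero multiple of one of them.

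The hard part is this last determinant, the analogue of \eqref{ELT1T3} for the growing eigenvector $T_2$ in the regime $H_*>1$: at $x=0^-$ one has $\tau_1<\tau_1(H_*)$, so $\sqrt{\tau_1^2-\tau_1(H_*)^2}=i\sqrt{\tau_1(H_*)^2-\tau_1^2}$ is purely imaginary and the determinant is genuinely complex-valued, whence the definite-sign arguments of \eqref{ELT1T3} and Corollary~\ref{cor:high_stability_turning;decreasing;discontinuous} do not transfer verbatim. I would compute $\det([\tau_1 F_0(\bar U)+F_2(\bar U)],T_1(0^-),T_2(0^-))$ directly from \eqref{A1A2E} and \eqref{matT} at $H=H_*$, $v=0$, factor out the $(H_*-H_R)$ prefactor coming from the subshock jump, split the remaining factor into real and imaginary parts $a(\tau_1)+i\,b(\tau_1)$, and show $a^2+b^2>0$ on $(\tau_1(1),\tau_1(H_*))$ using $H_*>1>H_R$, the strict monotonicity of $\tau_1(\cdot)$, and the existence constraints for non-monotone discontinuous profiles from \cite[Proposition 2.1]{FRYZ}, much as \eqref{tau1inquality} was exploited in the decreasing case; I expect $a$ and $b$ to have controllable signs ruling out simultaneous vanishing. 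This is a bounded but intricate symbolic computation, and is the one place where genuinely new work beyond the decreasing-case argument is required.
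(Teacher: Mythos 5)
Your proposal correctly identifies the structure of the argument and matches the paper's proof: for a non-monotone profile, Lemma~\ref{lemma_high2} swaps the oscillatory and growth/decay regions relative to the decreasing case, and the Airy connection shows that a single oscillatory mode at $-\infty$ (as opposed to the equal combination) must carry a nonzero growing component on $(x_1(\tau_1),0)$, so that the leading term of $D(i\tau_1 r+\lambda_r,r)$ is a nonzero multiple of $\det\bigl([\tau_1 F_0(\bar U)+F_2(\bar U)],T_1(0^-),T_2(0^-)\bigr)$ times the dominant exponential. Up to that reduction you and the paper agree.

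The gap is that you stop precisely at the proof's one substantive computation: you correctly observe that $\sqrt{\tau_1^2-\tau_1(H_*)^2}$ is purely imaginary on the range $\tau_1\in(\tau_1(1),\tau_1(H_*))$, so the decreasing-case sign bookkeeping of \eqref{ELT1T3} and Corollary~\ref{cor:high_stability_turning;decreasing;discontinuous} does not transfer, and you then only outline a program (split into $a+ib$, show $a^2+b^2>0$) that you ``expect'' to succeed. In fact the purely imaginary square root is a gift, not an obstacle: substituting $\sqrt{\tau_1^2-\tau_1(H_*)^2}=i\sqrt{\tau_1(H_*)^2-\tau_1^2}$, the imaginary part of the determinant alone already factors cleanly as
\[
\Im\det\bigl([\tau_1 F_0(\bar U)+F_2(\bar U)],T_1(0^-),T_2(0^-)\bigr)=-\tfrac12 F^{-1}F_{\text{exist}}H_*(H_*-H_R)(2F^2\tau_1^2+H_R+H_*)\sqrt{\tau_1(H_*)^2-\tau_1^2},
\]
a product of manifestly positive factors (here $H_*>1>H_R$ and $\tau_1<\tau_1(H_*)$), so there is no need to control the real part at all. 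Without this observation, or the deferred symbolic computation, your argument is incomplete; with it, the ``bounded but intricate'' analysis you anticipate collapses to a one-line nonvanishing check, which is exactly \eqref{ELT1T2}. Your fallback of verifying the determinant against both $T_2$ and $T_3$ is also unnecessary: the Airy connection forces $c_2\neq0$, and $T_2$ dominates $T_3$ by a factor of size $e^{\mathcal{O}(r)}$ on $(x_1(\tau_1),0)$, so only the $T_2$ determinant enters at leading order.
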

\begin{proof}

Recalling the proof of Lemma~\ref{decayingmode_decreasing}, by symmetry,  we know that an equally weighted combination of oscillatory modes at $-\infty$ connects
to the decaying (in forward $x$-direction) mode for $x>x_1$, i.e.,
$$
e^{\int_{x_1}^x\gamma_3(y)+M_{33}(y)dy}\big(T_3(x)+\mathcal{O}(\frac{1}{r})\big).
$$
It follows that the single oscillatory mode that corresponds to the decaying mode at $-\infty$ does not connect to the decaying (in forward $x$-direction) mode for $x>x_1$ alone, and instead must connect to some linear
combination involving also the growing (in forward $x$-direction) mode, i.e.
$$
e^{\int_{x_1}^x\gamma_2(y)+M_{22}(y)dy}\big(T_2(x)+\mathcal{O}(\frac{1}{r})\big).
$$
By \eqref{gammaturning} and Lemma~\ref{lemma_high2}, that for $y\in(x_1,0)$, $\gamma_2(y)$ and $\gamma_3(y)$ are complex with opposite real parts of order $\mathcal{O}(r)$ to conclude 
$$
\left|e^{\int_{x_1}^0\gamma_3(y)+M_{33}(y)dy}\big(T_3(0^-)+\mathcal{O}(\frac{1}{r})\big)\right|\sim e^{-Ar}\ll 1.
$$
Therefore,
$$
D(i\tau_1 r+\lambda_r,r)=ire^{\int_{x_1}^0\gamma_2(y)+M_{22}(y)dy}\left(\det\big([\tau_1 F_0(\bar{U})+ F_2(\bar{U})],T_1(0^-), T_2(0^-)\big)+\mathcal{O}(\frac{1}{r})\right).
$$
Direct computation shows
\ba\label{ELT1T2}
&\Im\det\big([\tau_1 F_0(\bar{U})+ F_2(\bar{U})],T_1(0^-), T_2(0^-)\big)\\
=&-\tfrac{1}{2}F^{-1}F_{\text{exist}} H_*(H_* - H_R)(2F^2\tau_1^2 + H_R + H_*)\sqrt{\tau_1(H_*)^2-\tau_1^2}\neq 0.
\ea 
Hence the leading order term of the Evans-Lopatinsky determinant does not vanish.
\end{proof}
\begin{lemma}\label{high_stability_turning;smooth} The Evans function $\Delta(\lambda,\eta)$ \eqref{Evans} associated to a non-characteristic decreasing smooth hydraulic shock profile does not vanish at turning point frequencies $(\lambda,\eta)=(i\tau_1r+\lambda_r,r)$, where $\tau_1$ satisfies \eqref{tau1range} and $|\lambda_r|\sim\mathcal{O}(1)$, $\lambda_r \geq 0$.
\end{lemma}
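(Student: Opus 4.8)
\emph{Plan.} I would follow the same template as the proofs of Lemma~\ref{high_stability_turning;increasing;discontinuous} and Proposition~\ref{high_stability_turning;decreasing;discontinuous}: feed the WKB/turning-point descriptions of the decaying modes into the Evans determinant \eqref{Evans} and reduce its nonvanishing to that of a $3\times 3$ determinant of the columns $T_1,T_2,T_3$ of the diagonalizing matrix $T$ in \eqref{matT}. The essential simplification of the smooth case is that there is no subshock jump condition, so no competing threshold of the form ${\rm ind}(\tau_1)$ enters: the $+\infty$-decaying mode is proportional (to all orders in $1/r$) to a single oscillatory branch, while the $-\infty$-decaying acoustic mode, having crossed the Airy turning point, is \emph{forced} by Lemma~\ref{decayingmode_decreasing} to carry nonnegligible content along the other oscillatory branch, and these two facts already rule out a common decaying solution. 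Concretely: since the profile is smooth, \eqref{zgeig} is one and the same linear ODE $W_x=G(x;\lambda,\eta)W$ on all of $\mathbb{R}$, so by Abel's identity the Wronskian $\det\big(\mathcal{W}^+_1,\mathcal{W}^-_1,\mathcal{W}^-_2\big)(x)$ equals $\Delta(\lambda,\eta)\,e^{\int_0^x\operatorname{tr}G}$; hence $\Delta(\lambda,\eta)=0$ iff this determinant vanishes at some, equivalently every, point. I therefore evaluate at a point $x_0\in\big(x_1(\tau_1),\infty\big)$ of my choosing, e.g.\ $x_0=x_1(\tau_1)+1$ — this flexibility is needed because for smooth profiles $x_1(\tau_1)$ (Lemma~\ref{turningpoint:lemma}) need not lie in $(-\infty,0)$.

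At such $x_0$ I would insert the mode asymptotics of Lemma~\ref{decayingmode_decreasing}, extended from $(x_1(\tau_1),0)$ to $(x_1(\tau_1),\infty)$ in the smooth case: the $\gamma_1$-mode tracks throughout, $\mathcal{W}^-_1=\kappa_1\big(T_1+\mathcal{O}(1/r)\big)$ with $\kappa_1\neq0$; the $-\infty$-decaying acoustic mode becomes the Airy-matched combination $\mathcal{W}^-_2=a\big(T_2+\mathcal{O}(1/r)\big)+b\big(T_3+\mathcal{O}(1/r)\big)$ from \eqref{decayingmodesminus}, with $a=e^{\int_{x_1}^{x_0}\gamma_2+M_{22}}$ and $b=e^{\int_{x_1}^{x_0}\gamma_3+M_{33}}$ both nonzero; and the $+\infty$-decaying mode $\mathcal{W}^+_1$ is parallel to $\mathcal{W}^-_3=\kappa_+\big(T_2+\mathcal{O}(1/r)\big)$ of \eqref{decayingmodesplus}, $\kappa_+\neq0$. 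Expanding the determinant, the $a$-term has $T_2$ in two columns so its leading part drops, leaving
\[
\Delta=\kappa_+\kappa_1\Big(a\,\mathcal{O}(1/r)+b\big(\det(T_2,T_1,T_3)+\mathcal{O}(1/r)\big)\Big)
=\kappa_+\kappa_1\,b\,\Big({-}\det T+\mathcal{O}(1/r)+(a/b)\,\mathcal{O}(1/r)\Big),
\]
with $\det T\neq0$ away from turning points.

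It remains to control $a/b$. By Lemma~\ref{lemma_high2}, $\gamma_2,\gamma_3$ are purely imaginary on $(x_1(\tau_1),\infty)$, so the exact leading expressions \eqref{gammaturning} give $\Re(\gamma_2-\gamma_3)\equiv0$ there and thus $|a/b|=\exp\!\big(\int_{x_1}^{x_0}\Re(M_{22}-M_{33})\,dy\big)$. From \eqref{diagonalM}, $\Re(M_{22}-M_{33})=-\tfrac{2FH^{5/2}\tau_1}{(H^3-H_s^3)\sqrt{\tau_1^2-\tau_1(H)^2}}\,\Re\lambda_r+2\,\Re\#_2$, whose only singularity, at $y=x_1$, is of integrable type $\mathcal{O}\big((H(x_1)-H(y))^{-1/2}\big)$, exactly as observed in the proof of Proposition~\ref{high_stability_turning;decreasing;discontinuous}. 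Hence $|a/b|$ is a finite positive constant independent of $r$, so for $r\gg1$ the bracket in the displayed formula has modulus $\ge\tfrac12|\det T|>0$ and $\Delta\neq0$, which is the claim.

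\emph{Main obstacle.} The delicate part is not the final algebra but legitimately importing Lemma~\ref{decayingmode_decreasing}: one must check that $\mathcal{W}^-_1$ keeps tracking the $\gamma_1$-eigenspace and that $\mathcal{W}^+_1$ stays proportional to the single $T_2$-branch all the way down to the evaluation point $x_0$, including the case $x_1(\tau_1)\ge0$ that the literal range in Lemma~\ref{decayingmode_decreasing} does not cover, together with $\det T\neq0$ on $(x_1(\tau_1),\infty)$ and the integrable-singularity bound for $\Re(M_{22}-M_{33})$ near $x_1$ (and, if one wants uniformity in $\tau_1$ up to the endpoints of \eqref{tau1range}, the fact that the evaluation point drifts to $\pm\infty$ there, which is best handled by patching to a bounded-frequency region). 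Granting those, the nonvanishing is soft: it is driven entirely by $b\neq0$, i.e.\ by the Airy connection injecting nontrivial ``growing-branch'' content into $\mathcal{W}^-_2$ that $\mathcal{W}^+_1\propto T_2$ cannot cancel — structurally the same mechanism as in Lemma~\ref{high_stability_turning;increasing;discontinuous}, requiring no new idea.
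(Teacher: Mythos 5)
Your proof is correct and follows the paper's own route: insert the post-turning-point WKB/Airy asymptotics from Lemma~\ref{decayingmode_decreasing} into the Evans determinant \eqref{Evans}, observe the leading-order $T_2$-column cancellation, and reduce nonvanishing to $b\,\det(T_2,T_1,T_3)\neq 0$. The one refinement you make — consciously evaluating the Wronskian at a point $x_0$ strictly to the right of the turning point $x_1(\tau_1)$ (needed since for smooth profiles Lemma~\ref{turningpoint:lemma} allows $x_1\in(-\infty,\infty)$, not just $(-\infty,0)$) and explicitly bounding $|a/b|$ via integrability of $\Re(M_{22}-M_{33})$ near $x_1$ — tightens what the paper addresses only for the special case ``$x=0$ is the turning point,'' but it rests on the very same Abel's-theorem mechanism the paper invokes, so the underlying argument is identical.
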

\begin{proof}
Inserting \eqref{decayingmodesminus} and \eqref{decayingmodesplus} in \eqref{Evans}, we obtain
\ba 
\Delta(i\tau_1 r+\lambda_r,r)=&\det\Big(T_2(0)+\mathcal{O}(\frac{1}{r}),T_1(0)+\mathcal{O}(\frac{1}{r}), \\&\quad\quad\;\;\; e^{\int_{x_1}^0\gamma_2(y)+M_{22}(y)dy}T_2(0)+e^{\int_{x_1}^0\gamma_3(y)+M_{33}(y)dy}T_3(0)+\mathcal{O}(\frac{1}{r})\Big)\\
=&e^{\int_{x_1}^0\gamma_3(y)+M_{33}(y)dy}\det\big(T_2(0),T_1(0), T_3(0)\big)+\mathcal{O}(\frac{1}{r}),
\ea 
so long as $x=0$ is not the turning point, from which we easily see the leading $\mathcal{O}(1)$ 
	term does not vanish.
	In the case where $x=0$ is the turning point, the asymptotics
\eqref{decayingmodesminus} and \eqref{decayingmodesplus} do not apply, as they
do not give information exactly at the turning point but only nearby.
However, in this case we need only test the stability condition at a point $\tilde x_1$ close to but not equal to $0$
to obtain the same result, again verifying nonvanishing.
	Here, we are using the standard Evans function fact (Abel's Theorem) that vanishing/nonvanishing 
	of the Wronskian by which it is defined is independent of the point at which it is evaluated.
\end{proof}

\begin{theorem}[high-frequency stability; decreasing discontinuous profiles]\label{high_stability;decreasing;discontinuous} Any non-characteristic decreasing discontinuous hydraulic shock profile is high-frequency stable in the sense that there are no zeros $(\lambda,\eta)$ of its Evans-Lopatinsky determinant \eqref{EL} with $\Re\lambda\geq 0$ and $\sqrt{|\lambda|^2+\eta^2} \geq R_0$ for some $R_0>0$ sufficiently large.
\end{theorem}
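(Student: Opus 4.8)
The plan is to reduce the problem to the three mutually exclusive regimes identified above — high-frequency with $|\lambda_i|,|\lambda_r|\sim\mathcal O(r)$ (no turning points), the turning-point strip near $\lambda=i\tau_1 r$, and the turning-point strip near $\lambda=\tau_2 r$ — and dispose of each, then patch by compactness. First I would handle the non-turning-point part of the high-frequency region. By the Remark following Lemma~\ref{turningpoint:lemma}, in cases (a), (b), and the portion of case (c) with $|\lambda_i|,|\lambda_r|\sim\mathcal O(r)$, the leading $\mathcal O(r)$ symbol $-(\lambda A_0+i\eta A_2)A_1^{-1}(x)$ has spectrally separated eigenvalues \eqref{Lead_eigenvalues_allcases} uniformly in $x$ and in the relevant cone of $(\lambda,\eta)$. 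A standard block-diagonalization / WKB tracking argument (as in \cite{LWZ,Z9}) then shows that the decaying-mode solutions $\mathcal W_j^-$ of \eqref{zgeig_weight}(i) track the $\mathcal O(r)$-negative-real-part eigenspaces, and in \eqref{EL} the $\mathcal O(1)$ source term $[R(\bar U)]$ is swamped by the $\mathcal O(r)$ growth of the spanning columns, so $D(\lambda,\eta)/r$ converges to the Majda--Lopatinsky determinant for the associated conservative system — here isentropic $\gamma=2$ gas dynamics with a Lax 1-shock. Majda's theorem \cite{M1,M2} gives uniform nonvanishing of that limit for $\Re\lambda\geq 0$, hence nonvanishing of $D$ for $r$ large in this regime.

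Next, the two turning-point strips. For $(\lambda,\eta)=(i\tau_1 r+\lambda_r,r)$ with $\tau_1$ in the range \eqref{tau1range} and $|\lambda_r|\sim\mathcal O(1)$, decreasing discontinuous profiles are covered directly: Proposition~\ref{high_stability_turning;decreasing;discontinuous} reduces nonvanishing of $D$ to the condition ${\rm ind}(\tau_1)<1$, and Corollary~\ref{cor:high_stability_turning;decreasing;discontinuous} establishes exactly that inequality for all $\tau_1\in(\tau_1(H_*),\tau_1(1))$. (Note that for decreasing discontinuous profiles $H_*>H_R$, so $\min\{1,H_*\}=H_R$ — since $H_R=1/\nu^2<1$ — and the relevant $\tau_1$-range in \eqref{tau1range} is indeed $(\tau_1(H_R),\tau_1(1))$; here $\tau_1(H_*)$ versus $\tau_1(H_R)$ must be reconciled, and I would check that the profile actually attains $H=H_*$ at the subshock so the operative range is as stated in the Corollary.) For the second turning-point family $(\lambda,\eta)=(\tau_2 r+\lambda_r,r)$ with $\tau_2$ in \eqref{tau2range}, I would carry out the parallel analysis: here the collision is between $\gamma_1$ and $\gamma_3$ rather than the two acoustic modes, $\lambda$ has $\mathcal O(r)$ \emph{real} part, so $\Re\lambda/|\eta|$ is bounded below away from $0$ along this ray, putting us back in the uniform-spectral-gap regime where the multi-d Majda theorem applies and no turning-point subtlety arises. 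Thus this family contributes no zeros either; I should state this as a short lemma or fold it into the proof.

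Finally, I would assemble: the set $\{\Re\lambda\geq 0,\ \sqrt{|\lambda|^2+\eta^2}=R\}$ is compact, and it is covered by finitely many of the above regimes (the two turning-point strips, their endpoints where $\tau_1\to\tau_1(H_R)^+$ or $\tau_1(1)^-$ — handled by continuity plus the non-turning-point argument on either side — and the bulk non-turning-point cone); on each, nonvanishing of $D$ holds for all $R\geq R_0$ with $R_0$ uniform. A homotopy/winding-number argument in $r=\sqrt{|\lambda|^2+\eta^2}$ then propagates nonvanishing from the circle $\{r=R_0\}$ outward, giving the theorem. The main obstacle, I expect, is \emph{uniformity at the edges of the turning-point strips} — where $\tau_1$ approaches $\tau_1(H_R)$ or $\tau_1(1)$, the turning point $x_1(\tau_1)$ escapes to $-\infty$ (or to $0$, cf. Lemma~\ref{high_stability_turning;smooth}'s Abel's-theorem workaround) and the integrals $\int_{x_1}^0(\cdot)$ defining ${\rm ind}(\tau_1)$ and the spacing/offset formulas become singular or improper; one must check the integrands are integrable (the proposition notes the $\mathcal O((H(x_1)-H(y))^{-1/2})$ behavior at $x_1^+$, which is fine) and that the estimates match continuously onto the bulk high-frequency analysis on the adjacent rays. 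A secondary bookkeeping point is ensuring the error terms $\mathcal O(1/r)$ in \eqref{decayingmodesminus}--\eqref{decayingmodesplus} are uniform in $\tau_1$ over the closed strip, which requires the tracking argument to be carried out with constants independent of the turning-point location — the standard but slightly delicate part of the Erpenbeck-type analysis.
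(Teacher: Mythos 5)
Your overall decomposition — Majda convergence in the non-glancing cone, Airy-type analysis plus the index computation of Corollary~\ref{cor:high_stability_turning;decreasing;discontinuous} in the strip $\lambda\approx i\tau_1 r$, and a separate observation for the strip $\lambda\approx\tau_2 r$ — matches the paper's structure. But your covering has a genuine hole, and one of your reductions uses incorrect reasoning.

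\medskip

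\noindent\emph{The missing regimes.} Your ``non-turning-point cone'' is defined as cases (a), (b), and the part of (c) with $|\lambda_r|\sim\mathcal O(r)$, and your argument there relies on the leading $\mathcal O(r)$ symbol producing a real-part spectral gap uniform in $x$, so that tracking selects the $\mathcal W_j^-$ and $D/r$ tends to a Majda--Lopatinsky determinant. This is fine for (a), (b), and case~(c) with $|\lambda_r|\sim\mathcal O(r)$. But it does not cover the portion of case (c) with $\lambda=i\tau_1 r+\lambda_r$, $|\lambda_r|\sim\mathcal O(1)$, and $\tau_1$ \emph{outside} the turning-point range $(\tau_1(H_*),\tau_1(1))$ — i.e.\ the paper's sub-cases (c)(1)(i), $\tau_1\in(0,\tau_1(H_*))$, and (c)(1)(ii), $\tau_1\in(\tau_1(1),\infty)$. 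These are not turning-point frequencies, so Corollary~\ref{cor:high_stability_turning;decreasing;discontinuous} does not apply, yet they also do not have a uniform $\mathcal O(r)$ real-part gap: in (i) one eigenvalue $\gamma_1$ is purely imaginary at leading order, and in (ii) all three are. To identify the consistently split subspaces there one must descend to the $\mathcal O(1)$ diagonal corrections $M_{jj}$ from \eqref{diagonalM}--\eqref{Minfty} (and, in sub-case (ii), invoke the exponentially weighted norm to restore consistent splitting when $2<F$). The tracking argument must then be carried out at this next order, and the Evans--Lopatinsky determinant is nonvanishing because of the explicit formulas \eqref{ELT1T2} and \eqref{ELT1T3} — not because $D/r$ converges to a Majda determinant for the far-field system. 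Your compactness/assembly step at the end presumes a covering of the sphere $\{r=R\}$; without an argument for (c)(1)(i),(ii) the covering is not complete.

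\medskip

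\noindent\emph{The second turning-point family.} You assert that along $\lambda\approx\tau_2 r$ with $\tau_2$ in \eqref{tau2range}, because $\Re\lambda/|\eta|$ is bounded below, ``no turning-point subtlety arises.'' That is not literally true: Lemma~\ref{turningpoint:lemma} shows there \emph{is} a genuine turning point here, where $\gamma_1$ and $\gamma_3$ collide with geometric multiplicity one, even with $\Re\lambda\sim\mathcal O(r)$. The correct reason it does not matter — which the paper gives — is that the colliding pair lie on the \emph{same} side of the spectral splitting (both contribute to the tracked subspace at $-\infty$), so their total two-dimensional span remains continuous through the collision and the spectral gap between that span and the remaining mode persists. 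Your conclusion is right; the reasoning, as stated, would lead you astray in a setting where the collision were between a stable and an unstable mode.

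\medskip

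\noindent\emph{Minor points.} In your parenthetical you write $\min\{1,H_*\}=H_R$: for a decreasing discontinuous profile $H_s<H_*<1$ and $H_*>H_R$, so $\min\{1,H_*\}=H_*$ and the correct range is $(\tau_1(H_*),\tau_1(1))$, as you eventually use. Also, your plan to handle the endpoints $\tau_1=\tau_1(H_*)$ and $\tau_1=\tau_1(1)$ ``by continuity'' underestimates the work: the paper handles $\tau_1=\tau_1(H_*)$ (turning point at $0^-$) via Airy connection facts directly, and $\tau_1=\tau_1(1)$ (turning point at $-\infty$) via a separate reduction to Bessel's equation as in \cite{LWZ}; neither is a mere limit of the interior estimates. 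And the $\mathcal W_j^-$ decaying at $x\to-\infty$ track the \emph{positive}-real-part eigenspaces of $G$, not the negative ones, though this is only a sign convention.
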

\begin{proof}
We shall prove that the Evans-Lopatinsky determinant \eqref{EL} achieves no zeros for the 
three cases (a) (b) (c) outlined in the beginning of the section. 
	Corollary~\ref{cor:high_stability_turning;decreasing;discontinuous} has treated a sub-case of case (c) where the frequencies are of turning point frequencies. 

We now prove for the remaining sub-cases within case (c) $|\lambda|\sim \mathcal{O}(r)$, $|\eta|\sim \mathcal{O}(r)$.

\noindent (1) $|\lambda_i|,|\eta|\sim\mathcal{O}(r)$, $|\lambda_r|\sim\mathcal{O}(1)$:

The remaining frequencies in this region are $(\lambda,\eta)=(i\tau_1r+\lambda_r,r)$, with (i) $\tau_1\in(0,\tau_1(H_*))$, (ii) $\tau_1\in(\tau_1(1),+\infty)$, (iii) $\tau_1=\tau_1(H_*)$, and (iv) $\tau_1=\tau_1(1)$.

\noindent(i) $\tau_1\in(0,\tau_1(H_*))$:

In this case, $(\lambda,\eta)$ is not a turning point frequency. By \eqref{gammaturning}, for $x<0$, $\gamma_1(x;ir\tau_1)$ is purely imaginary and $\gamma_2,\gamma_3(x;ir\tau_1;r)$ are complex with non-vanish opposite real parts of order $\mathcal{O}(r)$. And, by \eqref{Minfty}[i], $M_{11}(-\infty;\tau_1,\lambda_r)>0$. We can use standard WKB methods to conclude that solutions of the system \eqref{eigenZ1} that connect to the unstable manifold at $-\infty$ are linear combinations of
$$
\begin{aligned}
\mathcal{W}_1^-(x;\tau_1,\lambda_r)=&e^{\int_0^x\gamma_1(y;ir\tau_1)+M_{11}(y;\tau_1,\lambda_r)dy}\left(T_1(x;\tau_1)+\mathcal{O}(1/r)\right)\quad \text{and}\\
\mathcal{W}_2^-(x;\tau_1,\lambda_r)=&e^{\int_{0}^x\gamma_2(y;ir\tau_1,r)+M_{22}(y;\tau_1,\lambda_r)dy}\left(T_2(x;\tau_1)+\mathcal{O}(1/r)\right).
\end{aligned}
$$
Plugging them into \eqref{EL}, we find that the Evans-Lopatinsky determinant does not vanish since \eqref{ELT1T2} holds.

\noindent(ii) $\tau_1\in(\tau_1(1),+\infty)$:

In this case, $(\lambda,\eta)$ is not a turning point frequency. By \eqref{gammaturning}, for $x<0$, $\gamma_1(x;ir\tau_1)$, $\gamma_2(x;ir\tau_1;r)$, and $\gamma_3(x;ir\tau_1;r)$ are purely imaginary. And, by \eqref{Minfty}[i], $M_{11}(-\infty;\tau_1,\lambda_r)>0$. Direct computation reveals
\ba\label{useofF2d} &F_{\text{exist}}^2H(-\infty)^3-F^2(F_{\text{exist}}+1)H(-\infty)+2F^2
=&(F_{\text{exist}}-1)(F_{\text{2d}}^2-F^2)>0,
\ea 
which gives $\#_2(-\infty;\tau_1)<0$. Hence, by \eqref{Minfty},
$$
\Re(M_{33}-M_{22})(-\infty;\tau_1,\lambda_r)=\frac{2F\tau_1\lambda_r}{(1-H_s^3)\sqrt{\tau_1^2-\tau_1(1)^2}}-2\#_2(-\infty;\tau_1)>0,\quad \text{for $\lambda_r\geq 0$.}
$$
In the weighted space where consistent splitting holds, solutions of the system \eqref{eigenZ1} that connect to the unstable manifold at $-\infty$ are linear combinations of
$$
\begin{aligned}
\mathcal{W}_1^-(x;\tau_1,\lambda_r)=&e^{\int_0^x\gamma_1(y;ir\tau_1)+M_{11}(y;\tau_1,\lambda_r)dy}\left(T_1(x;\tau_1)+\mathcal{O}(1/r)\right)\quad \text{and}\\
\mathcal{W}_2^-(x;\tau_1,\lambda_r)=&e^{\int_{0}^x\gamma_3(y;ir\tau_1,r)+M_{33}(y;\tau_1,\lambda_r)dy}\left(T_3(x;\tau_1)+\mathcal{O}(1/r)\right).
\end{aligned}
$$
Plugging them into \eqref{EL}, we find that the Evans-Lopatinsky determinant does not vanish for \eqref{ELT1T3} holds.

\noindent(iii) $\tau_1=\tau_1(H_*)$:

In this case, $0^-$ is a turning point. This is governed by an Airy equation on half its domain, whence
one finds by Airy equation facts \cite{LWZ} that again the continuation of the decaying solution at $-\infty$
	to $x=0^-$ is approximated by the equal sum of the two sinusoidal solutions to the right of the origin (just
	before they have a chance to oscillate).
	This validates the index computation as before.

\noindent(iv) $\tau_1=\tau_1(1)$:

In this case, $-\infty$ is a turning point. This scenario is not governed by an Airy equation, but is
	a genuinely new situation. Nonetheless, as shown in \cite{LWZ}, it can be transformed
	to Bessel's equation, with the same conclusion that the decaying mode at $-\infty$
	emerges for $x<0$ finite as the equal sum of the two sinusoidal solutions colliding at minus infinity,
	again validating the index computation.

\noindent (2) $|\lambda_i|,|\lambda_r|,|\eta|\sim\mathcal{O}(r)$:

In this case, $(\lambda,\eta)$ is not a turning point frequency. By \eqref{Lead_eigenvalues_allcases}, for $x<0$, $\gamma_1(x;\lambda)$, $\gamma_2(x;\lambda,\eta)$, and $\gamma_3(x;\lambda,\eta)$ achieves spectral gaps of order $\mathcal{O}(r)$. Moreover, $\Re\gamma_1(x;\lambda)=\lambda_rF_{\text{exist}}H(x)>0$ and $\Re(\gamma_2-\gamma_3)(x;\lambda,\eta)>0$. Hence, in the weighted space where consistent splitting holds, solutions of the system \eqref{eigenZ1} that connect to the unstable manifold at $-\infty$ are linear combinations of
$$
\begin{aligned}
\mathcal{W}_1^-(x;\tau_1,\lambda_r)=&e^{\int_0^x\gamma_1(y;\lambda)+M_{11}(y;\lambda,\eta)dy}\left(T_1(x;\lambda)+\mathcal{O}(1/r)\right)\quad \text{and}\\
\mathcal{W}_2^-(x;\lambda,\eta)=&e^{\int_{0}^x\gamma_3(y;\lambda,\eta)+M_{33}(y;\lambda,\eta)dy}\left(T_3(x;\lambda,\eta)+\mathcal{O}(1/r)\right).
\end{aligned}
$$
Plugging them into \eqref{EL}, we find that the Evans-Lopatinsky determinant does not vanish for it converges to the Lopatinsky determinant associated with planar Riemann shock in the whole space connecting $H_*$ and $H_R$.

\noindent (3) $|\lambda_r|,|\eta|\sim\mathcal{O}(r)$, $|\lambda_i|\sim \mathcal{O}(1)$:

In this case, $(\lambda,\eta)$ can be a turning point frequency of the latter case discussed in Lemma~\ref{turningpoint:lemma}.  However, different from the previous case, this concerns collision of two decaying modes and not collision of a decaying and a growing mode. By a block diagonalization argument, their span remains fixed as the total eigenspace for the frozen coefficient symbol of their two associated eigenvalues, even through collision. Thus, this type
of turning point does not at all affect the stability computation, and we arrive to lowest order at a nonvanishing
multiple of the nonvanishing Lopatinsky determinant for the subshock, as usual.

We now prove for case (a) $|\lambda|\sim \mathcal{O}(r)$, $|\eta|\sim \mathcal{O}(1)$. 

Indeed, case (a) corresponds to a perturbation of the 1d case and we have shown in \cite{YZ} that the leading $\mathcal{O}(r)$ order term of the Evans-Lopatinsky does not vanish.

Finally, we prove for case (b) $|\lambda|\sim \mathcal{O}(1)$, $|\eta|\sim \mathcal{O}(r)$.

Indeed, case (b) corresponds to case (c) (1) (i) by allowing $\tau_1=0$. And, we see the derivations hold for $\tau_1=0$.
\end{proof}

\begin{theorem}[high-frequency stability; nonmonotone discontinuous profiles]\label{high_stability;nonmonotone;discontinuous} Any non-monotone discontinuous hydraulic shock profile with $F<F_{\text{2d}}$ is high-frequency stable in the sense that there are no zeros $(\lambda,\eta)$ of its Evans-Lopatinsky determinant \eqref{EL} with $\Re\lambda\geq 0$ and $\sqrt{|\lambda|^2+\eta^2} \geq R_0$ for some $R_0>0$ sufficiently large.
\end{theorem}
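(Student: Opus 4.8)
The plan is to run the same case analysis as in the proof of Theorem~\ref{high_stability;decreasing;discontinuous}, partitioning the high-frequency region $\sqrt{|\lambda|^2+\eta^2}\sim\mathcal{O}(r)$ into (a) $|\lambda|\sim\mathcal{O}(r),\ |\eta|\sim\mathcal{O}(1)$; (b) $|\lambda|\sim\mathcal{O}(1),\ |\eta|\sim\mathcal{O}(r)$; (c) $|\lambda|\sim\mathcal{O}(r),\ |\eta|\sim\mathcal{O}(r)$, the last refined according to the relative sizes of $\lambda_r$, $\lambda_i$ against $r$. The one structural change from the decreasing case is that a nonmonotone profile $H(x)$ is monotone \emph{increasing} on $(-\infty,0)$ with $H_s<1=H(-\infty)<H(x)<H(0^-)=H_*$; since $\tau_1(H)$ is increasing, the turning-point interval \eqref{tau1range} reads $\tau_1\in(\tau_1(1),\tau_1(H_*))$, with the roles of $\tau_1(1)$ and $\tau_1(H_*)$ interchanged relative to the decreasing analysis. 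On this interval, nonvanishing of the Evans--Lopatinsky determinant at the type-1 turning-point frequencies $(\lambda,\eta)=(i\tau_1 r+\lambda_r,r)$ is exactly Lemma~\ref{high_stability_turning;increasing;discontinuous}; this nonvanishing is \emph{unconditional} -- no analogue of the index inequality $\mathrm{ind}(\tau_1)<1$ of Corollary~\ref{cor:high_stability_turning;decreasing;discontinuous} (the most computation-heavy step of the decreasing case) is needed -- because, by Lemma~\ref{lemma_high2}, for an increasing profile the oscillatory region lies on the $-\infty$ side of $x_1(\tau_1)$, so by the Airy connection the continuation to $x=0^-$ of the $-\infty$ decaying mode picks up the forward-\emph{growing} acoustic mode, which dominates and leaves the nonvanishing leading factor $\Im\det([\tau_1 F_0(\bar U)+F_2(\bar U)],T_1(0^-),T_2(0^-))$ of \eqref{ELT1T2}.

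Next I would dispatch the non-turning-point subcases of (c) with $(\lambda,\eta)=(i\tau_1 r+\lambda_r,r)$, $|\lambda_r|\sim\mathcal{O}(1)$. For $\tau_1\in(0,\tau_1(1))$ one has $\tau_1<\tau_1(1)\le\tau_1(H(x))$ on $x<0$, so by \eqref{gammaturning} $\gamma_2,\gamma_3$ are complex with opposite $\mathcal{O}(r)$ real parts while $\gamma_1$ is purely imaginary with $M_{11}(-\infty)>0$ by \eqref{Minfty}; a WKB argument then gives that the solutions connecting to the unstable manifold at $-\infty$ are spanned by $\mathcal{W}_1^-$ along $(\gamma_1,M_{11})$ and $\mathcal{W}_2^-$ along the growing acoustic mode $(\gamma_2,M_{22})$, and \eqref{EL} is a nonzero multiple of $\det([\tau_1 F_0(\bar U)+F_2(\bar U)],T_1(0^-),T_2(0^-))$, nonvanishing by \eqref{ELT1T2}. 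For $\tau_1\in(\tau_1(H_*),\infty)$ all three $\gamma_j$ are purely imaginary; here I would invoke \eqref{useofF2d}, whose positivity is precisely where the hypothesis $F<F_{\text{2d}}$ enters, to conclude $\#_2(-\infty;\tau_1)<0$, hence $\Re(M_{33}-M_{22})(-\infty;\tau_1,\lambda_r)>0$ for $\lambda_r\ge0$; the unstable-manifold solutions are then $\mathcal{W}_1^-$ along $(\gamma_1,M_{11})$ and $\mathcal{W}_2^-$ along $(\gamma_3,M_{33})$ in the weighted space where consistent splitting holds, and \eqref{EL} reduces to a nonzero multiple of $\det([\tau_1 F_0(\bar U)+F_2(\bar U)],T_1(0^-),T_3(0^-))$, nonvanishing by \eqref{ELT1T3}. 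The two boundary frequencies $\tau_1=\tau_1(H_*)$ (turning point at $0^-$, governed on half its domain by Airy's equation) and $\tau_1=\tau_1(1)$ (turning point at $-\infty$, governed by Bessel's equation as in \cite{LWZ}) are handled as in the decreasing proof: the $-\infty$ decaying mode still emerges, for finite $x<0$, as the equal sum of the two colliding sinusoidal solutions, validating the same determinant computation.

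The remaining subcases are routine adaptations. In (c) with $|\lambda_i|,|\lambda_r|,|\eta|\sim\mathcal{O}(r)$ there are no turning points, the $\gamma_j$ are spectrally separated at order $\mathcal{O}(r)$ with $\Re\gamma_1>0$ and $\Re(\gamma_2-\gamma_3)>0$, and \eqref{EL} converges to the nonvanishing Lopatinsky determinant of the planar Riemann shock connecting $H_*$ and $H_R$; in (c) with $|\lambda_r|,|\eta|\sim\mathcal{O}(r)$, $|\lambda_i|\sim\mathcal{O}(1)$ one meets only the type-2 turning points of Lemma~\ref{turningpoint:lemma}, a collision of two decaying modes whose joint span is preserved through the collision by block-diagonalization, so \eqref{EL} again reduces to the nonvanishing subshock Lopatinsky determinant. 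Case (a) is a perturbation of the 1d problem, where nonvanishing of the leading $\mathcal{O}(r)$ term is known from the 1d analysis \cite{YZ,FRYZ}; and case (b) is the specialization $\tau_1=0$ of the subcase $\tau_1\in(0,\tau_1(1))$ above, whose derivation is visibly valid at $\tau_1=0$. The main obstacle, were the cited lemmas not already in hand, would be the turning-point analysis behind Lemma~\ref{high_stability_turning;increasing;discontinuous}: carrying out the Airy, and at $\tau_1=\tau_1(1)$ Bessel, reduction for an \emph{increasing} profile and identifying which acoustic mode dominates \eqref{EL} after passage through the turning point; once this reversed connection is in place the nonvanishing is immediate, with no delicate index computation required.
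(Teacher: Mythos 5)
Your proof proposal is correct and follows essentially the same case decomposition and lemma citations as the paper's own proof of Theorem~\ref{high_stability;nonmonotone;discontinuous}: it reduces the turning-point analysis to Lemma~\ref{high_stability_turning;increasing;discontinuous}, invokes \eqref{useofF2d} (hence the hypothesis $F<F_{\text{2d}}$) for the $\tau_1\in(\tau_1(H_*),\infty)$ subcase, and handles cases (a), (b), and the remaining subcases of (c) by convergence to the nonvanishing subshock Lopatinsky determinants exactly as the paper does. The one stylistic difference is that you spell out more explicitly why no analogue of the decreasing-case index computation is required (the $-\infty$ decaying mode picks up the forward-growing acoustic mode through the turning point and dominates exponentially), a point the paper leaves encapsulated inside Lemma~\ref{high_stability_turning;increasing;discontinuous}.
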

\begin{proof}
We shall prove that the Evans-Lopatinsky determinant \eqref{EL} achieves no zeros for the three cases (a) (b) (c) outlined in the beginning of the section. Lemma~\ref{high_stability_turning;increasing;discontinuous} has treated a sub-case of case (c) where the frequencies are of turning point frequencies. 

We now prove for the remaining sub-cases within case (c) $|\lambda|\sim \mathcal{O}(r)$, $|\eta|\sim \mathcal{O}(r)$.

\noindent (1) $|\lambda_i|,|\eta|\sim\mathcal{O}(r)$, $|\lambda_r|\sim\mathcal{O}(1)$:

The remaining frequencies in this region are $(\lambda,\eta)=(i\tau_1r+\lambda_r,r)$, with (i) $\tau_1\in(0,\tau_1(1))$, (ii) $\tau_1\in(\tau_1(H_*),+\infty)$, (iii) $\tau_1=\tau_1(H_*)$, and (iv) $\tau_1=\tau_1(1)$.

\noindent(i) $\tau_1\in(0,\tau_1(1))$:

In this case, $(\lambda,\eta)$ is not a turning point frequency. By \eqref{gammaturning}, for $x<0$, $\gamma_1(x;ir\tau_1)$ is purely imaginary and $\gamma_2,\gamma_3(x;ir\tau_1;r)$ are complex with non-vanish opposite real parts of order $\mathcal{O}(r)$. And, by \eqref{Minfty}[i], $M_{11}(-\infty;\tau_1,\lambda_r)>0$. Solutions of the system \eqref{eigenZ1} that connect to the unstable manifold at $-\infty$ are linear combinations of
$$
\begin{aligned}
\mathcal{W}_1^-(x;\tau_1,\lambda_r)=&e^{\int_0^x\gamma_1(y;ir\tau_1)+M_{11}(y;\tau_1,\lambda_r)dy}\left(T_1(x;\tau_1)+\mathcal{O}(1/r)\right)\quad \text{and}\\
\mathcal{W}_2^-(x;\tau_1,\lambda_r)=&e^{\int_{0}^x\gamma_2(y;ir\tau_1,r)+M_{22}(y;\tau_1,\lambda_r)dy}\left(T_2(x;\tau_1)+\mathcal{O}(1/r)\right).
\end{aligned}
$$
Plugging them into \eqref{EL}, we find that the Evans-Lopatinsky determinant does not vanish for \eqref{ELT1T2} holds.

\noindent(ii) $\tau_1\in(\tau_1(H^*),+\infty)$:

In this case, $(\lambda,\eta)$ is not a turning point frequency. By \eqref{gammaturning}, for $x<0$, $\gamma_1(x;ir\tau_1)$, $\gamma_2(x;ir\tau_1;r)$, and $\gamma_3(x;ir\tau_1;r)$ are purely imaginary. And, by \eqref{Minfty}[i], $M_{11}(-\infty;\tau_1,\lambda_r)>0$. Also, we infer from $\sqrt{\tau_1^2-\tau_1(H)^2}>0$ and \eqref{useofF2d} that $\#_2(-\infty;\tau_1)<0$. Hence, by \eqref{Minfty},
$$
\Re(M_{33}-M_{22})(-\infty;\tau_1,\lambda_r)=\frac{2F\tau_1\lambda_r}{(1-H_s^3)\sqrt{\tau_1^2-\tau_1(1)^2}}-2\#_2(-\infty;\tau_1)>0,\quad \text{for $\lambda_r\geq 0$.}
$$
In the weighted space where consistent splitting holds, solutions of the system \eqref{eigenZ1} that connect to the unstable manifold at $-\infty$ are linear combinations of
$$
\begin{aligned}
\mathcal{W}_1^-(x;\tau_1,\lambda_r)=&e^{\int_0^x\gamma_1(y;ir\tau_1)+M_{11}(y;\tau_1,\lambda_r)dy}\left(T_1(x;\tau_1)+\mathcal{O}(1/r)\right)\quad \text{and}\\
\mathcal{W}_2^-(x;\tau_1,\lambda_r)=&e^{\int_{0}^x\gamma_3(y;ir\tau_1,r)+M_{33}(y;\tau_1,\lambda_r)dy}\left(T_3(x;\tau_1)+\mathcal{O}(1/r)\right).
\end{aligned}
$$
Plugging them into \eqref{EL}, we find that the Evans-Lopatinsky determinant does not vanish for \eqref{ELT1T3} holds.

\noindent(iii) $\tau_1=\tau_1(H_*)$:

	In this case, $0^-$ is a turning point. This is treatable as done in the decreasing case above \cite{LWZ}.

\noindent(iv) $\tau_1=\tau_1(1)$:

In this case, $-\infty$ is a turning point. 
	This is treatable as done in the decreasing case above \cite{LWZ}.

\noindent (2) $|\lambda_i|,|\lambda_r|,|\eta|\sim\mathcal{O}(r)$:

In this case, the Evans-Lopatinsky determinant does not vanish for it converges to the Lopatinsky determinant associated with planar Riemann shock in the whole space connecting $H_*$ and $H_R$.

\noindent (3) $|\lambda_r|,|\eta|\sim\mathcal{O}(r)$, $|\lambda_i|\sim \mathcal{O}(1)$:

In this case, $(\lambda,\eta)$ can be a turning point frequency of the latter case discussed in Lemma~\ref{turningpoint:lemma}. As discussed in the monotone decreasing case above, this type of collision does not affect the stability analysis, yielding convergence to a nonvanishing multiple of the nonvanishing Majda Lopatinsky determinant at the subshock.

We now prove for case (a) $|\lambda|\sim \mathcal{O}(r)$, $|\eta|\sim \mathcal{O}(1)$. 

Indeed, case (a) corresponds to a perturbation of the 1d case and we can make use of the convective spectral stability results of \cite{FRYZ} particular at high frequencies.

Finally, we prove for case (b) $|\lambda|\sim \mathcal{O}(1)$, $|\eta|\sim \mathcal{O}(r)$.

Indeed, case (b) corresponds to case (c) (1) (i) by allowing $\tau_1=0$. And, we see the derivations hold for $\tau_1=0$.

\end{proof}
\begin{theorem}[high-frequency stability; decreasing smooth profiles]\label{high_stability;decreasing;smooth} Any non-characteristic decreasing smooth hydraulic shock profile is high-frequency stable in the sense that there are no zeros $(\lambda,\eta)$ of its Evans function \eqref{Evans} with $\Re\lambda\geq 0$ and $\sqrt{|\lambda|^2+\eta^2} \geq R_0$ for some $R_0>0$ sufficiently large.
\end{theorem}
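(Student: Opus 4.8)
The plan is to replicate the case analysis in the proof of Theorem~\ref{high_stability;decreasing;discontinuous}, adapted to the smooth setting. Here $\bar U$ is defined on all of $\mathbb{R}$ with $H(x)>H_s$ throughout and there is no subshock; since smooth profiles require $F<F_{\text{char}}(\nu)<2$, the unweighted interior equation \eqref{zgeig} has consistent splitting for $\Re\lambda\ge 0$, so $G(-\infty;\lambda,\eta)$ furnishes two decaying modes $\mathcal{W}^-_{1,2}$, $G(+\infty;\lambda,\eta)$ one decaying mode $\mathcal{W}^+_1$, and \eqref{Evans} is their $3\times 3$ Wronskian, which by Abel's theorem may be evaluated at the arbitrary point $x=0$. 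As in \eqref{domainhf}, split the high-frequency region into (a) $|\lambda|\sim\mathcal{O}(r)$, $|\eta|\sim\mathcal{O}(1)$; (b) $|\lambda|\sim\mathcal{O}(1)$, $|\eta|\sim\mathcal{O}(r)$; (c) $|\lambda|,|\eta|\sim\mathcal{O}(r)$.

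The heart of the argument is case (c), which I would organize exactly as in the discontinuous theorem, writing $\lambda=i\tau_1 r+\lambda_r$, $\eta=r$. Away from the turning-point frequencies of Lemma~\ref{turningpoint:lemma} the leading symbol $-ir(\tau_1A_1^{-1}+A_2A_1^{-1})(x)$ has distinct eigenvalues $\gamma_1,\gamma_2,\gamma_3$ \eqref{gammaturning} along the whole profile, diagonalized by $T(x;\tau_1)$ \eqref{matT}; standard WKB tracking (and, where the $\mathcal{O}(r)$ parts of $\gamma_2,\gamma_3$ coincide, the $\mathcal{O}(1)$ gap $\Re(M_{22}-M_{33})\ne 0$, positive by the sign checks $\#_2(\pm\infty;\tau_1)<0$ in the spirit of \eqref{useofF2d}, valid since $F<F_{\text{char}}<F_{\text{2d}}$) then shows $\mathcal{W}^+_1(0)$, $\mathcal{W}^-_1(0)$, $\mathcal{W}^-_2(0)$ are nonzero scalar multiples of \emph{distinct} columns of $T(0;\tau_1)$ — using $H_s^{3/2}=F/F_{\text{exist}}$ one checks that the decaying-at-$+\infty$ mode picks out $T_2(0)$ and the two decaying-at-$-\infty$ modes pick out $T_1(0),T_3(0)$ — so the leading term of $\Delta$ is a nonzero multiple of $\det T(0;\tau_1)\ne 0$. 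This handles $\tau_1\in(0,\tau_1(H_R))\cup(\tau_1(1),+\infty)$ with $|\lambda_r|\sim\mathcal{O}(1)$, the regime $|\lambda_i|,|\lambda_r|,|\eta|\sim\mathcal{O}(r)$, and all of case (b) (the $\tau_1=0$ instance of the first). The remaining, turning-point frequencies in (c) are: $\tau_1\in(\tau_1(H_R),\tau_1(1))$, $|\lambda_r|\sim\mathcal{O}(1)$, a finite turning point $x_1(\tau_1)$ of $\gamma_2=\gamma_3$ type, handled verbatim by Lemma~\ref{high_stability_turning;smooth}; the endpoints $\tau_1=\tau_1(H_R)$ and $\tau_1=\tau_1(1)$, where the turning point migrates to $+\infty$ resp. $-\infty$ and the colliding $2\times2$ acoustic block conjugates to a Bessel equation whose ``decaying mode $=$ equal sum of the two colliding sinusoidal modes'' connection formula is that of \cite{LWZ}, again validating the determinant identity; and, in the regime $|\lambda_r|,|\eta|\sim\mathcal{O}(r)$, $|\lambda_i|\sim\mathcal{O}(1)$, the $\gamma_1=\gamma_{2,3}$ turning points, which are collisions of two \emph{decaying} modes whose span is preserved through the collision by block-diagonalization and hence do not affect $\Delta$.

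Case (a) is then a regular perturbation of the one-dimensional problem, for which \cite{YZ} has already shown that the leading $\mathcal{O}(r)$ term of the Evans function for smooth profiles is nonvanishing, so $\Delta\ne 0$ for $r$ large; collecting the cases completes the proof. The step I expect to be the main obstacle is the pair of endpoint turning points $\tau_1\in\{\tau_1(H_R),\tau_1(1)\}$: one must verify that the exponential convergence of $\bar U$ to its constant endstates puts the colliding acoustic block into Bessel (rather than merely Airy) normal form with precisely the ``equal-sum'' matching of \cite{LWZ}, and that the resulting index is the one already controlled in Lemma~\ref{high_stability_turning;smooth}. A subsidiary point, absent in the subshock case, is that a genuine mode $\mathcal{W}^+_1$ must now be tracked all the way from $+\infty$ to $x=0$, so the identification of $\mathcal{W}^+_1$ with the single column $T_2(0)$ rests on the sign analysis of $\Re M_{22}(+\infty;\tau_1,\lambda_r)$ recorded in \eqref{Minfty}, which must be kept consistent across the sub-cases.
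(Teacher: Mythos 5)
Your proposal takes essentially the same route as the paper: the paper's proof of this theorem is in fact just a two-sentence delegation, noting that Lemma~\ref{high_stability_turning;smooth} handles the turning-point sub-case of (c) and that the remaining sub-cases and cases (a), (b) are ``similar to those of Theorems~\ref{high_stability;decreasing;discontinuous} and~\ref{high_stability;nonmonotone;discontinuous}.'' Your filling-in of that gesture — case split from \eqref{domainhf}, WKB tracking by distinct columns of $T(0;\tau_1)$ away from turning-point frequencies, Lemma~\ref{high_stability_turning;smooth} for $\tau_1\in(\tau_1(H_R),\tau_1(1))$, Bessel normal form at the endpoint frequencies, block-diagonalization for the collision of two decaying modes when $|\lambda_r|,|\eta|\sim r$, and reduction to \cite{YZ} for case (a) — is structurally faithful to the way the two discontinuous theorems are actually argued.

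Two small remarks. First, your assignment of specific columns (``$\mathcal{W}^+_1\sim T_2$, $\mathcal{W}^-_{1,2}\sim T_1,T_3$'') matches the sub-case $\tau_1>\tau_1(1)$ but not $\tau_1<\tau_1(H_R)$, where the decaying-at-$+\infty$ mode is instead $T_3$ (since $H_R^3-H_s^3>0$ for smooth profiles puts $\Re\gamma_3<0<\Re\gamma_2$ at both ends) and the two decaying-at-$-\infty$ modes are $T_1,T_2$; the conclusion $\Delta\sim\pm\det T(0;\tau_1)\ne0$ is unaffected, but the labels should be sub-case dependent. Second, your flagging of the endpoint $\tau_1=\tau_1(H_R)$ as a \emph{new} turning-point-at-$+\infty$ scenario absent in the subshock case, and of the requirement to track $\mathcal{W}^+_1$ all the way from $+\infty$ using the sign structure in \eqref{Minfty}, are legitimate technical points that the paper leaves implicit under ``similar to.'' They are the genuine loci where a careful write-up would need to verify that the Bessel matching of \cite{LWZ} at $+\infty$ produces the same equal-weighting connection as at $-\infty$; you are right to single them out, and nothing in the paper's argument contradicts the expectation that they go through.
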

\begin{proof}
Lemma~\ref{high_stability_turning;smooth} has treated a sub-case of case (c) where the frequencies are of turning point frequencies. The treatment for the remaining sub-cases of case (c) and the other two cases (a) (b) 
are similar to those of Theorems \ref{high_stability;decreasing;discontinuous} and 
\ref{high_stability;nonmonotone;discontinuous}.
\end{proof}

\smallskip

\noindent{\bf Comments on the general case.} Here, and in the detonation case originally treated by
Erpenbeck \cite{Er4,LWZ}, we have been able to rigorously determine stability or instability of
the high-frequency stability index. However, it is important to note that, in more complicated settings
where one cannot compute this index by hand, it should still be computable numerically.
And, if numerical computation too proves problematic, there is a great deal of information carried
by the WKB analysis as a dichotomy of possible behaviors. Namely, if one does not see a clearly-defined
curve of instabilities in the high-frequency region, one may tentatively conclude stability.
(Of course there is always the possibility that one has not tested far enough out.)

\subsection{Medium-frequency and diffusive stability}\label{s:shockMF}
Having reduced to a compact parameter set by demonstration of high-frequency stability, we are now
in position to complete our study numerically.
Determination of diffusive (i.e., second-order low-frequency \cite{Z0,Z3,YZ,JNRYZ}) 
and medium-frequency stability, $1/R\leq  |(\lambda, \eta)|\leq  R$ are 
carried out by numerical Evans computations; see Appendix \ref{Evans_numeric} for further discussion.
The result is that all waves satisfying the convective stability (weighting) condition at the endstates satisfy also
both medium-frequency spectral stability and at least a neutral version $\beta\leq 0$ of the diffusive (second-derivative) stability condition.
The latter conclusion is obtained by contradiction, as $\beta>0$ would imply existence of strictly unstable spectra for $|\eta|\to 0$, which we do not see.


\subsection{Stability of channel flow}\label{s:channel_flow_hydraulic}
So far we have discussed the stability of planar shocks in the whole space.
Some interesting further issues arise in treating the main physical application of flow in a channel
$x\in \R$, $0\leq  y\leq  L$, with wall-type boundary conditions 
\be\label{BC}
\hbox{\rm $v=0$ at $y=0,L$, where $L=$ channel width.}
\ee
Denoting $\hat{V}$ and $\hat{\psi}$ the Laplace transforms in $t$ of $V$ and $\psi$, we reduce \eqref{lineqs}-\eqref{BC} to eigenvalue system
\ba\label{eigen_channel}
&\lambda A_0 \hat V +(A_1\hat V)_x+ (A_2\hat V)_y =E\hat V, \quad
[\lambda\hat \psi F_0(\bar U) + \hat \psi_y F_2(\bar U) - R(\bar U)\hat \psi] +[A_1 \hat V]=0,\\ &\hat{p}(x,0;\lambda)=\hat{p}(x,L;\lambda)=0.
\ea
For $L^2$ stability, we assume $\hat{V}$, $\hat\psi$ in \eqref{eigen_channel} are $L^2$ integrable, i.e., $\hat h,\hat q,\hat p\in L^2(\mathbb{R}\times(0,L))$ and $\hat \psi \in L^2(0,L)$. We infer from Fubini's theorem that, for a.e. $x\in \mathbb{R}$, $\hat h(x,\cdot;\lambda),\hat q(x,\cdot;\lambda),\hat p(x,\cdot;\lambda)\in L^2(0,L)$. Since $\hat{p}(x,0;\lambda)=\hat{p}(x,L;\lambda)=0$, we set in the ``ghost'' mirror channel $\mathbb{R}\times[-L,0]$ the ``ghost'' $\hat{p}(x,y;\lambda)$ by
\be\label{ghostp} 
\hat{p}(x,y;\lambda):=-\hat{p}(x,-y;\lambda),\quad\text{for $y\in[-L,0]$},
\ee
and extend it on the whole line as a $2L$-periodic odd function. We therefore represent $\hat{p}(x,y;\lambda)$ by Fourier series
$
\hat{p}(x,y;\lambda)=\sum_{n=-\infty}^{\infty}p_n(x;\lambda)e^{i\frac{n\pi y}{L}},
$
where 
$$
p_n(x;\lambda):=\frac{1}{2L}\int_{-L}^{L}\hat{p}(x,y;\lambda)e^{-i\frac{n\pi y}{L}}dy=-\frac{1}{L}\int_{0}^L i\hat p(x,y;\lambda)\sin(\frac{n\pi y}{L})dy,
$$
from which we see $p_n(x;\lambda)=-p_{-n}(x;\lambda)$, yielding that $\hat p$ consists of purely sine modes and \eqref{eigen_channel}[iii] are satisfied. As for $\hat h$, $\hat q$,  and $\hat{\psi}$, we set in the ``ghost'' mirror channel $\mathbb{R}\times[-L,0]$ the ``ghost'' functions
\be\label{ghosthqpsi}
\hat h(x,y;\lambda):=\hat h(x,-y;\lambda),\;\; \hat q(x,y;\lambda):=\hat q(x,-y;\lambda),\;\; \hat{\psi}(y;\lambda):=\hat{\psi}(-y;\lambda),\quad \text{for $y\in[-L,0]$,}
\ee
and extend them on the whole line as $2L$-periodic even functions, writing
$$
\hat{h}(x,y;\lambda)=\sum_{n=-\infty}^{\infty}h_n(x;\lambda)e^{i\frac{n\pi y}{L}},\quad  
\hat{q}(x,y;\lambda)=\sum_{n=-\infty}^{\infty}q_n(x;\lambda)e^{i\frac{n\pi y}{L}},\quad 
\hat{\psi}(y;\lambda)=\sum_{n=-\infty}^{\infty}\psi_n(\lambda)e^{i\frac{n\pi y}{L}},$$ where $h_n$, $q_n$, and $\psi_n$ are Fourier coefficients computed by, for instance, $$h_n(x;\lambda)=\frac{1}{2L}\int_{-L}^{L}\hat{h}(x,y;\lambda)e^{-i\frac{n\pi y}{L}}dy=\frac{1}{L}\int_{0}^L \hat h(x,y;\lambda)\cos(\frac{n\pi y}{L})dy.$$ 
Consequently, $h_n(x;\lambda)=h_{-n}(x;\lambda)$, $q_n(x;\lambda)=q_{-n}(x;\lambda)$, and $\psi_n(x;\lambda)=\psi_{-n}(x;\lambda)$.
Combining, there hold
\ba \label{fouriermodes}
\hat{h}(x,y;\lambda)=&2\sum_{n=0}^{\infty} h_n(x;\lambda)\cos(\frac{n\pi y}{L}),&\hat{q}(x,y;\lambda)=&2\sum_{n=0}^{\infty} q_n(x;\lambda)\cos(\frac{n\pi y}{L}),\\
\hat{p}(x,y;\lambda)=&2i\sum_{n=1}^{\infty} p_n(x;\lambda)\sin(\frac{n\pi y}{L}),\quad\text{and}, &\hat{\psi}(y;\lambda)=&2\sum_{n=0}^{\infty} \psi_n(\lambda)\cos(\frac{n\pi y}{L}).
\ea 

Also, it is easy to check that if $(\hat{V}=[\hat h,\hat q,\hat p]^T,\hat\psi)$ 
is a solution of the eigenvalue system \eqref{eigen_channel} on the channel $\mathbb{R}\times[0,L]$, the ``ghost'' functions $([\hat h,\hat q,\hat p]^T,\hat\psi)$ defined by \eqref{ghostp} \eqref{ghosthqpsi} solve \eqref{eigen_channel} on the ``ghost'' mirror channel $\mathbb{R}\times[-L,0]$. Therefore, the extended $2L$-periodic (in $y$) functions $([\hat h,\hat q,\hat p]^T,\hat\psi)$ solve \eqref{eigen_channel} on the extended channel $\mathbb{R}\times[-L,L]$ with periodic boundary conditions
\be\label{periodicbd}
\hat h(x,-L)=\hat h(x,L),\;\; \hat q(x,-L)=\hat q(x,L),\;\;\hat p(x,-L)=\hat p(x,L),\quad \text{and}\quad \hat{\psi}(-L)=\hat{\psi}(L).
\ee

\begin{lemma}\label{lem:periodic} The existence of a normal mode  associated to frequencies $(\lambda,\eta=n\pi/L)$ with $\Re\lambda>0$, $n\in\mathbb{N}$ for channel flow with width $L$ and wall boundary condition is equivalent to existence of a normal mode associated with the same frequencies  for channel flow with width $2L$ and periodic boundary conditions, or, equivalently, for flow in the whole space.
In particular, (i) normal mode instability for channel flow with wall boundary and width $L$ is equivalent to normal mode instability for
channel flow with periodic boundary condition and width $2L$, and (ii) normal mode instability for channel flow with either wall or periodic boundary conditions implies normal mode instability for flow in the whole space. (iii) normal mode stability for flow in the whole space implies normal mode stability for channel flow with either wall or periodic boundary conditions.
Moreover, for normal modes $(\hat h, \hat q, \hat p,\hat \psi)$ of channel flow with wall boundary conditions,
$\hat p(x,\cdot;\lambda)$ has a pure sine expansion and $\hat h(x,\cdot;\lambda), \hat q(x,\cdot;\lambda)$, and $\hat{\psi}(\cdot;\lambda)$ have pure cosine expansions.
\end{lemma}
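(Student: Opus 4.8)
The plan is to reduce all three notions of normal mode --- wall-type boundary conditions on a channel of width $L$, periodic boundary conditions on a channel of width $2L$, and the whole-plane problem --- to solvability, at a transverse frequency $\eta$ in the lattice $(\pi/L)\mathbb{Z}$, of the single $x$-dependent generalized eigenvalue system \eqref{geig} (equivalently \eqref{zgeig}), and then to compare the admissible frequency sets. Since the coefficient matrices $A_0,A_1,A_2,E$ and the profile $\bar U$ depend on $x$ alone, the channel system \eqref{eigen_channel} is constant-coefficient in $y$; hence each component has a Fourier expansion in $y$, and separating variables shows that a nontrivial solution (in the relevant $L^2$ class in $x$) of \eqref{eigen_channel} with periodic boundary conditions on $\mathbb{R}\times[-L,L]$ exists if and only if, for some $n\in\mathbb{Z}$, the reduced system \eqref{geig} with $\eta=n\pi/L$ has a nontrivial solution decaying as $x\to\pm\infty$ --- i.e.\ a whole-plane normal mode at the lattice frequency $\eta=n\pi/L$. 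This already settles the equivalence between the periodic channel of width $2L$ and the restriction of the whole-plane problem to $\eta\in(\pi/L)\mathbb{Z}$.

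Next I would treat the wall-type channel of width $L$. For the forward direction one uses the ghost extension already set up: given a wall-BC normal mode $(\hat h,\hat q,\hat p,\hat\psi)$ on $[0,L]$, extend to $[-L,L]$ by the parities \eqref{ghostp}, \eqref{ghosthqpsi} ($\hat p$ odd, $\hat h,\hat q,\hat\psi$ even) and then $2L$-periodically, and check that the extension solves \eqref{eigen_channel} with periodic boundary conditions \eqref{periodicbd}. Away from the seams $y\in\{0,\pm L\}$ this is precisely the invariance of \eqref{geig} under $(\eta,\hat V_3)\mapsto(-\eta,-\hat V_3)$ recorded in Lemma~\ref{transverse_sym_hydraulic} --- in flux coordinates the sign structure of $A_2$ in \eqref{A1A2E} is exactly what makes the reflection carry solutions to solutions --- and at the seams the pieces match because $\hat p$ vanishes there by construction. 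Cleanest is to bypass seams entirely through the Fourier series \eqref{fouriermodes}: $\hat p$ is a pure sine series and $\hat h,\hat q,\hat\psi$ pure cosine series, and inserting the $n$-th term into \eqref{eigen_channel} returns precisely \eqref{geig} at $\eta=n\pi/L$ (the $\partial_y$ interchanges $\sin$ and $\cos$ in a manner matched to the block structure of $A_2$, with the extra factor $i$ on the sine coefficients of $\hat p$ visible in \eqref{fouriermodes}), the wall condition being automatic since $\sin(n\pi y/L)$ vanishes at $y=0,L$. For the backward direction, given a $2L$-periodic periodic-BC normal mode, choose a Fourier index $n$ on which it is nontrivial; by transverse symmetry take $n\ge 0$; then the profile $(\hat V,\hat\psi)$ solving \eqref{geig} at $\eta=n\pi/L$ yields a wall-BC normal mode on $[0,L]$ via $\hat p=i\hat V_3(x)\sin(n\pi y/L)$, $\hat h=\hat V_1(x)\cos(n\pi y/L)$, $\hat q=\hat V_2(x)\cos(n\pi y/L)$, $\hat\psi=\hat\psi\cos(n\pi y/L)$ as in \eqref{fouriermodes} (the $1$d mode when $n=0$). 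This closes the loop: all three normal-mode problems at the frequency pair $(\lambda,\eta=n\pi/L)$ are equivalent to solvability of \eqref{geig} at $\eta=n\pi/L$.

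From this common reduction the remaining assertions follow at once. Statement (i) holds because both wall-BC instability at width $L$ and periodic-BC instability at width $2L$ amount to solvability of \eqref{geig} for some $\Re\lambda>0$ and some $\eta\in(\pi/L)\mathbb{Z}$; (ii) holds because \eqref{geig} at a lattice frequency is a special case of \eqref{geig} with $\eta\in\mathbb{R}$; and (iii) is the contrapositive of (ii). The structural claim that, for wall-BC normal modes, $\hat p(x,\cdot;\lambda)$ has a pure sine expansion while $\hat h,\hat q,\hat\psi$ have pure cosine expansions is exactly the output of the $L^2$/Fubini-plus-reflection argument carried out just before the lemma, producing \eqref{fouriermodes}. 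The only delicate point --- the main obstacle --- is the rigor of the reflection step: one must ensure the odd/even mirror extension introduces no spurious singularity along the seams $y=0,\pm L$, i.e.\ that the extended object is a genuine solution on all of $\mathbb{R}\times[-L,L]$ and not merely a solution on each half. This is handled either by the Fourier-series formulation, where there are no seams and each mode is checked directly against \eqref{geig}, or by invoking Lemma~\ref{transverse_sym_hydraulic} together with the vanishing of $\hat p$ at $y=0,L$ to match the pieces; both routes are routine once the sign bookkeeping of $A_2$ in \eqref{A1A2E} is tracked carefully.
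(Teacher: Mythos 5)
Your proposal is correct and follows essentially the same route as the paper: both directions rest on the mirror/ghost extension and the mixed sine/cosine Fourier expansion \eqref{fouriermodes} developed immediately before the lemma, with the converse extracting a nontrivial Fourier coefficient (the paper phrases this via Plancherel, you phrase it as "choose a Fourier index $n$ on which it is nontrivial") to produce a solution of \eqref{geig} at a lattice frequency $\eta = n\pi/L$. The reorganization around the periodic width-$2L$ problem as an intermediary, and the explicit appeal to the transverse symmetry of Lemma~\ref{transverse_sym_hydraulic} to control the seam, are harmless restatements of the same facts.
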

\begin{proof}
Assuming existence of a non-trivial solution $([h(x)\;q(x)\;p(x)]^T\in L^2,\psi\in \mathbb{C})$ of \eqref{geig} at $\eta=n\pi/L$ for some $\Re \lambda >0$, we can define $\hat h=h(x)\cos(n\pi y/L)$, $\hat q=q(x)\cos(n\pi y/L)$, $\hat p=i p(x)\sin(n\pi y/L)$, and $\hat \psi=\psi\cos(n\pi y/L)$, being $L^2$ solution of \eqref{eigen_channel}. Conversely, a non-trivial $L^2$ solution $(\hat{V}=[\hat h,\hat q,\hat p]^T,\hat\psi)$ of \eqref{eigen_channel} for some $\Re\lambda >0$, as we derived above, has Fourier representation \eqref{fouriermodes}. The Plancherel's identities
$$\begin{aligned}
||\hat h||_{L^2(\mathbb{R}\times(0,L))}^2=&2L\sum_{n=0}^{\infty}||h_n||_{L^{2}(\mathbb{R})}^2,&||\hat q||_{L^2(\mathbb{R}\times(0,L))}^2=&2L\sum_{n=0}^{\infty}||q_n||_{L^{2}(\mathbb{R})}^2,\\
||\hat p||_{L^2(\mathbb{R}\times(0,L))}^2=&2L\sum_{n=1}^{\infty}||p_n||_{L^{2}(\mathbb{R})}^2,\quad \text{and} &||\hat \psi||_{L^2(0,L)}^2=&2L\sum_{n=1}^{\infty}||\psi_n||^2
\end{aligned}
$$ force that there exists $n\in \mathbb{N}$ such that $([h_n(x;\lambda),q_n(x;\lambda),p_n(x;\lambda)]^T,\psi_n)$ makes a nontrivial solution of \eqref{geig} with $\eta=n\pi/L$.
\end{proof}

The conclusion is that stability in a channel may be studied simply by restricting the Evans functions in Section~\ref{s:evans_hydro} to $\eta=n\pi/L$, $n\in \Z$, a striking simplification
of an at first apparently complicated refinement of the whole space problem.

\smallskip

\noindent{\bf Hydraulic shock stability in whole space vs. channel.} Taking as given our partly numerically-based
conclusions on stability of hydraulic shocks-- recall that medium-frequency and low-frequency diffusive
stability were assessed by numerical Evans function computations--
we readily see that {\it stability of a hydraulic shock in a channel is generically} (i.e., for $F\neq F_{\text{2d}}$) 
{\it equivalent to its stability in the whole space}. For, stability in the whole space by restriction evidently implies stability in the channel.
But, on the other hand, instability appears to occur only through ``essential'' instability having to do
with failure of convective stabilizability at the endstates, which in turn appears to be equivalent to
failure along open frequency cones in the high-frequency limit.
And, an open frequency cone intersects any lattice $(k,\eta)$ with $k\in\mathbb{R}$ and $\eta=n\pi/L$, $n\in \Z$.

This is quite different from the scenario in the case of roll waves, for which channel width effectively
selects the onset of channel instability/bifurcation; see Sections \ref{ss:rollwaves} and \ref{s:channel_flow_roll}.

\subsection{Numerical time evolution studies and convergence to herringbone flow}\label{s:hydro_bif}
We complete our study of multi-d hydraulic shock stability 
with numerical time-evolution simulations in a channel using CLAWPACK \cite{Cl}, both validating
our stability conclusions and, in case of instability, demonstrating the
bifurcation and metastability phenomena described in Sections \ref{s:new}-\ref{ss:shocks}.

These experiments were carried out in channels of varying (fixed) finite width, with wall-type boundary
conditions on the upper and lower $y$-boundaries.  The infinite length channel was simulated by a rectangle
of ``long enough'' extent, with extrapolation (``non-reflecting outflow'') boundary conditions at
the left and right $x$-boundaries.  In practice, length $20$ was typically enough, as determined by trial and
error comparing with result computed on longer domains.

Initial data was either a numerically generated approximate hydraulic shock profile, Riemann-type ``dambreak''
data, or perturbations of the above by a smooth compactly supported ``bump function''.
Here, by dam-break, or Riemann-type initial data, 
we mean data that is constant for $x\gtrless 0$, with a jump at $x=0$,
the states on either side being in equilibrium, with left and right heights $H_L> H_R$ given.
As noted earlier, this corresponds with a unique shock wave $(H_L,H_R)$ of the associated equilibrium system,
and, for Froude number $F<F_{\rm exist}$, to a unique hydraulic shock solution, each with the same speed $s$.
In the case of stability, this hydraulic shock solution is expected to described time-asymptotic behavior
for any of the above types of data; we thus compute in a coordinate frame moving with speed $s$, so as to
keep relevant dynamics approximately centered in the frame.

As expected, shocks in the hydrodynamically stable regime $F<2$, or indeed in the larger convective stability
range $F<F_{\text{2d}}$ exhibited robust stability, with convergence for all types of data to translates
of the hydraulic shock solutions predicted by the endstates $H_L$ and $H_R$.
This suggests strongly that the {\it spectral stability properties}
verified here are associated also with {\it linear and nonlinear stability}.

Still more interesting are the results for $F_{\text{2d}}<F< F_{\text{1d}}$, where one might expect instability/bifurcation to
transverse patterns.  
And, indeed, we do find such behavior in this range in the form of transition to herringbone flow.  
A typical example of herringbone formation is illustrated in FIGURE~\ref{fig:herringbone1}.

However, as described in the introduction, the bifurcation point is not at $F_{\text{2d}}$ as one might at first expect,
but at a value $F=\tilde F$ lying strictly between $F_{\text{2d}}$ and $F_{\text{1d}}$.
Though it is a theorem that linear instability holds in any scalar exponentially weighted norm for $F>F_{\text{2d}}$,
this does not imply instability in the corresponding unweighted norm with respect to 
exponentially-decaying perturbations \cite{FHSS}; nor does it give information on superexponentially weighted norms 
or stability with respect to superexponentially-decaying perturbations.

And, under perturbed dambreak data with compactly supported perturbation, with $F$ sufficiently close to $F_{\text{2d}}$,
we in fact see metastable behavior, with herringbone structure initially appearing near the site of the 
perturbation, but eventually convecting into the subshock discontinuity and disappearing.
By carrying out a bisection search in $F$, starting with endpoints $F_{\text{2d}}$ and $F_{\text{1d}}$ we were able to locate
fairly accurately the bifurcation point at which this metastable behavior gives way to asymptotic herringbone
flow.  As may be expected from this discussion, this is exactly the point at which convection rate of the
herringbone pattern arising from the perturbation exactly matches the speed of the subshock discontinuity.

The result for a particular set of data is illustrated in FIGURE~\ref{fig:herringbone2}, for which we obtain
$F_{\text{2d}}= 2.059\ldots$, $F_{\text{1d}}= 2.290\ldots$, and $\tilde F\approx 2.14$.
In this figure we display also what we believe is the mechanism for this transition, namely, behavior
of a perturbed {\it flat solution} $(H,U)\equiv (H_L,U_L)$ corresponding to the upstream state in the dambreak
problem.
In the time evolution of this perturbed flat state, we see the same herringbone and roll wave patterns forming,
separated by a sort of parabolic interface, as are seen in the full dambreak problem.
And, one can see that the transition point $\tilde F\approx 2.14$ is also the point at which the herringbone
pattern forming from this flat solution has speed agreeing with that of the shock wave in the associated dam
break problem. Moreover, though we do not show it here, behavior for the perturbed dambreak problem, on the
upstream side, up to a small interface layer near the subshock, looks essentially identical to that of the
perturbed flat solution on the same domain, with no disturbance emerging from the subshock on the 
downstream side.

Thus, as described in the introduction, it would appear that understanding of behavior of the constant (``flat'')
solution under compact perturbation is the key-- or at least an important first step-- to understanding of
the metastable behavior of dambreak data under similar compact perturbation.

\begin{figure}[htbp]
\begin{center}   
\includegraphics[scale=0.27]{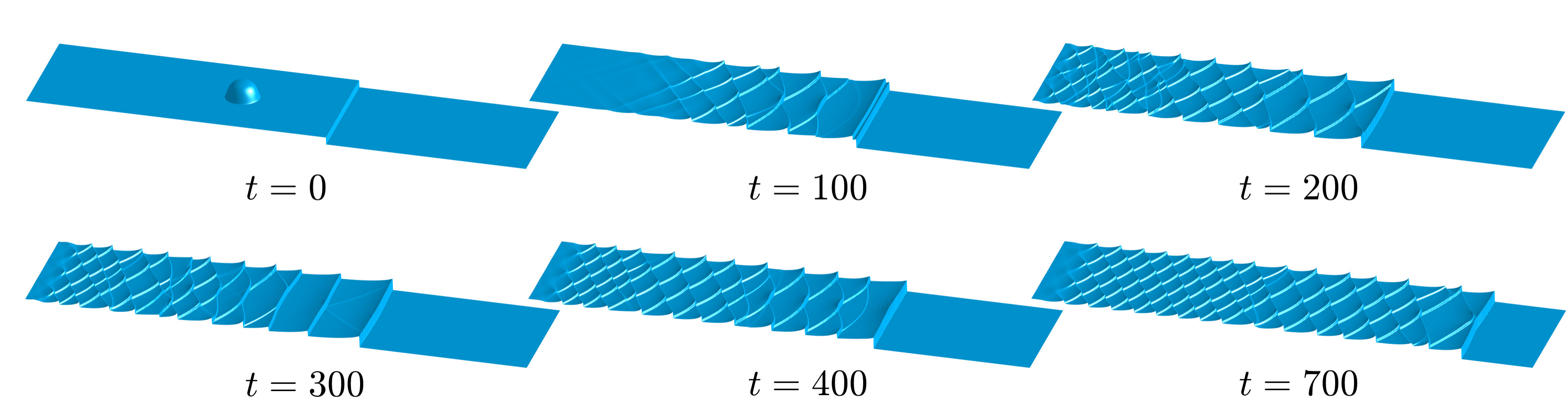}
\end{center}
\caption{See \cite[dambreak\_F\_equals\_2\_point\_25.mp4]{YZmovie} for the full movie. Time-evolution of dam-break initial data with a smooth local perturbation put after the dam-break shock in a channel with width $5$, length $20$, $H_R=0.7$, and $F=2.25$, at time $0$, $100$, $200$, $300$, $400$, and $700$, respectively, showing the development of herringbone from time $0$ to $200$, formation of herringbone to the left, parabolas in the middle, and roll waves between the parabolas and hydraulic shock front at time $300$. Later, for example at time $700$, there are no more parabolas and roll waves, leaving a persistent fully-developed herringbone flow. Also, the hydraulic shock front moves downstream as the herringbone convects toward the front.}
\label{fig:herringbone1}
\end{figure}
\begin{figure}
\begin{center}
\includegraphics[scale=0.5]{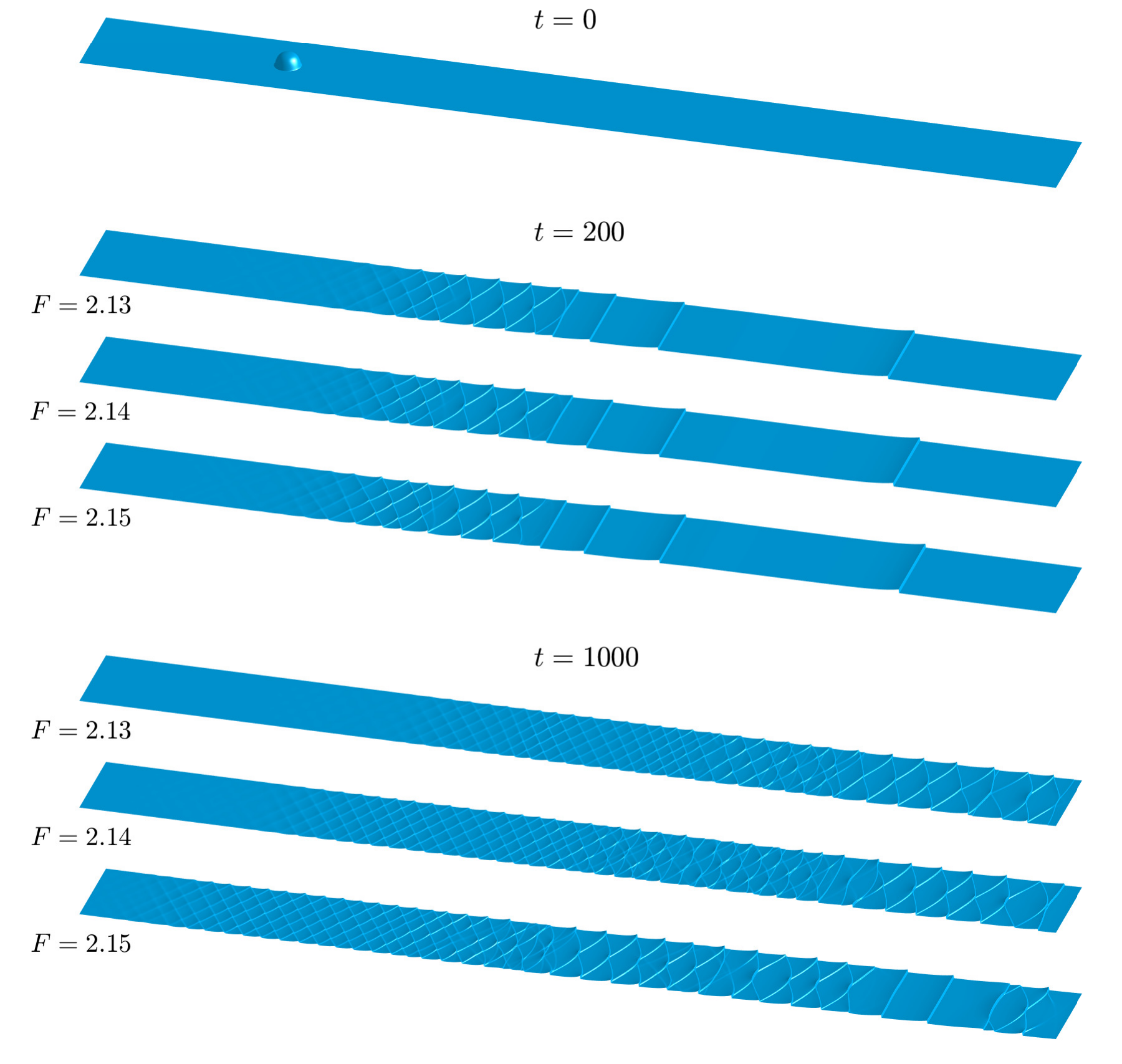}
\end{center}
\caption{See \cite[flat\_comparison.mp4]{YZmovie} for the full movie. Time-evolution of perturbed flat solution in channel, with width $5$, length $50$, and $H_R=0.7$. The first panel displays the common initial condition used for three simulations with $F=2.13$, $F=2.14$, and $F=2.15$. The next three panels show at time $200$ the development of herringbone patterns to left, parabola patterns in the middle, and roll waves to the right. The last three panels suggest that for the corresponding dam break problem, at time $1000$ the herringbone pattern slowly gets dragged into the subshock for $F=2.13$, keeps relatively static with the subshock for $F=2.14$, and deviates away from the subshock for $F=2.15$, indicating a transition from metastability to instability/herringbone flow at $\tilde F \approx 2.14$.}
\label{fig:herringbone2}
\end{figure}


\section{ Multi-d roll wave stability}\label{s:multi_roll}
Next, we turn to the hydrodynamically unstable (``pattern formation'') regime $F>2$
and stability of roll wave solutions depicted in FIGURE \ref{fig_phy}.
In contrast to the universally 1d stable hydraulic shocks for $F<2$, these exhibit interesting stability
transitions already in 1d, so may be expected to display interesting phenomena in multi-d.
Our immediate goal is to add to the longitudinal stability diagram of FIGURE \ref{fig_roll_1d}
transverse stability information, similarly as the 1d Eckhaus diagram \cite{Eck}
is refined by Busse's 2d ``zigzag instability'' criterion \cite{SLB} 
in the classical small-amplitude pattern formation theory.
Indeed, as described in the introduction, our original hope was to see something
like a zigzag instability, initiating multi-dimensional structure connected with herringbone flow.
We do find instabilities, and, different from the shock case, these are low- or mid-frequency rather than
high-frequency in nature. However, we do not for the moment see a connection to zigzag or herringbone flow.

\smallskip

\noindent\textbf{Scale-invariance, roll waves.} In the rest of Section~\ref{s:multi_roll}, we adopt the useful scale-invariance \cite[eq. (2.10)]{JNRYZ}, which reduces all conclusions, up to scale-invariance, to the roll waves with $H_s=1$. Here $H_s$ denotes the fluid height at the sonic point. See FIGURE~\ref{fig_profiles}.

\subsection{Linear stability and periodic Evans-Lopatinsky function}\label{s:linroll}
Changing to comoving coordinates and linearizing about a periodic roll wave solution of period $X$,
containing an infinite array of shock discontinuities at $x=jX$, one in each periodic cell, using ``good unknown",
and taking the Laplace/Fourier transform in $t$/$y$, we obtain by the same derivation as in the hydraulic shock case 
the interior generalized eigenvalue system
\eqref{geig}(i) on each cell $[jX, (j+1)X)$, $j\in \Z$, linked by jump conditions of form \eqref{geig}(ii)
at each boundary point $x=jX$, where now $\hat{\psi}$ is replaced by an infinite array of discontinuities $\hat{\psi}_j$,
and all coefficient matrices are periodic functions of $x$. Performing a further Bloch transform $\check{\cdot}$ and a change of unknowns $e^{i\xi x}\check{\hat{V}}\to w$ and $e^{i\xi x}\check{\hat{\psi}}\to \chi$, we reach a multi-d Floquet eigenvalue system on a single cell $[0, X)$
\be 
\label{Floquet_system}
\lambda A_0 w +(A_1 w)_x+ i\eta A_2w =E w, \qquad
\chi[\lambda F_0(\bar U) +i\eta  F_2(\bar U) - R(\bar U)]=\{A_1 w\}_\xi.
\ee 
where $[f]=f|_0^X$, $\{f\}_\xi=f(X^-)-e^{i\xi X}f(0^+)$, and the Floquet/Bloch number $\xi\in [-\pi/X, \pi/X)$ is real. This generalizes the {\it periodic Evans-Lopatinsky determinant} in the 1d case \cite{JNRYZ} to a multi-d {\it periodic Evans-Lopatinsky determinant}
\be\label{pEL}
E(\lambda, \eta,\xi):= \det ( [\lambda F_0(\bar U)+ i\eta F_2(\bar U) - R(\bar U)], 
\{A_1\mathcal{W}_1\}_\xi,  \{A_1 \mathcal{W}_{2}\}_\xi),
\ee
analogous to \eqref{EL},
vanishing for $\Re \lambda >0$ if and only if $(\lambda, \eta, \xi)$ correspond to an eigenmode $w$,
$\chi$.
Thus, similarly as in the hydraulic shock case {\it spectral stability amounts to nonvanishing of the periodic
Evans-Lopatinsky function $E$ for $\Re \lambda>0$.} 

\noindent{\bf Homogeneous local analytic solvability.} In the 1d Floquet system \cite[eq. (1.10)]{JNRYZ}, the interior equation (i) is degenerate at the sonic point local to which we construct a basis $\{w_1(\,\cdot\,;\lambda)\}$ for the one-dimensional regular solutions that depend analytically on $\lambda$ within an open connected set 
\be \label{Lambda0}
\Lambda_{0}:=\left\{\lambda:\Re\lambda >-\frac{F-2}{2}\right\}\subset \mathbb{C}
\ee 
containing the closed right half plane.  In the 2d case, the Floquet system \eqref{Floquet_system} again carries a single degenerate sonic point, causing again a dimension loss of the space of regular solutions. It turns out that one can construct a basis of two regular solutions depending analytically on $\lambda$ within the same open connected set $\Lambda_0$ \eqref{Lambda0} as the 1d case and also depending analytically on $\eta\in \mathbb{C}$. For convenience of constructing the basis, we carry out a coordinate change to \eqref{Floquet_system}. Setting constant matricies 
$$
T_l=\left[\begin{array}{rrr}\frac{1}{-F^2+F+2} & 0 & 0\\ 0 & 0 & -\frac{1}{F-2}\\ -\frac{F-1}{F^2\,\left(F-2\right)} & \frac{1}{F\,\left(F-2\right)} & 0\end{array}\right],\quad T_r=\left[\begin{array}{rrr}1 & 0 & \frac{F}{F+1}\\ 0 & 0 & 1\\ 0 & 1 & 0\end{array}\right],
$$ we obtain from \eqref{Floquet_system} $H'T_lAT_rW_H=(T_lET_r-\lambda T_lA_0T_r-i\eta T_lA_2T_r-H'T_lA_HT_r)W$ where $W:=T_r^{-1}w$. To make all entries of the coefficient matrices polynomials of $H$, multiplying by a common factor $(H^2 + H + 1)H^2$ yields
$$
\left(\sum_{n=0}^5L^{(n)}(H-1)^n \right)W_H=\left(\sum_{n=0}^3R^{(1,n)}(H-1)^n+\lambda \sum_{n=0}^4R^{(2,n)}(H-1)^n+\eta\sum_{n=0}^5R^{(3,n)}(H-1)^n\right)W,
$$
where
$$
\begin{aligned}
(H^2 + H + 1)H^2H'T_lAT_r=&\sum_{n=0}^5L^{(n)}(H-1)^n,\\
(H^2 + H + 1)H^2(T_lET_r-H'T_lA_HT_r)=&\sum_{n=0}^3R^{(1,n)}(H-1)^n,\\
-(H^2 + H + 1)H^2T_lA_0T_r=&\sum_{n=0}^4R^{(2,n)}(H-1)^n,\\
-i(H^2 + H + 1)H^2T_lA_2T_r=&\sum_{n=0}^5R^{(3,n)}(H-1)^n,\\
\end{aligned}
$$
where $L^{(n)}$, $R^{(1,n)}$, $R^{(2,n)}$, and $R^{(3,n)}$ are constant matrices given in Appendix \eqref{LRmat1}-\eqref{LRmat3}. Expanding a regular solution ansatz as $W=\sum_{n=0}^\infty W^{(n)}(H-1)^n$, presumably it converges, there shall hold the $\mathcal{O}(H-1)^n$-- recurrence relation for $n=0,1,2,3,\ldots$ 
\begin{equation}
\begin{aligned}
\label{reccurence_roll}
&(n+1)L^{(0)}W^{(n+1)}=-\sum_{i=1}^{\min(n,5)}(n+1-i)L^{(i)}W^{(n+1-i)}\\
&+\sum_{i=0}^{\min(n,3)}R^{(1,i)}W^{(n-i)}+\lambda\sum_{i=0}^{\min(n,4)}R^{(2,i)}W^{(n-i)}+\eta \sum_{i=0}^{\min(n,5)}R^{(3,i)}W^{(n-i)}.
\end{aligned}
\end{equation}
Because $L^{(0)}={\rm diag}(1,1,0)$ (see \eqref{LRmat1}) is singular, so is the linear system \eqref{reccurence_roll} for  $W^{(n+1)}$. The last equation of the $\mathcal{O}(H-1)^0$ system makes $W^{(0)}$ lie in the null space of the third row of $R^{(1,0)}+\lambda R^{(2,0)}+\eta R^{(3,0)}$, a two dimensional space spanned by
\be\label{w0}
\left\{\left[\begin{array}{c}-(R^{(1,0)}_{33}+\lambda R^{(2,0)}_{33}+\eta R^{(3,0)}_{33})\\0\\R^{(1,0)}_{31}+\lambda R^{(2,0)}_{31}+\eta R^{(3,0)}_{31}\end{array}\right],\left[\begin{array}{c}-(R^{(1,0)}_{32}+\lambda R^{(2,0)}_{32}+\eta R^{(3,0)}_{32})\\R^{(1,0)}_{31}+\lambda R^{(2,0)}_{31}+\eta R^{(3,0)}_{31}\\0\end{array}\right]\right\}.
\ee 
For $n\geq 0$, assuming that $W^{(m)}$ has been determined for $0\leq m\leq n$, the first two entries of $W^{(n+1)}$ are given by dividing the righthand side of the first two equations of  \eqref{reccurence_roll} by $n+1$ and the third entry of $W^{(n+1)}$ is determined by the last equation of the $\mathcal{O}(H-1)^{n+1}$-- recurrence relation, which reads
\ba
\label{third_row}
&((n+1)L^{(1)}_{33}-R^{(1,0)}_{33}-\lambda R^{(2,0)}_{33}-\eta R^{(3,0)}_{33})W^{(n+1)}_3\\
=&\sum_{j=1}^2(-(n+1)L^{(1)}_{3j}+R^{(1,0)}_{3j}+\lambda R^{(2,0)}_{3j}+\eta R^{(3,0)}_{3j})W^{(n+1)}_j\\
&-\sum_{j=1}^3\sum_{i=2}^{\min(n+1,5)}(n+2-i)L^{(i)}_{3j}W^{(n+2-i)}_j+\sum_{j=1}^3\sum_{i=1}^{\min(n+1,3)}R^{(1,i)}_{3j}W^{(n+1-i)}_j\\&
+\lambda\sum_{j=1}^3\sum_{i=1}^{\min(n+1,4)}R^{(2,i)}_{3j}W^{(n+1-i)}_j+\eta \sum_{j=1}^3\sum_{i=1}^{\min(n+1,5)}R^{(3,i)}_{3j}W^{(n+1-i)}_{j}.
\ea
\eqref{third_row} is solvable provided that
\be 
\label{nonvanish_const}
(n+1)L^{(1)}_{33}-R^{(1,0)}_{33}-\lambda R^{(2,0)}_{33}-\eta R^{(3,0)}_{33}=\frac{3\left(2\lambda+(n+1)(F-2)\right)
}{F\left(F+1\right)\left(F-2\right)}\neq 0.
\ee 
We pause to remark that \eqref{nonvanish_const} holds for all $n\geq 0$ and $\lambda\in \Lambda_0$ \eqref{Lambda0} and the basis \eqref{w0} depends analytically on $\eta\in \mathbb{C}$. $W^{(n+1)}$ is thereby determined. Next, we turn to bounding $||W^{(n+1)}||$, which is needed for the convergence of the solution ansatz $W=\sum_{n=0}^\infty W^{(n)}(H-1)^n$.

For matrices $A\in \mathbb{C}^{m\times p}$ and $B\in \mathbb{C}^{p\times n}$, denote $||A||:=\max_{1\leq i\leq m,1\leq j\leq p}|A_{ij}|$, the maximum norm of $A$. Clearly, we have $||AB||\leq p ||A||\, ||B||$. By \eqref{LRmat1}-\eqref{LRmat3}, $L^{(n)}$, $R^{(1,n)}$, $R^{(2,n)}$ and $R^{(3,n)}$ are constant matrices satisfying 
$$
||L^{(n)}||, ||R^{(1,n)}||, ||R^{(2,n)}||, ||R^{(3,n)}||\leq C,
$$
where $C$ depends solely on $F$.
\begin{lemma}\label{W_vector_analytic}
Let $W^{(0)}(\lambda,\eta)$ be a vector in the two dimensional space spanned by \eqref{w0} that depends analytically on $(\lambda,\eta)\in\Lambda_0\times \mathbb{C}$ and $W^{(n+1)}(\lambda,\eta)$ be the solution of the recurrence relation \eqref{reccurence_roll} for $n\geq 0$ Then, $W^{(n)}(\lambda,\eta)$ $n\geq 0$ are analytic on $\Lambda_0\times \mathbb{C}$. Moreover, for compact subsets $K_1 \subset \Lambda_0$ and $K_2\subset \mathbb{C}$, let $C_1:=\max_{\lambda\in K_1}|\lambda|$, $C_2:=\max_{\eta\in K_2}|\eta|$,
$$
C_3:=\sup_{n\geq 1,\lambda\in K_1}\left|\frac{nF\left(F+1\right)\left(F-2\right)}{3\left(2\lambda+n(F-2)\right)
}\right|=\sup_{n\geq 1}\frac{F(F+1)(F-2)}{6\,\mathrm{dist}(\frac{K_1}{n},\frac{2-F}{2})}\in(0,\infty),
$$
$N:=\max(5,\lceil 24C+30CC_1+36CC_2\rceil)$, and $$M:=\max\left\{\max_{\substack{0\leq n\leq N,j=1,2\\ \lambda\in K_1,\eta\in K_2}}|W^{(n)}_j(\lambda,\eta)|^{\frac{1}{n}},\max_{\substack{0\leq n\leq N,\\\lambda\in K_1,\eta\in K_2}}|W^{(n)}_3(\lambda,\eta)|^{\frac{1}{n+1}},30C,C_3(14C+\frac{1}{2}),1\right\},$$ 
there holds, for all $n\geq 0$,
\be \label{bound_recurrence}
|W^{(n)}_j(\lambda,\eta)|\leq M^n,\quad j=1,2,\quad |W^{(n)}_3(\lambda,\eta)|\leq M^{n+1},
\ee 
for any $\lambda\in K_1$ and $\eta\in K_2$.
\end{lemma}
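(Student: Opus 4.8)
The plan is to prove analyticity and the majorant bounds \eqref{bound_recurrence} together, by a single induction on $n$ driven by the recurrence \eqref{reccurence_roll}--\eqref{third_row}.

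\emph{Analyticity.} First I would check by induction that each $W^{(n)}$ is analytic on $\Lambda_0\times\mathbb{C}$. The case $n=0$ is the hypothesis on $W^{(0)}$. Assuming $W^{(0)},\dots,W^{(n)}$ are analytic, the first two entries of $W^{(n+1)}$ are, by the first two rows of \eqref{reccurence_roll} together with $L^{(0)}=\mathrm{diag}(1,1,0)$, explicit polynomial-in-$(\lambda,\eta)$ combinations of $W^{(0)},\dots,W^{(n)}$ divided by $n+1$, hence analytic; the third entry is then given by \eqref{third_row} after dividing by the scalar $\tfrac{3\left(2\lambda+(n+1)(F-2)\right)}{F(F+1)(F-2)}$, which by \eqref{nonvanish_const} never vanishes on $\Lambda_0$ — indeed for $\lambda\in\Lambda_0$ one has $\Re\bigl(2\lambda+(n+1)(F-2)\bigr)>-(F-2)+(n+1)(F-2)=n(F-2)\ge0$ — so $W^{(n+1)}_3$ is analytic as well. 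This closes the induction.

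\emph{The bounds.} The base case $0\le n\le N$ is immediate from the definition of $M$, which takes the maximum of $|W^{(n)}_j|^{1/n}$ ($j=1,2$) and $|W^{(n)}_3|^{1/(n+1)}$ over this range and over $K_1\times K_2$. For the inductive step fix $n\ge N$ and assume \eqref{bound_recurrence} for all orders $\le n$; throughout I use the submultiplicativity $\|AB\|\le p\,\|A\|\,\|B\|$ of the maximum norm, the uniform bound $\|L^{(i)}\|,\|R^{(k,i)}\|\le C$, and the fact that every sum in \eqref{reccurence_roll} and \eqref{third_row} has a bounded number of terms, independent of $n$. For $j=1,2$, expanding the first two rows of \eqref{reccurence_roll} and applying the triangle inequality bounds $(n+1)|W^{(n+1)}_j|$ by a sum in which the $L$-contributions are $O(nC)$ times $M$-powers no larger than $M^n$ — here one uses the explicit form of the matrices in \eqref{LRmat1}--\eqref{LRmat3} (produced, together with $L^{(0)}=\mathrm{diag}(1,1,0)$, by the $T_l,T_r$ change of variables) to see that the coupling of the ``large'' entry $W^{(n)}_3$ into the $W^{(n+1)}_{1,2}$-equations carries no factor of $n$ — while the $R$-contributions have leading ($m=n$) part of size $O\bigl(C(1+|\lambda|+|\eta|)\bigr)M^{n+1}$ and the remaining part $M$-suppressed. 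Dividing by $n+1$, choosing $M\ge30C$ to absorb the $O(C)M^n$ pieces and using $n\ge N\ge24C+30CC_1+36CC_2$ to make the $\tfrac{C(1+C_1+C_2)}{n+1}M^{n+1}$ pieces at most $\tfrac12M^{n+1}$, gives $|W^{(n+1)}_j|\le M^{n+1}$. For the third entry, \eqref{third_row} expresses $W^{(n+1)}_3$ as the quotient of an $O(n)$-size combination of $W^{(n+1)}_{1,2}$ and lower-order $W^{(m)}$'s over $\tfrac{3\left(2\lambda+(n+1)(F-2)\right)}{F(F+1)(F-2)}$, whose modulus is of order $n$; the crucial point is exactly the finiteness of $C_3$ asserted in the statement, valid because $K_1$ is compactly contained in $\Lambda_0$, so that $\tfrac{n}{|2\lambda+(n+1)(F-2)|}$ stays bounded uniformly in $n$ and $\lambda\in K_1$. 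Bounding each term and using $M\ge C_3(14C+\tfrac12)$ then yields $|W^{(n+1)}_3|\le M^{n+2}$, completing the induction.

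\emph{Main obstacle.} The delicate part is precisely this bookkeeping: one must keep the two scalings $M^n$ (entries $1,2$) and $M^{n+1}$ (entry $3$) apart while pushing them through \eqref{reccurence_roll} and \eqref{third_row}, and verify that every term involving an order-$n$ or order-$(n+1)$ vector on the right is neutralised either by the explicit $\tfrac{1}{n+1}$ damping or by the structural vanishings in \eqref{LRmat1}--\eqref{LRmat3}; this is what makes the thresholds $30C$, $C_3(14C+\tfrac12)$ and $N$ in the statement sufficient. Once \eqref{bound_recurrence} holds, analyticity of the series $W=\sum_{n\ge0}W^{(n)}(H-1)^n$ and its convergence for $|H-1|<1/M$ (needed below to construct the regular two-dimensional solution basis) follow by a routine geometric comparison.
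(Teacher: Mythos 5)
Your proposal is correct and takes essentially the same route as the paper: induction on $n$, base case read off from the definition of $M$, and in the inductive step a separation of the $L$-contributions (which carry the growing factor $n+1-i$ but, crucially, do not couple to the ``large'' third entry because $L^{(i)}_{13}=L^{(i)}_{23}=0$) from the $R$-contributions (which do couple to the third entry but are damped by $1/(n+1)$), with the two thresholds $M\geq 30C$ and $n\geq N$ absorbing the two types of term, and the $C_3/n$ bound on the scalar in \eqref{third_row} handling the third entry. The paper spells out the same bookkeeping with the explicit numerical constants; you have identified all the structural points that make them sufficient.
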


\begin{proof}
The first analyticity assertion can be easily shown by mathematical induction. Assuming the analyticity of $W^{(m)}(\lambda,\eta)$ for $0\leq m\leq n$, we then have (i) the analyticity of the first two entries of  $W^{(n+1)}(\lambda,\eta)$ can be concluded from that of the righthand side of the first two equations of  \eqref{reccurence_roll} and (ii) the analyticity of the third entry of $W^{(n+1)}(\lambda,\eta)$  can be concluded from that of the righthand side of \eqref{third_row} and the analyticity of $$((n+1)L^{(1)}_{33}-R^{(1,0)}_{33}-\lambda R^{(2,0)}_{33}-\eta R^{(3,0)}_{33})^{-1}=\frac{F\left(F+1\right)\left(F-2\right)}{3\left(2\lambda+(n+1)(F-2)\right)
}$$
on $\Lambda_0$ \eqref{Lambda0}. To prove \eqref{bound_recurrence}, when $n\leq N$, we immediately obtain the bounds by the choices of $M$. When $n>N$, using the first two equations of the recurrence relation \eqref{reccurence_roll}, we obtain, for $j=1,2$,
$$
\begin{aligned}
|W^{(n)}_j(\lambda,\eta)|&\leq \frac{1}{n}\Big(\sum_{i=1}^53(n-i)CM^{n-i}+\sum_{i=0}^33CM^{n-i}+C_1\sum_{i=0}^43CM^{n-i}+C_2\sum_{i=0}^53CM^{n-i}\Big)\\
&< M^n\Big(\frac{15C}{M}+\frac{12C+15CC_1+18CC_2}{n}\Big)<M^n\Big(\frac{15C}{M}+\frac{12C+15CC_1+18CC_2}{N}\Big)\\
&\leq M^n,
\end{aligned}
$$
where, on the righthand side of the first inequality, we have used $L^{(i)}_{13}=L^{(i)}_{23}=0$ for $1\leq i\leq 5$ in bounding $|(n-i)L^{(i)}W^{(n-i)}|$.
By \eqref{nonvanish_const} and the choice of $C_3$, we have 
$$
|(nL^{(1)}_{33}-R^{(1,0)}_{33}-\lambda R^{(2,0)}_{33}-\eta R^{(3,0)}_{33})^{-1}|=\frac{1}{n}\left|\frac{nF\left(F+1\right)\left(F-2\right)}{3\left(2\lambda+(n+1)(F-2)\right)
}\right|\leq \frac{C_3}{n}.
$$
Therefore, using \eqref{third_row}, we obtain the bound
$$
\begin{aligned}
|W^{(n)}_3(\lambda,\eta)|\le& \frac{C_3}{n}\Big(2(nC+C+C_1C+C_2C)M^n+3\sum_{i=2}^5(n+1-i)CM^{n+2-i}\\
&+3\sum_{i=1}^3CM^{n+1-i}+3C_1\sum_{i=1}^4CM^{n+1-i}+3C_2\sum_{i=1}^5CM^{n+1-i}\Big)\\
<& M^{n+1}\Big(\frac{2C_3C}{M}+\frac{2C_3C(1+C_1+C_2)}{nM}+\frac{12C_3C}{M}
+\frac{9C_3C+12C_3CC_1+15C_3CC_2}{nM}\Big)\\
<&M^{n+1}\Big(\frac{14C_3C}{M}+\frac{C_3}{2M}\Big)\leq M^{n+1}.\end{aligned}
$$
\end{proof}
\begin{theorem}\label{analyticity}
The periodic Evans-Lopatinsky determinant $E(\lambda,\eta,\xi)$\eqref{pEL} is analytic on $\Lambda_0\times \mathbb{C}\times\mathbb{C}$.
\end{theorem}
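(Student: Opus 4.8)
The plan is to assemble the periodic Evans–Lopatinsky determinant $E(\lambda,\eta,\xi)$ out of ingredients each of which is already known or readily shown to be analytic in $(\lambda,\eta,\xi)\in\Lambda_0\times\mathbb{C}\times\mathbb{C}$, and then invoke the fact that a determinant of analytically-varying vectors is analytic. The three ingredients are: (i) the jump vector $[\lambda F_0(\bar U)+i\eta F_2(\bar U)-R(\bar U)]$, which is manifestly an affine — hence entire — function of $(\lambda,\eta)$ and is independent of $\xi$; (ii) the regular solutions $\mathcal{W}_1,\mathcal{W}_2$ of the interior Floquet system \eqref{Floquet_system} emanating from the sonic point, evaluated at the cell endpoints; and (iii) the Floquet bracketing operation $\{f\}_\xi=f(X^-)-e^{i\xi X}f(0^+)$, which is entire in $\xi$. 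Since $e^{i\xi X}$ is entire and the determinant is multilinear, once (ii) is handled the theorem follows.

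The substance is therefore to establish that the basis $\{\mathcal{W}_1(\cdot;\lambda,\eta),\mathcal{W}_2(\cdot;\lambda,\eta)\}$ of regular solutions through the sonic point depends analytically on $(\lambda,\eta)$ on $\Lambda_0\times\mathbb{C}$, and that this analyticity propagates under the flow of \eqref{Floquet_system} along the smooth portions of the cell up to the endpoints $x=0^+$ and $x=X^-$. The first part is exactly Lemma~\ref{W_vector_analytic}: the power-series ansatz $W=\sum_{n\ge0}W^{(n)}(\lambda,\eta)(H-1)^n$ built from the recurrence \eqref{reccurence_roll} has coefficients that are analytic on $\Lambda_0\times\mathbb{C}$ (the only denominators that enter, via \eqref{nonvanish_const}, are $3(2\lambda+(n+1)(F-2))/(F(F+1)(F-2))$, which by definition of $\Lambda_0$ in \eqref{Lambda0} never vanish for $n\ge0$ and $\lambda\in\Lambda_0$), and the geometric bounds \eqref{bound_recurrence} give local uniform convergence of the series, so the sum is analytic in $(\lambda,\eta)$ jointly. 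Changing coordinates back through $w=T_rW$ and unwinding the polynomial-clearing factor $(H^2+H+1)H^2$ (which is nonzero on the profile) preserves analyticity. The second part — propagation away from the sonic point — is standard analytic dependence of solutions of a linear ODE on parameters: on each smooth arc of the cell, away from the sonic singularity, \eqref{Floquet_system} can be written as $Z_x=G(x;\lambda,\eta)Z$ with $G$ entire in $(\lambda,\eta)$ for each fixed $x$ and continuous in $x$; solutions with initial data analytic in $(\lambda,\eta)$ remain analytic (uniform convergence of the Picard iterates on compact parameter sets), and at the single interior subshock one multiplies by the jump relation, again entire in $(\lambda,\eta)$. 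Evaluating at $x=0^+$ and $x=X^-$, applying $\{\cdot\}_\xi$, inserting into \eqref{pEL}, and taking the determinant then yields joint analyticity on $\Lambda_0\times\mathbb{C}\times\mathbb{C}$.

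I expect the main obstacle to be the careful bookkeeping at the sonic point: one must verify that the \emph{two}-dimensional space of regular solutions spanned by \eqref{w0} can be selected so that a basis varies analytically (not merely that each individual regular solution is analytic once its leading coefficient is fixed), and that matching the local series solution to the global flow on both sides of the sonic point is done consistently — i.e., that the connection is through regular, not singular, solutions, so that no branch cuts in $\lambda$ or $\eta$ are introduced. This is precisely the 2d analogue of the 1d construction in \cite{JNRYZ}; the new feature is the extra analytic dependence on $\eta\in\mathbb{C}$, which is already built into the recurrence and the bounds of Lemma~\ref{W_vector_analytic}, so the obstacle is one of organization rather than of any genuinely new estimate. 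Everything else — the affine $(\lambda,\eta)$-dependence of the jump vector, the entire $\xi$-dependence through $e^{i\xi X}$, and the passage from analytic vectors to an analytic determinant — is routine.
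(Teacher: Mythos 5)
Your proposal is correct and follows essentially the same route as the paper: reduce to analyticity of the regular solutions $\mathcal{W}_1,\mathcal{W}_2$ at the cell endpoints, invoke Lemma~\ref{W_vector_analytic} for locally uniform convergence of the series at the sonic point (the paper phrases the conclusion via Goursat/Morera), propagate by standard analytic dependence of linear ODE solutions on parameters away from the sonic point, return to $w$-coordinates via $T_r$, and observe that the jump vector and $e^{i\xi X}$ are entire in the remaining variables. One small slip: roll-wave cells have no \emph{interior} subshock — the only discontinuity in each period sits at the cell boundary $x=jX$ and enters $E$ only through the jump vector and the bracket $\{\cdot\}_\xi$, both already accounted for — so the "jump relation at the interior subshock" step should simply be dropped.
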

\begin{proof}
It suffices to show that $\mathcal{W}_1(H_\pm;\lambda,\eta)$ and $\mathcal{W}_2(H_\pm;\lambda,\eta)$ are analytic on $\Lambda_0\times \mathbb{C}$. By Lemma \ref{W_vector_analytic}, for $|H-1|$ sufficiently small, the Taylor series $W(H;\lambda,\eta)=\sum_{n=0}^\infty W^{(n)}(\lambda,\eta)(H-1)^n$ converges compactly on $\{\lambda:\Re\lambda>\frac{2-F}{2}\}\times \mathbb{C}$. Hence, by Goursat’s theorem and Morera’s theorem, the series $W(H;\lambda,\eta)$ is analytic on $\Lambda_0 \times \mathbb{C}$ for $|H-1|$ sufficiently small. Moreover, because $W^{(n)}$, $n=0,1,2,3,\ldots$ solve the recurrence relations given by the interior equation local to the sonic point, $W(H;\lambda,\eta)$ makes a regular solution. Away from the sonic point, evolving the interior ode preserves analyticity. By \eqref{w0}, the regular solutions that depend analytically on $\lambda$ are of two dimensional. Finally, by changing to $w$ coordinates via $w=T_rW$,  $\mathcal{W}_1(H;\lambda,\eta_\pm)$ and $\mathcal{W}_2(H;\lambda,\eta_\pm)$ are also analytic on $\Lambda_0\times \mathbb{C}$.
\end{proof}
\begin{lemma}[Transverse symmetry]\label{transverse_sym_roll}
 A solution to the Floquet eigenvalue system \eqref{Floquet_system} is invariant under the coordinate change
$\big(\lambda,\eta,\xi,\chi,[w_1,\;w_2,\;w_3]\big)\rightarrow \big(\lambda,-\eta,\xi,\chi,[w_1,\;w_2,\;-w_3]\big)$.
\end{lemma}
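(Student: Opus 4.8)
The plan is to mimic essentially verbatim the proof of \lemref{transverse_sym_hydraulic}: write out the Floquet system \eqref{Floquet_system} component by component using the explicit matrices $A_0=\Id_3$, $A_1$, $A_2$, $E$ of \eqref{A1A2E} together with the values of $F_0$, $F_2$, $R$ at a roll-wave profile $\bar U=(\bar h(x),\bar q(x),0)$, and then simply read off the claimed invariance. Concretely, on the profile $p\equiv 0$, so $F_0(\bar U)=(\bar h,\bar q,0)^T$, $F_2(\bar U)=(0,0,\bar h^2/2F^2)^T$ (the first two components vanishing precisely because $p=0$), and $R(\bar U)=(0,\bar h-\bar q^2/\bar h^2,0)^T$.

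First I would record the three interior equations \eqref{Floquet_system}(i). The first two read, schematically, $\lambda w_{1,2}+\partial_x(\cdots)_{1,2}+i\eta\,c_{1,2}(x)\,w_3=(\cdots)_{1,2}$, with $c_1\equiv 1$, $c_2=\bar q/\bar h$, all remaining coefficients involving only $w_1,w_2$; the third equation reads $\lambda w_3+\partial_x((\bar q/\bar h-s)w_3)+i\eta(\bar h/F^2)w_1=-(\bar q/\bar h^2)w_3$. Thus $\eta$ enters the first two equations only through the product $i\eta(\cdot)w_3$, while the third equation is linear and homogeneous in $w_3$ apart from the single term $i\eta(\bar h/F^2)w_1$. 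Sending $(\eta,w_3)\mapsto(-\eta,-w_3)$ therefore leaves the first two equations literally unchanged and multiplies the third by $-1$, so the whole of \eqref{Floquet_system}(i) is invariant.

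Next I would treat the jump/Floquet conditions \eqref{Floquet_system}(ii) the same way. Using the above values of $F_0,F_2,R$, the three rows are $\chi[\lambda\bar h]=\{-s w_1+w_2\}_\xi$, $\chi[\lambda\bar q-\bar h+\bar q^2/\bar h^2]=\{(\bar h/F^2-\bar q^2/\bar h^2)w_1+(2\bar q/\bar h-s)w_2\}_\xi$, and $\chi[i\eta\bar h^2/2F^2]=\{(\bar q/\bar h-s)w_3\}_\xi$. The Bloch gluing operator $\{f\}_\xi=f(X^-)-e^{i\xi X}f(0^+)$ involves neither $\eta$ nor $w_3$, so the first two rows are unaffected by the substitution, while the third is again merely multiplied by $-1$. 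Hence $(\lambda,\eta,\xi,\chi,[w_1,w_2,w_3])$ solves \eqref{Floquet_system} if and only if $(\lambda,-\eta,\xi,\chi,[w_1,w_2,-w_3])$ does, which is the assertion.

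I do not expect any real obstacle: the statement is a bookkeeping identity, strictly parallel to \lemref{transverse_sym_hydraulic}, and the only two points deserving a moment's attention are that $F_2(\bar U)$ has vanishing first two components—so $\eta$ survives in the jump conditions only in the last row—and that the Bloch factor $e^{i\xi X}$ depends on neither the flipped frequency nor the flipped unknown, so carrying an extra Floquet parameter through the argument changes nothing. (At the nonlinear level this symmetry is of course just the reflection invariance $y\mapsto -y$, $v\mapsto -v$ of (SV), which the component computation makes manifest after linearization.)
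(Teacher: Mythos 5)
Your proposal is correct and takes essentially the same approach as the paper, whose own proof is simply a one-line reference to the proof of the hydraulic-shock analogue \lemref{transverse_sym_hydraulic}: write out the three interior and three jump equations using the explicit matrices of \eqref{A1A2E} at the planar profile with $\bar p\equiv 0$ and observe that flipping $(\eta,w_3)\mapsto(-\eta,-w_3)$ leaves the first two rows literally unchanged and multiplies the third by $-1$. Your extra remark that the Bloch gluing operator $\{f\}_\xi=f(X^-)-e^{i\xi X}f(0^+)$ is independent of $\eta$ and of the sign of $w_3$ is exactly the one point that distinguishes this from the hydraulic case and is worth making explicit.
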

\begin{proof}
Similar to the proof of Lemma~\ref{transverse_sym_hydraulic}. 
\end{proof}
\begin{corollary}[Evenness in $\eta$]\label{evenineta}
The periodic Evans-Lopatinsky determinant $E(\lambda,\eta,\xi)$ \eqref{pEL} is even in $\eta$ that is there holds $E(\lambda,\eta,\xi)=E(\lambda,-\eta,\xi)$.
\end{corollary}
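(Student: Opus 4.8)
The plan is to deduce evenness directly from the transverse symmetry of \lemref{transverse_sym_roll}, realized by the constant involution $S:=\mathrm{diag}(1,1,-1)$, combined with the fact that the transverse velocity of a planar roll wave vanishes identically, $\bar v\equiv 0$ (equivalently $\bar p\equiv 0$).

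First I would record the algebraic identities forced by $\bar v\equiv 0$ and by the block structure of \eqref{A1A2E} for the splitting $\{1,2\}\oplus\{3\}$: one has $SF_0(\bar U)=F_0(\bar U)$ and $SR(\bar U)=R(\bar U)$ (the third component $-\sqrt{u^2+v^2}\,v$ of $R$ vanishes on the wave), whereas $SF_2(\bar U)=-F_2(\bar U)$ (on the wave $F_2(\bar U)$ is a multiple of $e_3$); and since $A_1,E$ are block diagonal and $A_2$ block anti-diagonal, $SA_1=A_1S$, $SE=ES$, $SA_2=-A_2S$. In particular $[\lambda F_0(\bar U)+i(-\eta)F_2(\bar U)-R(\bar U)]=S[\lambda F_0(\bar U)+i\eta F_2(\bar U)-R(\bar U)]$, which takes care of the first column of the determinant \eqref{pEL}, while $SA_1=A_1S$ and the fact that $S$ is constant (so it commutes with the jump operation $\{\,\cdot\,\}_\xi$) will take care of the other two columns once the behaviour of the regular basis under $\eta\mapsto-\eta$ is pinned down.

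The heart of the argument is to show $\mathcal{W}_1(\,\cdot\,;\lambda,-\eta)=S\mathcal{W}_1(\,\cdot\,;\lambda,\eta)$ and $\mathcal{W}_2(\,\cdot\,;\lambda,-\eta)=-S\mathcal{W}_2(\,\cdot\,;\lambda,\eta)$. By \lemref{transverse_sym_roll}, $S$ carries solutions of \eqref{Floquet_system} at $(\lambda,\eta,\xi)$ to solutions at $(\lambda,-\eta,\xi)$ and preserves regularity at the ($\eta$-independent) sonic point, so it maps the two-dimensional regular-solution space at $\eta$ onto that at $-\eta$; the point is to control the precise change of basis. For this I would pass to the $W$-coordinates $w=T_rW$, in which the symmetry becomes $\tilde S:=T_r^{-1}ST_r=\mathrm{diag}(1,-1,1)$. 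Since $T_rT_l$ is block diagonal for $\{1,2\}\oplus\{3\}$, the parity of the coefficient matrices is inherited: $L^{(i)},R^{(1,i)},R^{(2,i)}$ commute with $\tilde S$ and $R^{(3,i)}$ anticommutes with $\tilde S$ (this is read off from \eqref{LRmat1}--\eqref{LRmat3}). Hence the recurrence \eqref{reccurence_roll} is invariant under the joint substitution $W^{(n)}\mapsto\tilde SW^{(n)}$, $\eta\mapsto-\eta$. Inspecting the seed space \eqref{w0}: the third row of $T_lA_2T_r$ has its only nonzero entry in column $2$, while the corresponding rows of $T_lET_r$, $T_lA_0T_r$ and $T_lA_HT_r$ vanish there, so the first seed vector is independent of $\eta$ and the second depends on $\eta$ only through its (odd) first entry; therefore $\tilde Sv_1(\lambda,\eta)=v_1(\lambda,-\eta)$ and $\tilde Sv_2(\lambda,\eta)=-v_2(\lambda,-\eta)$. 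Feeding these through the ($\eta$-linear) recurrence, and then evolving the interior ODE away from the sonic point, yields the two displayed transformation laws for $\mathcal{W}_1,\mathcal{W}_2$.

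Combining the pieces, under $\eta\mapsto-\eta$ the first two columns of the determinant in \eqref{pEL} each acquire a factor $S$ and the third column acquires $-S$, so $E(\lambda,-\eta,\xi)=\det(S)\cdot(-1)\cdot E(\lambda,\eta,\xi)=(-1)(-1)E(\lambda,\eta,\xi)=E(\lambda,\eta,\xi)$. I expect the main obstacle to be the middle step: it is immediate from \lemref{transverse_sym_roll} that $S$ identifies the regular-solution spaces at $\pm\eta$, but one must check that it does so with change-of-basis determinant exactly $-1$ and not some $\eta$-dependent factor (which analyticity of $E$ alone would permit), and this is precisely what the parity bookkeeping for \eqref{LRmat1}--\eqref{LRmat3} and the explicit seed \eqref{w0} is needed for.
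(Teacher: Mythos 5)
Your proof is correct and follows the paper's intended route, which is simply to invoke \lemref{transverse_sym_roll}. The extra bookkeeping you carry out — checking that the regular-solution basis built from the seeds \eqref{w0} via the recurrence \eqref{reccurence_roll} transforms under the constant involution $S=\mathrm{diag}(1,1,-1)$ with change-of-basis determinant exactly $-1$ rather than some analytic $\eta$-dependent factor, so that the sign from $\det S$ cancels — is precisely the detail the paper leaves implicit in labelling this a corollary, and you are right to flag it as the one step that does not follow from the symmetry of the eigenvalue problem alone.
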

\begin{lemma}\label{lemmaeta0}
There hold for all $\xi\in \mathbb{R}\setminus\{0\}$ \be E(\lambda_r(F)+\frac{i\xi X}{\int_0^XFH(x)dx},0,\xi)=0,\quad \text{where}\quad \label{lambdarF} \lambda_r(F):=\frac{\int_{0}^X(\frac{1}{H}-1-F)(x)dx}{\int_{0}^XFH(x)dx}<0,
\ee 
and 
\be \label{xiequal0}
E(\lambda_r(F),0,0)=0\quad \text{ provided that $2\lambda_r(F)+(n+1)(F -2)\neq 0$ for all $n\geq 0$.}
\ee 
For $\lambda\in \Lambda_0$ satisfying $\Re\lambda>\lambda_r(F)$,  $E(\lambda,0,\xi)$ \eqref{pEL} is, up to multiplying a non-vanishing factor, equal to the {\normalfont 1d} periodic Evans-Lopatinsky determinant $\Delta(\lambda,\xi)$ defined in \cite[eq. (1.12)]{JNRYZ}. 
\end{lemma}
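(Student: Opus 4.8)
The plan is to run everything through the transverse symmetry of Lemma~\ref{transverse_sym_roll}: at $\eta=0$ the Floquet system \eqref{Floquet_system} decouples, the $(w_1,w_2)$-block being exactly the $1$d Floquet system of \cite[eq. (1.10)]{JNRYZ} (isentropic $\gamma=2$ gas dynamics), while $w_3$ satisfies, by the same explicit expansion as in the proof of Lemma~\ref{transverse_sym_hydraulic}, the scalar interior equation $\big((Q/H-s)w_3\big)_x=-(\lambda+Q/H^2)w_3$ together with the scalar jump condition $\{(Q/H-s)w_3\}_\xi=0$ (the $\chi$-term in the third row of \eqref{Floquet_system}(ii) carries a factor $i\eta$ and drops out). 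Under the $H_s=1$ normalization the moving-frame mass flux $q_0:=Q-sH$ is globally constant and equals $-1/F$, so $Q/H-s=-1/(FH)$ never vanishes: the $w_3$-equation has no sonic singularity and integrates on $(0,X)$ to $z(x):=(Q/H-s)w_3(x)=z(0^+)\exp\!\big(-\!\int_0^x\tfrac{\lambda+Q/H^2}{Q/H-s}(y)\,dy\big)$.

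The one genuine computation will be the identity $-\int_0^X\tfrac{\lambda+Q/H^2}{Q/H-s}(y)\,dy=(\lambda-\lambda_r(F))\,F\!\int_0^X H(y)\,dy$. I obtain it by substituting $Q/H-s=-1/(FH)$ and $Q/H^2=s/H-1/(FH^2)$, which gives $-\tfrac{\lambda+Q/H^2}{Q/H-s}=Fs-\tfrac1H+\lambda FH$, then using the sonic-point closure of the Dressler profile — smoothness at $H=H_s$ forces the numerator of the profile ODE to vanish there, which under $H_s=1$ reduces to $(s-1/F)^2=1$, i.e.\ $Fs=F+1$ — and finally the definition \eqref{lambdarF} of $\lambda_r(F)$. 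Consequently $z(X^-)=z(0^+)\,e^{(\lambda-\lambda_r(F))F\int_0^X H\,dy}$, so that for $z=(Q/H-s)\mathcal{W}_{2,3}$ one has $\{z\}_\xi=z(0^+)\big(e^{(\lambda-\lambda_r(F))F\int_0^X H\,dy}-e^{i\xi X}\big)$.

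Next I factor the determinant \eqref{pEL} at $\eta=0$. Take the basis of regular solutions at the sonic point to be the two vectors of \eqref{w0}; at $\eta=0$ the first has $w_3=0$ at leading order, and uniqueness for the nonsingular scalar $w_3$-equation forces $\mathcal{W}_{1,3}\equiv0$, while a dimension count — the $2\times2$ block admits only a one-dimensional space of regular solutions, so two independent regular solutions of the full system cannot both have $w_3\equiv0$ — gives $\mathcal{W}_{2,3}\not\equiv0$ and hence $z(0^+)\ne0$. Since $F_0(\bar U),F_2(\bar U),R(\bar U)$ have vanishing third component on the profile (where $v\equiv0$) and the third row of $A_1$ at $\eta=0$ is $(0,0,Q/H-s)$ by \eqref{A1A2E}, the third row of the matrix in \eqref{pEL} reads $\big(0,\,0,\,\{(Q/H-s)\mathcal{W}_{2,3}\}_\xi\big)$; cofactor expansion along it yields
$$E(\lambda,0,\xi)=\{(Q/H-s)\mathcal{W}_{2,3}\}_\xi\cdot\det\big([\lambda F_0(\bar U)-R(\bar U)],\,\{A_1\mathcal{W}_1\}_\xi\big),$$
where both columns on the right are restricted to their first two components. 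Because $(\mathcal{W}_{1,1},\mathcal{W}_{1,2})$ is a regular solution of the $1$d block, it is a nonzero scalar multiple $c(\lambda)$ (analytic and nonvanishing on $\Lambda_0$) of the $1$d regular solution of \cite{JNRYZ}, so this $2\times2$ determinant equals $c(\lambda)\,\Delta(\lambda,\xi)$ with $\Delta$ the $1$d periodic Evans--Lopatinsky determinant of \cite[eq. (1.12)]{JNRYZ}.

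The three assertions then read off. For \eqref{lambdarF}: at $\lambda=\lambda_r(F)+i\xi X/(F\int_0^X H)$ the exponent equals $i\xi X$, so $\{(Q/H-s)\mathcal{W}_{2,3}\}_\xi=0$ and $E=0$; since $\Im\lambda\ne0$ for $\xi\ne0$, the solvability condition \eqref{nonvanish_const} holds automatically and $\mathcal{W}_2$ is well defined. For \eqref{xiequal0} one takes $\xi=0$, $\lambda=\lambda_r(F)$, where the exponent is $0$ and again $\{z\}_0=0$; here $\lambda_r(F)$ is real and the stated proviso $2\lambda_r(F)+(n+1)(F-2)\ne0$ is precisely what \eqref{nonvanish_const} requires for $\mathcal{W}_2(\cdot;\lambda_r(F),0)$, hence $E(\lambda_r(F),0,0)$, to make sense. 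For the last claim: when $\lambda\in\Lambda_0$ with $\Re\lambda>\lambda_r(F)$ one has $|e^{(\lambda-\lambda_r(F))F\int_0^X H}|>1=|e^{i\xi X}|$, so $\{(Q/H-s)\mathcal{W}_{2,3}\}_\xi\ne0$, and together with $c(\lambda)\ne0$ this exhibits $E(\lambda,0,\xi)$ as a nonvanishing multiple of $\Delta(\lambda,\xi)$. I expect the main obstacle to be not these short scalar computations but the bookkeeping at the sonic point — verifying cleanly that the regular solutions can be arranged with $\mathcal{W}_{1,3}\equiv0$ and $\mathcal{W}_{2,3}$ nowhere vanishing, and that the surviving $2\times2$ minor is literally a nonzero multiple of the $1$d determinant $\Delta$ of \cite{JNRYZ}.
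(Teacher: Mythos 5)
Your proof is correct and follows essentially the same strategy as the paper's: decouple \eqref{Floquet_system} at $\eta=0$, observe that the $w_3$-equation is nonsingular, integrate it across one period, and read off the values of $\lambda$ for which the transported jump $\{(Q/H-s)w_3\}_\xi$ vanishes. In the paper's notation this is done via $\omega := w_3/(FH)$, with $\omega_x=(\lambda FH+1+F-1/H)\omega$ and jump $\{\omega\}_\xi=0$, which is exactly what your $z=(Q/H-s)w_3$ computation reproduces once you invoke $Q-sH\equiv -1/F$ and $Fs=F+1$ under the $H_s=1$ normalization. Your cofactor-expansion factorization of $E(\lambda,0,\xi)$ (choosing a basis with $\mathcal W_{1,3}\equiv 0$, $\mathcal W_{2,3}\not\equiv 0$) is a cleaner and more explicit presentation than the paper's brief ``taking the first two entries of $w$ to be zeros'' argument, and is a worthwhile tightening; both yield the same formula, so this is a presentational rather than a structural difference. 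Your handling of the well-definedness proviso---that $\Im\lambda\neq 0$ for $\xi\neq 0$ makes \eqref{nonvanish_const} automatic, while at $\xi=0$ the real value $\lambda_r(F)$ requires the stated hypothesis $2\lambda_r(F)+(n+1)(F-2)\neq 0$---matches the paper.

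One piece of the lemma is not addressed in your write-up: the strict inequality $\lambda_r(F)<0$, which is asserted as part of \eqref{lambdarF}. The paper proves it by noting that on the profile $H\geq H_->H_{hom}$, so
$\frac{1}{H}-1-F<\frac{1}{H_{hom}}-1-F=-\frac{1+\sqrt{4F+1}}{2}<0$,
hence the numerator of $\lambda_r(F)$ is strictly negative and the denominator $\int_0^X FH\,dx$ is positive. This is short, but you should include it: without it, the final claim---that $\Re\lambda>\lambda_r(F)$ covers all of $\{\Re\lambda\geq 0\}$, so that $E(\lambda,0,\xi)$ reduces to the $1$d determinant on the closed right half-plane---does not come for free.
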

\begin{proof}
For $\eta=0$, the first two equations of \eqref{Floquet_system}(i) decouples from the third and make, together with the first two equations of \eqref{Floquet_system}(ii), the Floquet system of the 1d case. Taking the first two entries of $w$ in \eqref{Floquet_system} to be zeros, we investigate eigenmodes corresponding to the third equation of \eqref{Floquet_system}(i) together with the third equation of \eqref{Floquet_system}(ii).  The third equation of \eqref{Floquet_system}(ii) reads $\{\frac{w_3}{FH}\}_{\xi}=0$ and the third equation of \eqref{Floquet_system}(i) reads $\lambda w_3-\left(\frac{w_3}{FH}\right)_x=-\frac{H + FH - 1}{FH^2}w_3$.
Denoting $\omega=\frac{w_3}{FH}$, these two equations amount to $\{\omega\}_{\xi}=0$ and
$
\omega_x=(\lambda FH+1+F-\frac{1}{H})\omega .
$
Integrating from $0$ to $X$ yields 
$\omega(X)=\omega(0)e^{\int_0^X (\lambda FH+1+F-\frac{1}{H})(x)dx}=\omega(0)e^{i\xi X}$. Hence, for any $\xi\in \mathbb{R}$, $\lambda(\xi)$ with
$$
\Re\lambda(\xi) = \lambda_r(F)\quad\text{and}\quad
\Im \lambda(\xi)=\frac{\xi X}{\int_0^XFH(x)dx}$$ 
is an essential spectrum for which the periodic Evans-Lopatinsky determinant \eqref{pEL} vanishes, provided the function is defined at $(\lambda(\xi),0,\xi)$, which holds given \eqref{nonvanish_const} is met for $n=0,1,2,3\ldots$, proving \eqref{lambdarF} and \eqref{xiequal0}.
We further infer from $H_{hom}<H_-\leq  H$ that
$$
\frac{1}{H}-1-F<\frac{1}{H_{hom}}-1-F=- \frac{1+\sqrt{4F + 1}}{2}<0,
$$
whence $\lambda_r(F)<0$.  For $\lambda\in \Lambda_0$ satisfying $\Re \lambda>\lambda_r(F)$, the third equations of \eqref{Floquet_system}(i)(ii) cannot make an eigenmode, reducing the Floquet eigenvalue system to the 1d case.
\end{proof}

\subsection{Low-frequency analysis}\label{s:rollLF}
We now carry out a low-frequency stability analysis, generalizing that of \cite{JNRYZ} in the 1d case,
by Taylor series expansion of the periodic Evans-Lopatinsky determinant near the origin.

By Theorem~\ref{analyticity}, in the vicinity of the origin $(\lambda,\eta,\xi)=(0,0,0)$, $E(\lambda,\eta,\xi)$ expands as 
\be\label{pEL_expansion}
E(\lambda,\eta,\xi)=\sum_{i+j+k\geq  0}a_{(i,j,k)}\lambda^i\eta^j\xi^k,\quad \text{where $a_{(i,j,k)}\in (\sqrt{-1})^{j+k}\mathbb{R}$.}
\ee
When $\eta=0$ and $|\xi|\ll 1$, we infer from Lemma~\ref{lemmaeta0} that $E(\cdot,0,\xi)$ admits the same zeros including their multiplicity as $\Delta(\cdot,\xi)$. We thus collect from \cite[\S 5]{JNRYZ} that $\Delta(0,0),\partial_\lambda \Delta(0,0),\Delta(0,\xi)=0$, which imply that
\be\label{afrom1d} a_{(1,0,0)}=0\quad\text{and}\quad a_{(0,0,k)}=0\quad \text{for all $k\geq 0$.}
\ee
Furthermore, by Corollary~\ref{evenineta}, the powers of $\eta$ must be even numbers, i.e., in addition to \eqref{afrom1d} there hold \be\label{etaevenpower}
a_{(i,2j+1,k)}=0,\quad \text{for all $i,j,k\geq 0$.}
\ee
In particular, $a_{(0,1,0)},\; a_{(1,1,0)}, \; a_{(0,1,1)},\; a_{(0,3,0)},\; a_{(0,1,2)},\; a_{(1,1,1)},\; a_{(2,1,0)}=0$. Therefore, to the lowest and second lowest order, \eqref{pEL_expansion} becomes 
\ba  \label{expandE}
E(\lambda,\eta,\xi)=&a_{(2,0,0)}\lambda^2+a_{(0,2,0)}\eta^2+a_{(1,0,1)}\lambda\xi+a_{(3,0,0)}\lambda^3+a_{(1,2,0)}\lambda\eta^2\\&+a_{(1,0,2)}\lambda\xi^2+a_{(0,2,1)}\eta^2\xi+a_{(2,0,1)}\lambda^2\xi+\mathcal{O}(|\lambda|+|\eta|+|\xi|)^4,
\ea  
where we check numerically $a_{(2,0,0)}\neq 0$. Here, we note $a_{(2,0,0)}\neq 0$ iff $\partial_\lambda\hat{\Delta}(0,0)\neq 0$. The latter is defined and numerically checked in \cite[\S 5.2]{JNRYZ}. Assuming $a_{(2,0,0)}\neq 0$, the Weierstrass preparation theorem asserts that \eqref{expandE} can be factorized as $E(\lambda,\eta,\xi)=W(\lambda;\eta,\xi)h(\lambda,\eta,\xi)$
for $|\lambda|, |\eta|, |\xi|\ll 1$, where $W(\lambda;\eta,\xi)$ is a Weierstrass polynomial and
\ba
\label{weierstrassE}
W(\lambda;\eta,\xi)=\lambda^2+b(\eta,\xi)\lambda+c(\eta,\xi)
\ea
with $b(\eta,\xi)$ and $c(\eta,\xi)$ analytic at $(\eta,\xi)=(0,0)$ and $h(\lambda,\eta,\xi)$ analytic at $(0,0,0)$ and $h(0,0,0)=a_{(2,0,0)}\neq 0$. Therefore, in the vicinity of $(0,0,0)$, $E(\lambda,\eta,\xi)=0$ if and only if $W(\lambda;\eta,\xi)=0$. 
\begin{lemma}\label{bcexpansion:lemma}Assume $a_{(2,0,0)}\neq 0$ so that \eqref{weierstrassE} holds {\normalfont (numerically checked)}. Local to $(\eta,\xi)=(0,0)$, the powers of $\eta$ in the Taylor series expansions of $b(\eta,\xi)$ and $c(\eta,\xi)$ must be even and $\eta^2|c(\eta,\xi)$. Furthermore, direct computations reveal
\ba\label{bcexpansion}
b(\eta,\xi)=\,&\frac{a_{(1,0,1)}}{a_{(2,0,0)}}\xi+\Big(\frac{a_{(3,0,0)}a_{(1,0,1)}^2}{a_{(2,0,0)}^3}- \frac{a_{(2,0,1)}a_{(1,0,1)}}{a_{(2,0,0)}^2} + \frac{a_{(1,0,2)}}{a_{(2,0,0)}}\Big)\xi^2\\
&+\Big(-\frac{a_{(0,2,0)}a_{(3,0,0)}}{a_{(2,0,0)}^2}+\frac{a_{(1,2,0)}}{a_{(2,0,0)}}\Big)\eta^2+\mathcal{O}(|\eta|+|\xi|)^3\\
=:&b_{01}\xi+b_{02}\xi^2+b_{20}\eta^2+\mathcal{O}(|\eta|+|\xi|)^3,\\
c(\eta,\xi)=\,&\eta^2\Big(\frac{a_{(0,2,0)}}{a_{(2,0,0)}}+\Big(\frac{ a_{(0,2,0)}a_{(1,0,1)}a_{(3,0,0)}}{a_{(2,0,0)}^3} -\frac{ a_{(0,2,0)}a_{(2,0,1)}}{a_{(2,0,0)}^2} +\frac{a_{(0,2,1)}}{a_{(2,0,0)}}\Big)\xi+\mathcal{O}(|\eta|+|\xi|)^2\Big)\\
=:&\eta^2(c_{20}+c_{21}\xi+\mathcal{O}(|\eta|+|\xi|)^2),
\ea 
where, by \eqref{pEL_expansion}, 
\be \label{bcrealimag}
b_{02},b_{20},c_{20}\in \mathbb{R}\quad \text{and} \quad b_{01},c_{21}\in i\mathbb{R}.
\ee 
\end{lemma}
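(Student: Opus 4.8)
The plan is to read off $b$ and $c$ from the Weierstrass factorization \eqref{weierstrassE} using three ingredients: the uniqueness part of the Weierstrass preparation theorem together with the $\eta$-parity of $E$ (Corollary~\ref{evenineta}); the reduction to the $1$d problem at $\eta=0$ provided by Lemma~\ref{lemmaeta0}; and a finite coefficient comparison in the identity $E=(\lambda^2+b\lambda+c)h$. The reality assertions \eqref{bcrealimag} will then be a bookkeeping consequence of \eqref{pEL_expansion}.

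I would first dispose of the parity claims. Since $b(0,0)=c(0,0)=0$ (the defining property of a degree-$2$ Weierstrass polynomial), substituting $\eta\mapsto-\eta$ in the identity $E(\lambda,\eta,\xi)=\big(\lambda^2+b(\eta,\xi)\lambda+c(\eta,\xi)\big)h(\lambda,\eta,\xi)$ exhibits $E(\lambda,-\eta,\xi)$ as a product of a degree-$2$ Weierstrass polynomial in $\lambda$ and a unit (with value $a_{(2,0,0)}\neq0$ at the origin). By Corollary~\ref{evenineta} this product also equals $E(\lambda,\eta,\xi)$, so the uniqueness clause of Weierstrass preparation forces $b(-\eta,\xi)=b(\eta,\xi)$ and $c(-\eta,\xi)=c(\eta,\xi)$; hence only even powers of $\eta$ occur. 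Next, by Lemma~\ref{lemmaeta0}, for $\lambda$ near $0$ (legitimate since $\lambda_r(F)<0$ and $F>2$) one has $E(\lambda,0,\xi)=(\text{nonvanishing})\cdot\Delta(\lambda,\xi)$, and $\Delta(0,\xi)\equiv0$ was recalled above from \cite{JNRYZ}; thus $E(0,0,\xi)\equiv0$. Evaluating $E=(\lambda^2+b\lambda+c)h$ at $\lambda=0$ then gives $c(0,\xi)\,h(0,0,\xi)\equiv0$ with $h(0,0,\cdot)$ nonvanishing, so $c(0,\xi)\equiv0$; combined with evenness in $\eta$ this is exactly $\eta^2\mid c(\eta,\xi)$.

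For the explicit expansions \eqref{bcexpansion} I would run the Weierstrass division directly. Writing $h=\sum_{i,j,k}h_{(i,j,k)}\lambda^i\eta^j\xi^k$ with $h_{(0,0,0)}=a_{(2,0,0)}$, expanding $E=\lambda^2h+b\lambda h+ch$, and using that by \eqref{afrom1d}, \eqref{etaevenpower} and the previous paragraph only the monomials displayed in \eqref{expandE} contribute at low order (while $b$ has no constant term and $c$ no $\eta$-free part), one matches coefficients in a short cascade: the $\lambda^2$ and $\lambda^3$ monomials give $h_{(0,0,0)}=a_{(2,0,0)}$ and $h_{(1,0,0)}=a_{(3,0,0)}$; the $\lambda\xi$ and $\eta^2$ monomials give $b_{01}=a_{(1,0,1)}/a_{(2,0,0)}$ and $c_{20}=a_{(0,2,0)}/a_{(2,0,0)}$; the $\lambda^2\xi$ monomial then gives $h_{(0,0,1)}=a_{(2,0,1)}-b_{01}h_{(1,0,0)}$; and finally the $\lambda\xi^2$, $\lambda\eta^2$, and $\eta^2\xi$ monomials give $b_{02}$, $b_{20}$, and $c_{21}$ in terms of the already-known quantities. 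Back-substitution yields the stated closed forms. The reality claims \eqref{bcrealimag} follow because $a_{(i,j,k)}\in(\sqrt{-1})^{j+k}\mathbb{R}$ by \eqref{pEL_expansion}: every monomial in the expressions for $b_{02},b_{20},c_{20}$ carries an even total power of $\sqrt{-1}$, and every monomial in those for $b_{01},c_{21}$ carries an odd one.

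The computational content here is routine; the only point needing care is organizational, namely checking that the cascade closes — that each of $b_{01},b_{02},b_{20},c_{20},c_{21}$ is pinned down using only monomials of total degree $\le3$ (precisely those listed in \eqref{expandE}) together with the auxiliary coefficients $h_{(0,0,0)},h_{(1,0,0)},h_{(0,0,1)}$, with no leakage of higher-order data into the orders claimed. I expect this to be the main, and quite mild, obstacle.
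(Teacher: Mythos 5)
Your proposal is correct, and the substantive steps agree with the paper's; the one place where you take a genuinely different route is the parity of $b$ and $c$ in $\eta$. You deduce it by substituting $\eta\mapsto-\eta$ in $E=(\lambda^2+b\lambda+c)h$, observing that both sides of the resulting identity are Weierstrass factorizations of the same germ (using $E(\lambda,-\eta,\xi)=E(\lambda,\eta,\xi)$ from Corollary~\ref{evenineta} and that the reflected factors remain a Weierstrass polynomial and a unit), and then invoking \emph{uniqueness} of the preparation. The paper instead exploits evenness of $E$ up front to change variables $\tilde\eta=\eta^2$, notes $\tilde E(\lambda,\tilde\eta,\xi):=E(\lambda,\sqrt{\tilde\eta},\xi)$ is analytic in $(\lambda,\tilde\eta,\xi)$, applies Weierstrass preparation in the new variables, and reads off $b=\tilde b(\eta^2,\xi)$, $c=\tilde c(\eta^2,\xi)$ directly. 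Both arguments are short and standard; the uniqueness route you use is perhaps slightly more elementary (no claim about analyticity in a square-root variable needed), while the substitution route makes the evenness structurally visible. The remaining steps — using $E(0,0,\xi)\equiv0$ to get $c(0,\xi)\equiv0$, hence $\eta^2\mid c$; the finite coefficient cascade $h_{(0,0,0)},h_{(1,0,0)},b_{01},c_{20},h_{(0,0,1)},b_{02},b_{20},c_{21}$; and the $(\sqrt{-1})^{j+k}$ parity bookkeeping for \eqref{bcrealimag} — all match the paper's ``matching terms in the Weierstrass factorization,'' just spelled out in more detail, and the cascade does close at total degree $\le 3$ as you anticipate.
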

\begin{proof}
Again, by Corollary~\ref{evenineta}, the powers of $\eta$ in \eqref{pEL_expansion} must be even. Let $\eta^2=\tilde{\eta}$ and $\tilde{E}(\lambda,\tilde{\eta},\xi):=E(\lambda,\sqrt{\tilde{\eta}},\xi)=E(\lambda,\eta,\xi)$. $\tilde{E}$ is analytic in $\lambda$, $\tilde{\eta}$, and $\xi$ and, by Weierstrass preparation theorem, can be factorized as $\tilde{E}(\lambda,\tilde{\eta},\xi)=\tilde{W}(\lambda;\tilde{\eta},\xi)\tilde{h}(\lambda;\tilde{\eta},\xi)$ where
$
\tilde{W}(\lambda;\tilde{\eta},\xi)=\lambda^2+\tilde{b}(\tilde{\eta},\xi)\lambda+\tilde{c}(\tilde{\eta},\xi)
$ with $
\tilde{b}(\tilde{\eta},\xi)=b(\eta,\xi)$ and $\tilde{c}(\tilde{\eta},\xi)=c(\eta,\xi)$. Hence, powers of $\eta$ in the expansions of $b(\eta,\xi)$ and $c(\eta,\xi)$ must be even.

Recall \cite[\S 5]{JNRYZ}, $E(0,0,\xi)\equiv 0$, whence $W(0,0,\xi)\equiv 0$ and $c(0,\xi)\equiv 0$, yielding $\eta|c(\eta,\xi)$. Since the powers of $\eta$ in the Taylor series of $c(\eta,\xi)$ are all even, $\eta|c(\eta,\xi)$ implies $\eta^2|c(\eta,\xi)$. The Taylor expansions for $b(\eta,\xi)$ and $c(\eta,\xi)$ \eqref{bcexpansion} are obtained by matching terms in the Weierstrass factorization $E(\lambda,\eta,\xi)=W(\lambda;\eta,\xi)h(\lambda,\eta,\xi)$.
\end{proof}

For $\eta=0$, let us denote, as in \cite{JNRYZ}, $\lambda_1$ to be the spectrum that bifurcates off the origin as $\xi$ is varied and $\lambda_2$ to be the spectrum that stays at the origin for all $\xi$. With Lemma~\ref{bcexpansion:lemma} in hand, we make use of \eqref{weierstrassE} and \eqref{bcexpansion} to recover the expansion of $\lambda_1(0,\xi)$ obtained in \cite{JNRYZ} and, more importantly, to obtain a new expansion of $\lambda_2(\eta,0)$ helpful in examining the stability of $\lambda_2$ as $\eta$ is varied. 
\begin{lemma}[Expansions of $\lambda_{1}(0,\xi)$ and $\lambda_2(\eta,0)$]\label{lambda12} Assuming $a_{(2,0,0)}\neq 0$, $\lambda_1(0,\xi)$ is analytic in $\xi$ and expands in the vicinity of $\xi=0$ as
\be 
\label{lambda1expansions}
\lambda_1(0,\xi)=-b(0,\xi)=-b_{01}\xi-b_{02}\xi^2+\mathcal{O}(|\xi|)^3
\ee
and, with the additional assumption $a_{(0,2,0)}\neq 0$ {\normalfont (numerically checked)},  $\lambda_2(\eta,0)$ is analytic in $\eta$ and expands in the vicinity of $\eta=0$ as
\be 
\label{lambda2expansions}
\lambda_2(\eta,0)=\sqrt{-c_{20}}\,\eta-\frac{1}{2}b_{20}\eta^2+\mathcal{O}(\eta^3).
\ee
For the stability of \eqref{lambda1expansions}, recalling \eqref{bcrealimag}, we recover results, as in \cite[\S 5]{JNRYZ}, that $-b_{01}$, the coefficient of $\xi$, is purely imaginary and $-b_{02}$, the coefficient of $\xi^2$, is real, so that the stability of $\lambda_1(0,\xi)$ is ruled by the diffusive term. As for the stability of \eqref{lambda2expansions}, recalling \eqref{bcrealimag}, $c_{20}=\frac{a_{(0,2,0)}}{a_{(2,0,0)}}\in \mathbb{R}$ and, under the assumptions $a_{(2,0,0)},a_{(0,2,0)}\neq 0$, it must be either positive or negative. We numerically check $c_{20}>0$, yielding $\sqrt{-c_{20}}$, the coefficient of $\eta$, is purely imaginary. By \eqref{bcrealimag}, $-\frac{1}{2}b_{20}$, the coefficient of $\eta^2$ is real, so that the stability of $\lambda_2(\eta,0)$ is also ruled by the diffusive term.
\end{lemma}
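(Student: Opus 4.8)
The plan is to extract both expansions from the Weierstrass factorization already in hand. By Lemma~\ref{bcexpansion:lemma} we have $E(\lambda,\eta,\xi)=W(\lambda;\eta,\xi)\,h(\lambda,\eta,\xi)$ with $h(0,0,0)=a_{(2,0,0)}\neq0$, so near the origin the zeros of $E$ are exactly the roots of the monic quadratic $W(\lambda;\eta,\xi)=\lambda^2+b(\eta,\xi)\lambda+c(\eta,\xi)$, i.e.\ $\tfrac{1}{2}\big(-b\pm\sqrt{b^2-4c}\big)$, with $\lambda_1+\lambda_2=-b$ and $\lambda_1\lambda_2=c$. The whole statement is then obtained by restricting $b$ and $c$ — with expansions \eqref{bcexpansion} and reality properties \eqref{bcrealimag} — to the slices $\eta=0$ and $\xi=0$ and reading off leading orders; no new analytic input beyond Theorem~\ref{analyticity} and Lemma~\ref{W_vector_analytic} is needed.

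On the slice $\eta=0$: since $\eta^2\mid c$ by Lemma~\ref{bcexpansion:lemma}, one has $c(0,\xi)\equiv0$, hence $W(\lambda;0,\xi)=\lambda\big(\lambda+b(0,\xi)\big)$, whose roots are the identically vanishing spectrum $\lambda_2(0,\xi)\equiv0$ and $\lambda_1(0,\xi)=-b(0,\xi)$. Analyticity of $\lambda_1(0,\xi)$ in $\xi$ is immediate from that of $b$, and \eqref{bcexpansion} gives $\lambda_1(0,\xi)=-b_{01}\xi-b_{02}\xi^2+\mathcal{O}(\xi^3)$, recovering \cite[\S 5]{JNRYZ}. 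The reality facts $b_{01}\in i\mathbb{R}$, $b_{02}\in\mathbb{R}$ of \eqref{bcrealimag} then show the $\mathcal{O}(\xi)$ part of $\Re\lambda_1$ vanishes, so $\Re\lambda_1(0,\xi)=-b_{02}\xi^2+\mathcal{O}(\xi^3)$ and the sign of the diffusive coefficient $b_{02}$ alone decides stability.

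On the slice $\xi=0$: the even-$\eta$-power property of $b,c$ together with \eqref{bcexpansion} gives $b(\eta,0)=b_{20}\eta^2+\mathcal{O}(\eta^4)$ and $c(\eta,0)=c_{20}\eta^2+\mathcal{O}(\eta^4)$, so the discriminant is $b(\eta,0)^2-4c(\eta,0)=\eta^2\big(-4c_{20}+\mathcal{O}(\eta^2)\big)$. Here the numerically verified nonvanishing $c_{20}\neq0$ is what makes the bracket analytic and nonzero near $\eta=0$, hence the carrier of an analytic (in fact even) square root $\phi(\eta)$ with $\phi(0)=2\sqrt{-c_{20}}$; thus $\sqrt{b^2-4c}\,\big|_{\xi=0}=\eta\,\phi(\eta)$ is analytic, and the two roots $\tfrac{1}{2}\big(-b(\eta,0)\pm\eta\,\phi(\eta)\big)$ are analytic in $\eta$ with distinct leading terms $\pm\sqrt{-c_{20}}\,\eta$. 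Taking $\lambda_2(\eta,0)$ to be the $+$ branch — the other is its image under $\eta\mapsto-\eta$, which preserves the class of real frequencies, so the labelling is immaterial for what follows — and expanding yields $\lambda_2(\eta,0)=\sqrt{-c_{20}}\,\eta-\tfrac{1}{2}b_{20}\eta^2+\mathcal{O}(\eta^3)$; consistency with Vieta ($\lambda_1\lambda_2=c_{20}\eta^2+\cdots$, $\lambda_1+\lambda_2=-b_{20}\eta^2+\cdots$) is a convenient check. Since $c_{20}>0$ (numerically checked) and $c_{20},b_{20}\in\mathbb{R}$ by \eqref{bcrealimag}, the $\mathcal{O}(\eta)$ term is purely imaginary and $\Re\lambda_2(\eta,0)=-\tfrac{1}{2}b_{20}\eta^2+\mathcal{O}(\eta^3)$, so again stability is governed by the diffusive term.

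The only genuinely delicate points — the ``hard part'', such as it is — are bookkeeping ones: knowing precisely which Taylor coefficients of $b$ and $c$ vanish (that $c(0,\xi)\equiv0$ and that only even powers of $\eta$ occur, both inherited from $E(0,0,\xi)\equiv0$ and Corollary~\ref{evenineta} via Lemma~\ref{bcexpansion:lemma}), and carrying enough terms in \eqref{bcexpansion}, namely $b_{01},b_{02},b_{20}$ and $c_{20}$, so that both the oscillatory term and the first nontrivial real term appear on each slice; plus the minor point of fixing the branch of $\lambda_2$ at the node $(\eta,\xi)=(0,0)$ where the two spectra coincide. Everything else is the implicit function theorem and the elementary fact that a nonvanishing analytic function admits an analytic square root.
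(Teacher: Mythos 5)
Your argument is correct and follows essentially the same route as the paper: factor $E$ via the Weierstrass preparation of Lemma~\ref{bcexpansion:lemma}, so zeros of $E$ near the origin are roots of the monic quadratic $W$, then expand those roots on the two slices $\eta=0$ and $\xi=0$ using the structural facts $c(0,\xi)\equiv 0$, evenness in $\eta$, and $c_{20}\neq 0$. The only cosmetic differences are that you factor $W(\lambda;0,\xi)=\lambda(\lambda+b(0,\xi))$ directly rather than invoking the quadratic formula with the paper's branch convention $\sqrt{b^2(0,\xi)}=b(0,\xi)$, and you spell out more explicitly why the discriminant at $\xi=0$ admits an analytic square root of the form $\eta\,\phi(\eta)$ with $\phi$ even and $\phi(0)\neq 0$ — a small but genuine improvement in transparency about the branch point at the origin, which the paper treats more tersely.
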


\begin{proof}
By the quadratic formula,
$$
\lambda_{1}(\eta,\xi),\lambda_{2}(\eta,\xi)=\frac{-b(\eta,\xi)}{2}\pm\frac{\sqrt{b^2(\eta,\xi)-4c(\eta,\xi)}}{2}.
$$
Recall from the proof of Lemma~\ref{bcexpansion:lemma} that $c(0,\xi)=0$. For the multi-valued square root function, let us take $\sqrt{b^2(0,\xi)}=b(0,\xi)$ and, by our foregoing convention, 
\be
\lambda_{1}(\eta,\xi)=\frac{-b(\eta,\xi)}{2}-\frac{\sqrt{b^2(\eta,\xi)-4c(\eta,\xi)}}{2}\;\; \text{and} \;\; \lambda_{2}(\eta,\xi)=\frac{-b(\eta,\xi)}{2}+\frac{\sqrt{b^2(\eta,\xi)-4c(\eta,\xi)}}{2}.
\ee
Setting $\eta=0$ in $\lambda_1$ yields \eqref{lambda1expansions}. Setting $\xi=0$ in $\lambda_2$, we find that the lowest order term of $b^2(\eta,0)-4c(\eta,0)$ is $-4c_{20}\eta^2$. Given $a_{(0,2,0)}\neq 0$, we then obtain analytic dependence of $\lambda_2(\eta,0)$ on $\eta$ and the expansion \eqref{lambda2expansions}.
\end{proof}

For full low-frequency stability, it amounts to $\Re\lambda_{1}(\eta,\xi),\Re\lambda_{2}(\eta,\xi)< 0$ for all $|\eta|,|\xi|$ sufficiently small and $(\eta,\xi)\neq (0,0)$. Assuming the $\Re\lambda_{1}(0,\xi)<0$ for $0<|\xi|\ll 1$ and  $\Re\lambda_{2}(\eta,0)<0$ for $0<|\eta|\ll 1$, our next theorem states extra conditions needed for full low-frequency stability.

\begin{theorem}[Low-frequency stability criteria]\label{lowfrequencystability}
Under the assumptions made in Lemma~\ref{lambda12}, we further assume $c_{20}>0$ {\normalfont (numerically checked)} so that the stability of $\lambda_2(\eta,0)$ is also ruled by the diffusive term. Provided that
\be\label{ind12}
{\rm ind}_1:=b_{02}>0\quad \text{{\normalfont and}} \quad 
{\rm ind}_2:=b_{20}>0
\ee 
so that $\Re\lambda_{1}(0,\xi)<0$ for $0<|\xi|\ll 1$ {\normalfont and} $\Re\lambda_{2}(\eta,0)<0$ for $0<|\eta|\ll 1$ and that either
\be \label{cond1}
{\rm ind}_3:=c_{21}^2(b_{01}^2b_{20}^2 - 2b_{01}b_{20}c_{21} + 4b_{02}b_{20}c_{20} + c_{21}^2)<0
\ee 
or
\be \label{cond2}
\left\{\begin{aligned}&{\rm ind}_3>0\\
&{\rm ind}_4:= - b_{01}b_{20}c_{21} + 2b_{02}b_{20}c_{20}+c_{21}^2>0\\
&{\rm ind}_5:=-b_{01}c_{21}+b_{02}c_{20} >0,
\end{aligned}\right.
\ee 
there is no spectrum on the imaginary axis in the vicinity of the origin of the $\lambda$ plane for $0<|\eta|+|\xi|\ll 1$. Consequently, $\lambda_{1}(\eta,\xi)$ and $\lambda_{2}(\eta,\xi)$ stay on the left half plane for $0<|\eta|+|\xi|\ll 1$, giving low-frequency stability.
\end{theorem}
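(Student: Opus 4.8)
The plan is to reduce the assertion to a root--location problem for the Weierstrass quadratic $W(\lambda;\eta,\xi)=\lambda^{2}+b(\eta,\xi)\lambda+c(\eta,\xi)$ of \eqref{weierstrassE}, which near the origin has precisely the same zeros as $E$. For a monic quadratic $\lambda^{2}+b\lambda+c$ with $b,c\in\mathbb{C}$ one checks directly that both roots lie in the open left half plane if and only if $\Re b>0$ and $\Re\sqrt{b^{2}-4c}<\Re b$, and that, given $\Re b>0$, the latter inequality is equivalent to the single real condition
\[
N(\eta,\xi):=(\Re b)^{2}\,\Re c+(\Re b)(\Im b)(\Im c)-(\Im c)^{2}>0 .
\]
By Lemma~\ref{bcexpansion:lemma}, $\Re b=b_{02}\xi^{2}+b_{20}\eta^{2}+\mathcal{O}((|\eta|+|\xi|)^{3})$, so under ${\rm ind}_{1}=b_{02}>0$ and ${\rm ind}_{2}=b_{20}>0$ the leading quadratic form is positive definite and $\Re b>0$ for $0<|\eta|+|\xi|\ll1$; the whole question therefore collapses to controlling the sign of $N$.

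First I would substitute the expansions \eqref{bcexpansion} into $N$, using $b_{02},b_{20},c_{20}\in\mathbb{R}$ and $b_{01},c_{21}\in i\mathbb{R}$ (write $b_{01}=i\beta$, $c_{21}=i\gamma$, $\beta,\gamma\in\mathbb{R}$). Since $\eta^{2}\mid c$, one has $\eta^{2}\mid\Re c$ and $\eta^{2}\mid\Im c$, hence $\eta^{2}\mid N$, and the analytic quotient $N/\eta^{2}$ has lowest--order part homogeneous of degree four; a direct computation gives
\[
N/\eta^{2}=b_{02}\,{\rm ind}_{5}\,\xi^{4}+{\rm ind}_{4}\,\xi^{2}\eta^{2}+c_{20}b_{20}^{2}\,\eta^{4}+\mathcal{O}((|\eta|+|\xi|)^{5}),
\]
and one checks, as a finite symbolic identity, that the discriminant of the quadratic $Q(t):=b_{02}\,{\rm ind}_{5}\,t^{2}+{\rm ind}_{4}\,t+c_{20}b_{20}^{2}$ equals ${\rm ind}_{3}$, its leading coefficient is $b_{02}\,{\rm ind}_{5}$ with $b_{02}={\rm ind}_{1}>0$, and its constant term $c_{20}b_{20}^{2}$ is positive by hypothesis. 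Passing to polar coordinates $(\xi,\eta)=\rho(\cos\phi,\sin\phi)$, the quartic part of $N/\eta^{2}$ is $\rho^{4}\widetilde Q(\phi)$, where
\[
\widetilde Q(\phi)=b_{02}\,{\rm ind}_{5}\cos^{4}\phi+{\rm ind}_{4}\cos^{2}\phi\sin^{2}\phi+c_{20}b_{20}^{2}\sin^{4}\phi ,
\]
and $\widetilde Q(\phi)=\sin^{4}\phi\,Q(\cot^{2}\phi)$ whenever $\sin\phi\neq0$.

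Next comes the sign analysis. Since $c_{20}>0$ and $b_{20}>0$ we have $Q(0)=c_{20}b_{20}^{2}>0$. In case \eqref{cond1}, ${\rm ind}_{3}<0$ forces $Q$ to have no real root, hence $Q>0$ on all of $\mathbb{R}$ (this also forces ${\rm ind}_{5}>0$, consistent with $Q(0)>0$). In case \eqref{cond2}, ${\rm ind}_{5}>0$ makes $Q$ open upward, ${\rm ind}_{3}>0$ gives two distinct real roots, and ${\rm ind}_{4}>0$ makes their sum $-{\rm ind}_{4}/(b_{02}{\rm ind}_{5})$ negative, so both roots are negative and $Q>0$ on $[0,\infty)$. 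In either case $\widetilde Q(\phi)\ge c_{0}>0$ uniformly in $\phi$ --- including at $\sin\phi=0$, where $\widetilde Q=b_{02}{\rm ind}_{5}>0$, and at $\cos\phi=0$, where $\widetilde Q=c_{20}b_{20}^{2}>0$. Writing $N/\eta^{2}=\rho^{4}\widetilde Q(\phi)+\rho^{5}R(\rho,\phi)$ with $R$ bounded uniformly in $\phi$, we obtain $N/\eta^{2}>0$, hence $N>0$, for $0<\rho\ll1$; thus for $0<|\eta|+|\xi|\ll1$ with $\eta\neq0$ both zeros of $W$ lie strictly in the open left half plane. On the slice $\eta=0$ one has $c\equiv0$, so $W=\lambda(\lambda+b(0,\xi))$, whose only zero on the imaginary axis is the persistent neutral (translational) mode $\lambda=0$, the other zero being $-b(0,\xi)$ with $\Re(-b(0,\xi))=-b_{02}\xi^{2}+\mathcal{O}(\xi^{3})<0$. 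Hence, apart from that neutral zero on $\{\eta=0\}$, $E$ has no zero on the imaginary axis in a punctured neighborhood of the origin, and both small spectral branches $\lambda_{1}(\eta,\xi)$, $\lambda_{2}(\eta,\xi)$ stay in the open left half plane for $0<|\eta|+|\xi|\ll1$, which is the assertion.

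The step I expect to be most delicate is the uniform control near the degenerate directions $\eta\to0$ (and, by symmetry, $\xi\to0$): one must be certain that dividing $N$ by $\eta^{2}$ really leaves an analytic function whose quartic leading part $\widetilde Q(\phi)$ is bounded below on the entire circle $\phi\in[0,\pi]$, so that the higher--order remainder cannot create a spurious imaginary--axis crossing as the transverse wave number or the Floquet exponent is switched off. The divisibility $\eta^{2}\mid N$ (coming from $\eta^{2}\mid c$) together with compactness of $\phi$ is exactly what makes this rigorous. The only other nontrivial point is the purely algebraic identification of the discriminant of $Q$ and of the coefficients of $Q$ with ${\rm ind}_{3},{\rm ind}_{4},{\rm ind}_{5}$, a finite computation to be carried out and double-checked symbolically once the expansions \eqref{bcexpansion} are in hand.
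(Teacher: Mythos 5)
Your proposal is correct and arrives at the theorem by a route that is genuinely different in packaging from the paper's, though the two ultimately hinge on the identical quartic form in $(\eta,\xi)$.

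The paper argues by contradiction: it sets $\lambda=i\lambda_i$, writes out $\Re W$ and $\Im W$ from \eqref{realimagW}, solves $\Im W=0$ for the unique $\lambda_i$ (made possible by ${\rm ind}_1,{\rm ind}_2>0$), substitutes back into $\Re W=0$, factors out $\eta^2$, and is left with the quartic equation \eqref{tauremoved}; it then applies Weierstrass preparation in $\eta^2$, computes the discriminant to be ${\rm ind}_3\xi^4+\mathcal{O}(\xi^5)$, and shows that under \eqref{cond1} the $\eta^2$-roots are non-real while under \eqref{cond2} they are both negative, either way contradicting $\eta^2\ge 0$. You instead invoke a positive root-location criterion for the complex monic quadratic $\lambda^2+b\lambda+c$ --- both roots in the open left half-plane iff $\Re b>0$ and $N:=(\Re b)^2\Re c+(\Re b)(\Im b)(\Im c)-(\Im c)^2>0$ --- and then show $N>0$ directly. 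I verified this criterion; the caveat one needs (that squaring $|z|<2(\Re b)^2-\Re z$ is legitimate) is harmless because $2(\Re b)^2-\Re z\le 0$ already forces $N\le -(\Re b)^4/4<0$. Your identification of the leading quartic of $N/\eta^2$ with $b_{02}\,{\rm ind}_5\,\xi^4+{\rm ind}_4\,\xi^2\eta^2+c_{20}b_{20}^2\,\eta^4$, and of ${\rm disc}(Q)={\rm ind}_3$, also checks out algebraically; these are exactly the coefficients appearing in the paper's \eqref{tauremoved}--\eqref{etaxieq} after the change of variable $t=\xi^2/\eta^2\mapsto s=1/t$. What your route buys is a direct, non-contradiction argument together with an explicitly uniform-in-direction estimate via the polar decomposition and compactness of $\phi$, which makes the behavior near the degenerate slices $\eta\to 0$ and $\xi\to 0$ a bit more transparent. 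What the paper's elimination route buys is that it constructs the putative imaginary-axis crossing explicitly, which is reused in the subsequent instability corollary; your $N>0$ criterion would need a separate argument there. Both are correct; the key algebraic identities and the role of the indices are the same.
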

\begin{proof}
Substituting $\lambda=i\lambda_i$ with $\lambda_i\in\mathbb{R}$ into \eqref{weierstrassE} yields
\ba \label{realimagW}
&\Re W(i\lambda_i,\eta,\xi)=-\lambda_i^2+(b_{01}\xi+\mathcal{O}(|\eta|+|\xi|)^3)\lambda_i i+(c_{20}+\mathcal{O}(|\eta|+|\xi|)^2)\eta^2\quad \text{and}\\
&\Im W(i\lambda_i,\eta,\xi)=(b_{02}\xi^2+b_{20}\eta^2+\mathcal{O}(|\eta|+|\xi|)^4)\lambda_i+(-ic_{21}\xi+\mathcal{O}(|\eta|+|\xi|)^3)\eta^2.
\ea 
Given \eqref{ind12}, the leading order term $(b_{02}\xi^2+b_{20}\eta^2)$ of the coefficient of $\lambda_i$ on the righthand side of $\Im W(i\lambda_i,\eta,\xi)$ does not vanish for $0<|\eta|+|\xi|\ll 1$, justifying that one can solve for a unique $\lambda_i$ by $\Im W(i\lambda_i,\eta,\xi)=0$. Substituting the $\lambda_i$-solution in $\Re W(i\lambda_i,\eta,\xi)=0$ yields
$$
\eta^2\left(b_{20}^2c_{20}\eta^4+(c_{21}^2 - b_{01}b_{20}c_{21} + 2b_{02}b_{20}c_{20})\eta^2\xi^2+(b_{02}^2c_{20} - b_{01}b_{02}c_{21})\xi^4+\mathcal{O}(|\eta|+|\xi|)^6\right)=0.
$$
As the low frequency behavior with $\eta=0$ is completely understood in \cite{JNRYZ}, we focus on
\be 
\label{tauremoved}
b_{20}^2c_{20}\eta^4+(c_{21}^2 - b_{01}b_{20}c_{21} + 2b_{02}b_{20}c_{20})\eta^2\xi^2+(b_{02}^2c_{20} - b_{01}b_{02}c_{21})\xi^4+\mathcal{O}(|\eta|+|\xi|)^6=0,
\ee 
where $b_{20}^2c_{20}$ is positive by assumptions and the powers of $\eta$ in the remainder $\mathcal{O}(|\eta|+|\xi|)^6$ must be even.
Applying the Weierstrass preparation theorem to the lefthand side of \eqref{tauremoved}, we have that \eqref{tauremoved} amounts to
\be
\label{etaxieq}
b_{20}^2c_{20}\eta^4+((c_{21}^2 - b_{01}b_{20}c_{21} + 2b_{02}b_{20}c_{20})\xi^2+\mathcal{O}(|\xi|^3))\eta^2+((b_{02}^2c_{20} - b_{01}b_{02}c_{21})\xi^4+\mathcal{O}(|\xi|^5))=0.
\ee
The discriminant of \eqref{etaxieq} reads ${\rm ind}_3 \xi^4+\mathcal{O}(|\xi|^5)$. If ${\rm ind}_3\neq 0$, \eqref{etaxieq} achieves two $\eta^2(\xi)$ zeros that depend analytically on $\xi$ and expand as
\be 
\label{etaofxi}
\eta^2(\xi)=\frac{- c_{21}^2 + b_{01}b_{20}c_{21} - 2b_{02}b_{20}c_{20}\pm\sqrt{{\rm ind}_3}}{2b_{20}^2 c_{20}}\xi^2+\mathcal{O}(|\xi|^3).
\ee
If \eqref{cond1} holds, the two $\eta^2(\xi)$ zeros are complex to the leading order term. If instead \eqref{cond2} hold, the two zeros $\eta^2(\xi)$ are both negative to the leading order term. In either case, we reach a contradiction against $\eta,\xi\in \mathbb{R}$. This completes the proof.
\end{proof}
For the sake of instability criteria, the negation of the stability criteria reads
\be \label{negationofstability}
{\rm ind}_1\leq 0\quad \text{or} \quad {\rm ind}_2\leq 0\quad \text{or} \quad {\rm ind}_3= 0 \quad \text{or} \quad 
\left\{\begin{aligned}&{\rm ind}_3\geq 0\\
&{\rm ind}_4\leq 0\\
\end{aligned}\right.\quad \text{or} \quad  
\left\{\begin{aligned}&{\rm ind}_3\geq 0\\
&{\rm ind}_5\leq 0\\
\end{aligned}\right..
\ee
We will not treat the cases for which either ${\rm ind}_1=0$ or ${\rm ind}_2=0$, for the stability of either $\lambda_1(0,\xi)$ \eqref{lambda1expansions} or $\lambda_2(\eta,0)$ \eqref{lambda2expansions} is determined by higher order terms. Also we will not treat the case for which ${\rm ind}_3=0$, since the $\eta^2(\xi)$ roots of \eqref{etaxieq} becomes non analytic in $\xi$. In addition, we will not treat the case for which ${\rm ind}_5=0$ and ${\rm ind}_4>0$, for the positivity of the righthand side of \eqref{etaofxi} is determined by higher order terms. Except for the situations above, we can conclude low-frequency instability provided that \eqref{negationofstability} holds.
\begin{corollary}
[Low-frequency instability criteria] Under the assumptions made in Lemma~\ref{lambda12}, we further assume $c_{20}>0$. Provided that one of the following three conditions is satisfied 
\begin{itemize}
\item[(i)] ${\rm ind}_1<0$ {\normalfont or} ${\rm ind}_2<0$,  
\item[(ii)] ${\rm ind}_1>0$, ${\rm ind}_2>0$, ${\rm ind}_3>0$, {\normalfont and} ${\rm ind}_4\leq 0$,
\item[(iii)] ${\rm ind}_1>0$, ${\rm ind}_2>0$, ${\rm ind}_3>0$, {\normalfont and} ${\rm ind}_5< 0$,
\end{itemize}
\noindent for any $r>0$, there exists spectrum $\lambda(\eta,\xi)$ with $0<|\eta|+|\xi|< r$ and $\Re\lambda(\eta,\xi)>0$, giving low-frequency instability.
\end{corollary}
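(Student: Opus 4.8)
The plan is to run, in reverse, the mechanism used in the proof of Theorem~\ref{lowfrequencystability}. As there, the standing assumptions make \eqref{weierstrassE} valid, so that near the origin the zeros of $E(\lambda,\eta,\xi)$ coincide with those of the Weierstrass polynomial $W(\lambda;\eta,\xi)=\lambda^2+b(\eta,\xi)\lambda+c(\eta,\xi)$, with the expansions \eqref{bcexpansion} and reality constraints \eqref{bcrealimag}. Write $b_{01}=i\beta_{01}$, $c_{21}=i\gamma_{21}$ with $\beta_{01},\gamma_{21}\in\mathbb{R}$, the remaining listed coefficients $b_{02},b_{20},c_{20}$ being real. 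The two roots of $W$ are $\tfrac12\bigl(-b\pm\sqrt{b^2-4c}\bigr)$, so one of them always has real part $\tfrac12\bigl(-\Re b+\bigl|\Re\sqrt{b^2-4c}\,\bigr|\bigr)$, a quantity independent of the branch of the square root; this root is a zero of $E$, hence a point of spectrum. Under ${\rm ind}_1,{\rm ind}_2>0$ one has $\Re b=b_{02}\xi^2+b_{20}\eta^2+\mathcal{O}(|\eta|+|\xi|)^3>0$ for $0<|\eta|+|\xi|\ll1$ (since $b_{01}\xi$ is purely imaginary), so proving instability reduces to exhibiting arbitrarily small $(\eta,\xi)\ne(0,0)$ with $\bigl|\Re\sqrt{b^2-4c}\,\bigr|>\Re b$.

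Case (i) is immediate from Lemma~\ref{lambda12}: if ${\rm ind}_1=b_{02}<0$ then $\Re\lambda_1(0,\xi)=-b_{02}\xi^2+\mathcal{O}(\xi^3)>0$ for $0<|\xi|\ll1$, and if ${\rm ind}_2=b_{20}<0$ then, using $c_{20}>0$ so that $\sqrt{-c_{20}}\in i\mathbb{R}$, $\Re\lambda_2(\eta,0)=-\tfrac12b_{20}\eta^2+\mathcal{O}(\eta^3)>0$ for $0<|\eta|\ll1$. Taking $|\xi|$ (resp.\ $|\eta|$) less than $r$ gives the claim.

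For (ii) and (iii), first expand $b^2-4c=-D(\eta,\xi)+iE(\eta,\xi)$, where by \eqref{bcexpansion}--\eqref{bcrealimag}
$$
D=\beta_{01}^2\xi^2+4c_{20}\eta^2+\mathcal{O}(|\eta|+|\xi|)^4,\qquad E=2\xi\bigl[(\beta_{01}b_{20}-2\gamma_{21})\eta^2+\beta_{01}b_{02}\xi^2\bigr]+\mathcal{O}(|\eta|+|\xi|)^4,
$$
and $\bigl|\Re\sqrt{b^2-4c}\,\bigr|=\sqrt{(|{-D+iE}|-D)/2}=\dfrac{|E|}{2\sqrt D}(1+o(1))$ along any ray into the origin (here $D>0$ along such a ray since $c_{20}>0$). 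Restrict to $\eta=\sqrt u\,\xi$, $u\ge0$ fixed, $\xi\to0^+$. Squaring the desired inequality $|E|>2\sqrt D\,\Re b$ and collecting leading ($\sim\xi^6$) terms turns it into the strict inequality $-u\,Q(u)>0$, where
$$
Q(u):=b_{20}^2c_{20}\,u^2+{\rm ind}_4\,u+b_{02}\,{\rm ind}_5 .
$$
A direct computation from \eqref{bcrealimag} establishes the identities ${\rm disc}(Q)={\rm ind}_3$, $(\text{sum of roots of }Q)=-{\rm ind}_4/(b_{20}^2c_{20})$, and $(\text{product of roots of }Q)=b_{02}\,{\rm ind}_5/(b_{20}^2c_{20})$; equivalently, ${\rm ind}_4^2-{\rm ind}_3=4b_{20}^2c_{20}b_{02}\,{\rm ind}_5$. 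It thus remains to produce $u_*>0$ with $Q(u_*)<0$. As $Q$ is an upward parabola ($b_{20}^2c_{20}>0$) with discriminant ${\rm ind}_3$: under (iii), ${\rm ind}_5<0$ and $b_{02}={\rm ind}_1>0$ make the product of roots negative, so $Q$ has one negative and one positive root and is strictly negative between them; under (ii), ${\rm ind}_3>0$ gives two real roots with sum $-{\rm ind}_4/(b_{20}^2c_{20})\ge0$, so if ${\rm ind}_4<0$ the larger root is positive, while if ${\rm ind}_4=0$ then ${\rm ind}_3>0$ forces ${\rm ind}_5<0$ and again one root is positive — in all cases $Q(u_*)<0$ for some $u_*>0$. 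Fixing such a $u_*$, note that $Q(u_*)<0$ automatically forces the leading coefficient $\beta_{01}b_{02}+(\beta_{01}b_{20}-2\gamma_{21})u_*$ of $E$ along this ray to be nonzero (otherwise $-u_*Q(u_*)$ would be $-(\beta_{01}^2+4c_{20}u_*)(b_{02}+b_{20}u_*)^2\le0$). Hence for all sufficiently small $\xi>0$ one of the two roots of $W(\cdot;\sqrt{u_*}\,\xi,\xi)$ has real part
$$
\tfrac{\xi^2}{2}\Bigl(\dfrac{|\beta_{01}b_{02}+(\beta_{01}b_{20}-2\gamma_{21})u_*|}{\sqrt{\beta_{01}^2+4c_{20}u_*}}-(b_{02}+b_{20}u_*)\Bigr)(1+o(1))>0,
$$
the bracket being positive precisely because it has the sign of $-u_*Q(u_*)>0$. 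Choosing $\xi<r/(1+\sqrt{u_*})$ yields spectrum $\lambda(\eta,\xi)$ with $0<|\eta|+|\xi|<r$ and $\Re\lambda(\eta,\xi)>0$.

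The main obstacle is the sign bookkeeping in the last step: tracking which of the two roots of $W$ acquires positive real part and, above all, making sure the leading-order computation along the ray $\eta=\sqrt{u_*}\,\xi$ survives the $\mathcal{O}(|\eta|+|\xi|)$ corrections hidden inside $b$ and $c$. This works because in (ii) and (iii) one can select $u_*$ with the strict inequality $Q(u_*)<0$, producing a strictly positive leading coefficient for $\Re\lambda$ that dominates the corrections once $\xi$ is small; a secondary, purely algebraic point is the verification of the identities relating ${\rm disc}(Q)$ and the symmetric functions of its roots to ${\rm ind}_3,{\rm ind}_4,{\rm ind}_5$, which is exactly where $c_{20}>0$ and the reality constraints \eqref{bcrealimag} are used.
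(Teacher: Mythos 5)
Your proof is correct, and it takes a genuinely different (and more direct) route than the paper's. The paper's argument first locates, via the paper's \eqref{etaofxi}, the curve $\eta_\pm(\xi)$ along which a zero of the Weierstrass polynomial lies on the imaginary axis, and then verifies transversal crossing by computing a Jacobian and a directional derivative of $\Re\lambda$; the key inequality there is that at least one of $\frac{-{\rm ind}_4\pm\sqrt{{\rm ind}_3}}{2b_{20}^2c_{20}}$ is positive. You instead show directly that one root of $W$ has positive real part by writing its real part as $\frac12\bigl(-\Re b+|\Re\sqrt{b^2-4c}|\bigr)$, reducing the sign comparison along rays $\eta=\sqrt{u}\,\xi$ at leading order $\xi^2$ to the polynomial inequality $-uQ(u)>0$ with $Q(u)=b_{20}^2c_{20}u^2+{\rm ind}_4u+b_{02}{\rm ind}_5$, and checking that ${\rm disc}(Q)={\rm ind}_3$. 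Note that the two approaches are tightly related: the quantities $\frac{-{\rm ind}_4\pm\sqrt{{\rm ind}_3}}{2b_{20}^2c_{20}}$ appearing in the paper are precisely the roots of your $Q$; the paper walks along the boundary of the unstable cone and argues by transversality, while you pick a ray strictly inside it. Your route avoids the implicit function theorem step at the cost of the small-angle asymptotics $|\Re\sqrt{-D+iE}|\approx|E|/(2\sqrt D)$; your auxiliary observation that $Q(u_*)<0$ forces the leading coefficient of $E$ to be nonzero is a nice sanity check, though as you note it is not strictly needed once the leading-order inequality is strict. The $\mathcal{O}(|\eta|+|\xi|)^3$ remainders in $b$ and $c$ are indeed dominated along the chosen ray, so the strict sign of the $\xi^2$ coefficient of $\Re\lambda$ survives, as claimed.
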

\begin{proof}
The condition (i) leads to the low-frequency instability of $\lambda_1(0,\xi)$ \eqref{lambda1expansions} or $\lambda_2(\eta,0)$  \eqref{lambda2expansions}. Given either (ii) or (iii) holds, we have that \eqref{ind12} hold, which guarantees that $\Im W(i\lambda_i,\eta,\xi)=0$ \eqref{realimagW}[ii] admits a (unique) solution $\lambda_i=\lambda_i(\eta,\xi)$ for $0<|\eta|+|\xi|\ll 1$ and that the rest assumptions on ${\rm ind}_3$, ${\rm ind}_4$, and ${\rm ind}_5$ ensure that at least one of 
$$
\frac{- c_{21}^2 + b_{01}b_{20}c_{21} - 2b_{02}b_{20}c_{20}\pm\sqrt{{\rm ind}_3}}{2b_{20}^2 c_{20}}
$$
of the coefficient of the leading $\mathcal{O}(|\xi|^2)$ term of \eqref{etaofxi} is positive, which gives analytic solutions $\eta_\pm(\xi)\in\mathbb{R}$ for $|\xi|\ll 1$ that expand as
\be \label{etapm}
\eta_\pm(\xi)=\pm\eta^{(1)}\xi+\mathcal{O}(|\xi|^2),\quad \text{where $\eta^{(1)}> 0$.}
\ee Combining, we justify the existence of spectrum $\lambda(\xi)=i\lambda_i(\eta_+(\xi),\xi)=i\lambda_i(\eta_-(\xi),\xi)\neq 0$, for $0<|\xi|\ll 1$, on the imaginary axis. Indeed, in the course of solving for $\lambda_i$ by $\Im W(i\lambda_i,\eta_+(\xi),\xi)=0$ \eqref{etapm}[ii], it is easy to see that $\xi=0$ is a removable singularity of the analytic function $\lambda_i(\eta_+(\xi),\xi)$ which can be expanded as
\be 
\label{lambda_i}
\lambda_i(\eta_+(\xi),\xi)=\lambda_i^{(1)}\xi+\mathcal{O}(|\xi|^2),\quad \text{where we have $\lambda_i^{(1)}\neq 0$ by $c_{21}\neq 0$ obtained from ${\rm ind}_3\neq 0$.}
\ee 
It remains to justify that the neutral spectrum can cross the imaginary axis transversely when $\eta$ and $\xi$ are varied. To this end, for $\lambda=\lambda_r+i\lambda_i$ with $\lambda_r,\lambda_i\in\mathbb{R}$ and $|\lambda_r|,|\lambda_i|\ll 1$, we compute the Jacobian matrix
$$\begin{aligned}
&\frac{\partial (\Re W(\lambda_r+i\lambda_i,\eta,\xi),\Im W(\lambda_r+i\lambda_i,\eta,\xi))}{\partial (\lambda_r,\lambda_i)}\\
=&\left[\begin{array}{rr}2\lambda_r+b_{20}\eta^2+b_{02}\xi^2+\mathcal{O}(|\eta|+|\xi|)^4&-2\lambda_i+b_{01}\xi i+\mathcal{O}(|\eta|+|\xi|)^3\\2\lambda_i-b_{01}\xi i+\mathcal{O}(|\eta|+|\xi|)^3&2\lambda_r+b_{20}\eta^2+b_{02}\xi^2+\mathcal{O}(|\eta|+|\xi|)^4\end{array}\right],
\end{aligned}
$$
whose determinant at $(0,\lambda_i(\eta_+(\xi),\xi),\eta_+(\xi),\xi)$ reads $(2\lambda_i^{(1)} - b_{01} i)^2\xi^2+\mathcal{O}(|\xi|^3)$. Substituting $\lambda_i$ and $\eta$ in $\Re W(i\lambda_i,\eta,\xi)=0$ \eqref{realimagW}[i] by $\lambda_i(\eta_+(\xi),\xi)$ \eqref{lambda_i} and $\eta_+(\xi)$ \eqref{etapm} respectively yields, at the $\mathcal{O}(|\xi|^2)$ order,
$$
\left(-(\lambda_i^{(1)})^2+b_{01}\lambda_i^{(1)}i+c_{20}(\eta^{(1)})^2\right)\xi^2=0
$$
Since $c_{20}$ by assumption is positive and by \eqref{etapm} $\eta^{(1)}>0$, we obtain $\lambda_{i}^{(1)}\neq b_{01} i/2$. Therefore the Jacobian matrix is invertible for $0<|\xi|\ll 1$. The implicit function theorem then asserts the existence of $\lambda(\eta,\xi)=\lambda_r(\eta,\xi)+\lambda_i(\eta,\xi)i$ satisfying $W(\lambda(\eta,\xi),\eta,\xi)=0$ \eqref{weierstrassE} for $(\eta,\xi)$ in the vicinity of $(\eta_+(\xi),\xi)$ for $0<|\xi|\ll 1$. By the theorem, we make further computation using
$$
\frac{\partial (\lambda_r,\lambda_i)}{\partial (\eta,\xi)}=-\left(\frac{\partial (\Re W,\Im W)}{\partial (\lambda_r,\lambda_i)}\right)^{-1}\frac{\partial (\Re W,\Im W)}{\partial (\eta,\xi)}
$$
to obtain
$$
\left[\begin{array}{rr}\frac{\partial\lambda_r}{\partial\eta}&\frac{\partial\lambda_r}{\partial\xi}\end{array}\right](\eta_+(\xi),\xi)=\left[\begin{array}{rr} \frac{2( b_{20}\lambda_i^{(1)} - c_{21}i)\eta^{(1)}}{b_{01}i - 2\lambda_i^{(1)}}
&\frac{ 2b_{02}\lambda_i^{(1)}- c_{21}(\eta^{(1)})^2i}{b_{01}i - 2\lambda_i^{(1)}}\end{array}\right]\xi+\mathcal{O}(|\xi|^2).
$$
Along the direction $[\eta^{(1)},1]$, we compute the directional derivative
$$
\begin{aligned}
\nabla_{[\eta^{(1)},1]}\lambda_r(\eta_+(\xi),\xi)=&[\eta^{(1)},1]\cdot\left[\begin{array}{rr} \frac{2( b_{20}\lambda_i^{(1)} - c_{21}i)\eta^{(1)}}{b_{01}i - 2\lambda_i^{(1)}}
&\frac{ 2b_{02}\lambda_i^{(1)}- c_{21}(\eta^{(1)})^2i}{b_{01}i - 2\lambda_i^{(1)}}\end{array}\right]\xi+\mathcal{O}(|\xi|^2)\\
=&\frac{2(b_{20}(\eta^{1})^2+b_{02})\lambda_i^{(1)}-3 c_{21}(\eta^{(1)})^2i}{b_{01}i - 2\lambda_i^{(1)}}\xi+\mathcal{O}(|\xi|^2)\\
=&\frac{- c_{21}(\eta^{(1)})^2i}{b_{01}i - 2\lambda_i^{(1)}}\xi+\mathcal{O}(|\xi|^2),
\end{aligned}
$$
where in the last equation we have used $(b_{02}+b_{20}(\eta^{1})^2)\lambda_i^{(1)}=ic_{21}(\eta^{(1)})^2$ obtained from $$\Im W(i\lambda_i(\eta_+(\xi),\xi),\eta_+(\xi),\xi)=0, \quad \text{at the $\mathcal{O}(|\xi|^3)$ order.}$$
Lastly, we infer from \eqref{etapm} and \eqref{lambda_i} that the leading $\mathcal{O}(|\xi|)$ term of the directional derivative $\nabla_{[\eta^{(1)},1]}\lambda_r(\eta_+(\xi),\xi)$ is nonzero for $0<|\xi|\ll 1$, proving that the neutral spectra can cross the imaginary axis transversely so that $\lambda_r(\eta,\xi)$ becomes positive.
\end{proof}

\begin{corollary}\label{boundaryIind}
If $b_{01}=0$, then there satisfy
$${\rm ind}_1,{\rm ind}_5=0,\quad {\rm ind}_3\geq 0,\quad {\rm ind}_4\leq 0.$$ We conclude that, on the low-frequency stability boundary I where $b_{01}=0$ \cite{JNRYZ}, both ${\rm ind}_1$ and ${\rm ind}_5$ are zeros and \eqref{cond2} cannot hold.
\end{corollary}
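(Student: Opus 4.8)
The corollary has an algebraic core and one genuinely analytic ingredient, and the plan is to separate them. The analytic ingredient is the assertion ${\rm ind}_1=b_{02}=0$ on boundary I; everything else drops out by substitution. I would therefore first record the reduction. Assuming $b_{01}=0$ and $b_{02}=0$, substitute into the definitions \eqref{ind12}, \eqref{cond1}, \eqref{cond2} and use the reality relations \eqref{bcrealimag}: one gets at once ${\rm ind}_5=-b_{01}c_{21}+b_{02}c_{20}=0$, ${\rm ind}_3=c_{21}^2\,(b_{01}^2b_{20}^2-2b_{01}b_{20}c_{21}+4b_{02}b_{20}c_{20}+c_{21}^2)=(c_{21}^2)^2$, and ${\rm ind}_4=-b_{01}b_{20}c_{21}+2b_{02}b_{20}c_{20}+c_{21}^2=c_{21}^2$. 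Since $c_{21}\in i\RR$, the quantity $c_{21}^2\le 0$ is real, whence ${\rm ind}_3=(c_{21}^2)^2\ge 0$ and ${\rm ind}_4=c_{21}^2\le 0$. In particular the strict inequalities demanded in \eqref{cond2} fail at both the ${\rm ind}_4$ and ${\rm ind}_5$ clauses (and \eqref{cond1} fails as well, since ${\rm ind}_3\ge 0$), so none of the sufficient low-frequency stability alternatives of Theorem~\ref{lowfrequencystability} is available on boundary I; this is the ``we conclude'' sentence of the statement.

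It remains to establish $b_{02}=0$ from $b_{01}=0$. Here I would appeal to Lemma~\ref{lemmaeta0}: at $\eta=0$ the multi-d periodic Evans--Lopatinsky determinant $E(\,\cdot\,,0,\xi)$ agrees, up to a nonvanishing analytic factor, with the 1d periodic Evans--Lopatinsky determinant $\Delta(\,\cdot\,,\xi)$ of \cite{JNRYZ}; hence the branch $\lambda_1(0,\xi)=-b(0,\xi)=-b_{01}\xi-b_{02}\xi^2+\mathcal{O}(\xi^3)$ produced by the factorization \eqref{weierstrassE} is exactly the 1d critical spectral curve bifurcating from the origin whose low-order Taylor expansion was computed explicitly in \cite[\S 5]{JNRYZ}. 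Tracing through that computation, $-b_{01}$ is the (purely imaginary) group velocity and $-b_{02}$ the (real) sideband/diffusion coefficient of this curve, and boundary I is cut out in parameter space precisely by the vanishing of its first nontrivial coefficient. The fact needed is that these first two coefficients vanish \emph{simultaneously} along boundary I, i.e. that on the existence region $b_{01}=0$ if and only if $b_{02}=0$; concretely one anticipates from the explicit 1d expansion a relation $b_{02}=iC(F,H_-/H_s)\,b_{01}$ with $C$ real and nonvanishing (compatible with $b_{02}\in\RR$, $ib_{01}\in\RR$), which one would verify either by quoting the explicit formula of \cite{JNRYZ} defining boundary I, or by re-running the contour-integral/Weierstrass expansion of Section~\ref{s:rollLF} one order further and checking the cancellation. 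This gives $b_{02}=0$, and the corollary follows from the substitution above.

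The main obstacle is precisely this last cancellation. Neither the Weierstrass formulas \eqref{bcexpansion} nor the reality constraints \eqref{bcrealimag} force it: $b_{01}=0$ only annihilates the first two summands of $b_{02}$ in \eqref{bcexpansion}, leaving $b_{02}=a_{(1,0,2)}/a_{(2,0,0)}$, and what must still be shown is the identity $a_{(1,0,2)}=0$ whenever $a_{(1,0,1)}=0$ --- equivalently, that the 1d bifurcating curve $\lambda_1(0,\xi)$ has vanishing curvature at $\xi=0$ whenever it has vanishing slope there. That is a genuine structural feature of the Saint-Venant roll-wave spectrum rather than a formal consequence of analyticity, so I would dispose of it by importing the explicit 1d sideband computation of \cite{JNRYZ}, which is exactly what defines boundary I, and reading off that both coefficients vanish on that locus.
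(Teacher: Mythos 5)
Your proposal is correct and takes essentially the same approach as the paper: both reduce the corollary to the implication $b_{01}=0\Rightarrow b_{02}=0$, which is obtained by citing the explicit 1d low-frequency expansion of \cite{JNRYZ} (the paper references the two equations below (5.4) there), and then substitute $b_{01}=b_{02}=0$ into the ${\rm ind}_j$ formulas, using $c_{21}\in i\mathbb{R}$ from \eqref{bcrealimag} to read off ${\rm ind}_5=0$, ${\rm ind}_3=c_{21}^4\geq 0$, ${\rm ind}_4=c_{21}^2\leq 0$. Your write-up is merely more explicit than the paper's terse one-line citation in flagging that the implication $b_{01}=0\Rightarrow b_{02}=0$ (equivalently $a_{(1,0,1)}=0\Rightarrow a_{(1,0,2)}=0$) is a structural feature of the 1d roll-wave spectrum rather than a formal consequence of the Weierstrass factorization or the reality constraints of Section~\ref{s:rollLF}; that observation is correct and worth recording.
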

\begin{proof}
If $b_{01}=0$, we readily see from \cite[the two eqs. below (5.4)]{JNRYZ} that ${\rm ind}_1=b_{02}=\beta=0$. Therefore, we find by \eqref{cond2} that
	${\rm ind}_5=-b_{01}c_{21}+b_{02}c_{20}=0$, ${\rm ind}_3=c_{21}^4\geq 0$, and ${\rm ind}_4=c_{21}^2\leq 0$, since by \eqref{bcexpansion} $c_{21}\in i\mathbb{R}$.
\end{proof}

The coefficients $a_{(i,j,k)}$ in the expansion \eqref{pEL_expansion} are numerically computable by the Cauchy's differential formulas:
$$
a_{(i,j,k)}=\frac{1}{(2\pi \sqrt{-1})^2}\oint_{\partial B(0,r)}\oint_{\partial B(0,r)}\frac{\partial_\xi^k\Delta(\lambda,\eta,0)}{\lambda^{i+1}\eta^{j+1}k!}d\lambda d\eta,\quad \text{for $r$ sufficiently small,}
$$
so are the index function ${\rm ind}_1,\ldots,{\rm ind}_5$. See FIGURE~\ref{fig:2dlowstability} for numerical results. From the top left panel and bottom middle panel of FIGURE~\ref{fig:2dlowstability}, we see the curve ${\rm ind}_4=0$ does not intersect the 1d low-frequency stability boundary I, which holds provided that $c_{21}$ does not vanish on the 1d low-frequency stability boundary I. Hence, evidently there is no wave satisfying \eqref{ind12} and \eqref{cond2} simultaneously and 2d low-frequency stable waves in the whole space shall satisfy both \eqref{ind12} and \eqref{cond1}, giving that the 2d low-frequency stable waves sit in between the 2d low frequency stability boundary (see FIGURE~\ref{fig:2dlowstability} bottom left panel) and the 1d low-frequency stability boundary II (see \cite[Fig. 7. b.]{JNRYZ}). The former stability boundary is then used in FIGURE~\ref{fig:stabdiag_fig}.

\begin{figure}[htbp]
\begin{center}
\includegraphics[scale=0.33]{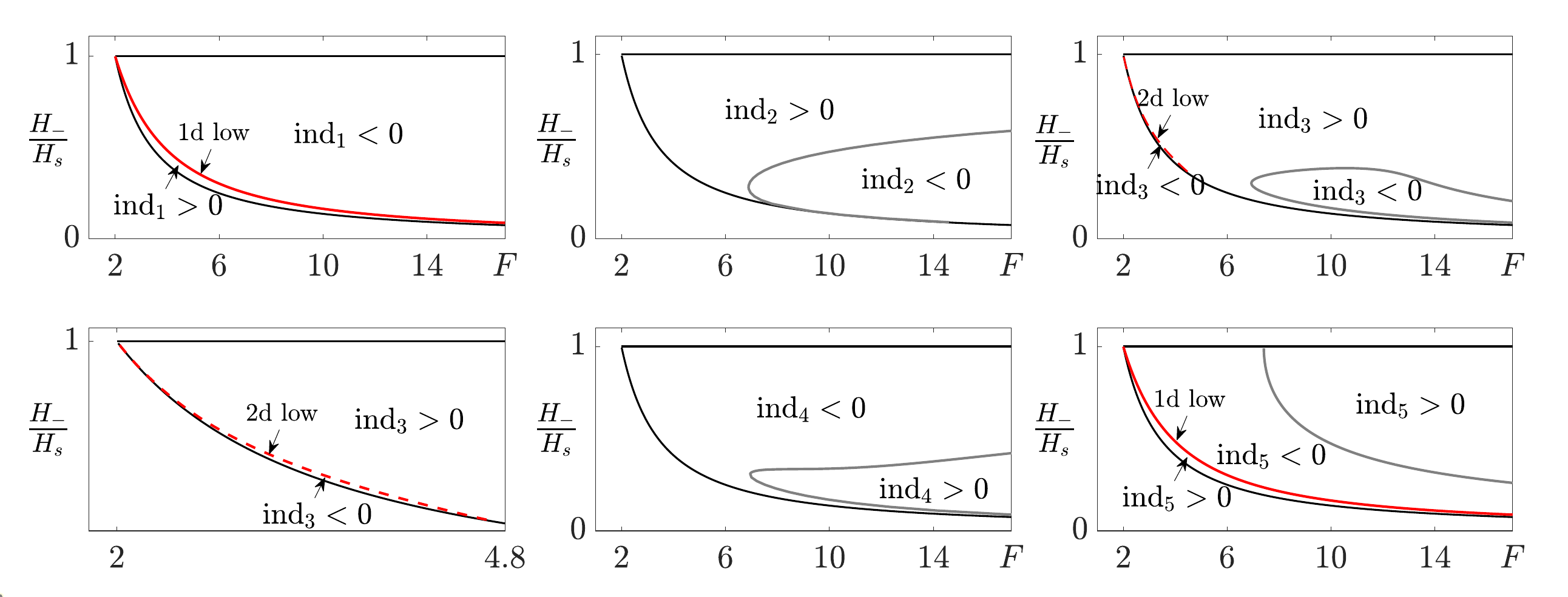}
\end{center}
\caption{Numerical investigations of 2d low-frequency stability indexes ${\rm ind}_{1},\ldots, {\rm ind}_5$ \eqref{ind12} \eqref{cond2}. Top left panel: The ${\rm ind}_1$ by \eqref{lambda1expansions} corresponds to 1d low-frequency stability. That is ${\rm ind}_1>0$ iff $\beta>0$ \cite[eq. (5.4)]{JNRYZ}. There were found two 1d low-frequency stability boundaries: the boundary I (red solid curve with label ``1d low'') and the boundary II (see \cite[Fig. 7. b.]{JNRYZ}). Recall the latter boundary II seemingly asymptotes to the lower existence curve as F increases, making it indistinguishable from the lower existence curve. The ${\rm ind}_1$ is positive in the region between the two curves and negative elsewhere. Top middle panel: There is found a curve (gray solid curve) on which ${\rm ind}_2$ is numerically found vanishing and to the left/right of which ${\rm ind}_2$ is positive/negative. Top right panel: There are found two curves (a red dash curve and a gray solid curve) on which ${\rm ind}_3$ is numerically found vanishing. The ${\rm ind}_3$ is positive in the region between the two curves and negative elsewhere. The red dash curve sits very close to the lower existence curve. See blowup of the red dash curve with label ``2d low'' in the bottom left panel. Bottom middle panel: There is found a curve (gray solid curve) on which ${\rm ind}_4$ is numerically found to be vanishing and to the right/left of which ${\rm ind}_4$ is positive/negative. Bottom right panel: By Corollary~\ref{boundaryIind}, both ${\rm ind}_1$ and ${\rm ind}_5$ vanish on the 1d low-frequency stability boundary I (red solid curve with label ``1d low''). There is found another curve (gray solid curve) on which ${\rm ind}_5$ is numerically found vanishing. The ${\rm ind}_5$ is negative in the region between the two curves and positive elsewhere.}
\label{fig:2dlowstability}
\end{figure}

\subsection{High-frequency stability}\label{s:rollHF} 
Similarly as in \cite[\S 6]{JNRYZ},
one may perform a WKB-type expansion to obtain an asymptotic description of $E$ for high frequencies.
In the roll-wave case, different from the hydraulic shock case, the resulting condition is not automatically 
satisfied in 1d, but, due to the additional Floquet/Bloch frequency $\xi$,
reduces \cite{JNRYZ} to a ``ratio test'' like that of Erpenbeck \cite{Er4} for 2d detonations.  
This ratio test is not associated with an Airy-type turning point, but rather with the ``sonic'' point,
a regular singular point of the eigenvalue ODE.
Fortunately, it is given in terms of explicitly evaluable functions,  hence readily checked;
it was shown numerically in \cite{JNRYZ} that high-frequency longitudinal (1d) instabilities do not occur.

In 2d, on the other hand, one finds along with sonic type regular singular point also
Airy-type turning points as in \cite{Er4,LWZ,Z9}, corresponding to irregular singular points of
the generalized eigenvalue ODE \eqref{geig}.
The 2d high-frequency analysis involves both types of singularity, making behavior potentially quite rich.
A high-frequency stability condition should be computable combining the techniques of \cite{JNRYZ,LWZ,Z9} and
\cite{Z8};
{\it it is a very interesting question whether and when it is violated,}
i.e., high-frequency transverse (2d) instabilities occur.

We will not attempt to carry out a full analysis here, but only to provide a road map indicating the
types of singular points arising and a general idea how such an analysis might be carried out-
the main phenomena of interest for roll waves have to do, rather, with low- and med-frequency instability.

\smallskip

\noindent\textbf{High-frequency analysis.} For $\lambda=\lambda_r+i\lambda_i$, $\lambda_i$, $\eta\in \mathbb{R}$, and $\lambda_r\geq 0$, we analyze solutions of the interior eigenvalue problem \eqref{Floquet_system}[i] in the high-frequency region \eqref{domainhf}. Again, we may further split the high-frequency region \eqref{domainhf} into case (a) $|\lambda|\sim \mathcal{O}(r)$, $|\eta|\sim \mathcal{O}(1)$; case (b) $|\lambda|\sim \mathcal{O}(1)$, $|\eta|\sim \mathcal{O}(r)$; case (c) $|\lambda|\sim \mathcal{O}(r)$, $|\eta|\sim \mathcal{O}(r)$. 

Performing the change of coordinates $w\rightarrow w_1:=A_1w$, we find that $w_1$ satisfies
\be
\label{eigenw1}
\det(A_1)d_x w_1=\left(-\lambda A_0\,{\rm adj}(A_1) -i\eta  A_2\,{\rm adj}(A_1) \right)w_1+E\,{\rm adj}(A_1) w_1.
\ee
The eigenvalues of the leading matrix $\big(-\lambda A_0\,{\rm adj}(A_1) -i\eta A_2\,{\rm adj}(A_1)\big)(x)$ are
\be\label{Leading_eigenvalues}
\gamma_1(x;\lambda)=\frac{\lambda(H^3 - 1)}{F^2H^2}(x),\quad 
\gamma_2,\gamma_3(x;\lambda;\eta)=\frac{- F\lambda\pm \sqrt{H(F^2H^2\lambda^2 + (H^3-1)\eta^2)} }{F^3H^2}(x),
\ee 
from which we immediately see, at the sonic point $x_s$ where $H=1$, $$\gamma_1(x_s;\lambda)=\gamma_2(x_s;\lambda,\eta)=0,\quad \gamma_3(x_s;\lambda,\eta)=-2\lambda/F^2,$$
moreover, the leading matrix can be diagonalized as ${\rm diag}(0,0,-2\lambda/F^2)$ by similar transform $$T^{-1}(x_s)\big(-\lambda A_0\,{\rm adj}(A_1) -i\eta A_2\,{\rm adj}(A_1)\big)(x_s)T(x_s),$$ where, for example

$$
T(x_s):=\left[\begin{array}{rrr}F&0&F^2\lambda\\F-1&0&F(F+1)\lambda\\0&1&i\eta\end{array}\right] 
$$
is non-singular provided that $\det(T(x_s))=-2F^2\lambda\neq 0$. In case (a) and case (c) where $|\lambda|\sim\mathcal{O}(r)$, we obtain $\det(T(x_s))\neq 0$, whence the sonic point is not a turning point. On the other hand, for case (b), we have $\gamma_3=-2\lambda/F^2$ vanishes as well at leading $\mathcal{O}(r)$ order and $\det(T(x_s))=0$, making the sonic point $x_s$ a turning point. Note in 1d that $\gamma_2$ and $\eta$ do not appear, so that the latter scenario does not arise.

In search of turning points other than the sonic point, easy computations show that, for case (a) and case (b), the leading $\mathcal{O}(r)$ terms of \eqref{Leading_eigenvalues} are distinct, whence $(\lambda,\eta)$ cannot be turning point frequencies. This leaves the search to case (c).

If the leading $\mathcal{O}(r)$ term of $\gamma_2$ is equal to that of $\gamma_3$, there must hold $F^2H^2\lambda^2+(H^3-1)\eta^2=0$. Let 
\be 
\tau_3(H):=\frac{\sqrt{H^3-1}}{HF},\quad \text{for}\quad H\in(1,H_+)\quad\text{and}\quad\tau_4(H):=\frac{\sqrt{1-H^3}}{HF},\quad \text{for}\quad H\in(H_-,1).
\ee 
On the other hand, if the leading $\mathcal{O}(r)$ term of $\gamma_1$ is equal to that of $\gamma_2$, there must hold $- F^2H^2\lambda^2 + \eta^2=0$. Let 
\be 
\tau_5(H):=\frac{1}{HF},\quad \text{for}\quad H\in (H_-,1)\cup(1,H_+).
\ee 

\begin{lemma}[Turning points and turning point frequencies; roll waves]\label{turningpoint:lemma;roll}
${}$

\noindent Consider a roll wave in two dimensions. 

At the frequencies $(\lambda,\eta)=(i\tau_3 r+\mathcal{O}(1),r)$, where \be\label{tau3range}
\tau_3\in\big(0,\tau_3(H_+)\big),
\ee
there exists a unique turning point $x_3(\tau_3)\in (x_s,X)$, where $\gamma_2(x_3(\tau_3);i\tau_3r,r)=\gamma_3(x_3(\tau_3);i\tau_3r,r)\neq \gamma_1(x_3(\tau_3);i\tau_3r)$ and the geometric multiplicity of the double eigenvalue is $1$.

At the frequencies $(\lambda,\eta)=(\tau_4r+\mathcal{O}(1),r)$, where \be\label{tau4range}
\tau_4\in\big(0,\tau_4(H_-)\big),
\ee
there exists a unique turning point $x_4(\tau_4)\in (0,x_s)$, where $\gamma_2(x_4(\tau_4);\tau_4r,r)=\gamma_3(x_4(\tau_4);\tau_4r,r)\neq \gamma_1(x_4(\tau_4);\tau_4r)$ and the geometric multiplicity of the double eigenvalue is $1$.

At the frequencies $(\lambda,\eta)=(\tau_5r+\mathcal{O}(1),r)$, where \be\label{tau5range}
\tau_5\in\big(\tau_5(H_+),\frac{1}{F}),(\frac{1}{F},\tau_5(H_-)\big),
\ee
there exists a unique turning point $x_5(\tau_5)\in(x_s,X),(0,x_s)$, where $$\gamma_1(x_5(\tau_5);\tau_5r)=\gamma_2(x_5(\tau_5);\tau_5r,r)\neq \gamma_3(x_5(\tau_5);\tau_5r,r)$$ and the geometric multiplicity of the double eigenvalue is $1$.
\end{lemma}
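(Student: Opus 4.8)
The plan is to follow closely the proof of Lemma~\ref{turningpoint:lemma} for hydraulic shocks: the turning points and the double eigenvalues are again governed by vanishing of the discriminants appearing in the leading-order eigenvalue formulas \eqref{Leading_eigenvalues}, and the whole statement reduces to elementary algebra once the correspondence between the parameter $\tau_j$ and the profile $H(x)$ is set up.

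First I would record the coalescence conditions for the leading $\mathcal{O}(r)$ symbol $\big(-\lambda A_0\,{\rm adj}(A_1)-i\eta A_2\,{\rm adj}(A_1)\big)(x)$. By \eqref{Leading_eigenvalues}, $\gamma_2$ and $\gamma_3$ coincide precisely when $F^2H^2\lambda^2+(H^3-1)\eta^2=0$, while $\gamma_1$ coincides with $\gamma_2$ or $\gamma_3$ when, after squaring the defining relation $F\lambda H^3=\pm\sqrt{H(F^2H^2\lambda^2+(H^3-1)\eta^2)}$, dividing by $H$, and cancelling the factor $H^3-1$ (nonzero away from the sonic point $x_s$), one obtains $F^2H^2\lambda^2=\eta^2$; the un-squared relation then dictates which of $\gamma_2,\gamma_3$ participates and which branch of the square root is taken (the $+$ branch, i.e.\ $\gamma_1=\gamma_2$, since $\lambda=\tau_5 r>0$). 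Substituting, in each case, the high-frequency scaling $(\lambda,\eta)=(i\tau_3 r+\mathcal{O}(1),r)$, $(\tau_4 r+\mathcal{O}(1),r)$, $(\tau_5 r+\mathcal{O}(1),r)$ into these relations reduces them, at leading order, to $\tau_3=\tau_3(H)$, $\tau_4=\tau_4(H)$, and $\tau_5=\tau_5(H)$ respectively, with $\tau_3,\tau_4,\tau_5$ the explicit functions introduced just above the statement.

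Next I would establish the monotonicity used throughout. Writing $\tau_3(H)^2=(H-H^{-2})/F^2$, $\tau_4(H)^2=(H^{-2}-H)/F^2$, and $\tau_5(H)^2=H^{-2}/F^2$ and differentiating, one finds $\tau_3(\cdot)$ strictly increasing on $(1,H_+)$ with $\tau_3(1)=0$, $\tau_4(\cdot)$ strictly decreasing on $(H_-,1)$ with $\tau_4(1)=0$, and $\tau_5(\cdot)$ strictly decreasing on each of $(H_-,1)$ and $(1,H_+)$ with $\tau_5(1)=1/F$; this produces exactly the admissible ranges \eqref{tau3range}, \eqref{tau4range}, \eqref{tau5range} and the split of the $\tau_5$-interval at $1/F$. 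Since the smooth part of the roll-wave profile is strictly monotone on a period cell, with $H(x_s)=H_s=1$ and taking the values $H_-$ and $H_+$ at the two ends of the cell, the map $H(\cdot)$ restricts to homeomorphisms of $(0,x_s)$ onto $(H_-,1)$ and of $(x_s,X)$ onto $(1,H_+)$ (consistent with the subintervals in the statement). Composing with the strictly monotone $\tau_j$ and invoking the intermediate value theorem then gives, for each admissible $\tau_j$, a unique turning point $x_j(\tau_j)$ in the asserted subinterval. At that point the distinctness claim $\gamma_1\neq\gamma_2=\gamma_3$ (for $x_3,x_4$) and $\gamma_3\neq\gamma_1=\gamma_2$ (for $x_5$) follows by direct substitution: when $\gamma_2=\gamma_3$ their common value is $-\lambda/(F^2H^2)$, which equals $\gamma_1=\lambda(H^3-1)/(F^2H^2)$ only if $H^3=0$, impossible, and the case of $x_5$ is analogous.

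The only step demanding an explicit matrix computation is the geometric-multiplicity assertion. Here I would evaluate the leading symbol at $x=x_j(\tau_j)$ and check that $\big(-\lambda A_0\,{\rm adj}(A_1)-i\eta A_2\,{\rm adj}(A_1)\big)(x_j)-\gamma_*\Id$ has rank $2$ --- equivalently, exhibit a nonvanishing $2\times2$ minor --- so that the double eigenvalue $\gamma_*$ carries a one-dimensional eigenspace, i.e.\ a genuine Jordan block (a true ``turning point'' rather than a removable degeneracy). I expect this to be the main, though still routine, obstacle: one must verify that the $\mathcal{O}(1)$ corrections to $\lambda$ do not affect the rank of the leading $\mathcal{O}(r)$ block, exactly as in the proof of Lemma~\ref{turningpoint:lemma}. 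With these ingredients in place the statement follows by direct computation, as in the hydraulic-shock case.
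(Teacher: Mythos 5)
Your proposal is correct and matches the paper's approach, which the paper summarizes as ``The lemma follows by direct computations.'' You supply exactly those computations: the coalescence conditions from the vanishing discriminant in \eqref{Leading_eigenvalues} (giving $F^2H^2\lambda^2+(H^3-1)\eta^2=0$ for $\gamma_2=\gamma_3$, and, after squaring and dividing by the nonvanishing factor $H^3-1$, $F^2H^2\lambda^2=\eta^2$ for $\gamma_1=\gamma_2$), the strict monotonicity of $\tau_3,\tau_4,\tau_5$ as functions of $H$, the monotone homeomorphisms $H:(0,x_s)\to(H_-,1)$ and $H:(x_s,X)\to(1,H_+)$ given by the increasing Dressler profile, and the distinctness of the third eigenvalue by direct substitution. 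The one part you leave as a check rather than carry out --- the rank-$2$ verification giving geometric multiplicity $1$ --- is exactly the kind of routine matrix computation the paper is alluding to; one small note is that the geometric multiplicity concerns only the leading $\mathcal{O}(r)$ symbol at the exact turning-point frequency, so the $\mathcal{O}(1)$ correction to $\lambda$ you worry about does not enter at this order.
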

\begin{proof}
The lemma follows by direct computations. 
\end{proof}

\begin{remark}
Within {\normalfont case (c)}, our foregoing analyses confirm $(\lambda,\eta)$ can be turning point frequencies if either $|\lambda_i|\sim\mathcal{O}(1)$ or $|\lambda_r|\sim\mathcal{O}(1)$ and, in particular, they cannot be turning point frequencies if $|\lambda_i|,|\lambda_r|\sim\mathcal{O}(r)$.
\end{remark}
\begin{remark}
If $\frac{1}{F}<\tau_4(H_-)$, or equivalently, $1>H_-^3+H_-^2$, for $\tau_4,\tau_5=\tau\in(\frac{1}{F},\tau_4(H_-)$, there are two distinct turning points  $0<x_4(\tau)<x_5(\tau)<x_s$, at the former turning point  $\gamma_2$ collides with $\gamma_3$ and at the latter turning point $\gamma_1$ collides with $\gamma_2$.
\end{remark}

\subsubsection{Conclusions and future prospects}\label{s:prospects}
In our analysis of the high-frequency limit, above, we have shown that there appears in all cases a 
sonic point $x_s$ where $H=1$, just as in the 1d case. 
Likewise, we have shown that there appear Airy-type turning points different from the sonic point, of types similar to those arising in the hydraulic shock case in case (c): at most two on $(0,x_s)$ and at most one on $(x_s,X)$. Moreover, the sonic and Airy turning points collide only for case (b). 

Restricting for the moment to the case that sonic and turning points do not collide, i.e., case (c), we may decompose the domain $[0,X)$ into small regions containing 
singular (sonic or turning) points, and complementary regions on which principal eigenvalues remain 
distinct. On the small, singular regions, we may perform either a sonic-type expansion as in the 1d roll wave 
case, or a conjugation to Airy equation as in the 2d hydraulic shock (or detonation \cite{Er4,LWZ}) case, 
to obtain connection information from left to right endpoints. Meanwhile, on complementary regions, the 
generalized eigenvalue ODE may be completely diagonalized, so that, combining these pieces of information, 
we obtain a full description of the spectral problem, yielding stability indices similar to those of the 
1d roll and 2d hydraulic shock case.

The case of colliding sonic and turning points is new to the 2d roll wave case, and requires a separate
handling to treat the associated principal frequencies $\lambda\sim\mathcal{O}(1)$, $|\eta|=r\gg 1$, and nearby
ones as well.  This interesting open problem we leave for future investigation, 
contenting ourselves that low- and med-frequency instabilities appear to be the physically important ones for roll waves.

\subsection{Medium-frequency stability: numerics}\label{s:rollMF}
Medium frequencies $1/R\leq  |(\lambda, \eta, \xi)|\leq  R$ will be studied by numerical computation.
A complicating factor, already present in 1d,
is the appearance independent of frequencies of a singular point in the eigenvalue ODE's.
For, the Lax characteristic condition at the component shock implies that, in comoving coordinates 
moving with the shock, the numbers of positive/negative characteristics of the equation must change 
as the profile traverses from $x=0^+$ to $x=X^-$, since they are different on either side $x=0\pm$ of the shock, 
and by periodicity, equal at $x=0^-$ and $X^-$.  Indeed, there is precisely one ``sonic point''
$x=x_s$ where a characteristic changes sign, i.e., an eigenvalue of $A_1=dF_1/dU$ vanishes in \eqref{Floquet_system},
at which the eigenvalue ODE thus has a {regular singular point}.
This was dealt with in \cite{JNRYZ} by a ``hybrid'' ODE solver using symbolic computation/Frobenius method
to obtain a high-order analytic expansion in a neighborhood of $x_s$, which was then continued out to the boundaries
$x=0,X$ by standard ODE solvers. Recalling results developed in Section~\ref{s:linroll}, we develop a similar hybrid method for computing the periodic Evans-Lopatinsky determinant \eqref{pEL}, which performs quite well. 

\subsection{Medium-frequency stability: analysis}\label{s:medanalysis}
Recalling from \cite[\S 7]{JNRYZ}, an inviscid roll wave with $(F,H_-)$ on the 1d medium frequency stability boundary (see \cite[Fig. 2. and Fig. 3.]{JNRYZ}) achieves a neutral spectrum at $\lambda_*\in i\mathbb{R}$ at a critical Floquet exponent $\xi_*$ and the nearby essential spectrum $\lambda(\xi)$ satisfies $\Re \lambda'(\xi_*)=0>\Re \lambda''(\xi_*)$. The corresponding planar inviscid roll waves, considered in the whole space, must achieve a $\eta$-family of arcs of spectrum $\lambda(\eta,\xi)$ satisfying

\be\label{mid_neutral}
\lambda(0,\xi_*)=\lambda_*\in i\mathbb{R}\quad \text{and}\quad \Re \frac{\partial\lambda}{\partial \xi}(0,\xi_*)=0>\Re \frac{\partial^2\lambda}{\partial \xi^2}\lambda(0,\xi_*).
\ee 

To investigate if the arcs cross the imaginary axis transversely as $\eta$ becomes non-zero, we expand $E(\lambda,\eta,\xi)$ in the vicinity of its zero $(\lambda_*,0,\xi_*)$ as
\be \label{expandEmed}
E(\lambda,\eta,\xi)=a_{(1,0,0)}(\lambda-\lambda_*)+a_{(0,2,0)}\eta^2+a_{(0,0,1)}(\xi-\xi_*)+\mathcal{O}(|\lambda-\lambda_*|+|\eta|^2+|\xi-\xi_*|)^2,
\ee 
where we have used Corollary~\ref{evenineta} to assert that powers of $\eta$ terms must be even. Assuming $a_{(1,0,0)}\neq 0$, the Weierstrass preparation theorem asserts that \eqref{expandEmed} can be factorized as $E(\lambda,\eta,\xi)=W(\lambda-\lambda_*;\eta,\xi-\xi_*)h(\lambda-\lambda_*,\eta,\xi-\xi_*)$
for $|\lambda-\lambda_*|, |\eta|, |\xi-\xi_*|\ll 1$, where $W(\lambda-\lambda_*;\eta,\xi-\xi_*)$ is a Weierstrass polynomial 
\ba
\label{weierstrassEmed}
W(\lambda-\lambda_*;\eta,\xi-\xi_*)=(\lambda-\lambda_*)+\frac{a_{(0,2,0)}}{a_{(1,0,0)}}\eta^2+\frac{a_{(0,0,1)}}{a_{(1,0,0)}}(\xi-\xi_*)+\mathcal{O}(|\eta|^2+|\xi-\xi_*|)^2.
\ea
and $h(\lambda-\lambda_*,\eta,\xi-\xi_*)$ is analytic at $(0,0,0)$ with $h(0,0,0)=a_{(1,0,0)}\neq 0$. Therefore, in the vicinity of $(\lambda_*,0,\xi_*)$, $E(\lambda,\eta,\xi)=0$ if and only if $W(\lambda-\lambda_*;\eta,\xi-\xi_*)=0$.

\begin{proposition}
Considering nearby spectra of a neutral spectrum $\lambda(0,\xi_*)$ satisfying \eqref{mid_neutral}, if $\Re\frac{a_{(0,2,0)}}{a_{(1,0,0)}}<0$ where $a_{(0,2,0)}$ and $a_{(1,0,0)}$ are coefficients in the expansion \eqref{expandEmed}, then the neutral spectrum becomes unstable as $\eta$ becomes non-zero. if $\Re\frac{a_{(0,2,0)}}{a_{(1,0,0)}}>0$, then $\Re\lambda(\eta,\xi)$ achieves a local maximum $0$ at $(0,\xi_*)$. 
\end{proposition}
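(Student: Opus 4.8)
The plan is to reduce the statement to a short computation with the Weierstrass polynomial \eqref{weierstrassEmed}, exactly paralleling the low-frequency analysis of Section~\ref{s:rollLF}. By the Weierstrass factorization, near $(\lambda_*,0,\xi_*)$ we have $E=0$ iff $W(\lambda-\lambda_*;\eta,\xi-\xi_*)=0$, and since $W$ is monic and linear in $(\lambda-\lambda_*)$ the unique zero is the analytic function
\[
\lambda(\eta,\xi)=\lambda_*-\frac{a_{(0,2,0)}}{a_{(1,0,0)}}\eta^2-\frac{a_{(0,0,1)}}{a_{(1,0,0)}}(\xi-\xi_*)+\mathcal{O}(|\eta|^2+|\xi-\xi_*|)^2.
\]
First I would record that the hypothesis \eqref{mid_neutral} pins down the $(\xi-\xi_*)$-behavior: $\Re\partial_\xi\lambda(0,\xi_*)=0$ forces $\Re\big(a_{(0,0,1)}/a_{(1,0,0)}\big)=0$, and $\Re\partial_\xi^2\lambda(0,\xi_*)<0$ says that the coefficient of $(\xi-\xi_*)^2$ in $\Re\lambda(0,\xi)$ is strictly negative; these are precisely the statements that, with $\eta=0$, the arc $\lambda(0,\xi)$ touches the imaginary axis tangentially from the stable side at $\xi=\xi_*$, which is the 1d medium-frequency boundary scenario already analyzed in \cite[\S7]{JNRYZ}.

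The next step is the transverse ($\eta$) direction. Writing $\lambda=\lambda_r+i\lambda_i$ and separating real and imaginary parts of $W(\lambda-\lambda_*;\eta,\xi-\xi_*)=0$ as in the proof of Theorem~\ref{lowfrequencystability}, the real part reads
\[
\lambda_r=-\Re\!\left(\frac{a_{(0,2,0)}}{a_{(1,0,0)}}\right)\eta^2-\Re\!\left(\frac{a_{(0,0,1)}}{a_{(1,0,0)}}\right)(\xi-\xi_*)+\mathcal{O}(|\eta|^2+|\xi-\xi_*|)^2.
\]
By the first step the linear-in-$(\xi-\xi_*)$ coefficient vanishes, and the genuine second-order term in $(\xi-\xi_*)$ is strictly negative by \eqref{mid_neutral}; the only remaining leading term along the $\eta$ axis is $-\Re\big(a_{(0,2,0)}/a_{(1,0,0)}\big)\eta^2$. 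Hence if $\Re\big(a_{(0,2,0)}/a_{(1,0,0)}\big)<0$, then for $\eta\ne 0$ small and $\xi=\xi_*$ we get $\lambda_r=-\Re\big(a_{(0,2,0)}/a_{(1,0,0)}\big)\eta^2+o(\eta^2)>0$, i.e. the neutral spectrum moves into the unstable half-plane — this is the first conclusion. If instead $\Re\big(a_{(0,2,0)}/a_{(1,0,0)}\big)>0$, then $\lambda_r\le 0$ on a full punctured neighborhood of $(0,\xi_*)$: the $\eta^2$ term is negative, the genuine $(\xi-\xi_*)^2$ term is negative, and one checks the mixed $\eta^2(\xi-\xi_*)$ and higher terms are dominated, so $\Re\lambda(\eta,\xi)$ attains a strict local maximum value $0$ at $(0,\xi_*)$ — this is the second conclusion. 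I would present the estimate dominating the cross terms via a completion-of-square argument on the quadratic form $-\Re(a_{(0,2,0)}/a_{(1,0,0)})\eta^2 - |c_2|(\xi-\xi_*)^2 + (\text{cross})$, with remainders absorbed for $|\eta|+|\xi-\xi_*|$ small.

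The main obstacle, I expect, is not any single computation but the bookkeeping needed to justify that the $\mathcal{O}(\cdot)^2$ remainder in \eqref{weierstrassEmed} — which mixes $\eta^2$ and $(\xi-\xi_*)$ — does not spoil the sign of $\lambda_r$ in the degenerate direction where $\eta^2$ and $(\xi-\xi_*)^2$ are comparable. Concretely, one must show that after using $\Re(a_{(0,0,1)}/a_{(1,0,0)})=0$, the next real contribution in the $\xi$ direction is genuinely quadratic with the sign supplied by \eqref{mid_neutral}, so that the two-variable quadratic form in $(\eta,\xi-\xi_*)$ is negative definite in the $\Re(a_{(0,2,0)}/a_{(1,0,0)})>0$ case; this is where the hypothesis $\Re\partial_\xi^2\lambda(0,\xi_*)<0$ is essential and where I would be most careful. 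Everything else is a direct transcription of the Weierstrass/implicit-function machinery already used for the low-frequency indices, together with Corollary~\ref{evenineta} to guarantee the absence of odd powers of $\eta$.
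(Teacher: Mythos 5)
Your proposal is correct and coincides with the computation the paper intends but does not write out (the Proposition is stated immediately after the Weierstrass factorization \eqref{weierstrassEmed} with no proof supplied): since $W$ is monic and degree one in $\lambda-\lambda_*$, reading off the unique zero $\lambda(\eta,\xi)$, taking real parts, and invoking \eqref{mid_neutral} to kill the linear $(\xi-\xi_*)$ contribution and supply the strictly negative $(\xi-\xi_*)^2$ coefficient is exactly the intended route. The cross-term concern you flag is the right thing to check and resolves just as you sketch: setting $s=\eta^2\geq 0$, $\sigma=\xi-\xi_*$, one has $\Re\lambda(\eta,\xi)=-\alpha s-\beta\sigma^2+\mathcal{O}(s|\sigma|)+\mathcal{O}(s^2)+\mathcal{O}(|\sigma|^3)$ with $\alpha=\Re\bigl(a_{(0,2,0)}/a_{(1,0,0)}\bigr)$ and $\beta=-\tfrac{1}{2}\Re\partial_\xi^2\lambda(0,\xi_*)>0$, so the mixed term is absorbed into $-\alpha s$ once $|\sigma|<\alpha/(2|c|)$ and the remaining terms are strictly lower order, giving $\Re\lambda<0$ on a punctured neighborhood when $\alpha>0$; while simply setting $\sigma=0$ gives $\Re\lambda(\eta,\xi_*)=-\alpha\eta^2+o(\eta^2)>0$ for small $\eta\neq 0$ when $\alpha<0$.
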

Note that the coefficients $a_{(i,j,k)}$ in the expansion \eqref{expandEmed} are numerically computable by the Cauchy's differential formulas:
$$
a_{(i,j,k)}=\frac{1}{(2\pi \sqrt{-1})^2}\oint_{\partial B(0,r)}\oint_{\partial B(\lambda_*,r)}\frac{\partial_\xi^k\Delta(\lambda,\eta,\xi_*)}{(\lambda-\lambda_*)^{i+1}\eta^{j+1}k!}d\lambda d\eta,\quad \text{for $r$ sufficiently small.}
$$
Numerically, $\Re\frac{a_{(0,2,0)}}{a_{(1,0,0)}}(F)$ is negative for $F\in(4.28\ldots,4.57\ldots)$ and positive elsewhere on $(2,5)$. See FIGURE~\ref{fig:2dmedlocal} for plot of  $\Re\frac{a_{(0,2,0)}}{a_{(1,0,0)}}$ versus $F$. Therefore, planar roll waves with parameters $(F,H_-)$ on the 1d medium frequency stability boundary and with $F$ between $4.28\ldots$ and $4.57\ldots$ are 2d unstable (in the whole space). On the other hand, one could numerically solve for the spectrum $\lambda(\eta,\xi X)$ continued from the neutral one $\lambda(\eta_*,\xi_* X)$. In FIGURE~\ref{fig:spectrumf4_point4} left panel, we plot the surface $(\eta,\xi X,\Re\lambda(\eta,\xi X))$ for the parameter $(F,H_-)=(4.4,0.36450716\ldots)$ on the 1d medium frequency stability boundary. The spectrum is unstable inside two circular curves that are symmetric about the $\xi X$ axis. See FIGURE~\ref{fig:spectrumf4_point4} right panel.  Furthermore, we make use of the Matlab built-in function fminsearch 
to search for a local maximum point of the function $(\eta,\xi X)\rightarrow \Re\lambda(\eta,\xi X)$ and find 
that the function achieves local maximum $0.002695\ldots$ at $(\eta,\xi X)=(3.334\ldots,-2.100\ldots)$. 
\begin{figure}[htbp]
\begin{center}
\includegraphics[scale=0.4]{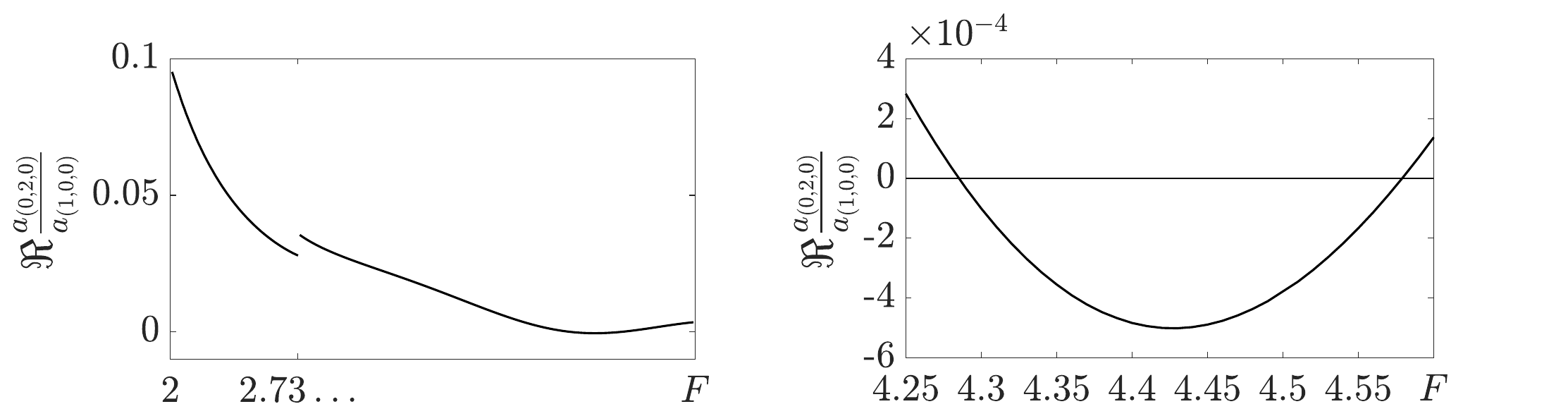}
\end{center}
\label{fig:2dmedlocal} 
\end{figure}
The numerics suggest there should be a 2d medium frequency stability boundary sitting for fixed $F$ above the 1d medium frequency stability boundary. On this 2d mid boundary, the global maximum of the function $\max_{j=1,2,3}\Re\lambda_j(\eta,\xi X)$ must be zero. The global maximum is not necessarily trackable from neutral 1d spectra by a steepest descent method,
and appears difficult to find numerically in a systematic way. Nonetheless, we search for a nearby neutral {\it local} maximum by the method.
Namely, we fix $F=4.4$ and perturb $H_-$ from the 1d mid-boundary to a critical value where the local maximum of $\Re\lambda(\eta,\xi X)$ is zero. 
We then vary $F$ and track a 2d local medium frequency stability boundary where the local maximum of $\Re\lambda(\eta,\xi X)$ is zero. It turns out that the boundary intersects the 1d medium frequency boundary at $F=4.09\ldots$ and $F=4.61\ldots$. Also the boundary sits very close to the 1d medium frequency stability boundary. For $F\in[4.09\ldots,4.61\ldots]$, the difference in $H_-$ is less than $10^{-6}$. We therefore compare the two boundaries in $F$ versus $X/H_s$ coordinates. See bottom left and right panels of FIGURE~\ref{fig:stabdiag_fig}.

\begin{figure}[htbp]
\begin{center}\includegraphics[scale=0.35]{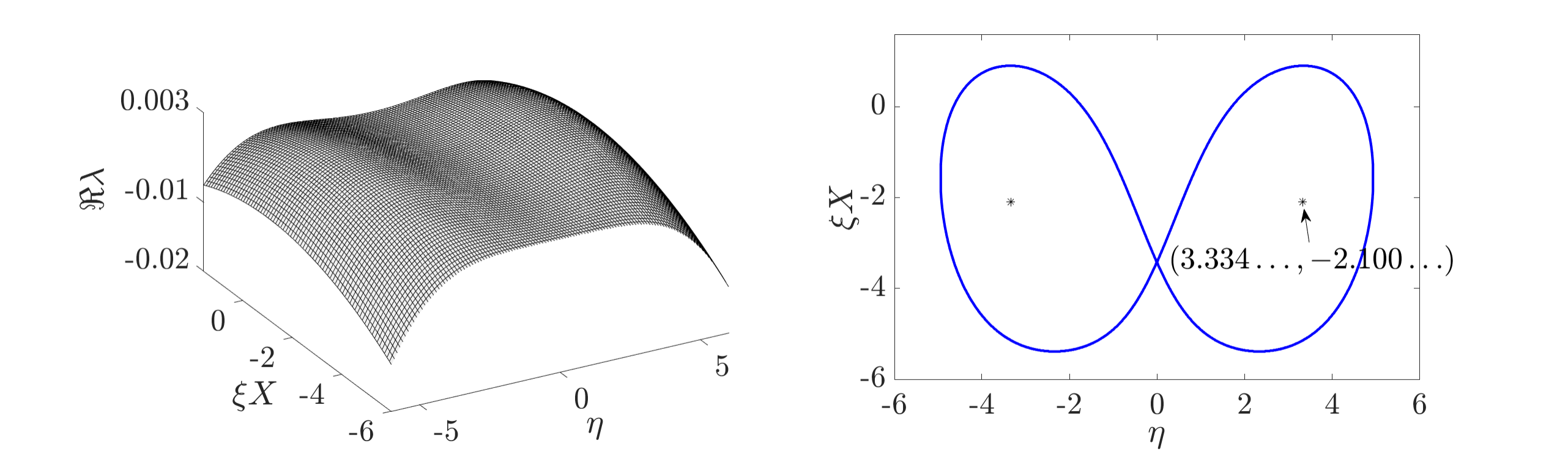}
\end{center}
    \caption{Visualization of spectra of planar roll waves in the whole space with $(F,H_-)=(4.4,0.3645071607\ldots)$ on the 1d medium frequency stability boundary. Left panel: plots of the surface $(\eta,\xi X,\Re\lambda(\eta,\xi X))$; Right panel: the level contour (blue curve) $\{(\eta,\xi X):\Re\lambda(\eta,\xi X)=0\}$. The bounded region inside the blue curve is where $\Re\lambda>0$. At $(\eta,\xi X)=(3.334\ldots,-2.100\ldots)$, $\Re\lambda(\eta,\xi X)$ achieves its local maximum $0.002695\ldots$. }
    \label{fig:spectrumf4_point4}
\end{figure}

\subsection{Channel flow and flapping bifurcation}\label{s:channel_flow_roll}
By the same arguments as in subsection~\ref{s:channel_flow_hydraulic} for the hydraulic shock case,
channel flow with wall-type boundary conditions in a channel with width $L$ leads for roll waves to the restrictions $\eta=n\pi/L$, $n\in \mathbb{N}$. 
Moreover, a complete basis for normal modes in the channel are given by a pure sine expansion for interior
variable $\hat p$ and pure cosine expansions for interior variables $\hat h$, $\hat q$, and front variable 
$\hat \psi$. See \eqref{fouriermodes}.

From this simple observation, we can draw very interesting conclusions regarding the generic form of bifurcations
resulting from transverse low-frequency instability as channel width $L$- here considered as a bifurcation 
parameter- is increased from the 1d limit $L=0$.
This generic behavior, that we have termed ``flapping bifurcation,'' could be considered as the
periodic analog of ``cellular instabilities'' described in \cite{Z7} for viscous traveling fronts.

Namely, consider a fixed roll wave with $(F,H_-)$ sitting in the 1d stable but 2d unstable region as depicted in FIGURE~\ref{fig:stabdiag_fig}, and suppose that high-frequency 
stability holds as we have numerically observed.  Then, there are no unstable normal modes for
the 1d case $\eta=0$, but there do exist unstable normal modes $\Re \lambda>0$ for
frequencies $(\lambda, \eta, \xi)$ with $\eta\neq 0$.  Define $\eta_*$ to be the supremum of $\eta$ among
such unstable frequencies. By boundedness of spectra, coming from the assumed high-frequency stability,
there is a sequence of normal mode frequencies converging to a limit $(i\tau_*, \eta_*, \xi_*)$ for
which the periodic Evans-Lopatinsky function vanishes, and this is thus a neutral normal mode.
Assume the generic situation that this is the unique set of normal mode frequencies involving $\eta_*$

Now, taking width $L$ as a bifurcation parameter consider the channel flow problem as width $L$ is 
increased from $L=0$.
As we have seen, this restricts $\eta$ to the frequencies $\eta_j= \pi j/L$.
In the limit as $L\to 0$, only the 1d case $ \eta=0$ remains; in particular, the wave is
stable for width-$L$ channel flow, by assumed 1d stability.
As $L$ increases, the minimum nonzero $\eta$ frequency $\eta_{1}=  \pi/L$ decreases,
so that eventually the wave becomes unstable as a solution of width-$L$ channel flow.
Evidently, the stability transition $L=L_*$ occurs for $L_*:=\pi/\eta_*$, where
$\eta_*$ is the above-defined supremum value of $\eta$ among nonstable normal mode frequencies,
with $(i\tau_*, \eta_*, \xi_*)$ the associated neutral frequencies.

We expect therefore, a bifurcation as $L$ increases past $L_*$ to a time- and space-oscillating solution
close to the normal mode associated with $(i\tau_*, \eta_*, \xi_*)$.
By \cite[eq. (1.8)]{JNRYZ} regarding structure of normal modes, 
this associated neutral normal mode should have front behavior
\be\label{nflap}
\psi_j(y,t)= \hat \psi e^{i\tau_* t}e^{i\eta_*y}e^{ij\xi_* X} + c.c.= |\hat{\psi}| \cos(\eta_* y + \tau_* t + j \xi_* X+{\rm arg}(\hat{\psi}) ),
\ee
where $c.c.$ denotes complex conjugate and ${\rm arg}$ stands for argument. Evaluating at the endpoints $y=0, L_*$, we have the $2\pi/\tau_*$--periodic function $\psi_j(y,\cdot)$ satisfies
$$
\psi_j(L_*,t)=-\psi_j(0,t),
$$
giving a ``flapping'' appearance to the movement of each front.  Meanwhile, successive fronts are translated
in phase by a constant increment $\xi_* X$, giving the cited appearance of a roll wave ``doing the wave.''
And, this is indeed what we see in our numerical time-evolution experiments.  See FIGURE \ref{fig:flapping} and FIGUREs \ref{fig:flappingf6_width18}--\ref{fig:flapping_time_plot}.

The analyses above were indeed motivated by time-evolution simulations for 2d channel roll waves where we stumbled upon ``flapping'' shock fronts instead of herringbone patterns, while we were initially looking for pattern formations due to transverse instability of roll waves with $(F,H_-)=(6,0.28)$ in the 1d stable but 2d unstable region in a channel with width $1$. See FIGURE~\ref{fig:flapping} for snapshots of the simulation.
\begin{figure}[htbp]
\begin{center}
\includegraphics[scale=0.6]{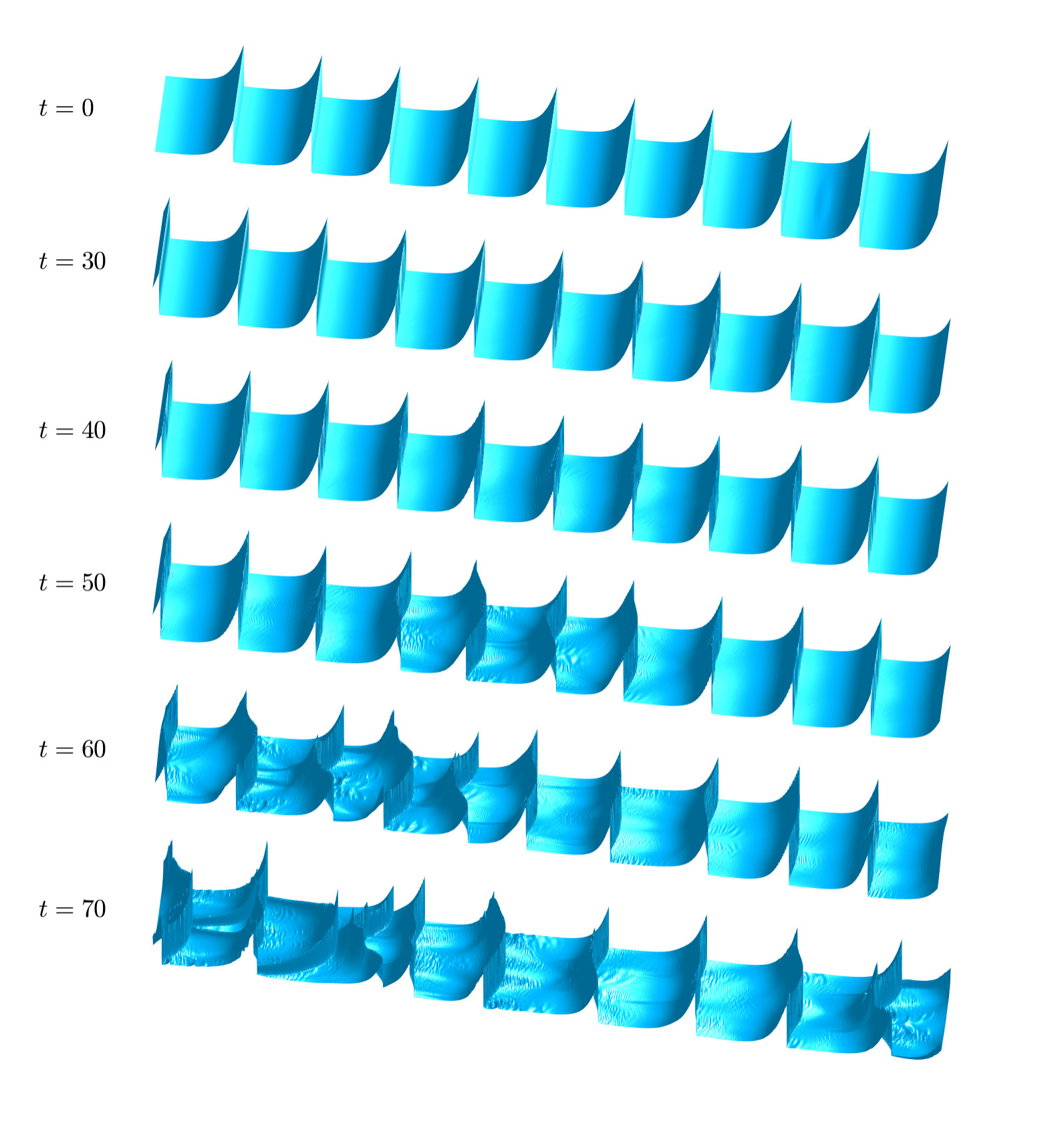}
\end{center}
    \caption{See \cite[roll\_width\_1.mp4]{YZmovie} for the full movie. Time-evolution in a channel with width $1$ and perturbed roll waves with $F=6$ and $H_-=0.28$ as the initial condition. At $t=30$, shock fronts start to vibrate. From $t=30$ to $t=60$, the amplitude of the flapping fronts increases. For $t>70$, waves become chaotic.}
    \label{fig:flapping}
\end{figure}

Numerically, one can use nonlinear system solvers, for instance fsolve available in MATLAB, to solve for roots $\lambda(\eta,\xi)$ of the periodic Evans-Lopatinsky function $E:\mathbb{C}\times \mathbb{R}\times\mathbb{R}\rightarrow \mathbb{C}$, $(\lambda,\eta,\xi)\rightarrow E(\lambda,\eta,\xi)$. Recalling Section~\ref{s:rollLF}, for $|\eta|, |\xi|\ll 1$, the function admits two roots $\lambda_1(\eta,\xi)$ and $\lambda_2(\eta,\xi)$ being also roots of the Weierstrass polynomial \eqref{weierstrassE}-\eqref{bcexpansion}, which help to supply initial guess of the roots. For  $|\eta|,|\xi|$ relatively large, roots of the function can be traced by continuation. See FIGURE~\ref{fig:spectrumf6} for plots of the surface $(\eta,\xi X,\max\{\Re\lambda_{1},\Re\lambda_{2}\}(\eta,\xi X))$ and the level contour $\{(\eta,\xi X):\max\{\Re\lambda_{1},\Re\lambda_{2}\}(\eta,\xi X)=0\}$ for $(F,H_-)=(6,0.28)$, from which we find $$(i\tau_*,\eta_*,\xi_* X)=(i3.201\ldots,19.37\ldots,0.3370\ldots).$$
\begin{figure}[htbp]
\begin{center}
\includegraphics[scale=0.35]{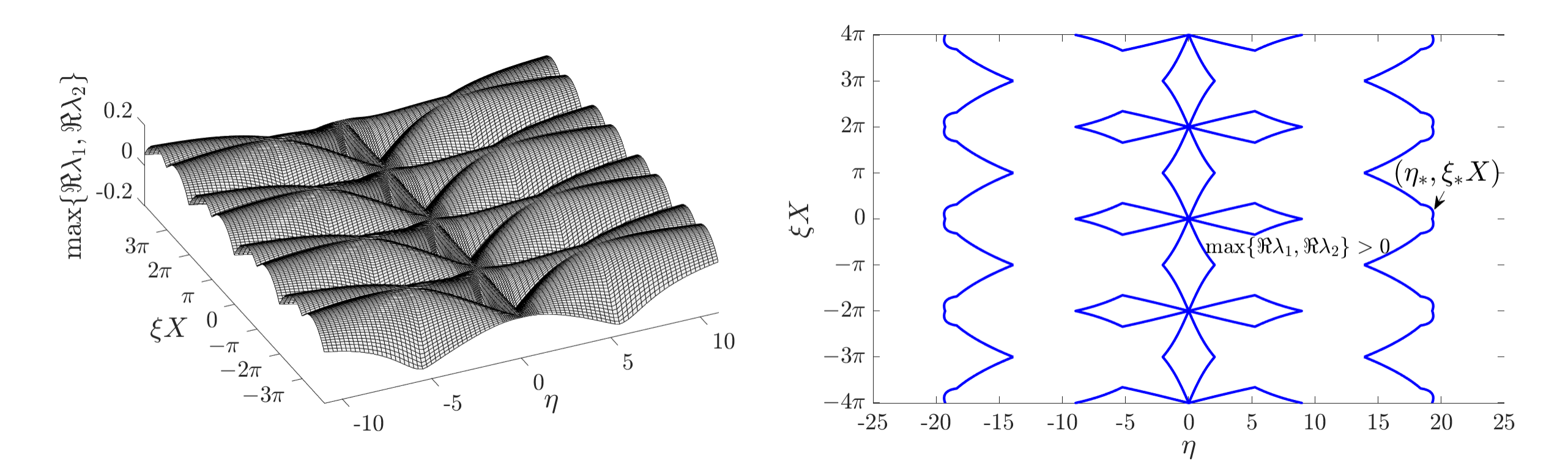}
\end{center}
    \caption{Visualization of spectra of planar roll waves in the whole space with $(F,H_-)=(6,0.28)$. Left panel: plots of the surface $(\eta,\xi X,\max\{\Re\lambda_{1},\Re\lambda_{2}\}(\eta,\xi X))$; Right panel: the level contour $\{(\eta,\xi X):\max\{\Re\lambda_{1},\Re\lambda_{2}\}(\eta,\xi X)=0\}$. The region where $\max\{\Re\lambda_{1},\Re\lambda_{2}\}$ is positive is indicated in the plot. On the blue curves are where $\max\{\Re\lambda_{1},\Re\lambda_{2}\}(\eta,\xi X)=0$, crossing which the function switches sign. }
    \label{fig:spectrumf6}
\end{figure}
We readily compute the critical channel width
$L_*=\pi/\eta_*=0.1621\ldots$ and the period of the flapping $T=2\pi/\tau_*=1.9628\ldots$.
Simulations were then made for $0.15$, $0.16$, $0.17$, $0.18$, $0.19$, and $0.2$ up to $t=1000$ and the movies with $L=0.18$ and greater exhibited flapping shock fronts. See \cite[roll\_width\_point15.mp4, roll\_width\_point16.mp4, roll\_width\_point17.mp4, roll\_width\_point18.mp4, roll\_width\_point18\_refined.mp4,  roll\_width\_point19.mp4, and roll\_width\_point2.mp4]{YZmovie} for the series of full movies. In particular, in \cite[roll\_width\_point18\_refined.mp4]{YZmovie}, shock fronts start flapping at about $t=120$. The flapping waves are persistent and do not become chaotic. See FIGURE~\ref{fig:flapping_time_plot} left panel for $\psi_2(0,t)$, namely the $2^{nd}$ shock front location with $y=0$ versus $t$. On the right panel of  FIGURE~\ref{fig:flapping_time_plot}, we read that the period of the flapping is $2.1$ (close to the prediction $1.9628\ldots$).  See FIGURE~\ref{fig:flappingf6_width18} for the first three flapping shock fronts at $t=350$, $350.2$,  $350.4$,  $350.6$,  $350.8$. Because the small-amplitude flapping shock fronts can be barely seen in the snapshots, we therefore also plot the shock fronts in the $x-y$ plane. See FIGURE~\ref{fig:flappingf6_wid18}. For $L=0.18$ and $L=0.19$, shock fronts start flapping at about $t=220$ \footnote{Compared with  \cite[roll\_width\_point18\_refined.mp4]{YZmovie}, The movie \cite[roll\_width\_point18.mp4]{YZmovie} is simulated in a longer channel with length of ten periods of the roll waves, whence shock fronts start flapping at a different time.} and $100$. The flapping waves are persistent and do not become chaotic. See \cite[roll\_width\_point18.mp4, roll\_width\_point19.mp4]{YZmovie}. For $L=0.2$, the flapping waves are also unstable, transitioning to chaotic flow at $t\approx 350$. See \cite[roll\_width\_point2.mp4]{YZmovie}.

\smallskip

\noindent\textbf{Periodic y-boundary condition.} For roll waves with $(F,H_-)=(6,0.28)$, Lemma~\ref{lem:periodic} together with our former numerics implies that the critical channel width of channels with periodic $y$-boundary condition will be $L_{*,\text{per}}=2L_*=0.3242\ldots$. Changing to periodic $y$-boundary condition, we predict that stability shall be recovered for roll waves in a channel with width $0.18$ and flapping shock fronts shall exhibit again if the channel width is doubled to $0.36$. Simulations \cite[roll\_width\_point18\_periodic.mp4, roll\_width\_point36\_periodic.mp4]{YZmovie} are consistent with our predictions.

\begin{figure}[htbp]
\begin{center}
\includegraphics[scale=0.4]{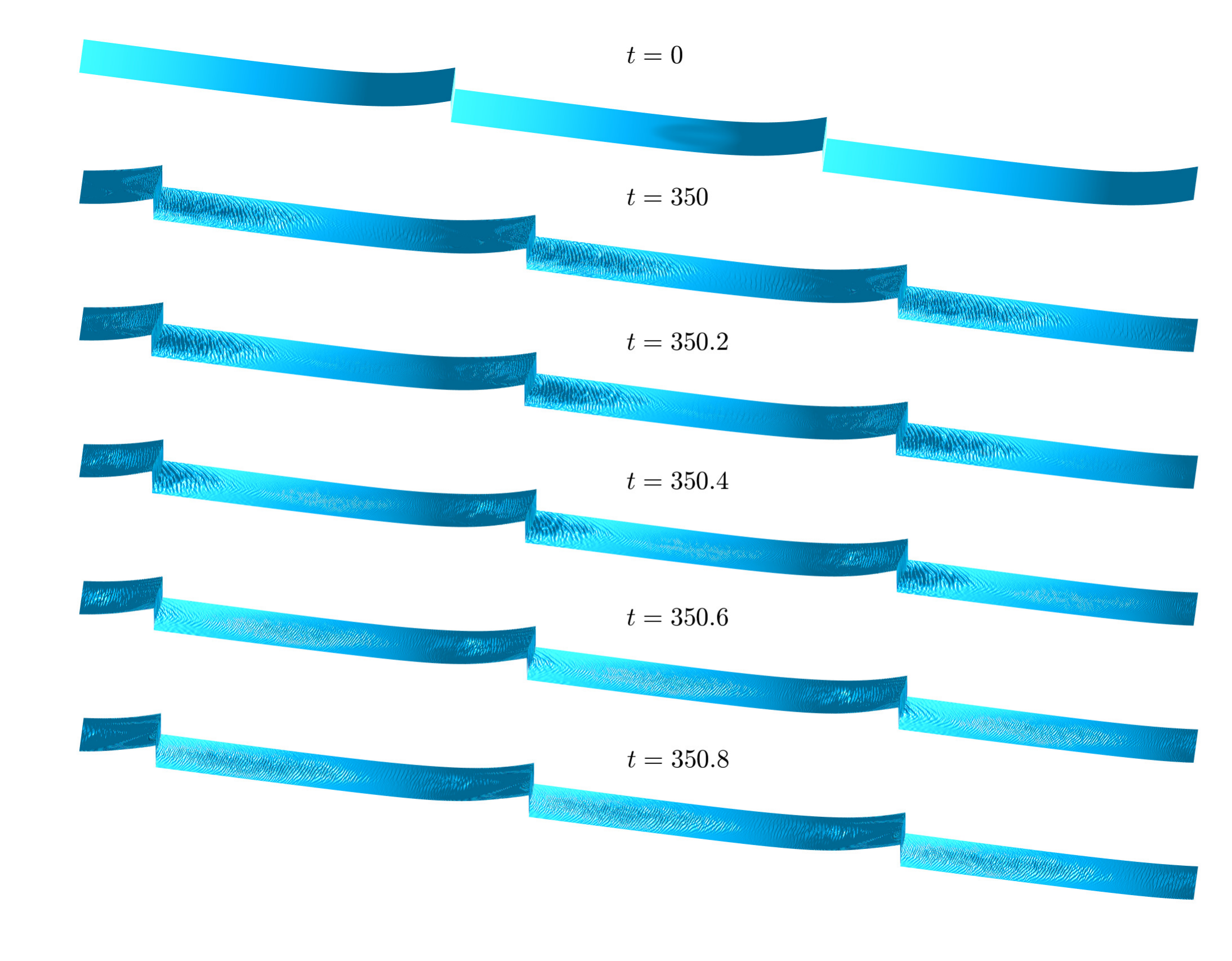}
\end{center}
\caption{See \cite[roll\_width\_point18\_refined.mp4]{YZmovie} for the full movies. Time-evolution in a channel with width $0.18$ and perturbed roll waves with $F=6$ and $H_-=0.28$ as the initial condition. We display the persistent flapping shock fronts at $t=350$, $350.2$, $350.4$, $350.6$, $350.8$.}
\label{fig:flappingf6_width18}
\end{figure}

\begin{figure}[htbp]
\begin{center}
\includegraphics[scale=0.31]{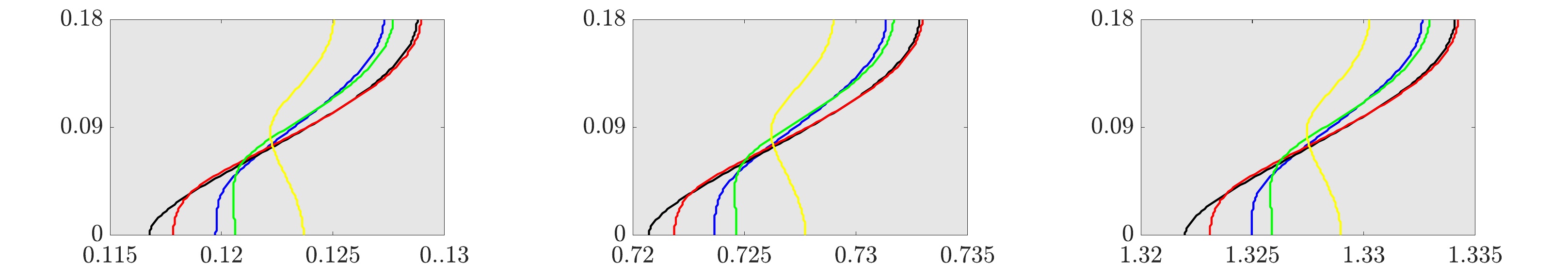}
\end{center}
\caption{Visualization of flapping shock fronts of channel roll waves with $(F,H_-)=(6,0.28)$ and $L=0.18$. On the left, middle, and right panels, there display consecutive three shock fronts at time $t=350$ (blue), $t=350.2$ (black), $t=350.4$ (red), $t=350.6$ (green), and $t=350.8$ (yellow).}
    \label{fig:flappingf6_wid18}
\end{figure}

\begin{figure}[htbp]
\begin{center}
\includegraphics[scale=0.33]{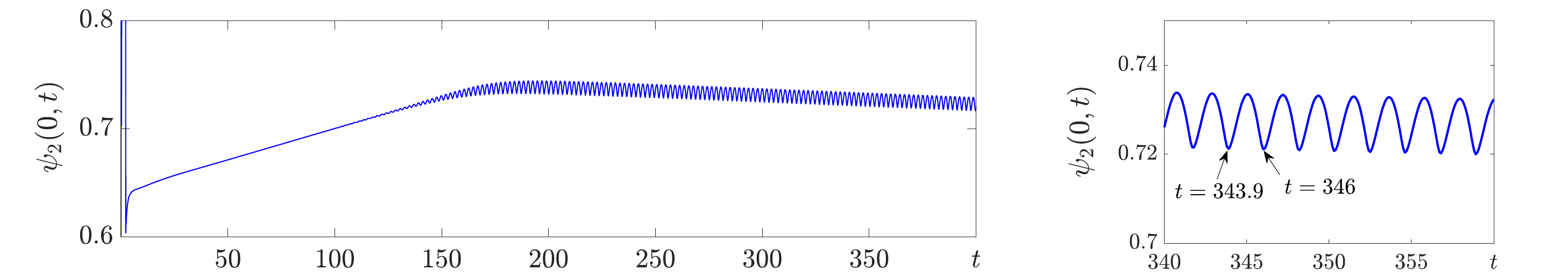}
\end{center}
\caption{Visualization of flapping shock fronts of channel roll waves with $(F,H_-)=(6,0.28)$ and $L=0.18$. Left panel: the $2^{nd}$ shock front with $y=0$ versus $t$; Right panel blowup at the time interval $[340,360]$. }
\label{fig:flapping_time_plot}
\end{figure}
\subsubsection{Stability boundaries for channel roll waves}\label{s:bd_channel_roll}
Associated to a roll wave with $(F,H_-)$ sitting in the 1d stable but 2d unstable region, our foregoing analysis defines a critical frequency $\eta_*$ and a critical channel width $L_*$ by
\be \label{defetahstar}
\eta_*=\sup\{\eta:E(\lambda,\eta,\xi)=0,\Re\lambda>0\}\quad\text{and}\quad L_*=\frac{\pi}{\eta_*}.
\ee 
Treating $L_*$ ($\eta_*$) as function of $(F,H_-)$, which can be numerically computed by the method used for FIGURE~\ref{fig:spectrumf6}, the former consistency between theoretical predictions and numerical time-evolution experiments suggests that one can, without doing time-expensive evolutions, predict the stability and behavior of roll waves in a channel with width $L$. That is if $L<L_*(F,H_-)$, the channel roll waves are stable. As $L$ is increased beyond $L_*(F,H_-)$ the roll waves become unstable, and we see a bifurcation to flapping waves. See FIGURE~\ref{fig:flappingf6_width18}. For $L$ still larger, the flapping waves also become unstable, transitioning to apparently chaotic flow.  See FIGURE~\ref{fig:flapping}. Since the stability for roll waves in the whole space implies the stability for roll waves in a channel with arbitrary channel width, we obtain $L_*(F,H_-)=+\infty$ for $(F,H_-)$ sitting in the 2d stable region. For any $(F,H_-)$ sitting in the interior of the complement of the 2d stable region, $L_*(F,H_-)$ is thus finite and, by assumed high-frequency stability, greater than zero. By fixing $L_*$ to be some constant $c\in \mathbb{R}^+$, there we can compute a curve (curves) where roll waves have associated critical channel width $L_*=c$. Knowing such curves parameterized by $c$ is very useful: for we know not only $c$ is the critical channel width $L_*$ associated to roll waves with parameters on the curve(s), but also the critical channel width $L_*$ must be greater than $c$ for roll waves with parameter on one side of the curve(s) and less than $c$ on the other side. Because we already know where the region $L_*=+\infty$ sits, it will be easy to determine the side on which $L_*>c$ ($L_*<c$). The curve(s) on which $L_*=c$ then sub-divides the 1d stable region into sub-region(s). Consider roll waves in a channel with width $L$. The condition $L<c$ implies stability in the sub-region(s) which contains the region where $L_*=+\infty$ and the condition $L>c$ implies instability in the sub-region(s) which does not contain the region where $L_*=+\infty$. We thus refer to the curve(s) where $L_*=c$ as the stability boundary(ies) for channel roll waves with critical channel width $L_*=c$. In FIGURE~\ref{fig:stabdiag_fig}, we display the stability boundaries for channel roll waves with $L_*=1$, $\frac{1}{3}$, and $0.1$. These curves intersect the 1d low-frequency stability boundary at $F=3.83\ldots$, $4.88\ldots$, and $6.97\ldots$ and the 1d medium-frequency stability boundary at $F=4.96\ldots$, $5.29\ldots$, and $6.72\ldots$, respectively.

\subsection{Whitham modulation equations and the role of oblique waves}\label{s:rollformal} 
Finally, we return briefly to the whole-space setting to discuss the interesting phenomenon
of {\it oblique waves}, and their connection to low-frequency stability.
By {oblique waves}, we mean planar traveling-wave profiles 
$(h,u,v)(x,y,t)= (\bar h, \bar u, \bar v)(\cos \theta x+ \sin \theta y -st )$
in directions other than the $x$-axis, i.e., for $\theta\neq 0,\pi$.
Evidently, these cannot exist in channels unless $v\equiv 0$, which can be seen to force $\theta=0,\pi$,
the standard 1d case.
Thus, neither oblique waves- nor, for that matter, low-frequency stability, since it involves
frequencies excluded by discrete Fourier transform- directly impact channel flow.

However, as discussed in Appendix \ref{s:oblique}, there appears to exist a smooth family of such waves 
in the whole-space setting, and these may be linked to low-frequency stability by formal Whitham
approximation \cite{Wh,JNRZ4} as we now discuss.
Moreover, even though low-frequency stability does not explicitly appear in stability for channel flow,
it can serve as a useful organizing center in the background.

\smallskip
\noindent\textbf{Whitham modulation equations.}
There is in the periodic case a low-frequency approximation analogous to that of Lemma \ref{ZSlem}
in the hydraulic shock case, reflecting the idea that again, fine structure is not ``seen'' at large space-time
scales, but instead leads to homogenized or ``averaged'' dynamics: namely, the modulation expansion of
Whitham \cite{Wh}. 
See \cite{Se,OZ3,JNRZ4} for general derivations and rigorous connection to low-frequency spectra;
in the specific context of roll waves, see the 1d treatment of \cite{JNRYZ}.
These take the form
\ba\label{modeq}
\mathcal{F}_0(M,K)_t + \mathcal{ F}_1(M,K)_x + \mathcal{ F}_2(M,K)_y&=0,\\
K_t + \nabla_{x,y} \Omega(K)=0,
\ea
augmented with constraint ${\rm curl}(K)\equiv 0$, 
where $K$ denotes spatial (vector) wavenumber and $\Omega$ temporal wavenumber of
planar periodic solution $\bar u^{M,K}(K\cdot x + \Omega t)$, parametrized locally by $M\in \R^m$, where $m$
is nullity of $R$; $\Omega=\Omega(K,N)$ denotes the nonlinear dispersion
relation given by the traveling-wave existence problem; and $\mathcal{F}_j$ are the averages over one 
$\mathcal{F}_j$ is the average over one period of the conservative part of flux $F_j(\bar u^{M,K})$;
without loss of generality $\mathcal{F}_0=M$.

In this framework, general solutions are approximated by periodic traveling-waves $\bar u^{(M,K)(x,t)}$
with parameters $(M,K)$ slowly varying compared to the size of the period $X$.
Thus, a single planar roll wave solution corresponds to a {\it constant-parameter} solution
$(M,K)\equiv  (M_0,K_0)$,
and the linearization of \eqref{modeq} about $(M_0,K_0)$ 
describes the approximate evolution of parameters $(M,K)$ under perturbation.
Taking without loss of generality $K_0=(1,0)^T$, and observing by isotropy that 
$\mathcal{F}_j= K_j h(M,r)$, $j=1,2$, and $\Omega=\omega(M,r)$, with $\kappa:=|K|$ gives linearized equations
\ba\label{linmod}
\bp M\\K_1\\K_2\ep_t + 
\bp \overline{h_M} & \overline{h_\kappa}+ \overline{h} & 0\\ \overline{\omega_M} & \overline{\omega_\kappa}& 0\\
 0&0&0 \ep \bp M\\K_1\\K_2\ep_x + 
\bp 0 & 0 & \overline{h} \\ 0&0&0 \\ 
\overline{\omega_M} & \overline{\omega_\kappa}& 0 \ep \bp M\\K_1\\K_2\ep_y= \bp 0\\0\\0\ep,
\ea
where barred entries are evaluated at base state  $(M_0,K_0)$, or $W_t+B_1W_x+B_2W_x=0$.

Defining the characteristic determinant 
\be\label{epsdef}
\eps(\lambda, \eta, \xi):=\lambda^{-1}\det(\lambda \Id + i\eta B_1 + i\eta B_2),
\ee
we have that \eqref{linmod} is well-posed, i.e., {\it hyperbolic}, under the constraint ${\rm curl}(K)=0$
if and only if zeros $\lambda$ of $\eps(\cdot, \xi, \eta)$
are purely imaginary for all $\xi,\eta\in \R$. 
Here, term $\lambda^{-1}$ factors out the spurious root $\lambda=0$ coming from curl equation $(d/dt){\rm curl}(K)_t=0$.
Based on \cite{Se,OZ3,JNRYZ}, we expect the following analog of Lemma \ref{ZSlem}.

\smallskip

\noindent{\bf Conjecture:} For $|(\lambda, \eta,\xi)|$ sufficiently small, and some fixed real $\gamma\neq 0$,
\be\label{pZS}
E(\lambda,\eta,\xi)=\gamma \eps(\lambda, \eta,\xi) + \mathcal{o}(|(\lambda, \eta, \xi)|).
\ee
Under mild assumptions on $\eps(\cdot)$- valid for \eqref{sv_intro}- 
this would give agreement to linear order of zeros of $E$ and $\eps$,
rigorously justifying the formal stability requirement \cite{Wh} of hyperbolicity of \eqref{linmod}.

\smallskip

We leave the proof of this conjecture as an interesting direction fo future investigation.
For the present work, with its emphasis on hydraulic engineering channel flow,
this represents a side direction outside our main scope- as indeed are oblique waves themselves.

\smallskip

\noindent\textbf{Low-frequency stability as organizing center.}
As seen in Section \ref{s:channel_flow_roll}, stability transitions for channel flow, having to do inherently
with medium and large frequencies, are difficult to predict analytically and seem to require numerical 
resolution.
On the other hand, these large/medium-scale instabilities can be thought of as growing from low-frequency
instabilities for the whole-space problem, detectable by Taylor expansion as in Section \ref{s:rollLF}.
In this sense, the low-frequency analysis, and associated whole-space stability transition,
serves as an organizing center for bifurcations in the family of channels with all possible widths $L$.
Recall that whole-space stability transitions were observed numerically to occur precisely through
low-frequency instability at the origin.

Moreover, the analysis of Section \ref{s:channel_flow_roll} for channel flow, based on
numerical computation, could in the whole space be converted in principle to a rigorous description of behavior,
in the limit $(F,H_-)\to $ 2d stable region and $(F,H_-)\in$ the complement of 2d stable region with respect to 1d stable region,
for which the set of unstable spectrum is vanishingly small.
These can therefore be computed approximately from the Taylor series expansion about the origin of the 
periodic Evans-Lopatinsky function, as carried out in Section \ref{s:rollLF}.
This, also, is left to the future.

\appendix
\numberwithin{equation}{section}
\section{Existence of oblique roll waves}\label{s:oblique}
We consider here briefly the question, so far as we know not previously investigated,
of existence of {\it oblique waves} for \eqref{sv_intro}, 
that is, of planar traveling-wave profiles {\it in the whole space}
\be\label{oblique_prof}
(h,u,v)(x,y,t)= (\bar h, \bar u, \bar v)(k_1 x+ k_2 y + \omega t)
\ee
moving in transverse directions, i.e., for $k_2\neq 0$. 
Evidently, these cannot exist in channels unless $v\equiv 0$, which can be seen to force $(k_1,k_2,\omega)=(1,0,-s)$,
the standard normal case. We will focus on periodic (in the direction of wave propagation) solutions, referred to as oblique roll waves, while noting that part of the analyses also apply to oblique hydraulic shock profiles.

\smallskip

\noindent\textbf{Equations.}
By rescaling the temporal and spatial variables, one may, without loss of generality, take $(k_1,k_2,\omega)=(\cos \theta , \sin \theta,  -s)$. Performing change of variables and coordinates
$$
\begin{aligned}
h(t,x,y)\rightarrow &h(t,\cos(\theta)x+\sin(\theta)y,-\sin(\theta)x+\cos(\theta)y),\\
u(t,x,y)\rightarrow& \cos(\theta)u(t,\cos(\theta)x+\sin(\theta)y,-\sin(\theta)x+\cos(\theta)y)
\\&+\sin(\theta)v(t,\cos(\theta)x+\sin(\theta)y,-\sin(\theta)x+\cos(\theta)y),\\
v(t,x,y)\rightarrow& -\sin(\theta)u(t,\cos(\theta)x+\sin(\theta)y,-\sin(\theta)x+\cos(\theta)y)\\&+\cos(\theta)v(t,\cos(\theta)x+\sin(\theta)y,-\sin(\theta)x+\cos(\theta)y),
\end{aligned}
$$
yields
\ba\label{sv_oblique}
\d_th+\d_x (hu) +\d_y(hv) &=0, \\
\d_t (hu) +\d_{x}\left(hu^2 +h^2/2F^2 \right) + \d_y(huv) & =\cos(\theta)h - \sqrt{u^2+v^2} \,u , 
\\
\d_t (hv) + \d_x( huv) + \d_{y}\left(hv^2 +h^2/2F^2 \right) & =-\sin(\theta)h -\sqrt{u^2+v^2} \,v .
\ea

\noindent\textbf{Profile ODEs and Rankine-Hugoniot conditions.}
Substituting $$(h,u,v)(x,y,t)=(\bar{h},\bar{u},\bar{v})(x-s)$$ into \eqref{sv_oblique} and dropping bars for ease in writing, we obtain the profile
ODEs
\ba\label{profeq}
\Big(-sh+hu\Big)' &=0, \\
\Big(  -shu +hu^2 +h^2/2F^2 \Big)' & =\cos(\theta)h - \sqrt{u^2+v^2} \,u , 
\\
\Big(-shv+ huv \Big)'& = -\sin(\theta) h-\sqrt{u^2+v^2} \,v , 
\ea
or, setting
\be\label{q0}
-q_0:= h(u-s),
\ee
the reduced ODEs
\ba \label{uvode_sub}
-q_0\left(1 + \frac{q_0}{F^2(u - s)^3}\right)u'  & =-\cos(\theta)\frac{q_0}{u-s} - \sqrt{u^2+v^2} \,u , \\
-q_0v'& = \sin(\theta)\frac{q_0}{u-s}-\sqrt{u^2+v^2} \,v ,
\ea 
with $h=-q_0/(u-s)$. The Rankine-Hugoniot conditions read
\be \label{uvRH}
\left[\frac{ 2F^2u^3-4F^2su^2+2F^2s^2u - q_0}{(u-s)^2}\right]_x=0\quad \text{and}\quad  [v]_x=0,
\ee
where $[f]_x=f(x^+)-f(x^-)$ denotes the jump at a shock $x$. The latter equation of \eqref{uvRH} implies that $v(x)$ is continuous and periodic on $\mathbb{R}$.

\smallskip

\noindent\textbf{Normal case.} In the normal case ($\theta=0$), the reduced equations \eqref{uvode_sub} become
\ba\label{nsimp}
-q_0\left(1 + \frac{q_0}{F^2(u - s)^3}\right)u' & =-\frac{q_0}{u-s} - \sqrt{u^2+v^2} \,u , \\
-q_0 v'  &= -\sqrt{u^2+v^2} \,v .
\ea
If $v$ is nonzero somewhere, it is nonzero everywhere, whence it is strictly monotone on $\mathbb{R}$, contradicting the its periodicity. Thus, $v\equiv 0$, reducing to the 1d case.

Note that this is what we expect and assume, but not immediately obvious.  And we have not assumed that $v=0$
from the start, so that we see for initial parameters $(h_0,u_0,v_0)$ for the ODE, not all of them connect to
some viable Lax shock, but only a co-dimension one family.

\smallskip

\noindent\textbf{Orthogonal case.} In the orthogonal case ($\theta=\pi/2$), the equations \eqref{uvode_sub} become, rather,
\ba\label{osimp}
-q_0\left(1 + \frac{q_0}{F^2(u - s)^3}\right)u' & = - \sqrt{u^2+v^2} \,u , \\
-q_0 v'  &=\frac{q_0}{u-s} -\sqrt{u^2+v^2} \,v .
\ea
By Lax characteristic conditions, in one period of the periodic wave, there is a point, referred to as the ``sonic point'', where one of the characteristic speeds is equal to the speed of the wave $s$, making the ODE system \eqref{osimp} degenerate. One may readily compute that the coefficient of the left hand side of the $u$ equation vanishes at a unique sonic point
$u=u_s= s-\sqrt[3]{q_0/F^2}$. On the other hand, the right hand side of the equation vanishes only at $u=0$,
so that there is no continuous solution crossing $u_s$, provided that $u_s\neq 0$. From these simple considerations, we see that {\it orthogonal traveling waves do not exist.}

\smallskip

\noindent\textbf{General case.} To analyze the general case, we first perform a rescaling, analogous to \cite[eq. (2.10)]{JNRYZ}, 
$$
u(\cdot)\rightarrow u_s u(\frac{1}{u_s^2}\times \cdot) ,\quad v(\cdot)\rightarrow u_s v(\frac{1}{u_s^2}\times \cdot),\quad q_0\rightarrow u_s^3 q_0, \quad s\rightarrow u_s s,
$$
so that, at the sonic point $x_s=0$, $u_s=1$. At the sonic point, both the left and right hand sides of \eqref{uvode_sub}[i] must vanish, giving 
$$
q_0=F^2(s-1)^3\quad \text{and}\quad  \cos(\theta)\frac{q_0}{s-1}-\sqrt{1+v_s^2}=0,
$$
from which we obtain
\be \label{seq}
s=1\pm \frac{(1+v_s^2)^{\tfrac{1}{4}}}{F\sqrt{\cos(\theta)}}\quad \text{and} \quad q_0=\pm\frac{(1+v_s^2)^{\tfrac{3}{4}}}{F\sqrt{\cos(\theta)^{3}}}.
\ee 
In the normal case, recall $\theta=0$ and $v_s=0$, giving
\be \label{theta0}
s=1\pm \frac{1}{F}\quad\text{and}\quad  q_0=\pm\frac{1}{F}.
\ee
Note that Dressler roll waves take ``+'' sign above, i.e., there always holds $u<s$. When $\theta\neq 0$, we claim that $v\neq 0$ along the profile. To prove the claim, recall that $h=-q_0/(u-s)>0$ and $v(x)$ is periodic and continuous on $\mathbb{R}$. When $v=0$, $v'=-\sin(\theta)/(u-s)$ is sign-definite. The variable $v$ cannot continuously return to $0$ once it leaves $0$. This completes the proof. Also, since $v$ returns to its initial value in one period, the right hand side of \eqref{uvode_sub}[ii] must vanish at some point, yielding that the sign of $\sin(\theta)$ must be opposite to that of $v$. 

\smallskip

\noindent\textbf{Parameters.} Noting that \eqref{uvode_sub} is invariant under 
$$u\rightarrow u, \quad v\rightarrow -v,\quad \text{and} \quad \theta\rightarrow -\theta,$$ we assume, without loss of generality, $\sin(\theta)<0$ and $v>0$. From \eqref{seq}, we see there are three parameters $(F,\theta,v_s)$ with 
\be
\label{thetadomain}
-\frac{\pi}{2}<\theta< 0.
\ee 
The choice of the parameters is subject to some other hidden conditions, e.g., $u'(x_s)$, being a root of the quadratic equation
\be \label{uprime}
3F^2(s - 1)^5 x^2+ \left(\frac{s - 1}{F^2\cos(\theta)} - F^2\cos(\theta)(s - 1)^4 + F^2\cos(\theta)(s - 1)^5\right)x+\frac{v_s(\tan(\theta) + v_s)}{F^2}=0,
\ee 
must be real and others required by Rankine-Hugoniot conditions \eqref{uvRH}, which we study (numerically) below.

\smallskip

\noindent\textbf{Constant oblique roll waves}. For constant oblique roll waves, the profile $(h,u,v)$ holds at constant level $(h,u,v)=(h_s=\sqrt{1+v_s^2}/\cos(\theta),1,v_s)$, yielding 
$$
0=-q_0v'=-\sin(\theta)F^2(s-1)^2-\sqrt{1+v_s^2}v_s=-\sqrt{1+v_s^2}\left(\tan(\theta)+v_s\right), \quad \text{or} \quad v_s=-\tan(\theta).
$$
Perturbing from the constant roll waves, i.e., perturbing $v_s$ away from $-\tan(\theta)$, we now numerically compute oblique roll waves of small amplitude.

\smallskip

\noindent\textbf{Hybrid method}. Local to the sonic point, we take 
$$
u(x)=1+u_1x+u_2x^2+\cdots\quad \text{and}\quad v(x)=v_s+v_1x+v_2x^2+\cdots.
$$
Away from the sonic point, we evolve the system \eqref{uvode_sub} to some $x_+>0$ and $x_-<0$ so that the Rankine-Hugoniot equations \eqref{uvRH} are satisfied.
For $|v_s+\tan(\theta)|$ small and 
$$
s=1+ \frac{(1+v_s^2)^{\tfrac{1}{4}}}{F\sqrt{\cos(\theta)}},
$$
numerically we find if $v_s<-\tan(\theta)$, or equivalently $v_1=v'(0)<0$, there is NO nearby oblique roll waves for there exists no $(x_-,x_+)$ such that \eqref{uvRH} are satisfied. And, for $-\pi/2<\theta_{\text{min}}<\theta<0$, if $-\tan(\theta)<v_s<v_{s,\text{max}}$, or equivalently $0<v_1=v'(0)<v_{1,\text{max}}$, there is a nearby oblique roll wave. To exemplify oblique roll waves, for $F=2.5$, $\theta=-0.1$, we find $v_{s,\text{max}}=\tan(0.1)+0.000526\ldots$. In FIGURE~\ref{oblique_orbits}, we show, in the left panel, the orbit of oblique roll waves with $$(F,\theta,v_s)=(2.5,-0.1,\tan(0.1)+0.0004),$$
and, in the middle panel, a non-example with $(F,\theta,v_s)=(2.5,-0.1,\tan(0.1)-0.0004)$. In the non-example, for $x_-<0$, though one can find $x_+(x_-)>0$ such that the Rankine-Hugoniot condition for $v$ \eqref{uvRH}[ii] $v(x_-)=v(x_+(x_-))$ is satisfied, it is numerically found that the Rankine-Hugoniot condition for $u$ \eqref{uvRH}[i] is not satisfied at any $(x_-<0,x_+(x_-))$. The transition from non-existence to existence happens at $v_s=-\tan(\theta)$ where $v'(0)=v_1=0$. See FIGURE~\ref{oblique_orbits} right panel for blowup of the example and non-example near the sonic point, showing change of sign of $v'(0)$.

\begin{figure}[htbp]
\begin{center}
\includegraphics[scale=0.4]{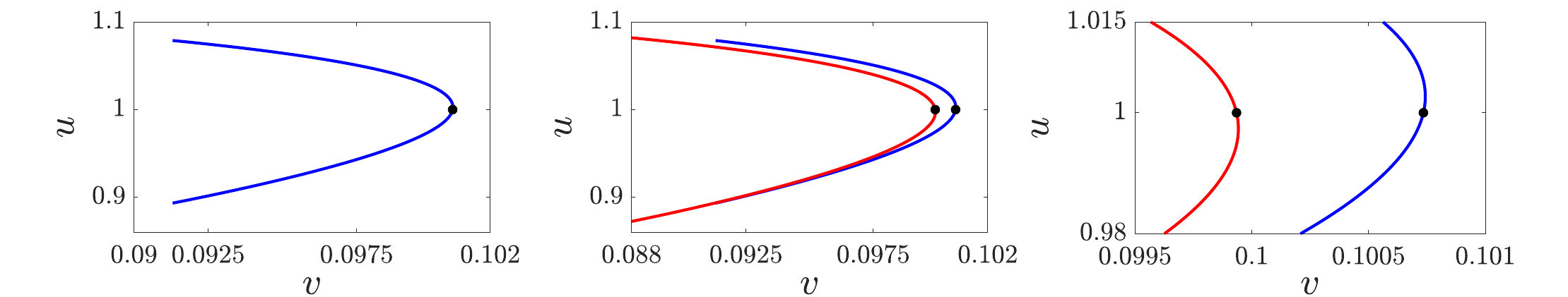}
\end{center}
\caption{Left panel: an example of oblique roll waves with $(F,\theta,v_s)=(2.5,-0.1,\tan(0.1)+0.0004)$. The part of the orbit of \eqref{uvode_sub} satisfying Rankine-Hugoniot conditions \eqref{uvRH} at end points is plotted in blue curve with sonic point marked by a black solid dot. Middle panel: a non-example orbit of \eqref{uvode_sub} with $(F,\theta,v_s)=(2.5,-0.1,\tan(0.1)-0.0004)$ is plotted in red curve with sonic point marked by a black solid dot. Right panel: blowup of the middle panel near the sonic point. At the sonic point, in the non-example $v'$ is negative and in the example $v'$ is positive.}
\label{oblique_orbits}
\end{figure}
\br\label{existrmk}
One should be able to carry out a rigorous analytic proof of existence for small (nearly constant) or
approximately normal ($\theta$ near zero) waves by a more detailed Melnikov analysis of the profile ODEs.
However, we shall not pursue this direction here.
\er

\smallskip

\noindent\textbf{Numerical time-evolution.} Setting the RHS of \eqref{sv_oblique} zeros, we obtain that an equilibrium state takes the form $(h,u,v)=(h,\sqrt{h}\cos(\theta),-\sqrt{h}\sin(\theta))$. We make a simulation of \eqref{sv_oblique} with $(F,\theta)=(2.5,-0.1)$ and the initial data \be \label{oblique_initial}(h,u,v)=(1+e^{-\frac{0.2}{0.25-(x-4)^2-(y-0.5)^2}}\mathbbm{1}_{(x-4)^2+(y-0.5)^2<0.25},\cos(0.1),\sin(0.1)).
\ee 
See FIGURE~\ref{oblique_profiles} bottom panels for a snapshot of the simulation (at $t=800$, at $y=0$ for $x\in[124.7,137.2]$). The period of the simulated waves is approximately $4.1$. By adjusting $v_s$, we find that oblique roll waves with $(F,\theta)=(2.5,-0.1)$ achieves a period of $4.1$ for $v_s=\tan(0.1)+0.0005255$. See FIGURE~\ref{oblique_profiles} top panels for oblique roll waves with $(F,\theta,v_s)=(2.5,-0.1,\tan(0.1)+0.0005255)$. Comparing top and bottom panels, we find the simulated waves resemble the profile of oblique roll waves in shape.
\begin{figure}[htbp]
\begin{center}
\includegraphics[scale=0.4]{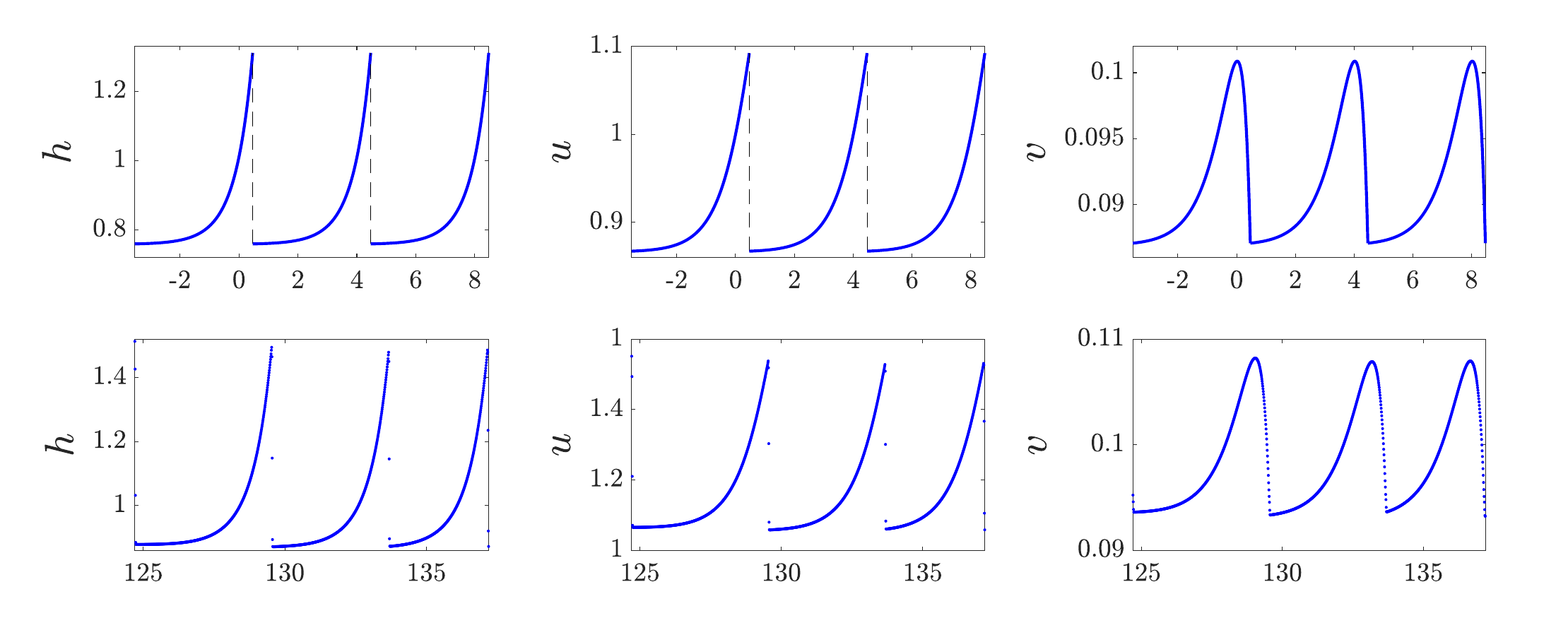}
\end{center}
\caption{Top three panels: the profile of oblique roll waves with $(F,\theta,v_s)=(2.5,-0.1,\tan(0.1)+0.0005255)$. Bottom three panels: simulation with $(F,\theta)=(2.5,-0.1)$ and the initial data \eqref{oblique_initial} in a channel of length $200$ and width $1$ with periodic $x$- and $y$- boundary conditions.}
\label{oblique_profiles}
\end{figure}

\section{Computation of the Evans functions}\label{Evans_numeric}
\noindent{\bf Discontinuous hydraulic shock profile.} To compute the Evans-Lopatinsky determinant \eqref{EL} effectively, we reformulate it by a {\bf dual formulation} \cite{PW,BSZ,HuZ1}. Write the two solutions $\mathcal{W}_j^-$, $j=1,2$ of the interior equations \eqref{zgeig_weight}[i] that decay exponentially as $x\to -\infty$ as $\mathcal{W}_j^-=[w_{1j}\;w_{2j}\;w_{3j}]^T$ and denote their wedge product as $\mathcal{W}_1^-\wedge\mathcal{W}^-_2=c_1e_1+c_2e_2+c_3e_3$ 
where

$$
\begin{aligned}
&e_1=\left[\begin{array}{c}1\\0\\0\end{array}\right]\wedge \left[\begin{array}{c}0\\1\\0\end{array}\right],& e_2=&\left[\begin{array}{c}1\\0\\0\end{array}\right]\wedge \left[\begin{array}{c}0\\0\\1\end{array}\right],& e_3=&\left[\begin{array}{c}0\\1\\0\end{array}\right]\wedge \left[\begin{array}{c}0\\0\\1\end{array}\right],\\
&c_1=\det\left(\left[\begin{array}{rr}w_{11} &w_{12}\\w_{21}&w_{22}\end{array}\right]\right),& c_2=&\det\left(\left[\begin{array}{rr}w_{11} &w_{12}\\w_{31}&w_{32}\end{array}\right]\right),& c_3=&\det\left(\left[\begin{array}{rr}w_{21} &w_{22}\\w_{31}&w_{32}\end{array}\right]\right).
\end{aligned}
$$ The idea is that $\mathcal{W}:=[c_3,\,-c_2,\,c_1]^T$ by direct computation satisfies the dual system
\be 
\label{W_ode}
\mathcal{W}'=\left(\mathrm{trace}(G_{\mu_{L}})\Id-G_{\mu_{L}}^T\right)\mathcal{W},
\ee
and its decaying rate toward $\mathbf{0}$, as $x\to -\infty$, is of order $e^{\mu(\lambda,\eta,\mu_L)x}$, 
where $\mu(\lambda,\eta,\mu_L)$ is the eigenvalue of $\left(\mathrm{trace}(G_{\mu_{L}})\Id-G_{\mu_{L}}^T\right)(-\infty;\lambda,\eta,\mu_{L})$ with the largest real part. Setting $\tilde{\mathcal{W}}=e^{-\mu x}\mathcal{W}$ yields \be
\label{tildeW_ode} \tilde{\mathcal{W}}'=\left(\mathrm{trace}(G_{\mu_{L}})\Id-G_{\mu_{L}}^T-\mu\Id\right)\tilde{W},\quad\text{or}\quad \tilde{\mathcal{W}}'=\left(-G^T-\tilde{\mu}\Id\right)\tilde{\mathcal{W}},
\ee  where $\tilde{\mu}=\mu-2\mu_{L}-\mathrm{trace}(G)$ is the eigenvalue of $-G^T(-\infty)$ with largest real part and $\tilde{W}(-\infty)$ is an eigenvector of $-G^T(-\infty)$ associated with $\tilde{u}$. 
The Evans-Lopatinsky determinant \eqref{EL} may then be expressed as
\be 
\label{duallopatinsky}
D(\lambda,\eta)=\sum_{j=1}^3[\lambda F_0+i\eta F_2-R]_j\tilde{\mathcal{W}}_j(0;\lambda,\eta),
\ee 
where the subindex $j$ corresponds to the $j^{th}$ entry of the corresponding column vector.  
From the form of the second equation of \eqref{tildeW_ode}, we see that 
computation of the Evans-Lopatinsky determinant can be made independent of the choices of weights $\mu_{L,R}$.
\br\label{indrmk}
Independence of the Evans function on choice of weighted norms is a more general phenomenon, reflecting the
distinction between ``inner layer dynamics'' captured by the Evans function- 
analogous to point spectrum/resonant poles- and ``outer layer dynamics'' captured by weighted norm computations-
analogous to essential spectrum in the 1d case \cite{ZH,ZH1,Z3}.
We note that an Evans function may be defined and efficiently computed without reference to weights
so long as there exists a spectral gap between fastest decaying and remaining modes \cite{PW,Z4}, 
whether or not these correspond to stable and unstable manifolds; and, indeed, existence of such a gap
is equivalent to existence of stabilizing scalar weights at $x\to \pm \infty$.
For further discussion, see \cite{FRYZ,BJRZ}.
\er

\smallskip

\noindent\textbf{Numerical schemes}. To solve $\tilde{\mathcal{W}}'=\left(-G^T-\tilde{\mu}\Id\right)\tilde{\mathcal{W}}$ effectively, we integrate numerical schemes used in \cite{JNRYZ,YZ,RYZ1}, combining hybrid method and working in $H$-coordinates and polar non-radial coordinates. Switching to $H$-coordinates $\tilde{\mathcal{W}}(x)\rightarrow \tilde{\mathcal{W}}(H(x))$ yields $H'd_H\tilde{\mathcal{W}}=\left(-G^T-\tilde{\mu}\Id\right)\tilde{\mathcal{W}}$. Without of loss of generality, we assume that $\tilde{\mathcal{W}}(H=1)$ is a unit eigenvector associated with the eigenvalue of $-G^T(H=1)$ with largest real part. Also, by working in polar coordinates $\tilde{\mathcal{W}}(H)=\Omega(H)\alpha(H)$ with $||\Omega(H)||\equiv 1$ and $\alpha(H=1)=1$, we instead solve 
\be \label{polar_interior}
H'd_H\Omega=(-G^T-\tilde{\mu}\Id)\Omega+\Omega\Omega^*(G^T+\tilde{\mu}\Id)\Omega
\ee 
and formulate a rescaled Evans-Lopatinsky determinant

\be 
\label{rescaledduallopatinsky}
\tilde{\Delta}(\lambda,\eta)=\sum_{j=1}^3[\lambda F_0+i\eta F_2-R]_j\Omega_j(H_*;\lambda,\eta),
\ee 
which, although it loses the analytic dependence on variables $\lambda$ and $\eta$, necessarily has the same zeros as \eqref{duallopatinsky}. 
This is sometimes called the ``polar non-radial'' method \cite{BHZ}.

To solve \eqref{polar_interior}
 near $H=1$, we use instead analytic expansion techniques from singular ODE theory; 
 such a combination of numerical ODE solution and singuar analytic expansion we refer to in general
 as a ``hybrid'' method.
 Namely, we make the solution ansatz $\Omega(H)=\sum_{i=0}^\infty \Omega_i(H-1)^i$ where $\Omega_0=\tilde{\mathcal{W}}(H=1)$ and $\Omega_i$ for $i\geq 1$ can be computed by the recurrence relation given by \eqref{polar_interior}. To make all entries of the coefficient matrices polynomials of $H$, multiply the equation \eqref{polar_interior} by the 
 common factor $(H^3H_R + H^3 - F^2H_R^2 + 2H^3\sqrt{H_R})H^2$ so that it becomes
 \be 
 Ld_H\Omega=(R_1+\lambda R_2+\eta R_3+\tilde{\mu}R_4)\Omega-\Omega\Omega^*(R_1+\lambda R_2+\eta R_3+\tilde{\mu}R_4)\Omega
 \ee 
 where
 $$
 \begin{aligned}
 (H^3H_R + H^3 - F^2H_R^2 + 2H^3\sqrt{H_R})H^2H'=&\sum_{i=1}^5L_i(H-1)^i=:L,\\
 (H^3H_R + H^3 - F^2H_R^2 + 2H^3\sqrt{H_R})H^2 (-EA^{-1})^T=&\sum_{i=0}^5R_{1i}(H-1)^i=:R_1,\\
 (H^3H_R + H^3 - F^2H_R^2 + 2H^3\sqrt{H_R})H^2 (A_0A^{-1})^T=&\sum_{i=0}^6R_{2i}(H-1)^i=:R_2,\\
 (H^3H_R + H^3 - F^2H_R^2 + 2H^3\sqrt{H_R})H^2 (iA_2A^{-1})^T=&\sum_{i=0}^6R_{3i}(H-1)^i=:R_3,\\
 -(H^3H_R + H^3 - F^2H_R^2 + 2H^3\sqrt{H_R})H^2=&\sum_{i=0}^5R_{4i}(H-1)^i=:R_4,
 \end{aligned}
 $$
 where $L_i$, $R_{1i}$, $R_{2i}$, $R_{3i}$, and $R_{4i}$ are given in Appendix \eqref{LRmat_hydraulic1}-\eqref{LRmat_hydraulic4}.
 
 At $\mathcal{O}(1)$ order, we obtain
 $$
 0=(R_{10}+\lambda R_{20}+\eta R_{30}+\tilde{\mu}R_{40})\Omega_0-\Omega_0\Omega_0^*(R_{10}+\lambda R_{20}+\eta R_{30}+\tilde{\mu}R_{40})\Omega_0
 $$
 Because $\Omega_0\in\ker((-G^T-\tilde{\mu}\Id)(H=1))$, $\Omega_0$ is then in $\ker(R_{10}+\lambda R_{20}+\eta R_{30}+\tilde{\mu}R_{40})$. The $\mathcal{O}(1)$ equation is satisfied.
 
For $n\geq 1$, at $\mathcal{O}((H-1)^n)$ order, we obtain
$$
\begin{aligned}
&nL_1\Omega_n-(R_{10}+\lambda R_{20}+\eta R_{30}+\tilde{\mu}R_{40})\Omega_n+\Omega_n\Omega_0^*(R_{10}+\lambda R_{20}+\eta R_{30}+\tilde{\mu}R_{40})\Omega_0\\
&+\Omega_0\Omega_n^*(R_{10}+\lambda R_{20}+\eta R_{30}+\tilde{\mu}R_{40})\Omega_0+\Omega_0\Omega_0^*(R_{10}+\lambda R_{20}+\eta R_{30}+\tilde{\mu}R_{40})\Omega_n=L.O.T.
\end{aligned}
$$
where $L.O.T.$ consists of lower order terms involving $\Omega_i$ for $i< n$. Having the term $\Omega_0\Omega_n^*(R_{10}+\lambda R_{20}+\eta R_{30}+\tilde{\mu}R_{40})\Omega_0$, the left hand side of the equation above seems to depend nonlinearly on $\Omega_n$. However, because $\Omega_0\in \ker(R_{10}+\lambda R_{20}+\eta R_{30}+\tilde{\mu}R_{40})$, the equation simplifies to
\be \label{Omega_n}
\left(nL_1\Id-(\Id-\Omega_0\Omega_0^*)(R_{10}+\lambda R_{20}+\eta R_{30}+\tilde{\mu}R_{40})\right)\Omega_n=L.O.T.
\ee 
For solvability of the linear system above, we readily compute by Sylvester's determinant theorem
\ba  \label{solvability_det}
&\det\left(nL_1\Id-(\Id-\Omega_0\Omega_0^*)(R_{10}+\lambda R_{20}+\eta R_{30}+\tilde{\mu}R_{40})\right)\\
=&\det\left(nL_1\Id-(R_{10}+\lambda R_{20}+\eta R_{30}+\tilde{\mu}R_{40})(\Id-\Omega_0\Omega_0^*)\right)\\
=&\det\left(nL_1\Id-(R_{10}+\lambda R_{20}+\eta R_{30}+\tilde{\mu}R_{40})\right).
\ea 
By direct computation, $L_1=F^2(\sqrt{H_R} + 1)(\sqrt{H_R} - 2H_R + 1)>0$, yielding that $nL_1$ is also positive. Because $\tilde{\mu}=\mu-2\mu_L-\mathrm{trace}(G)$ is the eigenvalue of $-G^T(H=1)$ with largest real part, the matrix $(R_{10}+\lambda R_{20}+\eta R_{30}+\tilde{\mu}R_{40})=H_R^2(F_{\text{exist}}^2-F^2)(-G^T(H=1)-\tilde{\mu}\Id)$ then has a zero eigenvalue and two eigenvalues with negative real part. In particular, it cannot have the positive number $nL_1$ as an eigenvalue. Therefore, the linear equation is solvable. Next, we study the convergence of the Taylor series $\Omega(H)=\sum_{i=0}^\infty\Omega_i(H-1)^i$, which validates that the Taylor series is a solution of \eqref{polar_interior} near $H=1$. To this end, the $L.O.T.$ explicitly read for $n\geq 6$,
\ba 
\label{LOT}
L.O.T=&-\sum_{i=2}^{5}(n+1-i)L_i\Omega_{n+1-i}+\sum_{i=1}^{5}(R_{1i}+\tilde{\mu}R_{4i})\Omega_{n-i}+\sum_{i=1}^{6}(\lambda R_{2i}+\eta R_{3i})\Omega_{n-i}\\
&-\sum_{i=0}^{5}\sum_{\substack{j+k+l=n,\\j<n,k<n,l<n}}\Omega_j\Omega_k^*(R_{1i}+\tilde{\mu}R_{4i})\Omega_{l}-\sum_{i=0}^{6}\sum_{\substack{j+k+l=n,\\j<n,k<n,l<n}}\Omega_j\Omega_k^*(\lambda R_{2i}+\eta R_{3i})\Omega_{l}
\ea 
For matrices $A\in \mathbb{C}^{m\times p}$ and $B\in \mathbb{C}^{p\times n}$, denote $||A||:=\max_{1\leq i\leq m,1\leq j\leq p}|A_{ij}|$, the maximum norm of $A$. Clearly, we have $||AB||\leq p ||A||\, ||B||$ Also, because $L_{i}$, $R_{1i}$, $R_{2i}$, $R_{3i}$, and $R_{4i}$ are constant matrices, we may assume
$$
||L_{i}||, ||R_{1i}||, ||R_{2i}||, ||R_{3i}||, ||R_{4i}||\leq C.
$$

\begin{lemma} \label{lemma:rig_lemma2}
Fix $q\in\mathbb{N}$, $q\geq  2$. there holds that $$\sum_{j=0}^{m}\frac{1}{(j+1)^q(m-j+1)^q}<\frac{2^{1+q}\zeta(q)}{(m+2)^q}\quad\text{and}\quad \sum_{j=1}^{m-1}\frac{1}{(j+1)^q(m-j+1)^q}<\frac{2^{1+q}\zeta(q)-2}{(m+2)^q}.$$ Here $\zeta(q)$ is the Riemann zeta function.
\end{lemma}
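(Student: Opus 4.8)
# Proof Proposal for Lemma~\ref{lemma:rig_lemma2}

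The plan is to prove both inequalities by a unified elementary argument, treating the first as the main case and deducing the second by subtracting the two boundary terms $j=0$ and $j=m$ (which are equal, each contributing $1/(m+1)^q$). First I would observe that the function $t \mapsto 1/t^q$ is convex and decreasing on $(0,\infty)$, so the natural strategy is to bound the discrete sum $\sum_{j=0}^{m} \frac{1}{(j+1)^q (m-j+1)^q}$ by splitting the index range at the midpoint. For $j \le m/2$ we have $m-j+1 \ge (m+2)/2$, hence $\frac{1}{(m-j+1)^q} \le \frac{2^q}{(m+2)^q}$, and symmetrically for $j \ge m/2$. This gives
\[
\sum_{j=0}^{m}\frac{1}{(j+1)^q(m-j+1)^q}
\le \frac{2^q}{(m+2)^q}\left(\sum_{j=0}^{\lceil m/2\rceil}\frac{1}{(j+1)^q} + \sum_{j=\lfloor m/2\rfloor}^{m}\frac{1}{(m-j+1)^q}\right).
\]
Each of the two inner sums is a partial sum of $\sum_{k\ge 1} k^{-q} = \zeta(q)$, hence bounded by $\zeta(q)$; one must be slightly careful about the overlap at the midpoint, but bounding each inner sum crudely by the full series $\zeta(q)$ absorbs any double-counting, yielding the factor $2\zeta(q)$ and therefore the bound $\frac{2^{1+q}\zeta(q)}{(m+2)^q}$. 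Since $q\ge 2$, $\zeta(q)$ is finite, so the right-hand side is genuinely finite and the stated strict inequality follows provided the inner sums are strict partial sums of $\zeta(q)$, i.e. do not capture every term — which holds because $m$ is finite.

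For the second inequality, I would write $\sum_{j=1}^{m-1} = \sum_{j=0}^{m} - (\text{term }j=0) - (\text{term }j=m)$. The $j=0$ term equals $\frac{1}{1^q \cdot (m+1)^q} = \frac{1}{(m+1)^q}$ and likewise the $j=m$ term equals $\frac{1}{(m+1)^q}$. Thus
\[
\sum_{j=1}^{m-1}\frac{1}{(j+1)^q(m-j+1)^q} = \sum_{j=0}^{m}\frac{1}{(j+1)^q(m-j+1)^q} - \frac{2}{(m+1)^q}
< \frac{2^{1+q}\zeta(q)}{(m+2)^q} - \frac{2}{(m+1)^q}.
\]
It then remains to check that $\frac{2^{1+q}\zeta(q)}{(m+2)^q} - \frac{2}{(m+1)^q} \le \frac{2^{1+q}\zeta(q) - 2}{(m+2)^q}$, which is equivalent to $\frac{2}{(m+2)^q} \le \frac{2}{(m+1)^q}$, i.e. to $(m+1)^q \le (m+2)^q$ — true since $q > 0$ and $m \ge 1$. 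This closes the second estimate. One should also handle the small-$m$ edge cases ($m=0$ in the first sum, where the sum is the single term $j=0$; $m=1$ in the second sum, where the sum over $j=1$ to $m-1$ is empty and the inequality is trivial since the right-hand side is positive for $q\ge 2$) so that the statement holds as written for all $m \ge 0$ (resp. $m \ge 1$).

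The argument is essentially routine; the only point requiring genuine care is the \emph{midpoint overlap} in the first inequality. When $m$ is even, the index $j = m/2$ appears in both halves of the split, so a naive application would double-count its contribution; one must verify that bounding each half by the full series $\zeta(q)$ (rather than by a strict partial sum) leaves enough slack to absorb this, and that strictness of the final inequality is still preserved. I expect this bookkeeping — choosing the split points (e.g. $\lceil m/2\rceil$ versus $\lfloor m/2 \rfloor$) so that the union of index sets is all of $\{0,\dots,m\}$ while the overcount is harmless — to be the main, if minor, obstacle; everything else is convexity and a telescoping of boundary terms. Since the paper only needs the lemma with a fixed $q\ge 2$ (in fact $q$ will be a small explicit integer in the application), there is no need to optimize the constant, so the crude bound $2^{1+q}\zeta(q)$ is entirely adequate.
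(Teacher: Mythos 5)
Your proposal is correct and takes essentially the same approach as the paper's proof: both rest on the observation that in each half of the index range one of the factors $(j+1)^{-q}$ or $(m-j+1)^{-q}$ is at most $2^q/(m+2)^q$, leaving a strict partial sum of $\zeta(q)$, and both obtain the second inequality by subtracting the two boundary terms $2/(m+1)^q$ and comparing with $2/(m+2)^q$. The only cosmetic difference is that the paper folds the sum using the $j\leftrightarrow m-j$ symmetry, working solely with $j\le\lfloor m/2\rfloor$ and thereby sidestepping the midpoint bookkeeping that you correctly flag as needing care.
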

\begin{proof}
The inequalities follow from
$$\begin{aligned}
&\sum_{j=0}^{m}\frac{1}{(j+1)^q(m-j+1)^q}\\
\leq & 2\sum_{j=0}^{\lfloor m/2\rfloor} \frac{1}{(j+1)^q(m/2+1)^q}=\frac{2^{1+q}}{(m+2)^q} \sum_{j=0}^{\lfloor m/2\rfloor} \frac{1}{(j+1)^q}< \frac{2^{1+q}}{(m+2)^q}\sum_{j=0}^{\infty} \frac{1}{(j+1)^q}
\end{aligned}
$$
and
$$\begin{aligned}
&\sum_{j=1}^{m-1}\frac{1}{(j+1)^q(m-j+1)^q}\\
=&\sum_{j=0}^{m}\frac{1}{(j+1)^q(m-j+1)^q}-\frac{2}{(m+1)^q}<\frac{2^{1+q}\zeta(q)}{(m+2)^q}-\frac{2}{(m+1)^q}<\frac{2^{1+q}\zeta(q)-2}{(m+2)^q}.
\end{aligned}
$$
\end{proof}
\begin{theorem}
There exists a sufficiently large $\alpha>0$, such that $||\Omega_n||\leq \frac{\alpha^n}{(n+1)^2}$, for all $n\geq 1$. The radius of convergence of the Taylor series $\Omega(H)=\sum_{i=0}^\infty \Omega_i(H-1)^i$ is then at least $\frac{1}{\alpha}>0$. The Taylor series then defines a solution of the ODE \eqref{polar_interior} locally near $H=1$.
\end{theorem}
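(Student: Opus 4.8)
The plan is to establish the bound $\|\Omega_n\|\le \alpha^n/(n+1)^2$ by strong induction on $n$, choosing the constant $\alpha$ large only at the very end. Write \eqref{Omega_n} in the form $\Omega_n=B_n^{-1}(\mathrm{L.O.T.})_n$, where $B_n:=nL_1\Id-(\Id-\Omega_0\Omega_0^*)(R_{10}+\lambda R_{20}+\eta R_{30}+\tilde\mu R_{40})$ and $(\mathrm{L.O.T.})_n$ is the right-hand side \eqref{LOT}. Invertibility of $B_n$ for every $n\ge 1$ was already obtained via Sylvester's identity in \eqref{solvability_det}; since $L_1>0$ is a positive scalar (computed just before Lemma~\ref{lemma:rig_lemma2}) and the remaining matrix is a fixed $3\times 3$ matrix depending continuously on $(\lambda,\eta,\mu_L)$ through $\tilde\mu$, there exist $N_1\in\NN$ and a constant $C_0$, uniform over compact $(\lambda,\eta,\mu_L)$--sets, with $\|B_n^{-1}\|\le C_0/n$ for all $n\ge N_1$. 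Using the submultiplicativity $\|AB\|\le p\|A\|\|B\|$ noted before the statement and the uniform bound $\|L_i\|,\|R_{\cdot i}\|\le C$ from \eqref{LRmat_hydraulic1}, one splits $(\mathrm{L.O.T.})_n$ into its \emph{linear} part (the single sums over $\Omega_{n+1-i},\Omega_{n-i}$, $i\ge 1$) and its \emph{cubic} part (the triple sums $\Omega_j\Omega_k^*M\Omega_l$, $j+k+l=n$, $j,k,l<n$).

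The linear part is the easy one: under the inductive hypothesis each of its (finitely many) summands is $\le C(1+|\lambda|+|\eta|+|\tilde\mu|)\,n\,\alpha^{n-1}/(n-5)^2$, so for $n$ large the linear part is bounded by $C_2(1+|\lambda|+|\eta|+|\tilde\mu|)\,\alpha^{n-1}/n$ --- crucially with one power of $\alpha$ \emph{to spare}. The cubic part is the delicate one. By the inductive hypothesis $\|\Omega_j\Omega_k^*M\Omega_l\|\le 9C\,\alpha^{j+k+l}\big/\big((j+1)^2(k+1)^2(l+1)^2\big)=9C\,\alpha^n\big/\big((j+1)^2(k+1)^2(l+1)^2\big)$, so everything reduces to the combinatorial estimate
$$
\sum_{\substack{j+k+l=n\\ j,k,l\ge 0}}\frac{1}{(j+1)^2(k+1)^2(l+1)^2}\;<\;\frac{64\,\zeta(2)^2}{(n+2)^2},
$$
obtained by applying Lemma~\ref{lemma:rig_lemma2} with $q=2$ first in $(j,k)$ at fixed $l$ and then once more in $l$ (after the harmless replacement $(n-l+2)^2\ge(n-l+1)^2$). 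Hence the cubic part of $(\mathrm{L.O.T.})_n$ is bounded by $C_3\,\alpha^n/(n+2)^2$, with $C_3$ uniform on compacts.

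Closing the induction: for $n$ large,
$$
\|\Omega_n\|\le\frac{C_0}{n}\Big(\frac{C_2\alpha^{n-1}}{n}+\frac{C_3\alpha^n}{(n+2)^2}\Big)
=\frac{C_0C_2}{\alpha}\cdot\frac{\alpha^n}{n^2}+\frac{C_0C_3}{n}\cdot\frac{\alpha^n}{(n+2)^2}.
$$
Since $n^2\ge(n+1)^2/4$ for $n\ge 1$, the first term is $\le (4C_0C_2/\alpha)\,\alpha^n/(n+1)^2\le\tfrac12\alpha^n/(n+1)^2$ once $\alpha\ge 8C_0C_2$; since $(n+1)^2\le(n+2)^2$, the second term is $\le(C_0C_3/n)\,\alpha^n/(n+1)^2\le\tfrac12\alpha^n/(n+1)^2$ once $n\ge 2C_0C_3$. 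So, with $N_0:=\max\{6,N_1,\lceil 2C_0C_3\rceil\}$ (the lower bound $6$ ensures the explicit form \eqref{LOT} applies), the inductive step goes through for all $n>N_0$ as soon as $\alpha\ge 8C_0C_2$. Exactly as in the proof of Lemma~\ref{W_vector_analytic}, one then enlarges $\alpha$ once more so that $\|\Omega_n\|\le\alpha^n/(n+1)^2$ holds also for the finitely many base indices $1\le n\le N_0$. This gives the bound for all $n\ge 1$; in particular $\|\Omega_n\|\le\alpha^n$, so $\sum_i\Omega_i(H-1)^i$ converges uniformly on $|H-1|\le\rho$ for each $\rho<1/\alpha$, and term-by-term differentiation together with the fact that the $\Omega_i$ were constructed precisely to annihilate the $\mathcal O((H-1)^n)$ coefficients of \eqref{polar_interior} shows the sum solves \eqref{polar_interior} near $H=1$.

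The main obstacle is the cubic term: it is scale-critical, carrying the full weight $\alpha^n$ rather than $\alpha^{n-1}$, so it cannot be absorbed by taking $\alpha$ large (that trick only works for the linear part). It must instead be beaten by combining the $1/n$ decay of $\|B_n^{-1}\|$ with the sharp $1/(n+2)^2$ decay of the triple convolution --- which is exactly why the induction closes only from a threshold $N_0$ onward and the low-index cases get pushed into the choice of $\alpha$. Producing that triple-convolution estimate in the form $O(1/(n+2)^2)$, rather than anything weaker that would fail to dominate $1/(n+1)^2$, is the crux, and it is precisely the content supplied by Lemma~\ref{lemma:rig_lemma2}; the remaining bookkeeping (tracking the finitely many linear summands and verifying uniformity of all constants over compact $(\lambda,\eta,\mu_L)$--sets) is routine.
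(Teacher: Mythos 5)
Your proof is correct and follows essentially the same strategy as the paper's: a strong induction closed by combining the $O(1/n)$ decay of $\|B_n^{-1}\|$ (via the adjugate/Sylvester identity \eqref{solvability_det}) with the triple-convolution estimate from Lemma~\ref{lemma:rig_lemma2} to beat the scale-critical $i=0$ cubic term, while the linear and $i\ge1$ cubic terms carry a spare power of $\alpha^{-1}$ and are absorbed by choosing $\alpha$ large. Your presentation is somewhat more streamlined — you make the scale-critical vs.\ subcritical dichotomy explicit and use the cleaner convolution bound $64\zeta(2)^2/(n+2)^2$ without the paper's boundary-term bookkeeping — but the underlying argument is the one the paper gives.
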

\begin{proof}
Let $N$ be a large integer to be specified later. For $n<N$, we can definitely choose $\alpha$ large enough so that $||\Omega_n||\leq \frac{\alpha^n}{(n+1)^2}$ holds. For $n\geq N$,
by \eqref{Omega_n},
$$
||\Omega_n||\leq 3||\left(nL_1\Id-(\Id-\Omega_0\Omega_0^*)(R_{10}+\lambda R_{20}+\eta R_{30}+\tilde{\mu}R_{40})\right)^{-1}||\,||L.O.T.||.
$$
We first obtain a bound for $||L.O.T.||$.
For the first three summations in \eqref{LOT}, the following estimate holds.
$$
\begin{aligned}
&||-\sum_{i=2}^{5}(n+1-i)L_i\Omega_{n+1-i}+\sum_{i=1}^{5}(R_{1i}+\tilde{\mu}R_{4i})\Omega_{n-i}+\sum_{i=1}^{6}(\lambda R_{2i}+\eta R_{3i})\Omega_{n-i}||\\
\le& \sum_{i=2}^5\frac{(n-1-i)C\alpha^{n+1-i}}{(n+2-i)^2}+\sum_{i=1}^5\frac{3C(1+|\tilde{\mu}|)\alpha^{n-i}}{(n+1-i)^2}+\sum_{i=1}^6\frac{3C(|\lambda|+|\eta|)\alpha^{n-i}}{(n+1-i)^2}\\
=&\frac{\alpha^{n}}{(n+1)^2}\left(\sum_{i=2}^5\frac{(n-1-i)C(n+1)^2}{\alpha^{i-1}(n+2-i)^2}+\sum_{i=1}^5\frac{3C(1+|\tilde{\mu}|)(n+1)^2}{\alpha^i(n+1-i)^2}+\sum_{i=1}^6\frac{3C(|\lambda|+|\eta|)(n+1)^2}{\alpha^i(n+1-i)^2}\right)\\
=:&\frac{\alpha^n}{(n+1)^2}\#_1.
\end{aligned}
$$
For the remaining two summations, we have 
$$
\begin{aligned}
&||\sum_{i=0}^{5}\sum_{\substack{j+k+l=n,\\j<n,k<n,l<n}}\Omega_j\Omega_k^*(R_{1i}+\tilde{\mu}R_{4i})\Omega_{l}||\\
\le&||\sum_{\substack{j+k+l=n,\\j<n,k<n,l<n}}\Omega_j\Omega_k^*(R_{10}+\tilde{\mu}R_{40})\Omega_{l}||+||\sum_{i=1}^{5}\sum_{j+k+l=n-i}\Omega_j\Omega_k^*(R_{1i}+\tilde{\mu}R_{4i})\Omega_{l}||\\
\leq &\sum_{\substack{j+k+l=n,\\j<n,k<n,l<n}} \frac{9C(1+|\tilde{\mu}|)\alpha^n}{(j+1)^2(k+1)^2(l+1)^2}+\sum_{i=1}^{5}\sum_{j+k+l=n-i}\frac{9C(1+|\tilde{\mu}|)\alpha^{n-i}}{(j+1)^2(k+1)^2(l+1)^2}\\
\leq & 9C(1+|\tilde{\mu}|)\alpha^n\left(\sum_{k=1}^{n-1}\frac{1}{(k+1)^2(n-k+1)^2}+\sum_{j=1}^{n-1}\frac{1}{(j+1)^2}\sum_{k+l=n-j}\frac{1}{(k+1)^2(l+1)^2}\right)\\
&+9C(1+|\tilde{\mu}|)\sum_{i=1}^{5}\alpha^{n-i}\sum_{j=0}^{n-i}\frac{1}{(j+1)^2}\sum_{k+l=n-i-j}\frac{1}{(k+1)^2(l+1)^2}
\end{aligned}
$$

$$
\begin{aligned}
\leq & 9C(1+|\tilde{\mu}|)\alpha^n\left(\frac{8\zeta(2)-2}{(n+2)^2}+\sum_{j=1}^{n-1}\frac{8\zeta(2)}{(j+1)^2(n-j+2)^2}\right)\\
&+9C(1+|\tilde{\mu}|)\sum_{i=1}^{5}\alpha^{n-i}\sum_{j=0}^{n-i}\frac{8\zeta(2)}{(j+1)^2(n-i-j+2)^2}\\
\leq &9C(1+|\tilde{\mu}|)\alpha^n\left(\frac{8\zeta(2)-2}{(n+2)^2}+8\zeta(2)(\frac{8\zeta(2)-2}{(n+3)^2}-\frac{1}{4(n+1)^2})\right)\\
&+72C(1+|\tilde{\mu}|)\zeta(2)\sum_{i=1}^{5}\alpha^{n-i}(\frac{8\zeta(2)}{(n-i+3)^2}-\frac{1}{(n-i+2)^2})\\
<&\frac{\alpha^n}{(n+1)^2}9C(1+|\tilde{\mu}|)\left((8\zeta(2)-2)+8\zeta(2)(8\zeta(2)-2)+8\zeta(2)\sum_{i=1}^5\frac{8\zeta(2)(n+1)^2}{\alpha^i(n-i+3)^2}\right)\\
=:&\frac{\alpha^n}{(n+1)^2}\#_2,
\end{aligned}
$$
and
$$\begin{aligned}
&||\sum_{i=0}^{6}\sum_{\substack{j+k+l=n,\\j<n,k<n,l<n}}\Omega_j\Omega_k^*(\lambda R_{2i}+\eta R_{3i})\Omega_{l}||\\
<&\frac{\alpha^n}{(n+1)^2}9C(|\lambda|+|\eta|)\left((8\zeta(2)-2)+8\zeta(2)(8\zeta(2)-2)+8\zeta(2)\sum_{i=1}^6\frac{8\zeta(2)(n+1)^2}{\alpha^i(n-i+3)^2}\right)\\
=:&\frac{\alpha^n}{(n+1)^2}\#_3.
\end{aligned}
$$
Secondly, to bound the maximum norm of $\left(nL_1\Id-(\Id-\Omega_0\Omega_0^*)(R_{10}+\lambda R_{20}+\eta R_{30}+\tilde{\mu}R_{40})\right)^{-1}$, we have
$$\begin{aligned}
&||\left(nL_1\Id-(\Id-\Omega_0\Omega_0^*)(R_{10}+\lambda R_{20}+\eta R_{30}+\tilde{\mu}R_{40})\right)^{-1}||\\
=&\frac{||\mathrm{adj}(nL_1\Id-(\Id-\Omega_0\Omega_0^*)(R_{10}+\lambda R_{20}+\eta R_{30}+\tilde{\mu}R_{40}))||}{|\det\left(nL_1\Id-(\Id-\Omega_0\Omega_0^*)(R_{10}+\lambda R_{20}+\eta R_{30}+\tilde{\mu}R_{40})\right)|}.
\end{aligned}
$$

The denominator, by \eqref{solvability_det}, is equal to the absolute value of the product of eigenvalues of $nL_1\Id-(R_{10}+\lambda R_{20}+\eta R_{30}+\tilde{\mu}R_{40})$. Again because $(R_{10}+\lambda R_{20}+\eta R_{30}+\tilde{\mu}R_{40})$ has a zero eigenvalue and two eigenvalues with negative real part and $L_1>0$, we have the denominator is greater than $(nL_1)^3$. To bound the numerator, for arbitrary $3$ by $3$ matrix $A$, we have $||\mathrm{adj}(A)||$ is the maximum of $|A_{ij}|$, taking over $1\leq i,j\leq 3$. Here, $A_{ij}$ denotes the $(i,j)$ cofactor of $A$. Obviously, $|A_{ij}|\leq 2||A||^2$ for $1\leq i,j\leq 3$, yielding $||\mathrm{adj}(A)||\leq 2||A||^2$. Therefore,
$$
\begin{aligned}
&||\mathrm{adj}(nL_1\Id-(\Id-\Omega_0\Omega_0^*)(R_{10}+\lambda R_{20}+\eta R_{30}+\tilde{\mu}R_{40}))||\\
\le& 2\big(n|L_1|+3(1+||\Omega_0||^2)C(1+|\lambda|+|\eta|+|\tilde{\mu}|)\big)^2
\end{aligned}
$$
and
$$\begin{aligned}
&||\left(nL_1\Id-(\Id-\Omega_0\Omega_0^*)(R_{10}+\lambda R_{20}+\eta R_{30}+\tilde{\mu}R_{40})\right)^{-1}||\\
\leq & \frac{2\big(n|L_1|+3(1+||\Omega_0||^2)C(1+|\lambda|+|\eta|+|\tilde{\mu}|)\big)^2}{(nL_1)^3}\\
\leq & \frac{2\big(n|L_1|+6C(1+|\lambda|+|\eta|+|\tilde{\mu}|)\big)^2}{(nL_1)^3}\end{aligned}
$$
in which the last inequality follows from $||\Omega_0||\leq 1$ given $\Omega_0^*\Omega_0=1$.
Combining, we have 
$$
||\Omega_n||\leq \frac{\alpha^n}{(n+1)^2}\frac{6\big(n|L_1|+6C(1+|\lambda|+|\eta|+|\tilde{\mu}|)\big)^2}{(nL_1)^3}\left(\#_1+\#_2+\#_3\right).
$$
It is then easy to see that we can choose $N$ large enough and $\alpha$ large enough so that when $n\geq N$  $$\frac{6\big(n|L_1|+6C(1+|\lambda|+|\eta|+|\tilde{\mu}|)\big)^2}{(nL_1)^3}\left(\#_1+\#_2+\#_3\right)\leq 1.$$
\end{proof}
Next, for a fixed $N$, we compute $\Omega(H_m):=\sum_{i=0}^N\Omega_i(H_m-1)^i$ for some $H_m$ between $1$ and $H_*$ so that a specified truncation error bound is met. Then, we continue to solve the ODE \eqref{polar_interior} from $H_m$ to $H_*$ with initial condition $\Omega(H_m)$ using ODE solvers, such as $\mathrm{ode45}$ in matlab. Finally, we compute the rescaled Evans-Lopatinsky determinant \eqref{rescaledduallopatinsky}.

For $\eta=0$, we count the number of point spectra in half annulus $\Omega(r,R):=\{\Re\lambda>0:r<|\lambda|<R\}$ by computing the winding number of mapped contours $\tilde{\Delta}(\partial{\Omega}(r,R),0)$. For $\eta\neq 0$, we count the number of point spectra in half disc $\Omega(R):=\{\Re\lambda>0:|\lambda|<R\}$ by computing the winding number of mapped contours $\tilde{\Delta}(\partial{\Omega}(R),\eta)$. See FIGURE~\ref{winding} for typical mapped contours. We verify numerically that there is no point spectrum in $\Omega(0.1,30)$ for $\eta=0$ and in $\Omega(30)$ for $\eta\in 0.2:0.2:6$ and for $7333$ combinations of parameters in the discretized existence domain $H_R\in 0.01:0.01:0.99$ and $F\in  0.02\times\mathbb{N}^+ \cap [\sqrt{H_R}+H_R,F_{\text{2d}})$. 
\begin{figure}[htbp]
    \begin{center}
    \includegraphics[scale=0.42]{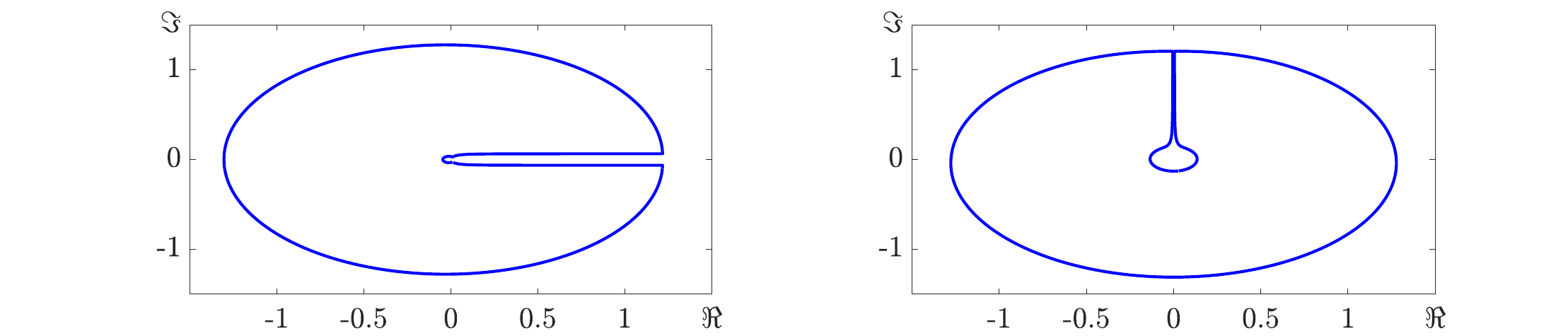}
    \end{center}
    \caption{Images of contours under rescaled Evans Lopatinsky determinant $\tilde{\Delta}(\lambda,\eta)$. Left panel: $F=2.05$, $H_R=0.7$, $\lambda\in \partial\Omega(0.1,5)$, and $\eta=0$  Right panel: $F=2.05$, $H_R=0.7$, $\lambda\in \partial\Omega(5)$, and $\eta=3$. Winding numbers are both zeros.}
    \label{winding}
\end{figure}

\noindent{\bf Smooth hydraulic shocks.} For smooth hydraulic shocks, we shall verify nonvanishing of the Evans function $\Delta(\lambda,\eta)$ \eqref{Evans} for $\eta\in \mathbb{R}$ and $\Re \lambda\geq 0$ in the right half plane $\{\Re\lambda\geq 0\}$, except for $(\lambda,\eta)=(0,0)$. 
For the computation of $\mathcal{W}_1^-(0^-;\lambda,\eta)\wedge \mathcal{W}_2^-(0^-;\lambda,\eta)$, we can again make use of the dual formulation, switch to $H$-coordinates, and apply the polar non-radial method and hybrid method. For the computation of $\mathcal{W}^+_1(0^+;\lambda,\eta)$, we can switch to $H$-coordinates and use polar non-radial method and hybrid method. Indeed, what we compute in the end is a function necessarily having the same zeros as the Evans function. The function is
\be 
\tilde{\Delta}(\lambda,\eta):=\sum_{j=1}^3\Omega^+_j(\frac{1+H_R}{2};\lambda,\eta)\Omega^-_j(\frac{1+H_R}{2};\lambda,\eta),
\ee 
where $\Omega^+(H;\lambda,\eta)=[\Omega^+_1,\,\Omega^+_2\,\Omega^+_3]^T(H;\lambda,\eta)$ solves $$H'd_H\Omega=(G(H,\lambda,\eta)-\mu\Id)\Omega-\Omega\Omega^*(G(H,\lambda,\eta)-\mu\Id)\Omega,$$ where $\mu$ is the smallest eigenvalue of $G(H_R,\lambda,\eta)$ and $\Omega^+(H_R;\lambda,\eta)$ is a corresponding unit eigenvector and $\Omega^-(H;\lambda,\eta)=\left[\begin{array}{rrr}\Omega^-_1&\Omega^-_2&\Omega^-_3\end{array}\right]^T(H;\lambda,\eta)$ solves the dual system $$H'd_H\Omega=(-G^T(H,\lambda,\eta)-\tilde{\mu}\Id)\Omega+\Omega\Omega^*(G^T(H,\lambda,\eta)+\tilde{\mu}\Id)\Omega,$$ where $\tilde{\mu}$ is the largest eigenvalue of $-G^T(H=H_L=1,\lambda,\eta)$ and $\Omega^-(H=H_L=1;\lambda,\eta)$ is a corresponding unit eigenvector. Here for the convenience, we examine the connection at $H=\frac{1+H_R}{2}$. 

We verify numerically that there is no point spectrum in $\Omega(0.1,30)$ for $\eta=0$ and in $\Omega(30)$ for $\eta\in 0.2:0.2:6$ and for $5732$ combinations of parameters in the discretized existence domain $H_R\in 0.01:0.01:0.99$ and $F\in 0.02\times\mathbb{N}^+ \cap (0,\sqrt{H_R}+H_R)$ except for the parameters $(H_R=0.98,F=0.02)$, $(H_R=0.99,F=0.04)$, and $(H_R=0.99,F=0.04)$. Here, note that we decide not to investigate for these three waves for it is extremely time-consuming to compute the Evans function for small amplitude smooth profile with small $F$ value as also noted in the study of multi-dimensional Navier-Stokes shocks by Humpherys, Lyng, and Zumbrun \cite{HLyZ}.

\smallskip

\noindent{\bf Computational environment.} In carrying out our numerical investigations, we have used MacBook Pro 2017 with Intel Core i7-7700HQ processor and a workstation with Intel Core i9-13900K for coding and debugging. The main parallelized computation is done in the compute nodes of IU Quartz, a high-throughput computing cluster featuring 92 compute nodes, each equipped with two 64-core AMD EPYC 7742 2.25 GHz CPUs and 512 GB of RAM. 

\smallskip

\noindent{\bf Computational time.} In the rescaled Evans-Lopatinsky condition study, for $\eta=0$, we discretize the smaller half circle with radius $0.1$ equally into 200 pieces, the bigger half circle with radius $30$ equally into $1000$ pieces, and the two straight line segments $[0.1,30]\times i$ and $[-30,0.1]\times i$ each equally into 300 pieces and for $\eta\in 0.2:0.2:6$, we discretize the half circle with radius $30$ equally into 1000 pieces and the part $[0.1,30]\times i$ and $[-30,0.1]\times i$ on the straight line each equally into 300 pieces and the part $[-0.1,0.1]\times i$ on the straight line equally into 200 pieces. Here the centered line segment $[-0.1,0.1]$ is discretized into finer pieces to reduce argument changes when $(\lambda,\eta)$ is close to the zeros $(0,0)$. Therefore, for a single combination of parameter $(F,H_R)$, we evaluate the rescaled Evans-Lopatinsky determinant $31\times(200+1000+300\times 2)=55800$ times. For the total $7333$ combinations of parameters, we evaluate the rescaled Evans-Lopatinsky determinant $409181400$ times. To complete the computation task, we used Slurm to submit and manage jobs on IU's Quartz. Each job was run with a single Quartz's AMD EPYC 7742 2.25 GHz CPU and took all of its 64 cores. The total computation time on a single Quartz's 64-core AMD EPYC 7742 2.25 GHz CPU is 2506 hours and 35 mins or about 105 days. We submited 15 jobs running at the same time, which reduced our waiting time to 7 days.

In the rescaled Evans condition study, for $\eta=0$, we discretize the smaller half circle with radius $0.1$ equally into 200 pieces, the bigger half circle with radius $30$ equally into $1000$ pieces, and the two straight line segments $[0.1,30]\times i$ and $[-30,0.1]\times i$ each equally into 200 pieces and for $\eta\in 0.2:0.2:6$, we discretize the half circle with radius $30$ equally into 1000 pieces and the part $[0.1,30]\times i$ and $[-30,0.1]\times i$ on the straight line each equally into 200 pieces and the part $[-0.1,0.1]\times i$ on the straight line equally into 200 pieces. Here the centered line segment $[-0.1,0.1]$ is discretized into finer pieces to reduce argument changes when $(\lambda,\eta)$ is close to the zeros $(0,0)$. Therefore, for a single combination of parameter $(F,H_R)$, we evaluate the rescaled Evans function $31\times(200+1000+200\times 2)=49600$ times. For the total $5732$ combinations of parameters, we evaluate the rescaled Evans function $284307200$ times. The total computation time on a single Quartz's 64-core AMD EPYC 7742 2.25 GHz CPU is 4870 hours and 40 mins or about 203 days. We submited 23 jobs running at the same time, which reduced our waiting time to less than 9 days. As we mention above, we decide not to verify for three parameters due to a time-consumption concern. Indeed, in the $23^{nd}$ job, it took more than 4 days to examine for the parameter $H_R=0.98$ and $F=0.02$. To compare, the computation time for the parameter $H_R=0.97$ and $F=0.02$ is 47.5 hours. In the left panel of FIGURE~\ref{computation_time}, we plot computation time versus $H_R\in 0.01:0.01:0.97$ for $F=0.02$. In the right panel of FIGURE~\ref{computation_time}, we plot computation time versus $F\in 0.06:0.02:1.98$ for $H_R=0.99$. Clearly, the figure shows that it is time-expensive to verify stability for small amplitude smooth hydraulic shocks with small $F$ values.
\begin{figure}
\begin{center}
\includegraphics[scale=0.42]{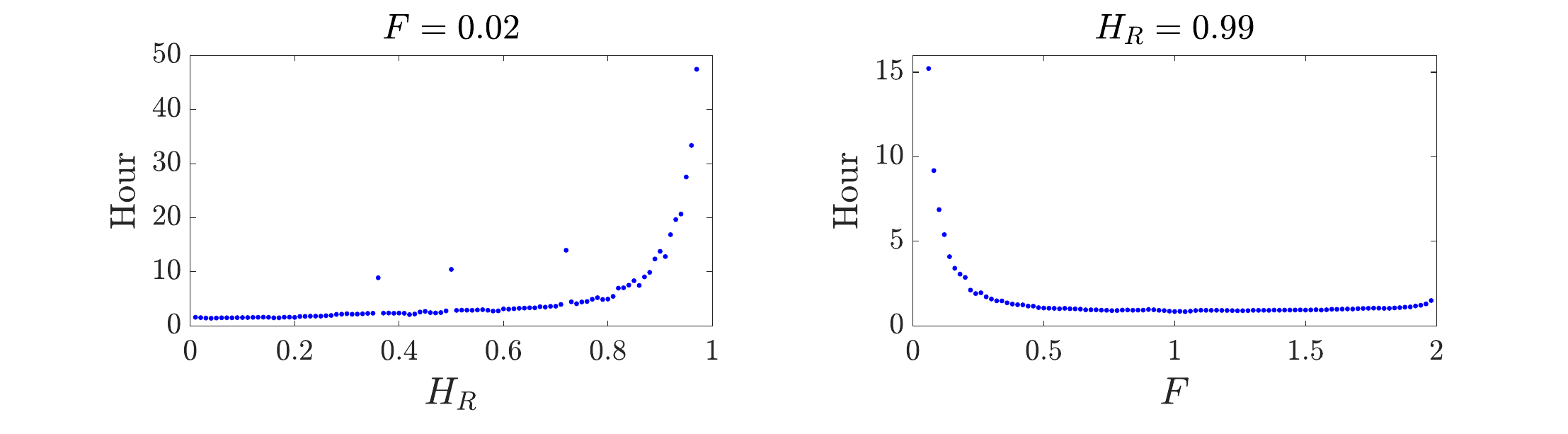}
\end{center}
\vspace{-0.3cm}
\caption{Left: computation time versus $H_R\in 0.01:0.01:0.97$ for $F=0.02$. Right: computation time versus $F\in 0.06:0.02:1.98$ for $H_R=0.99$.}
\label{computation_time}
\end{figure}
\section{Additional formulae}
Formulae of $L^{(n)}$, $R^{(1,n)}$, $R^{(2,n)}$, and $R^{(3,n)}$ are given by
\ba \label{LRmat1}
&L^{(0)}=\left[\begin{array}{rrr}1 & 0 & 0\\ 0 & 1 & 0\\ 0 & 0 & 0\end{array}\right],\quad L^{(1)}=\left[\begin{array}{rrr} \frac{4F^2-6F-1}{F \left(F-2\right)} & 0 & 0\\ 0 & -\frac{-3 F^2+4 F+1}{F \left(F-2\right)} & 0\\ -\frac{2 F-1}{F^2} & 0 & \frac{3}{F \left(F+1\right)} \end{array}\right],\\
&L^{(2)}=\left[\begin{array}{rrr} -\frac{2 \left(-3 F^2+3 F+1\right)}{F \left(F-2\right)} & 0 & 0\\ 0 & -\frac{-3 F^2+2 F+1}{F \left(F-2\right)} & 0\\ -\frac{12 F^3-22 F^2+4 F+2}{2 F^3 \left(F-2\right)} & 0 & -\frac{-18 F^2+24 F+6}{2 F^2 \left(F+1\right) \left(F-2\right)} \end{array}\right],\\
&L^{(3)}=\left[\begin{array}{rrr} -\frac{-4 F^2+2 F+1}{F \left(F-2\right)} & 0 & 0\\ 0 & \frac{F}{F-2} & 0\\ -\frac{36 F^3-48 F^2+12 F+6}{6 F^3 \left(F-2\right)} & 0 & -\frac{-60 F^2+48 F+18}{6 F^2 \left(F+1\right) \left(F-2\right)} \end{array}\right],\\
&L^{(4)}=\left[\begin{array}{rrr} \frac{F}{F-2} & 0 & 0\\ 0 & 0 & 0\\ -\frac{48 F^3-72 F^2+48 F+24}{24 F^3 \left(F-2\right)} & 0 & -\frac{-120 F^2+48 F+24}{24 F^2 \left(F+1\right) \left(F-2\right)} \end{array}\right],\\
&L^{(5)}=\left[\begin{array}{rrr} 0 & 0 & 0\\ 0 & 0 & 0\\ \frac{1}{F \left(F-2\right)} & 0 & \frac{1}{\left(F+1\right) \left(F-2\right)} \end{array}\right],\quad R^{(1,0)}=\left[\begin{array}{rrr} 0 & 0 & 0\\ 0 & \frac{F+1}{F-2} & 0\\ \frac{2 \left(F^2+2 F+1\right)}{F^2 \left(F-2\right)} & 0 & 0 \end{array}\right],\\
&R^{(1,1)}=\left[\begin{array}{rrr} 0 & 0 & 0\\ 0 & \frac{2 \left(F^2+2 F+1\right)}{F \left(F-2\right)} & 0\\ \frac{4 F^3+15 F^2+12 F+1}{F^3 \left(F-2\right)} & 0 & \frac{3 \left(F-1\right)}{F^2 \left(F-2\right)} \end{array}\right],\\
&R^{(1,2)}=\left[\begin{array}{rrr} 0 & 0 & 0\\ 0 & \frac{F^2+4 F+3}{F \left(F-2\right)} & 0\\ \frac{2 \left(F^3+7 F^2+9 F+3\right)}{F^3 \left(F-2\right)} & 0 & \frac{4}{F \left(F-2\right)} \end{array}\right],\\
&R^{(1,3)}=\left[\begin{array}{rrr} 0 & 0 & 0\\ 0 & \frac{F+1}{F \left(F-2\right)} & 0\\ \frac{3 \left(F^2+2 F+1\right)}{F^3 \left(F-2\right)} & 0 & \frac{F+1}{F^2 \left(F-2\right)} \end{array}\right],
\ea 
\ba\label{LRmat2}
&R^{(2,0)}=\left[\begin{array}{ccc} \frac{3}{(F-2)(F+1)} & 0 & \frac{3 F}{(F-2)(F+1)^2}\\ 0 & \frac{3}{F-2} & 0\\ \frac{3 \left(F-1\right)}{F^2 \left(F-2\right)} & 0 & -\frac{6}{F \left(F+1\right) \left(F-2\right)} \end{array}\right],\\
&R^{(2,1)}=\left[\begin{array}{ccc} \frac{9}{(F-2)(F+1)} & 0 & \frac{9 F}{(F-2)(F+1)^2}\\ 0 & \frac{9}{F-2} & 0\\ \frac{9 \left(F-1\right)}{F^2 \left(F-2\right)} & 0 & -\frac{18}{F \left(F+1\right) \left(F-2\right)} \end{array}\right],\\
&R^{(2,2)}=\left[\begin{array}{ccc} \frac{10}{(F-2)(F+1)} & 0 & \frac{10 F}{(F-2)(F+1)^2}\\ 0 & \frac{10}{F-2} & 0\\ \frac{10 \left(F-1\right)}{F^2 \left(F-2\right)} & 0 & -\frac{20}{F \left(F+1\right) \left(F-2\right)} \end{array}\right],\\
&R^{(2,3)}=\left[\begin{array}{ccc} \frac{5}{(F-2)(F+1)} & 0 & \frac{5 F}{(F-2)(F+1)^2}\\ 0 & \frac{5}{F-2} & 0\\ \frac{5 \left(F-1\right)}{F^2 \left(F-2\right)} & 0 & -\frac{10}{F \left(F+1\right) \left(F-2\right)} \end{array}\right],\\
&R^{(2,4)}=\left[\begin{array}{ccc} \frac{1}{(F-2)(F+1)} & 0 & \frac{F}{(F-2)(F+1)^2}\\ 0 & \frac{1}{F-2} & 0\\ \frac{F-1}{F^2 \left(F-2\right)} & 0 & -\frac{2}{F \left(F+1\right) \left(F-2\right)} \end{array}\right],\\
&R^{(3,0)}=i\left[\begin{array}{ccc} 0 & \frac{3}{(F-2)(F+1)} & 0\\ \frac{3}{F^2 \left(F-2\right)} & 0 & \frac{3 }{F \left(F+1\right) \left(F-2\right)}\\ 0 & -\frac{3 }{F^2 \left(F-2\right)} & 0 \end{array}\right],
\ea 
and
\ba
\label{LRmat3}
&R^{(3,1)}=i\left[\begin{array}{ccc} 0 & \frac{9 }{(F-2)(F+1)} & 0\\ \frac{12 }{F^2 \left(F-2\right)} & 0 & \frac{12 }{F \left(F+1\right) \left(F-2\right)}\\ 0 & -\frac{12 }{F^2 \left(F-2\right)} & 0 \end{array}\right],\\
&R^{(3,2)}=i\left[\begin{array}{ccc} 0 & \frac{10 }{(F-2)(F+1)} & 0\\ \frac{19 }{F^2 \left(F-2\right)} & 0 & \frac{19 }{F \left(F+1\right) \left(F-2\right)}\\ 0 & -\frac{16 }{F^2 \left(F-2\right)} & 0 \end{array}\right],\\
&R^{(3,3)}=i\left[\begin{array}{ccc} 0 & \frac{5 }{(F-2)(F+1)} & 0\\ \frac{15 }{F^2 \left(F-2\right)} & 0 & \frac{15 }{F \left(F+1\right) \left(F-2\right)}\\ 0 & -\frac{9 }{F^2 \left(F-2\right)} & 0 \end{array}\right],\\
& R^{(3,4)}=i\left[\begin{array}{ccc} 0 & \frac{1 }{(F-2)(F+1)} & 0\\ \frac{6 }{F^2 \left(F-2\right)} & 0 & \frac{6 }{F \left(F+1\right) \left(F-2\right)}\\ 0 & -\frac{2 }{F^2 \left(F-2\right)} & 0 \end{array}\right],\\
&R^{(3,5)}=i\left[\begin{array}{ccc} 0 & 0 & 0\\ \frac{1 }{F^2 \left(F-2\right)} & 0 & \frac{1 }{F \left(F+1\right) \left(F-2\right)}\\ 0 & 0 & 0 \end{array}\right].
\ea 

Formulae of $L_i$, $R_{4i}$, $R_{1i}$, $R_{2i}$, and $R_{3i}$ are given by
\ba \label{LRmat_hydraulic1}
&L_1=\frac{F^2(\nu + 1)(\nu^2 + \nu - 2)}{\nu^3}, \quad L_2=-\frac{F^2(- 4\nu^4 - 8\nu^3 + 2\nu^2 + 6\nu + 1)}{\nu^4},\\
&L_3=-\frac{2F^2(- 3\nu^4 - 6\nu^3 + 3\nu + 1)}{\nu^4},\quad L_4=\frac{F^2( 4\nu^4 +8\nu^3 + 2\nu^2 - 2\nu - 1)}{\nu^4},\\
&L_5=\frac{F^2(\nu + 1)^2}{\nu^2}, \quad R_{40}=-\frac{- F^2 + \nu^4 + 2\nu^3 + \nu^2}{\nu^4},\\
&R_{41}=-\frac{- 2F^2 + 5\nu^4 + 10\nu^3 + 5\nu^2}{\nu^4},\quad R_{42}=-\frac{- F^2 + 10\nu^4 + 20\nu^3 + 10\nu^2}{\nu^4},\\
&R_{43}=-\frac{10(\nu^2 + 2\nu + 1)}{\nu^2},\quad R_{44}=-\frac{5(\nu^2 + 2\nu + 1)}{\nu^2},\quad R_{45}=-\frac{\nu^2 + 2\nu + 1}{\nu^2},\\
&R_{10}=\left[\begin{array}{ccc} 0 & \frac{\left(\nu +1\right) \left(F^2(\nu^2+\nu -3)+2 \nu ^2+2 \nu \right)}{\nu ^3} & 0\\ 0 & \frac{F^2 \left(-\nu ^3-2 \nu ^2+\nu +2\right)}{\nu ^3} & 0\\ 0 & 0 & -\frac{\left(\nu +1\right) \left(-F^2+\nu ^4+2 \nu ^3+\nu ^2\right)}{\nu ^3} \end{array}\right],\\
&R_{11}=\left[\begin{array}{ccc} 0 & \frac{2 \left(F^2 (2\nu ^4+4\nu ^3- \nu ^2-3\nu -2)+4 \nu ^4+8 \nu ^3+5 \nu ^2+\nu \right)}{\nu ^4} & 0\\ 0 & \frac{2 F^2 \left(-2 \nu ^4-4 \nu ^3+2 \nu +1\right)}{\nu ^4} & 0\\ 0 & 0 & \frac{F^2(2\nu^2+2\nu+1)-\nu^2(\nu + 1)^2(5\nu^2 + 5\nu + 1)}{\nu ^4} \end{array}\right],\\
&R_{12}=\left[\begin{array}{ccc} 0 & \frac{2 \left(F^2(3 \nu ^6+9 \nu ^5+8 \nu ^4+\nu ^3-3 \nu ^2-2\nu -1)+3\nu^2(\nu + 1)^2(2\nu^2 + 2\nu + 1)\right)}{\nu ^5 \left(\nu +1\right)} \\ 0 & -\frac{2 F^2 \left(3 \nu ^4+6 \nu ^3+2 \nu ^2-\nu -1\right)}{\nu ^4} \\ 0 & 0 \end{array}\right.\\
&\quad\quad\quad \left.\begin{array}{c}0\\0\\ \frac{F^2(\nu ^2+\nu +1)-2\nu^2(\nu + 1)^2(5\nu^2 + 5\nu + 2)}{\nu ^4} \end{array}\right],\\
&R_{13}=\left[\begin{array}{ccc} 0 & \frac{2 \left(\nu +1\right) \left(F^2(2 \nu ^2+2  \nu +1)+4 \nu ^2+4 \nu +3\right)}{\nu ^3} & 0\\ 0 & -\frac{4 F^2 {\left(\nu +1\right)}^2}{\nu ^2} & 0\\ 0 & 0 & -\frac{2 {\left(\nu +1\right)}^2 \left(5 \nu ^2+5 \nu +3\right)}{\nu ^2} \end{array}\right],\\
&R_{14}=\left[\begin{array}{ccc} 0 & \frac{\left(F^2+2\right) \left(\nu ^3+2 \nu ^2+2 \nu +1\right)}{\nu ^3} & 0\\ 0 & -\frac{F^2 {\left(\nu +1\right)}^2}{\nu ^2} & 0\\ 0 & 0 & -\frac{{\left(\nu +1\right)}^2 \left(5 \nu ^2+5 \nu +4\right)}{\nu ^2} \end{array}\right],\\
&R_{15}=\left[\begin{array}{ccc} 0 & 0 & 0\\ 0 & 0 & 0\\ 0 & 0 & -\frac{{\left(\nu +1\right)}^2 \left(\nu ^2+\nu +1\right)}{\nu ^2} \end{array}\right],\\
&R_{20}=\left[\begin{array}{ccc} -\frac{F^2 \left(\nu +1\right) \left(\nu ^2+\nu -1\right)}{\nu ^3} & -\frac{\left(F^2-1\right) {\left(\nu +1\right)}^2}{\nu ^2} & 0\\ \frac{F^2 {\left(\nu +1\right)}^2}{\nu ^2} & \frac{F^2 \left(\nu +1\right) \left(\nu ^2+\nu +1\right)}{\nu ^3} & 0\\ 0 & 0 & -\frac{\left(\nu +1\right) \left(-F^2+\nu ^4+2 \nu ^3+\nu ^2\right)}{\nu ^3} \end{array}\right],
\ea 

\ba \label{LRmat_hydraulic2}
&R_{21}=\left[\begin{array}{ccc} -\frac{2 F^2 \left(2 \nu ^3+4 \nu ^2+\nu -1\right)}{\nu ^3} & -\frac{\left(\nu +1\right) \left(F^2 (4\nu ^2+4 \nu +2)-5 \nu ^2-5 \nu \right)}{\nu ^3} & 0\\ \frac{4 F^2 {\left(\nu +1\right)}^2}{\nu ^2} & \frac{4 F^2 \left(\nu +1\right) \left(\nu ^2+\nu +1\right)}{\nu ^3} & 0\\ 0 & 0 & -\frac{3 \left(\nu +1\right) \left(-F^2+2 \nu ^4+4 \nu ^3+2 \nu ^2\right)}{\nu ^3} \end{array}\right],\\
&R_{22}=\left[\begin{array}{ccc} -\frac{6 F^2 {\left(\nu +1\right)}^2}{\nu ^2} & -\frac{F^2 (6\nu ^4+12 \nu ^3+12 \nu ^2+6  \nu +1)-10 \nu ^4-20 \nu ^3-10 \nu ^2}{\nu ^4} & 0\\ \frac{6 F^2 {\left(\nu +1\right)}^2}{\nu ^2} & \frac{6 F^2 \left(\nu +1\right) \left(\nu ^2+\nu +1\right)}{\nu ^3} & 0\\ 0 & 0 & -\frac{3 \left(\nu +1\right) \left(-F^2+5 \nu ^4+10 \nu ^3+5 \nu ^2\right)}{\nu ^3} \end{array}\right],\\
\\&R_{23}=\left[\begin{array}{ccc} -\frac{2 F^2 \left(2 \nu ^3+4 \nu ^2+3 \nu +1\right)}{\nu ^3} & -\frac{2 \left(F^2(2 \nu ^4+4 \nu ^3+5 \nu ^2+3 \nu +1)-5 \nu ^2(\nu+1)^2\right)}{\nu ^4} & 0\\ \frac{4 F^2 {\left(\nu +1\right)}^2}{\nu ^2} & \frac{4 F^2 \left(\nu +1\right) \left(\nu ^2+\nu +1\right)}{\nu ^3} & 0\\ 0 & 0 & -\frac{\left(\nu +1\right) \left(-F^2+20 \nu^2(\nu+1)^2\right)}{\nu ^3} \end{array}\right],\\
\\&R_{24}=\left[\begin{array}{ccc} -\frac{F^2 \left(\nu +1\right) \left(\nu ^2+\nu +1\right)}{\nu ^3} & -\frac{F^2 (\nu^2+\nu+1)^2-5 \nu ^2(\nu+1)^2}{\nu ^4} & 0\\ \frac{F^2 {\left(\nu +1\right)}^2}{\nu ^2} & \frac{F^2 \left(\nu +1\right) \left(\nu ^2+\nu +1\right)}{\nu ^3} & 0\\ 0 & 0 & -\frac{15 {\left(\nu +1\right)}^3}{\nu } \end{array}\right],\\
&R_{25}=\left[\begin{array}{ccc} 0 & \frac{{\left(\nu +1\right)}^2}{\nu ^2} & 0\\ 0 & 0 & 0\\ 0 & 0 & -\frac{6 {\left(\nu +1\right)}^3}{\nu } \end{array}\right],\quad R_{26}=\left[\begin{array}{ccc} 0 & 0 & 0\\ 0 & 0 & 0\\ 0 & 0 & -\frac{{\left(\nu +1\right)}^3}{\nu } \end{array}\right],
\ea 
and
\ba \label{LRmat_hydraulic3}
&R_{30}=i\left[\begin{array}{ccc} 0 & 0 & -\frac{\left(\nu +1\right) \left(\nu ^2+\nu -1\right)}{\nu ^3}\\ 0 & 0 & \frac{{\left(\nu +1\right)}^2}{\nu ^2}\\ \frac{\left(\nu +1\right) \left(F^2-\nu ^2(\nu+1)^2\right)}{\nu ^3} & \frac{\left(\nu +1\right) \left(F^2-\nu ^2(\nu+1)^2\right)}{\nu ^3} & 0 \end{array}\right],\\
&R_{31}=i\left[\begin{array}{ccc} 0 & 0 & -\frac{5 \nu ^3+10 \nu ^2+2 \nu -3}{\nu ^3}\\ 0 & 0 & \frac{5 {\left(\nu +1\right)}^2}{\nu ^2}\\ \frac{3 \left(\nu +1\right) \left(F^2-2 \nu ^2(\nu+1)^2\right)}{\nu ^3} & \frac{F^2 (3\nu ^2+3 \nu +1)-\nu^2(\nu + 1)^2(6\nu^2 + 6\nu + 1)}{\nu ^4} & 0 \end{array}\right],\\
&R_{32}=i\left[\begin{array}{ccc} 0 & 0 & -\frac{10 \nu ^3+20 \nu ^2+8 \nu -2}{\nu ^3}\\ 0 & 0 & \frac{10 {\left(\nu +1\right)}^2}{\nu ^2}\\ \frac{3 \left(\nu +1\right) \left(F^2-5\nu^2( \nu+1)^2\right)}{\nu ^3} & \frac{F^2(3 \nu ^2+3 \nu +2 )-5\nu^2(\nu + 1)^2(3\nu^2 + 3\nu + 1)}{\nu ^4} & 0 \end{array}\right],\\
&R_{33}=i\left[\begin{array}{ccc} 0 & 0 & -\frac{10 \nu ^3+20 \nu ^2+12 \nu +2}{\nu ^3}\\ 0 & 0 & \frac{10 {\left(\nu +1\right)}^2}{\nu ^2}\\ \frac{\left(\nu +1\right) \left(F^2-20 \nu ^2(\nu+1)^2\right)}{\nu ^3} & \frac{F^2 (\nu ^2+\nu +1)-10\nu^2(\nu + 1)^2(2\nu^2 + 2\nu + 1)}{\nu ^4} & 0 \end{array}\right],
\ea 

\ba \label{LRmat_hydraulic4}
&R_{34}=i\left[\begin{array}{ccc} 0 & 0 & -\frac{5 \nu ^3+10 \nu ^2+8 \nu +3}{\nu ^3}\\ 0 & 0 & \frac{5 {\left(\nu +1\right)}^2}{\nu ^2}\\ -\frac{15 {\left(\nu +1\right)}^3}{\nu } & -\frac{5 {\left(\nu +1\right)}^2 \left(3 \nu ^2+3 \nu +2\right)}{\nu ^2} & 0 \end{array}\right],\\
&R_{35}=i\left[\begin{array}{ccc} 0 & 0 & -\frac{\left(\nu +1\right) \left(\nu ^2+\nu +1\right)}{\nu ^3}\\ 0 & 0 & \frac{{\left(\nu +1\right)}^2}{\nu ^2}\\ -\frac{6 {\left(\nu +1\right)}^3}{\nu } & -\frac{{\left(\nu +1\right)}^2 \left(6 \nu ^2+6 \nu +5\right)}{\nu ^2} & 0 \end{array}\right],\\
&R_{36}=i\left[\begin{array}{ccc} 0 & 0 & 0\\ 0 & 0 & 0\\ -\frac{{\left(\nu +1\right)}^3}{\nu } & -\frac{{\left(\nu +1\right)}^2 \left(\nu ^2+\nu +1\right)}{\nu ^2} & 0 \end{array}\right].
\ea

\section{Comparison with Brock's experimental results}\label{s:brock}

In this section, we make a comparison between our analytical spectral stability results and Brock's experimental results. We begin with finding a scaling such that the shallow water wave model used in Brock's dissertation \cite{Brock} becomes the Saint-Venant equations considered in \cite{JNRYZ} and this piece of work. 

\smallskip

\textbf{Shallow water model used in Brock's dissertation.} In chapters II and III of Brock's dissertation, Brock discussed about the basic differential equations that had been used in the analytical study of shallow water flow. Casting the variables in dimensionless form, Brock obtained, in his dissertation,  equations (3.8) and (3.9), which read
\ba
\label{brock-eq}
H_{t'}+(UH)_{x'}&=0,\\
U_{t'}+UU_{x'}+\frac{1}{F^2}H_{x'}&=\frac{S_0\lambda}{F^2h_n}\left(1-\frac{U^2}{H}\right),
\ea
where 
\ba  
\label{dimensionless_brock}
&x'=\frac{x}{\lambda}, \quad \lambda=\text{wave length,}\quad t'=\frac{u_n t}{\lambda}, \quad u_n=\text{normal velocity $q/h_n$,}\\
&H=\frac{h}{h_n}, \quad h_n=\text{normal depth,}\quad U=\frac{u}{u_n}, \\
&F=\frac{u_n}{\sqrt{gh_n}} \;\;\text{is the Froude number of Brock's,}\\
&g=32.16 \;\mathrm{ft/sec^2}\;\;\text{ is the gravitational constant, and} \\
&S_0=\sin(\theta), \theta=\text{angle of inclination of channel.}
\ea     

In addition to these symbols, Brock used $c$ for wave velocity and $T$ for time period of the wave with a dimensionless version 
\be \label{tprime}
T'=S_0T\sqrt{\frac{g}{h_n}}.
\ee
We pause to remark that the $T'$ above, recorded in Brock's Table 6 (see also Table \ref{brock_table} below), does not stand for the wave period in \eqref{brock-eq}. The dimensionless time period, by $t'=\frac{u_n t}{\lambda}$, is
\be \label{dimensionless_period}
T'_{period}=\frac{u_n T}{\lambda}=\frac{F\sqrt{gh_n} T}{\lambda}.
\ee

\smallskip

\textbf{The Saint Venant equations used in \cite{JNRYZ}.} Our convention is to use the dimensionless Saint Venant equations \eqref{sv_intro} and also their 1d version \cite[eq. (1.1)]{JNRYZ}. To distinguish variables in the two systems, we add tildes to the dimensionless variables in the 1d Saint Venant equations as shown below. 
\ba
\label{sv-1d}
\tilde{h}_{\tilde{t}}+(\tilde{u}\tilde{h})_{\tilde{x}}&=0,\\
(\tilde{h}\tilde{u})_{\tilde{t}}+\left(\tilde{h}\tilde{u}^2+\frac{\tilde{h}^2}{2\tilde{F}^2}\right)_{\tilde{x}}&=\tilde{h}-\tilde{u}^2,
\ea
where $\tilde{F}=\frac{U_0}{\sqrt{H_0 g\cos(\theta)}}$ is the Froude number of ours. Here, $U_0$ and $H_0$ are some reference speed and height, $g$ is gravitation constant, and $\theta$ is the angle of inclination of the channel. In the classical sense of solutions, the second equation of \eqref{sv-1d} can be put as 
\be
\label{sv_1d_2}
\tilde{u}_{\tilde{t}}+\tilde{u}\tilde{u}_{\tilde{x}}+\frac{1}{\tilde{F}^2}\tilde{h}_{\tilde{x}}=1-\frac{\tilde{u}^2}{\tilde{h}}.
\ee
The equation \eqref{sv_1d_2} looks the same as \eqref{brock-eq}[ii], except that the right hand side of \eqref{brock-eq} is multiplied by an additional number $\frac{S_0\lambda}{F^2h_n}$. We now look for a re-scaling 
\be  
\label{scaling}
H=h_0\tilde{h},\quad U=u_0\tilde{u},\quad t'=t_0\tilde{t},\quad\text{and}\quad x'=x_0\tilde{x},
\ee  
under which the system \eqref{brock-eq} is equivalent to \eqref{sv-1d}[i]-\eqref{sv_1d_2}. Substituting \eqref{scaling} into \eqref{brock-eq} yields
\ba 
\label{brock-eq2}
\tilde{h}_{\tilde{t}}+\frac{u_0t_0}{x_0}(\tilde{u}\tilde{h})_{\tilde{x}}&=0,\\
\tilde{u}_{\tilde{t}}+\frac{u_0t_0}{x_0}\tilde{u}\tilde{u}_{\tilde{x}}+\frac{h_0t_0}{u_0x_0}\frac{1}{F^2}\tilde{h}_{\tilde{x}}&=\frac{t_0\sin(\theta)\lambda}{u_0F^2h_n}\left(1-\frac{u_0^2}{h_0}\frac{\tilde{u}^2}{\tilde{h}}\right).
\ea 

To make \eqref{brock-eq2} exactly the same as \eqref{sv-1d}[i]-\eqref{sv_1d_2}, we shall set $$ 
x_0=u_0t_0,\quad h_0=u_0^2,\quad F=\tilde{F},\quad u_0=\frac{t_0\sin(\theta)\lambda}{F^2h_n},
$$
or, without loss of generality $t_0:=1$, 
\be  
u_0=\frac{\sin(\theta)\lambda}{F^2h_n},\quad x_0=u_0,\quad h_0=u_0^2,\quad F=\tilde{F}.
\ee 
From $F=\tilde{F}$, we also obtain a relation between the normal depth $h_n$ and velocity $u_n$ of Brock's and the reference height $H_0$ and speed $U_0$ of ours, i.e, 
$U_0^2h_n=u_n^2H_0\cos(\theta)$, which plays no role in the discussion below. Also, it is no longer necessary to distinguish Brock's Froude number and ours. Hence, we drop the tilde on $\tilde{F}$ from now on. We conclude the re-scaling from Brock's dimensionless variables to ours below.
\be 
\label{rescaling}
\tilde{h}=\frac{F^4h_n^2}{\sin^2(\theta)\lambda^2}H,\quad \tilde{u}=\frac{F^2h_n}{\sin(\theta)\lambda}U,\quad  \tilde{t}=t',\quad \text{and} \quad \tilde{x}=\frac{F^2h_n}{\sin(\theta)\lambda} x'.
\ee 
In addition to these, it is reasonable to adopt the re-scaling 
\be 
\label{rescalingy}
 \tilde{y}=\frac{F^2h_n}{\sin(\theta)\lambda} y' 
\ee 
where $y'=\frac{y}{\lambda}$, for the transverse coordinate $\tilde{y}$ of the two-dimensional Saint Venant equations of ours \eqref{sv-1d} \eqref{sv_intro}.

\smallskip

\textbf{Brock's experimental data}. In Brock's lab, he designed and experimented with two laboratory channel setups. The first was a 130-ft tiltable laboratory channel of $3.61$ ft wide with a maximum slope of $S_0=\sin(\theta)=0.02$. Brock stated ``{\it because of insufficient length, roll waves were not formed in this channel, even at the maximum Froude number of 2.65} ''. The second, referred by him as the ``{\it steep channel} '', was an improved setup when compared with the first one. It was 128 ft long and $4\frac{5}{8}$ in. wide with a slope $S_0=\sin(\theta)$ of $0.05011$, $0.08429$, and $0.1192$. To expedite the formation of roll wave, ``{\it A plastic paddle hinged on the upstream wall of the inlet box and driven by a variable speed fractional horsepower motor was used ...} ". Then, there were seen periodic permanent roll waves. Four properties were measured for these waves: minimum depths ($h_{min}$), time periods ($T$), wave velocities ($c$), and maximum depths ($h_{max}$) For details of Brock's experiments, see \cite[Chs. IV and V]{Brock}. 

We now start to translate Brock's basic data obtained for periodic permanent roll waves into our scaling. TABLE~\ref{brock_table} is a duplicated one of \cite[Table 6 p. 109]{Brock}. We remove the redundant column $\frac{h_{max}}{h_{min}}$ for it can be computed by the ratio of the last two columns. The nondimensionalization \eqref{dimensionless_brock} also suggests that we denote $\frac{h_{max}}{h_n}$ as $H_{max}$ and $\frac{h_{min}}{h_n}$ as $H_{min}$.

To make the wave properties in Table \ref{brock_table} useful for \eqref{sv_intro} (\eqref{sv-1d}), we refer to \eqref{dimensionless_brock}, \eqref{tprime}, \eqref{dimensionless_period}, \eqref{rescaling}, and \eqref{rescalingy} and obtain that
\be  
\label{wave_lengthpeiod}
\lambda=cT=c\frac{T'}{S_0}\sqrt{\frac{h_n}{g}}=\frac{h_nT'}{S_0}\frac{c}{\sqrt{gh_n}} \quad \text{and} \quad T'_{period}=\frac{F\sqrt{gh_n}T}{\lambda}=\frac{F\sqrt{gh_n}T}{cT}=F/\frac{c}{\sqrt{gh_n}}.
\ee
Using the above together with \eqref{rescaling} and \eqref{rescalingy}, we obtain the following key wave properties to be used in \eqref{sv-1d}: $\tilde{h}_{min}$ (minimum fluid height), $\tilde{h}_{max}$ (maximum fluid height), $\tilde{\lambda}$ (wave length), and $\tilde{c}$ (wave speed), which are computable by
\be 
\label{rescaling_compute}
\tilde{h}_{min}=\frac{F^4h_n^2}{\sin^2(\theta)\lambda^2}H_{min},\quad\tilde{h}_{max}=\frac{F^4h_n^2}{\sin^2(\theta)\lambda^2}H_{max}, \quad\tilde{\lambda}=\frac{F^2h_n}{\sin(\theta)\lambda},\quad  \tilde{c}=\frac{\tilde{\lambda}}{T'_{period}}.
\ee
And, using the re-scaling in $y$ direction \eqref{rescalingy}, for channel flow of the 2d Saint-Venant equations \eqref{sv_intro}, we have that the channel width $\tilde{L}$ is given by
\be \label{channel_width}
\tilde{L}=\frac{F^2h_n}{\sin(\theta)\lambda}L'=\frac{F^2h_n}{\sin(\theta)\lambda^2}L
\ee 
where $L=4\frac{5}{8}$ inches.

TABLE~\ref{converted-data} is based on \eqref{wave_lengthpeiod}-\eqref{channel_width} and, for convenience, we sort the dataset first by the Froude number $F$ in increasing order and then by the channel width $\tilde{L}$ in increasing order.

To determine the stability of the waves in TABLE~\ref{converted-data}, we immediately find that the Channel width $\tilde{L}$ and Froude number $F$ can be read off from the second and third columns of the table, besides which the rescaled minimum fluid height ($H_-/H_s$) remains to be determined from the last four columns for the table.  As we will see later, there are $C(4,2)=6$ ways to determine $H_s$ and $H_-$. By \cite[\S 2]{JNRYZ}, there hold the following equations for inviscid Dressler's roll wave profiles 
\ba
\label{eq-JNRYZ}
 &\frac{H_s}{H_-}+\frac{H_-^2}{2H_s^2}=\frac{H_s}{H_+}+\frac{H_+^2}{2H_s^2},\\
&X=\int_{H_-}^{H_+}\frac{H^2+HH_s+H_s^2}{F^2H^2-2FHH_s-HH_s+H_s^2}dH,\\
&c=\sqrt{H_s}\left(1+\frac{1}{F}\right).
\ea 

The six ways to determine $H_-$ and $H_s$, hence $H_-/H_s$, are
\begin{itemize}
\item[1.] take $H_-=\tilde{h}_{min}$ and $H_+=\tilde{h}_{max}$ and obtain $H_s$ via \eqref{eq-JNRYZ}[i];
\item[2.] take $H_-=\tilde{h}_{min}$ and $X=\tilde{\lambda}$ and obtain $H_s$ via \eqref{eq-JNRYZ}[i,ii];
\item[3.] take $H_-=\tilde{h}_{min}$ and $c=\tilde{c}$ and obtain $H_s$ via \eqref{eq-JNRYZ}[iii];
\item[4.] take $H_+=\tilde{h}_{max}$ and $X=\tilde{\lambda}$ and obtain $H_s$ and $H_-$ via \eqref{eq-JNRYZ}[i,ii];
\item[5.] take $H_+=\tilde{h}_{max}$ and $c=\tilde{c}$ and obtain $H_s$ and $H_-$ via \eqref{eq-JNRYZ}[i,iii];
\item[6.] take $X=\tilde{\lambda}$ and $c=\tilde{c}$ and obtain $H_s$ and $H_-$ via \eqref{eq-JNRYZ}.
\end{itemize}

For each of the ways above, we can rely on \eqref{eq-JNRYZ} to compute for the other two profile properties and compare them with the corresponding experimental data in TABLE~\ref{converted-data}. We display the results in TABLE~\ref{tway1} -- TABLE~\ref{tway6} below with relative errors in parentheses. While we record the $H_-/H_s$ values in the tables, we also determine its 1d stability by referring to the 1d stability diagrams \cite[Fig. 3.]{JNRYZ} or FIGURE~\ref{fig_roll_1d}, i.e., by seeing if it sits between the low-frequency stability boundary I and med-frequency stability boundary. We color 1d-stable parameters in blue. For the parameters that are 1d stable, we rely on the numerical method used for FIGURE~\ref{fig:spectrumf6} to find and record the critical channel width $L_*$ in TABLE~\ref{tway2cri} -- TABLE~\ref{tway6cri}, by which we then color the $\tilde{L}$ value in red if $\tilde{L}>L_*$, namely the corresponding channel roll wave is predicted to be unstable. It turns out that only the channel roll waves in Way 4 case 9 is found to be 1d stable but 2d unstable. Before presenting the tables, we also would like to remark that, as noted by Brock \cite[p. 111 ``{\it For $h_{max}$ the measurements ...} '']{Brock}, the shallow-water model can overestimate the maximum fluid heights. Richard and Gavrilyuk \cite{RG1,RG2} match the experimental fluid minimum heights and wave lengths with theoretical ones. So the second way has been preferred authors in the community. Also, for a fixed $F$, $H_-/H_s$ needs to be small enough in order to be stable. The overestimating effect then results in more stable cases in way 4 where the experimental maximum fluid heights are used for computing $H_-/H_s$.

\smallskip

\textbf{Periodic permanent  vs. natural roll waves.}
The above discussion concerns what Brock terms ``periodic permanent'' roll waves,
meaning waves initiated through periodic forcing by a plastic paddle at the inlet of the ramp
and eventually forming a persistent periodic pattern downstream.
It is worth mentioning a second flight of experiments performed by Brock \cite[Table 5, pp. 82-90]{Brock} with rougher bottom, in which he was able to observe formation of roll
waves by ordinary flow, without periodic forcing. However, these waves did not form a coherent periodic
array either in wavelength or waveform, and continued to interact through overtaking and merging behavior
throughout the length of the ramp.
Thus, they have mostly been ignored in discussion of (periodic) Dressler waves, as we do here.

However, we note that our numerical experiments did indicate eventual formation of periodic roll
waves for ``dambreak'' initial data analogous to Brock's natural wave conditions.
However, this process takes considerable time, involving a sometimes lengthy transition period of
wave rearrangement by overtaking/merging like that seen by Brock.
(Note: our figures and movies of this process display behavior in a frame moving with the periodic waves,
keeping them in frame for much longer time than on a fixed ramp.)
It appears that the length of the ramp in Brock's experiments was insufficient for this sorting process,
requiring periodic forcing to achieve convergence to periodic Dressler waves within the confines
of the experimental apparatus.

One might conjecture that persistent wave forms emerging from either (forced periodic or natural)
scenario should be stable ones.  And, indeed, most of the waves under the ``way 2'' parametrization
correspond to stable waves of the Saint-Venant equations, and almost all under the ``way 4'' parametrization
(the two most attractive parametrizations from our point of view).
However, a good many are not.  This apparent paradox seems worthy of further exploration, both
from the analytical and modeling points of view.
In particular, it would be interesting to compare with analytical stability results obtained from the
more sophisticated Richard-Gavrilyuk model \cite{RG1,RG2}.

\begin{table}[htbp]
\begin{tabular}{|c|c|c|c|c|c|c|c|c|}
\hline case&
\begin{tabular}[c]{@{}c@{}}$S_0$\\ ($\sin(\theta)$)\end{tabular}    & \begin{tabular}[c]{@{}c@{}}Channel\\  finished\end{tabular} & \begin{tabular}[c]{@{}c@{}}$h_n$\\ inch\end{tabular} & $F$  & $T'$ & $\frac{c}{\sqrt{gh_n}}$ & $H_{max}$ & $H_{min}$ \\ \hline
1&0.05011 & Smooth  & 0.314                                                & 3.71 & 1.08 & 5.18                    & 1.30      & 0.76      \\ \hline 2&
0.05011 & Smooth  & 0.314                                                & 3.71 & 1.64 & 5.27                    & 1.46      & 0.68      \\ \hline 3&
0.05011 & Smooth  & 0.314                                                & 3.71 & 2.14 & 5.36                    & 1.63      & 0.63      \\ \hline 4&
0.08429 & Smooth  & 0.208                                                & 4.63 & 1.63 & 6.46                    & 1.54      & 0.66      \\ \hline 5&
0.08429 & Smooth  & 0.314                                                & 4.96 & 2.50 & 7.01                    & 1.91      & -         \\ \hline 6&
0.08429 & Smooth  & 0.208                                                & 4.63 & 2.89 & 6.79                    & 2.00      & 0.56      \\ \hline 7&
0.08429 & Smooth  & 0.208                                                & 4.63 & 4.07 & 7.15                    & 2.35      & 0.54      \\ \hline 8&
0.08429 & Smooth  & 0.208                                                & 4.63 & 4.53 & 7.24                    & 2.49      & 0.53      \\ \hline 9&
0.1192  & Smooth  & 0.210                                                & 5.60 & 2.25 & 7.74                    & 1.78      & 0.52      \\ \hline 10&
0.1192  & Smooth  & 0.210                                                & 5.60 & 3.55 & 8.24                    & 2.31      & 0.47      \\ \hline 11&
0.1192  & Smooth  & 0.308                                                & 5.90 & 4.25 & 8.82                    & 2.65      & 0.44      \\ \hline 12&
0.1192  & Smooth  & 0.210                                                & 5.60 & 5.19 & 8.76                    & 2.82      & 0.45      \\ \hline 13&
0.1192  & Rough   & 0.199                                                & 3.74 & 1.98 & 5.43                    & 1.34      & 0.70      \\ \hline 14&
0.1192  & Rough   & 0.199                                                & 3.74 & 3.73 & 5.73                    & 1.54      & 0.67      \\ \hline 15&
0.1192  & Rough   & 0.375                                                & 4.04 & 4.19 & 6.14                    & 1.55      & 0.75      \\ \hline 16&
0.1192  & Rough   & 0.199                                                & 3.74 & 5.64 & 5.95                    & 1.68      & 0.62      \\ \hline 
\end{tabular}
\caption{Table 6 of Brock's dissertation, page 109}
\label{brock_table}
\end{table}

\begin{table}[htbp]
\begin{tabular}{|c|c|c|c|c|c|c|}
\hline case
& \begin{tabular}[c]{@{}c@{}} Channel width\\  $\tilde{L}$\end{tabular} & $F$  & $\tilde{h}_{min}$ & $\tilde{h}_{max}$ & $\tilde{\lambda}$ & $\tilde{c}$ \\ \hline
3&0.077214202&3.71&0.90714991&2.347070402&1.199966871&1.73364486\\ \hline
2&0.136002028&3.71&1.724628762&3.702879401&1.592551025&2.262195122\\ \hline
1&0.324599077&3.71&4.600469971&7.869224951&2.460335335&3.435185185\\ \hline
16&0.034410104&3.74&0.107717424&0.291879471&0.416818642&0.663120567\\ \hline
14&0.084830341&3.74&0.28696855&0.659599355&0.654454941&1.002680965\\ \hline
13&0.335234017&3.74&1.184825545&2.268094615&1.30100266&1.888888889\\ \hline
15&0.036253895&4.04&0.301871377&0.623867512&0.63442507&0.964200477\\ \hline
8&0.037351917&4.63&0.226426001&1.063774988&0.653619821&1.022075055\\ \hline
7&0.047444463&4.63&0.293033196&1.275237057&0.736650573&1.137592138\\ \hline
6&0.104340089&4.63&0.668308149&2.386814819&1.092431879&1.602076125\\ \hline
4&0.362364958&4.63&2.735442907&6.382700117&2.03583164&2.840490798\\ \hline
5&0.09945015&4.96&-&3.76395151&1.403800285&1.984\\ \hline
12&0.039829222&5.60&0.214102478&1.341708862&0.689770458&1.078998073\\ \hline
10&0.09621297&5.60&0.540180418&2.654929288&1.072063449&1.577464789\\ \hline
9&0.271454968&5.60&1.686197706&5.771984454&1.800746483&2.488888889\\ \hline
11&0.044343122&5.90&0.379442255&2.285277218&0.928638122&1.388235294\\ \hline
\end{tabular}
\caption{Converted wave properties for the Saint-Venant equations}
\label{converted-data}
\end{table}

\begin{table}[htbp]
\begin{tabular}{|c|c|c|c|c|c|}
\hline
case
& \begin{tabular}[c]{@{}c@{}} Channel width\\  $\tilde{L}$\end{tabular} & $F$  & $\frac{H_-}{H_s}$ & $X$ ($\left|\frac{X-\tilde{\lambda}}{\tilde{\lambda}}\right|$) & $c$ ($\left|\frac{c-\tilde{c}}{\tilde{c}}\right|$) \\ \hline
3&0.077214202&3.71&0.599521852&0.840290022(29.97\%)&1.56165113(56.17\%)\\ \hline
2&0.136002028&3.71&0.66643899&1.058501341(33.53\%)&2.042276835(104.23\%)\\ \hline
1&0.324599077&3.71&0.755583042&1.635740924(33.52\%)&3.132614807(213.26\%)\\ \hline
16&0.034410104&3.74&0.583792825&0.107039497(74.32\%)&0.54440279(45.56\%)\\ \hline
14&0.084830341&3.74&0.641340799&0.199058717(69.58\%)&0.847772758(15.22\%)\\ \hline
13&0.335234017&3.74&0.710390323&0.542933404(58.27\%)&1.636761593(63.68\%)\\ \hline
15&0.036253895&4.04&0.680817948&0.127907397(79.84\%)&0.830700824(16.93\%)\\ \hline
8&0.037351917&4.63&0.421171021&0.315449193(51.74\%)&0.89158204(10.84\%)\\ \hline
7&0.047444463&4.63&0.441181477&0.347028835(52.89\%)&0.991008044(0.90\%)\\ \hline
6&0.104340089&4.63&0.496644194&0.537999408(50.75\%)&1.410564991(41.06\%)\\ \hline
4&0.362364958&4.63&0.6359039&0.984442331(51.64\%)&2.522002351(152.20\%)\\ \hline
5&0.09945015&4.96&-&-(-)&-(-)\\ \hline
12&0.039829222&5.60&0.352819614&0.261487716(62.09\%)&0.918100862(8.19\%)\\ \hline
10&0.09621297&5.60&0.409753789&0.420482775(60.78\%)&1.353205759(35.32\%)\\ \hline
9&0.271454968&5.60&0.509287419&0.70779532(60.69\%)&2.144512991(114.45\%)\\ \hline
11&0.044343122&5.90&0.361612692&0.359170529(61.32\%)&1.197975932(19.80\%)\\ \hline
\end{tabular}
\caption{Way 1}
\label{tway1}
\end{table}

\begin{table}[htbp]
\begin{tabular}{|c|c|c|c|c|c|}
\hline case & \begin{tabular}[c]{@{}c@{}} Channel width\\  $\tilde{L}$\end{tabular} & $F$  & $\frac{H_-}{H_s}$ & $H_+$ ($\left|\frac{H_+-\tilde{h}_{max}}{\tilde{h}_{max}}\right|$) & $c$ ($\left|\frac{c-\tilde{c}}{\tilde{c}}\right|$) \\ \hline
3&0.077214202&3.71&0.547393362&2.746438647(17.02\%)&1.634318757(63.43\%)\\ \hline
2&0.136002028&3.71&0.600008197&4.455832195(20.33\%)&2.152366471(115.24\%)\\ \hline
1&0.324599077&3.71&0.68935572&9.297329275(18.15\%)&3.279641747(227.96\%)\\ \hline
16&0.034410104&3.74&\textcolor{blue}{0.453195141}&0.448350887(53.61\%)&0.61788421(38.21\%)\\ \hline
14&0.084830341&3.74&\textcolor{blue}{0.482457603}&1.07603189(63.13\%)&0.977449466(2.26\%)\\ \hline
13&0.335234017&3.74&0.568814219&3.358092163(48.06\%)&1.829147475(82.91\%)\\ \hline
15&0.036253895&4.04&\textcolor{blue}{0.447006063}&1.285472197(106.05\%)&1.025187807(2.52\%)\\ \hline
8&0.037351917&4.63&\textcolor{blue}{0.360129671}&1.37278089(29.05\%)&0.964186619(3.58\%)\\ \hline
7&0.047444463&4.63&\textcolor{blue}{0.36794323}&1.71603579(34.57\%)&1.085164072(8.52\%)\\ \hline
6&0.104340089&4.63&\textcolor{blue}{0.403647424}&3.366396241(41.04\%)&1.56464094(56.46\%)\\ \hline
4&0.362364958&4.63&0.50910293&9.369370552(46.79\%)&2.818632356(181.86\%)\\ \hline
5&0.09945015&4.96&-&-(-)&-(-)\\ \hline
12&0.039829222&5.60&\textcolor{blue}{0.283187534}&1.905009351(41.98\%)&1.024777755(2.48\%)\\ \hline
10&0.09621297&5.60&\textcolor{blue}{0.308775479}&4.190447827(57.84\%)&1.558848556(55.88\%)\\ \hline
9&0.271454968&5.60&0.368221179&9.862508112(70.87\%)&2.522062087(152.21\%)\\ \hline
11&0.044343122&5.90&\textcolor{blue}{0.278048669}&3.475179127(52.07\%)&1.366185009(36.62\%)\\ \hline
\end{tabular}
\caption{Way 2}
\label{tway2}
\end{table}

\begin{table}[htbp]
\begin{tabular}{|c|c|c|c|c|c|}
\hline case& \begin{tabular}[c]{@{}c@{}} Channel width\\  $\tilde{L}$\end{tabular} & $F$  & $\frac{H_-}{H_s}$ & $H_+$ ($\left|\frac{H_+-\tilde{h}_{max}}{\tilde{h}_{max}}\right|$) & $X$ ($\left|\frac{X-\tilde{\lambda}}{\tilde{\lambda}}\right|$) \\ \hline
3&0.077214202&3.71&\textcolor{blue}{0.486466327}&3.354604596	(42.93\%)&	2.084790017	(73.74\%)	\\ \hline
2&0.136002028&3.71&0.543162147&5.291197203	(42.89\%)&	2.354803681	(47.86\%)	\\ \hline
1&0.324599077&3.71&0.628341706&10.96316127	(39.32\%)&	3.557404141	(44.59\%)	\\ \hline
16&0.034410104&3.74&0.393472552&0.565691548	(93.81\%)&	NI(-)	\\ \hline
14&0.084830341&3.74&\textcolor{blue}{0.458481949}&1.171638403	(77.63\%)&	0.958130799	(46.40\%)	\\ \hline
13&0.335234017&3.74&0.533402521&3.749364921	(65.31\%)&	1.680119887	(29.14\%)	\\ \hline
15&0.036253895&4.04&0.505342232&1.046999814	(67.82\%)&	0.378027761	(40.41\%)	\\ \hline
8&0.037351917&4.63&0.32049077&1.655305899	(55.61\%)&	NI(-)	\\ \hline
7&0.047444463&4.63&0.334810034&1.997610228	(56.65\%)&	NI(-)	\\ \hline
6&0.104340089&4.63&\textcolor{blue}{0.385004023}&3.636274477	(52.35\%)&	1.333022191	(22.02\%)	\\ \hline
4&0.362364958&4.63&0.501297674&9.617071798	(50.67\%)&	2.137894741	(5.01\%)	\\ \hline
5&0.09945015&4.96&-&-	(-)&	-	(-)	\\ \hline
12&0.039829222&5.60&0.255441922&2.240690823	(67.00\%)&	NI	(-)	\\ \hline
10&0.09621297&5.60&\textcolor{blue}{0.30153054}&4.351588492	(63.91\%)&	1.193836562	(11.36\%)	\\ \hline
9&0.271454968&5.60&0.378102278&9.448238793	(63.69\%)&	1.661541491	(7.73\%)	\\ \hline
11&0.044343122&5.90&\textcolor{blue}{0.269285946}&3.655041651	(59.94\%)&	1.12561615	(21.21\%)	\\ \hline
\end{tabular}
\caption{Way 3, NI stands for ``Not Integrable", meaning that the $H_-/H_s$ computed is not in the domain of existence of inviscid Dressler's roll waves.}
\label{tway3}
\end{table}

\begin{table}[htbp]
\begin{tabular}{|c|c|c|c|c|c|}
\hline case& \begin{tabular}[c]{@{}c@{}} Channel width\\  $\tilde{L}$\end{tabular} & $F$  & $\frac{H_-}{H_s}$ & $H_-$ ($\left|\frac{H_--\tilde{h}_{min}}{\tilde{h}_{min}}\right|$) & $c$ ($\left|\frac{c-\tilde{c}}{\tilde{c}}\right|$) \\ \hline
3&0.077214202&3.71&\textcolor{blue}{0.515042798}&0.698828813(22.96\%)&1.478804419(47.88\%)\\ \hline
2&0.136002028&3.71&0.551102106&1.237297521(28.26\%)&1.902251931(90.23\%)\\ \hline
1&0.324599077&3.71&0.639478478&3.406072477(25.96\%)&2.929955158(193.00\%)\\ \hline
16&0.034410104&3.74&\textcolor{blue}{0.446620762}&0.068445051(36.46\%)&0.496144647(50.39\%)\\ \hline
14&0.084830341&3.74&\textcolor{blue}{0.451306698}&0.157375501(45.16\%)&0.74840971(25.16\%)\\ \hline
13&0.335234017&3.74&\textcolor{blue}{0.489844901}&0.620507982(47.63\%)&1.426432186(42.64\%)\\ \hline
15&0.036253895&4.04&\textcolor{blue}{0.407140001}&0.125610611(58.39\%)&0.69293173(30.71\%)\\ \hline
8&0.037351917&4.63&\textcolor{blue}{0.348756533}&0.166615294(26.42\%)&0.840472793(15.95\%)\\ \hline
7&0.047444463&4.63&\textcolor{blue}{0.350705106}&0.201534926(31.22\%)&0.921788996(7.82\%)\\ \hline
6&0.104340089&4.63&\textcolor{blue}{0.362838954}&0.398474348(40.38\%)&1.274296925(27.43\%)\\ \hline
4&0.362364958&4.63&\textcolor{blue}{0.406739044}&1.283033122(53.10\%)&2.159675026(115.97\%)\\ \hline
5&0.09945015&4.96&\textcolor{blue}{0.339916419}&0.565707692(-)&1.550152958(55.02\%)\\ \hline
12&0.039829222&5.60&\textcolor{blue}{0.274216464}&0.14332523(33.06\%)&0.852060572(14.79\%)\\ \hline
10&0.09621297&5.60&\textcolor{blue}{0.279199153}&0.291776997(45.99\%)&1.204826152(20.48\%)\\ \hline
9&\textcolor{red}{0.271454968}&5.60&\textcolor{blue}{0.291260501}&0.678197112(59.78\%)&1.798428104(79.84\%)\\ \hline
11&0.044343122&5.90&\textcolor{blue}{0.259908299}&0.22438418(40.86\%)&1.086633581(8.66\%)\\ \hline
\end{tabular}
\caption{Way 4}
\label{tway4}
\end{table}

\begin{table}[htbp]
\begin{tabular}{|c|c|c|c|c|c|}
\hline case& \begin{tabular}[c]{@{}c@{}} Channel width\\  $\tilde{L}$\end{tabular} & $F$  & $\frac{H_-}{H_s}$ & $H_-$ ($\left|\frac{H_--\tilde{h}_{min}}{\tilde{h}_{min}}\right|$) & $X$ ($\left|\frac{X-\tilde{\lambda}}{\tilde{\lambda}}\right|$) \\ \hline
3&0.077214202&3.71&0.779605258&2.347070402	(158.73\%)&	0.441618725	(63.20\%)	\\ \hline
2&0.136002028&3.71&0.850418331&3.702879401	(114.71\%)&	0.483123596	(69.66\%)	\\ \hline
1&0.324599077&3.71&0.928759374&7.869224951	(71.05\%)&	0.507611132	(79.37\%)	\\ \hline
16&0.034410104&3.74&0.936614347&0.291879471	(170.97\%)&	0.016401943	(96.06\%)	\\ \hline
14&0.084830341&3.74&0.948040836&0.659599355	(129.85\%)&	0.030585061	(95.33\%)	\\ \hline
13&0.335234017&3.74&0.979207459&2.268094615	(91.43\%)&	0.042895991	(96.70\%)	\\ \hline
15&0.036253895&4.04&0.956901298&0.623867512	(106.67\%)&	0.019042323	(97.00\%)	\\ \hline
8&0.037351917&4.63&0.62376399&1.063774988	(369.81\%)&	0.169581612	(74.06\%)	\\ \hline
7&0.047444463&4.63&0.651110914&1.275237057	(335.19\%)&	0.18857095	(74.40\%)	\\ \hline
6&0.104340089&4.63&0.700727804&2.386814819	(257.14\%)&	0.304882567	(72.09\%)	\\ \hline
4&0.362364958&4.63&0.847596225&6.382700117	(133.33\%)&	0.438316405	(78.47\%)	\\ \hline
5&0.09945015&4.96&0.697157231&3.76395151	(-)&	0.400599376	(71.46\%)	\\ \hline
12&0.039829222&5.60&0.574393426&1.341708862	(526.67\%)&	0.141540289	(79.48\%)	\\ \hline
10&0.09621297&5.60&0.636905412&2.654929288	(391.49\%)&	0.240664539	(77.55\%)	\\ \hline
9&0.271454968&5.60&0.754310731&5.771984454	(242.31\%)&	0.367779596	(79.58\%)	\\ \hline
11&0.044343122&5.90&0.564130568&2.285277218	(502.27\%)&	0.214637288	(76.89\%)	\\ \hline
\end{tabular}
\caption{Way 5, NI stands for ``Not Integrable", meaning that the $H_-/H_s$ computed is not in the domain of existence of inviscid Dressler's roll waves.}
\label{tway5}
\end{table}

\begin{table}[htbp]
\begin{tabular}{|c|c|c|c|c|c|}
\hline case& \begin{tabular}[c]{@{}c@{}} Channel width\\  $\tilde{L}$\end{tabular} & $F$  & $\frac{H_-}{H_s}$ & $H_-$ ($\left|\frac{H_--\tilde{h}_{min}}{\tilde{h}_{min}}\right|$) & $H_+$ ($\left|\frac{H_+-\tilde{h}_{max}}{\tilde{h}_{max}}\right|$) \\ \hline
3&0.077214202&3.71&0.569495208&1.061980033	(17.07\%)&	3.003710841	(200.37\%)	\\ \hline
2&0.136002028&3.71&0.621393489&1.973026084	(14.40\%)&	4.794644049	(379.46\%)	\\ \hline
1&0.324599077&3.71&0.709449463&5.194308953	(12.91\%)&	9.967276964	(896.73\%)	\\ \hline
16&0.034410104&3.74&\textcolor{blue}{0.458738257}&0.125584626	(16.59\%)&	0.512261782	(48.77\%)	\\ \hline
14&0.084830341&3.74&\textcolor{blue}{0.48783861}&0.305343185	(6.40\%)&	1.12381909	(12.38\%)	\\ \hline
13&0.335234017&3.74&0.581815121&1.292362503	(9.08\%)&	3.522541175	(252.25\%)	\\ \hline
15&0.036253895&4.04&\textcolor{blue}{0.434940906}&0.259816421	(13.93\%)&	1.15762413	(15.76\%)	\\ \hline
8&0.037351917&4.63&\textcolor{blue}{0.367927701}&0.259940085	(14.80\%)&	1.522342715	(52.23\%)	\\ \hline
7&0.047444463&4.63&\textcolor{blue}{0.375840887}&0.328944312	(12.25\%)&	1.861192835	(86.12\%)	\\ \hline
6&0.104340089&4.63&0.410379655&0.712356367	(6.59\%)&	3.492410261	(249.24\%)	\\ \hline
4&0.362364958&4.63&0.512752363&2.797947983	(2.29\%)&	9.468330835	(846.83\%)	\\ \hline
5&0.09945015&4.96&0.397748575&1.084333317	(-)&	5.594975419	(459.50\%)	\\ \hline
12&0.039829222&5.60&\textcolor{blue}{0.287849554}&0.241265421	(12.69\%)&	2.091994038	(109.20\%)	\\ \hline
10&0.09621297&5.60&\textcolor{blue}{0.311035154}&0.55720757	(3.15\%)&	4.272670558	(327.27\%)	\\ \hline
9&0.271454968&5.60&0.36358852&1.621471662	(3.84\%)&	9.680094573	(868.01\%)	\\ \hline
11&0.044343122&5.90&\textcolor{blue}{0.280187596}&0.394803423	(4.05\%)&	3.57240238	(257.24\%)	\\ \hline
\end{tabular}
\caption{Way 6}
\label{tway6}
\end{table}

\begin{table}[htbp]
\begin{tabular}{|c|c|c|c|c|c|c|c|c|c|}
\hline
case                         & 16        & 14          & 15           & 8           & 7            & 6            & 12           & 10           & 11           \\ \hline
$L_*$ & $+\infty$ & $2.3\ldots$ & $1.44\ldots$ & $1.0\ldots$ & $0.84\ldots$ & $0.49\ldots$ & $0.23\ldots$ & $0.19\ldots$ & $0.17\ldots$ \\ \hline
\end{tabular}
\caption{Way 2, critical channel width $L_*$; The symbol ``$+\infty$'' means the parameter sit in the 2d stable region.}
\label{tway2cri}
\end{table}

\begin{table}[htbp]
\begin{tabular}{|c|c|c|c|c|c|}
\hline
case  & 3           & 14        & 6   & 10           & 11          \\ \hline
$L_*$ & $2.4\ldots$ & $+\infty$ & $0.62\ldots$ & $0.20\ldots$ & $0.18\dots$ \\ \hline
\end{tabular}
\caption{Way 3, critical channel width $L_*$; The symbol ``$+\infty$'' means the parameter sit in the 2d stable region.}
\label{tway3cri}
\end{table}

\begin{table}[htbp]
\begin{tabular}{|c|c|c|c|c|llllll}
\hline
case  & 3            & 16           & 14           & 13           & \multicolumn{1}{c|}{15}        & \multicolumn{1}{c|}{8}           & \multicolumn{1}{c|}{7}           & \multicolumn{1}{c|}{6}            & \multicolumn{1}{c|}{4}            & \multicolumn{1}{c|}{5}           \\ \hline
$L_*$ & $1.3\ldots$  & $+\infty$    & $+\infty$    & $1.8\ldots$  & \multicolumn{1}{c|}{$+\infty$} & \multicolumn{1}{c|}{$2.1\ldots$} & \multicolumn{1}{c|}{$1.7\ldots$} & \multicolumn{1}{c|}{$0.96\ldots$} & \multicolumn{1}{c|}{$0.47\ldots$} & \multicolumn{1}{c|}{$0.43\dots$} \\ \hline
case  & 12           & 10           & 9            & 11           &                                &                                  &                                  &                                   &                                   &                                  \\ \cline{1-5}
$L_*$ & $0.23\ldots$ & $0.23\ldots$ & $0.21\ldots$ & $0.18\ldots$ &                                &                                  &                                  &                                   &                                   &                                  \\ \cline{1-5}
\end{tabular}
\caption{Way 4, critical channel width $L_*$; The symbol ``$+\infty$'' means the parameter sit in the 2d stable region.}
\label{tway4cri}
\end{table}

\begin{table}[htbp]
\begin{tabular}{|c|c|c|c|c|c|c|c|c|}
\hline
case  & 16        & 14          & 15          & 8   & 7            & 12           & 10           & 11          \\ \hline
$L_*$ & $+\infty$ & $1.9\ldots$ & $2.0\ldots$ & $0.84\ldots$ & $0.71\ldots$ & $0.22\ldots$ & $0.19\ldots$ & $0.17\dots$ \\ \hline
\end{tabular}
\caption{Way 6, critical channel width $L_*$; The symbol ``$+\infty$'' means the parameter sit in the 2d stable region.}
\label{tway6cri}
\end{table}

\bibliographystyle{plain}
\newpage
\bibliography{Ref-Multid}

\end{document}